\documentclass[a4paper, reqno]{amsart}
\usepackage[utf8]{inputenc}
\usepackage[T1]{fontenc}
\usepackage{lmodern}
\usepackage{amssymb,amsxtra}
\usepackage[all]{xy}
\usepackage{nicefrac,mathtools,enumitem}
\usepackage{mdwlist}
\setlist[enumerate,1]{label=\textup{(\arabic*)}}
\usepackage{lmodern,textcomp}
\usepackage{a4wide}

\usepackage{dsfont}

\usepackage{graphicx}
\usepackage{mathtools} 
\usepackage[all]{xy}

\usepackage{tikz}
\usetikzlibrary{matrix}
\tikzset{cd/.style=matrix of math nodes,row sep=2em,column sep=2em, text height=1.5ex, text depth=0.5ex}
\tikzset{cdar/.style=->,auto}

\usepackage{microtype}

\usepackage[pdftitle={Quantum isometry groups of the crossed products},
pdfauthor={Jyotishman Bhowmick, Arnab Mandal, Sutanu Roy},
pdfsubject={Mathematics}
]{hyperref}

\usepackage[lite]{amsrefs}

\newcommand*{\arxiv}[1]{\href{http://www.arxiv.org/abs/#1}{arXiv: #1}}

\renewcommand{\PrintDOI}[1]{\href{http://dx.doi.org/\detokenize{#1}}{doi: \detokenize{#1}}}

\BibSpec{book}{%
  +{}  {\PrintPrimary}                {transition}
  +{,} { \textit}                     {title}
  +{.} { }                            {part}
  +{:} { \textit}                     {subtitle}
  +{,} { \PrintEdition}               {edition}
  +{}  { \PrintEditorsB}              {editor}
  +{,} { \PrintTranslatorsC}          {translator}
  +{,} { \PrintContributions}         {contribution}
  +{,} { }                            {series}
  +{,} { \voltext}                    {volume}
  +{,} { }                            {publisher}
  +{,} { }                            {organization}
  +{,} { }                            {address}
  +{,} { \PrintDateB}                 {date}
  +{,} { }                            {status}
  +{}  { \parenthesize}               {language}
  +{}  { \PrintTranslation}           {translation}
  +{;} { \PrintReprint}               {reprint}
  +{.} { }                            {note}
  +{.} {}                             {transition}
  +{} { \PrintDOI}                   {doi}
  +{} { available at \url}            {eprint}
  +{}  {\SentenceSpace \PrintReviews} {review}
  }
\numberwithin{equation}{section}

\theoremstyle{plain}
\newtheorem{theorem}{Theorem}[section]
\newtheorem{lemma}[theorem]{Lemma}
\newtheorem{proposition}[theorem]{Proposition}

\newtheorem{corollary}[theorem]{Corollary}

\theoremstyle{definition}
\newtheorem{definition}[theorem]{Definition}

\theoremstyle{remark}
\newtheorem{remark}[theorem]{Remark}

\newtheorem{example}[theorem]{Example}

\newcommand*{\clc}{{\mathcal C}}

\newcommand*{\Pol}{{\textup{Pol}}}

\newcommand*{\Jnd}{{\mathcal{J}}}

\newcommand*{\bc}{{\mathbb C}}

\newcommand*{\fatlam}{{\boldsymbol{\lambda}}}
\newcommand*{\fatmu}{{\boldsymbol{\mu}}}

\newcommand*{\corep}[1]{\textup{#1}}          
\newcommand*{\Ducorep}[1]{\hat{\corep{#1}}}   





\newcommand*{\GDrin}[1]{\mathfrak{D}_{#1}}
\newcommand*{\GCodb}[1]{\widehat{\mathfrak{D}}_{#1}}


\newcommand*{\Bialg}[1]{(#1,\Comult[#1])}



\newcommand*{\nb}{\nobreakdash}
\newcommand*{\Star}{$^*$\nb-}

\newcommand*{\C}{\textup C}
\newcommand*{\Z}{\mathbb Z}
\newcommand*{\R}{\mathbb R}
\newcommand*{\N}{\mathbb N}

\newcommand*{\T}{\mathbb T}

\newcommand*{\Ltwo}{\textup{L}^2}

\global\long\def\Ww{\mathds{W}}
\global\long\def\wW{\text{\reflectbox{\ensuremath{\Ww}}}\:\!}

\global\long\def\WW{{\mathds{V}\!\!\text{\reflectbox{\ensuremath{\mathds{V}}}}}}

\newcommand*{\QISO}{\textup{QISO}}
\newcommand*{\CLS}{\mathrm{CLS}}
\newcommand*{\G}[1][G]{#1}
\newcommand*{\DuG}[1][G]{\hat{#1}}
\newcommand*{\GH}{H}
\newcommand*{\DuGH}{\hat{H}}

\newcommand*{\Comult}[1][]{\Delta_{#1}}
\newcommand*{\DuComult}[1][]{\hat{\Delta}_{#1}}


\newcommand*{\Bound}{\mathbb B}
\newcommand*{\Comp}{\mathbb K}

\newcommand*{\univ}{\textup u}
\newcommand*{\red}{\textup r}

\newcommand*{\Contvin}{\textup C_0}
\newcommand*{\Cont}{\textup C}

\newcommand*{\Mor}{\textup{Mor}}
\newcommand*{\Id}{\textup{id}}

\newcommand*{\multunit}[1][]{\textup W^{#1}}
\newcommand*{\Dumultunit}[1][]{\widehat{\textup W}^{#1}}


\newcommand*{\bichar}{\textup V}
\newcommand*{\Dubichar}{\widehat{\textup{V}}}







\newcommand*{\Afilt}{\widetilde{A}}
\newcommand*{\Bfilt}{\widetilde{B}}
\newcommand*{\Cfilt}{\widetilde{C}}
\newcommand*{\ABfilt}{\widetilde{A} \boxtimes_\bichar \widetilde{B}}
\newcommand*{\ABCfilt}{\widetilde{A} \boxtimes_{\bichar_1} \widetilde{B}}
\newcommand*{\crfilt}{\widetilde{A} \rtimes_{\beta} \widetilde{B}}

\newcommand*{\Flip}{\Sigma}
\newcommand*{\flip}{\sigma}
\newcommand*{\Cst}{\textup C^*}
\newcommand*{\Cred}{\textup C^*_\textup r}

\newcommand*{\Cstcat}{\mathfrak{C^*alg}}

\newcommand*{\Hils}[1][H]{\mathcal{#1}}
\newcommand*{\Mult}{\mathcal M}
\newcommand*{\U}{\mathcal U}

\newcommand*{\defeq}{\mathrel{\vcentcolon=}}

\begin{document}
\title[Quantum symmetries  of the twisted tensor products]{Quantum symmetries  of the twisted tensor products of $\Cst$-algebras}

\author{Jyotishman Bhowmick}
\email{jyotishmanb@gmail.com}
\address{Statistics and Mathematics Unit\\
 Indian Statistical Institute\\
 203, B. T. Road, Kolkata 700108\\
 India}

\author{Arnab Mandal}
\email{arnab@niser.ac.in}
\address{School of Mathematical Sciences\\
 National Institute of Science Education and Research  Bhubaneswar, HBNI\\
 Jatni, 752050\\
 India}

\author{Sutanu Roy}
\email{sutanu@niser.ac.in}
\address{School of Mathematical Sciences\\
 National Institute of Science Education and Research  Bhubaneswar, HBNI\\
 Jatni, 752050\\
 India}
 
 \author{Adam Skalski}
\email{a.skalski@impan.pl}
\address{Institute of Mathematics of the Polish Academy of Sciences\\
ul. Sniadeckich 8\\
00-656 Warsaw\\
 Poland}

\subjclass[2010]{Primary 46L65; Secondary 46L05, 58B32}
\keywords{quantum symmetry group; twisted tensor product; Drinfeld double; orthogonal filtration}

\begin{abstract}
We consider the construction of twisted tensor products in the category of  $\Cst$-algebras equipped with orthogonal filtrations and under certain  assumptions on the form of the twist compute the corresponding quantum symmetry group, which turns out to be the generalized Drinfeld double of the quantum symmetry groups of the original filtrations. We show how these results apply to a wide class of crossed products of $\Cst$-algebras by actions of discrete groups. We also discuss an example where the hypothesis of our main theorem is not satisfied and the quantum symmetry group is not a generalized Drinfeld double. 
\end{abstract}

\maketitle

\section{Introduction}
 \label{sec:Intro} 
The study of quantum symmetry groups (in the framework of compact quantum groups of Woronowicz, \cite{Woronowicz:CQG}) has started from the seminal paper of Wang (\cite{wang}), who studied quantum permutation groups and quantum symmetry groups of finite-dimensional $\Cst$\nb-algebras equipped with reference states. Soon after this, the theory of quantum symmetries was  extended to finite metric spaces and finite graphs by Banica, Bichon and their collaborators (see \cites{ban1 , Banica:Qaut , bichon}, and more recently \cite{moritzgraph} and \cite{soumalyagraph}), who uncovered several interesting connections to combinatorics, representation theory and free probability (\cite{weberraum}, \cite{speicherweber}, \cite{banica_new_expository} and the references therein). The next breakthrough came through the work of Goswami and his coauthors (\cites{goswami, goswami2}), who introduced the concept of quantum isometry groups associated to a given spectral triple \'a la Connes, viewed as a noncommutative differential manifold (for a general description of Goswami's theory we refer to a recent book \cite{goswamibook2}, another introduction to the subject of quantum symmetry groups may be found in the lecture notes \cite{Aubrun-Skalski-Speicher:QSym}). Among examples fitting in the Goswami's framework were the spectral triples associated with the group $\Cst$\nb-algebras of discrete groups, whose quantum isometry groups were first studied in \cite{adamjyotishgroup}, and later analyzed for example in \cite{twoparameterbanicaskalski}, \cite{cyclicgroupsbanicaskalski} and \cite{arnabsingleauthor}.

Historically, the main source of examples of quantum groups was the deformation theory related to the quantum method of the inverse problem and the desire to study a quantum version of the Yang-Baxter equation (\cite{drinfeld}); and in fact already this early work gave rise to the construction of what is now called the Drinfeld double, which plays an important role in this paper. Later, when the theory became to be viewed as one of the instances of the noncommutative mathematics \`a la Connes (\cite{connes}),  there was a hope that by analogy with the classical situation the quantum groups  might arise as quantum symmetries of physical objects appearing in the quantum field theory. It is worth mentioning that   Goswami's theory was in particular applied to compute the  quantum isometry group of the finite  spectral triple corresponding to the standard model in particle physics (\cite{connes}, \cite{connes2}), for which we refer to Chapter 9 of \cite{goswamibook2}.

In fact the examples related to  group $\Cst$\nb-algebras of discrete groups motivated Banica and Skalski to introduce in \cite{MR3066746} a new framework of quantum symmetry groups based on  orthogonal filtrations of unital $\Cst$\nb-algebras, which will be the main focus of our paper. Before we pass to a more specific description, we should mention that Thibault de Chanvalon generalized in \cite{MR3158722} this approach further to orthogonal filtrations of Hilbert $\Cst$\nb-modules. The concept of an orthogonal filtration of a given  unital $\Cst$\nb-algebra  $A$ with a reference state $\tau_A$ is essentially a family of mutually orthogonal (with respect to the scalar product coming from $\tau_A$) finite-dimensional subspaces spanning a dense subspace of $A$. The corresponding quantum symmetry group is the universal compact quantum group acting on $A$ in such a way that the individual subspaces are preserved. The article \cite{MR3066746} proves that such a universal action always exists and discusses several examples. The problems related to the study of quantum symmetry groups in this setup are two-fold: first we need to construct a natural filtration on a $\Cst$\nb-algebra, and then we want to compute the corresponding quantum symmetry group. 

The starting point for this work was an observation that if a $\Cst$\nb-algebra $A$ is equipped with an orthogonal filtration and we have an action of a discrete group $\Gamma$ on $A$, preserving the state $\tau_A$, then the corresponding crossed product admits a natural orthogonal filtration (Proposition \ref{prop15thmarch}), which we will denote $\crfilt$. Note here that the crossed product construction, generalizing that of a group $\Cst$-algebra, on one hand yields a very rich and intensely studied source of examples of operator algebras, and on the other was originally motivated by the desire to model inside the same $\Cst$-algebra both the initial system, and the group acting on it in a way compatible with the action -- in other words, making the action implemented by a unitary representation. The attempts to compute and analyze the resulting  quantum symmetry groups have led, perhaps unexpectedly, to discovering deep connections between the quantum symmetry group construction and the notion of a generalized \emph{Drinfeld double} of a pair of (locally) compact quantum groups linked through a bicharacter, as studied for example in \cite{Baaj-Skandalis:Unitaires, Roy:Codoubles}. This motivated us to extend the original question to the context of twisted tensor products of \cite{Meyer-Roy-Woronowicz:Twisted_tensor}.

Let $ A $ and $ B $ be $ \Cst $-algebras equipped with reduced actions $ \gamma_A $ and $ \gamma_B $ of locally compact quantum groups $G$ and $H$, respectively. Then the twisted tensor product of $ A $ and $ B $,  denoted  $ A \boxtimes_{\bichar_{1}} B $, is a $ \Cst $-algebra defined in terms of  the maps $ \gamma_A, \gamma_B $ and a bicharacter $ \bichar_{1} $ belonging to the unitary multiplier algebra of $\Contvin(\DuG)\otimes\Contvin(\DuG[H])$. For the trivial bicharacter $ \bichar_{1} = 1, $ the  $ \Cst $-algebra $ A \boxtimes_{\bichar_{1}} B $ is the minimal tensor product $ A \otimes B. $ When $ A, B $ are unital $ \Cst $-algebras, $ A \boxtimes_{\bichar_{1}} B $ is a unital  $ \Cst $-algebra. 

Our main result is the following: suppose that $ A $ and $ B $ are two unital $\Cst$\nb-algebras equipped with  orthogonal filtrations $ \Afilt $ and $ \Bfilt $, respectively, that $ \gamma_A, \gamma_B $ are filtration preserving reduced actions of compact quantum groups $G$ and $H$ on $ A $ and $ B, $ and that $ \bichar_{1} \in\U(\Contvin(\DuG)\otimes\Contvin(\DuG[H]))$ is a bicharacter.  Universality of~\(\QISO(\Afilt)\) and~\(\QISO(\Bfilt)\) gives a unique bicharacter $\bichar\in\U(\Contvin( \widehat{\QISO ( \Afilt )}) \otimes  \Contvin( \widehat{\QISO ( \Bfilt )}))$ lifting~\(\bichar_{1}\). 
Then one can always construct a natural orthogonal filtration $\Afilt\boxtimes_{\bichar_{1}}\Bfilt$ on $ A \boxtimes_{\bichar_{1}} B, $  and moreover, the resulting quantum symmetry group $\QISO(\Afilt\boxtimes_{\bichar_{1}}\Bfilt)$ is a generalized Drinfeld double of $\QISO(\Afilt)$ and $\QISO(\Bfilt)$ with respect to~\(\bichar\)  (Theorem \ref{maintheorem} and Theorem~\ref{cor:equiv-cross-iso}). 

Next, we try to apply Theorem \ref{cor:equiv-cross-iso} to compute the quantum symmetry group of $ A \rtimes_{\beta,\red} \Gamma $ in terms of the quantum symmetry groups of $\Afilt$ and $\Bfilt$, where the latter is a natural filtration of $ \Cst_\red ( \Gamma ). $ It turns out (Theorem \ref{maintheoremcrossed}) that if the action $ \beta $ of $ \Gamma $ on $ A $ factors through the action of the quantum symmetry group of $\Afilt $ then indeed, we can apply Theorem \ref{maintheorem} to prove that the quantum symmetry group of $ A \rtimes_{\beta,\red} \Gamma $ is a generalized Drinfeld double of the quantum symmetry groups of $\Afilt $ and ~ $ \Bfilt$. Our result can be applied to a wide class of crossed products, including noncommutative torus, the Bunce-Deddens algebra, crossed products of Cuntz algebras studied by Katsura as well as certain crossed products related to compact quantum groups and their homogeneous spaces.  We also exhibit an example where the hypothesis of Theorem \ref{maintheoremcrossed} does not hold and  the quantum symmetry group of the crossed product is not of the generalized Drinfeld double form.

In order to establish  Theorem \ref{cor:equiv-cross-iso}, we need a certain universal property of the universal $\Cst$\nb-algebra associated with a Drinfeld double. Although in the context of quantum symmetry groups it suffices to work with compact/discrete quantum groups, the property we mention remains true in the general locally compact setting, being a natural framework for studying twisted tensor products and Drinfeld doubles; thus we choose to consider this level of generality in the first few sections of the paper. For locally compact quantum groups, we refer to \cites{kustermans, Kustuniv} and \cite{Woronowicz:Mult_unit_to_Qgrp}; we will in fact only use the $\Cst$\nb-algebraic aspects of the theory.

The plan of the article is as follows: in Section \ref{sec:prelim}, after fixing the notations and conventions for locally compact quantum groups, we discuss the theories of twisted tensor product of $\Cst$\nb-algebras and generalized Drinfeld doubles developed in \cite{Meyer-Roy-Woronowicz:Twisted_tensor} and \cite{Roy:Codoubles}, respectively.  
 In Section \ref{sec:Univ-Drinf} we study the `universal $\Cst$\nb-algebra' associated to the generalized Drinfeld double, and prove two results about its action on twisted tensor products, namely Lemma \ref{lemm:Coact-univ-Drinf} and Theorem \ref{the:UnivProp-Drinf}. 
Section \ref{sec:orthfilt} is devoted to showing that if $A$ and $B$ are unital $\Cst$\nb-algebras equipped with orthogonal filtrations and $\bichar$ is a bicharacter in the unitary multiplier algebra of $ \Contvin( \widehat{\QISO ( \Afilt )}  ) \otimes  \Contvin( \widehat{\QISO ( \Bfilt )}  ), $ then the twisted tensor product $ A \boxtimes_\bichar B $ also admits a natural  orthogonal filtration $\ABfilt$ with respect to the twisted tensor product state.  In Section \ref{sec:sec5} we prove the main result of this article which says that $\QISO(\ABfilt)$ is canonically isomorphic to the generalized Drinfeld double of $ \QISO ( \Afilt )$ and $ \QISO ( \Bfilt )$ with respect to the bicharacter $ \bichar $. Finally in Section \ref{sec:sec6} we show that the above theorem applies to reduced crossed products $ A \rtimes_{\beta,\red} \Gamma $ for group actions of a specific form (Theorem \ref{maintheoremcrossed}). 
We also discuss several natural examples in which the assumptions of Theorem \ref{maintheoremcrossed} are satisfied. In Subsection \ref{counterexample}, we show that the conclusion of Theorem \ref{maintheoremcrossed} fails to hold for more general actions. Finally we discuss further possible extensions of such a framework to twisted crossed products and to crossed products by actions of discrete quantum groups.

\subsection*{Acknowledgements}
The second author was supported by the National Postdoctoral Fellowship given by SERB-DST, Government of India grant no. PDF/2017/001795. The third author was partially supported by an Early Career Research Award given by SERB-DST, Government of India grant no. ECR/2017/001354. The last author was partially supported by the National Science Centre
(NCN) grant no.~2014/14/E/ST1/00525. This work was started during the internship of the second author at IMPAN in 2016, funded by the Warsaw Center for Mathematical Sciences. A.M. thanks A.S. for his kind hospitality at IMPAN. We thank the referees for their thoughtful comments and suggestions.

\section{Preliminaries}
\label{sec:prelim}
Let us fix some notations and conventions. For a normed linear space $ A $, $ A' $ will denote the set of all bounded linear functionals on $ A. $ 
All Hilbert spaces and \(\Cst\)\nb-algebras (which are not explicitly multiplier algebras) 
are assumed to be separable. For a \(\Cst\)\nb-algebra~\(A\), let \(\Mult(A)\) be its multiplier
algebra and let \(\U(A)\) be the group of unitary multipliers
of~\(A\).  For two norm closed subsets \(X\) and~\(Y\) of a 
\(\Cst\)\nb-algebra~\(A\) and~\(T\in\Mult(A)\), let 
\[
XTY\defeq\{xTy\mid x\in X, y\in T\}^\CLS
\]
where CLS stands for the \emph{closed linear span}.

Let~\(\Cstcat\) be the category of \(\Cst\)\nb-algebras with
nondegenerate \Star{}homomorphisms \(\varphi\colon A\to\Mult(B)\) as
morphisms from $A $ to $ B$ (with the composition understood via strict extensions). Moreover, \(\Mor(A,B)\) will denote this set of morphisms. 

Let~\(\Hils\) be a Hilbert space.  A \emph{representation} of a
\(\Cst\)\nb-algebra~\(A\) on~\(\Hils\)  is a nondegenerate
\Star{}homomorphism \(A\to\Bound(\Hils)\).  Since
\(\Bound(\Hils)=\Mult(\Comp(\Hils))\) (where $\Comp(\Hils)$ denotes the algebra of compact operators on $\Hils$) and the nondegeneracy
conditions \(A \Comp(\Hils)=\Comp(\Hils)\) and
\(A \Hils=\Hils\) are equivalent, we have \(\pi\in\Mor(A,\Comp(\Hils))\). 

We write~\(\Flip\) for the tensor flip \(\Hils\otimes\Hils[K]\to
\Hils[K]\otimes\Hils\), \(x\otimes y\mapsto y\otimes x\), for two
Hilbert spaces \(\Hils\) and~\(\Hils[K]\).  We write~\(\flip\) for the
tensor flip isomorphism \(A\otimes B\to B\otimes A\) for two
\(\Cst\)\nb-algebras \(A\) and~\(B\). Further we use the standard `leg' notation for maps acting on tensor products.

\subsection{Quantum groups}
\label{sec:qgrp}
\begin{definition} 
	\label{def:Hopf-Cstar}
	A \emph{Hopf~\(\Cst\)\nb-algebra} is a pair~\(\Bialg{C}\) consisting of a 
	\(\Cst\)\nb-algebra~\(C\) and an element~\(\Comult\in\Mor(C,C\otimes C)\) 
	such that 
	\begin{enumerate}
		\item\label{cond:coasso} \(\Comult[C]\) is~\emph{coassociative}: \((\Comult[C]\otimes\Id_{C})\circ\Comult[C]
		=(\Id_{C}\otimes\Comult[C])\circ \Comult[C]\);
		\item\label{cond:cancellation} \(\Comult[C]\) satisfies the \emph{cancellation conditions}:
		\(\Comult[C](C) (1_C\otimes C) = C\otimes C = (C\otimes 1_C) \Comult[C](C)\).
	\end{enumerate}
	Let~\(\Bialg{D}\) be a Hopf~\(\Cst\)\nb-algebra. A \emph{Hopf~\Star{}homomorphism} from~\(C\) to~\(D\) is an 
	element~\(f\in\Mor(C,D)\) such that~\((f\otimes f)(\Comult[C](c))=\Comult[D](f(c))\) for all~\(c\in C\). 
	
\end{definition}

A \emph{compact quantum group} or CQG, in short, is described by a Hopf~\(\Cst\)\nb-algebra \(\Bialg{C}\) such 
that~\(C\) is unital (in which case $C$ is often called a \emph{Woronowicz algebra}). Hopf~\Star{}homomorphisms between CQGs are called \emph{CQG morphisms}.

\begin{example}
	\label{ex:CQG}
	Every compact group~\(G\) can be viewed as a compact quantum group by 
	setting~\(C=\Cont(G)\) and~\((\Comult[C]f)(g_1,g_2)\defeq f(g_1g_2)\) for all 
	\(f\in\Cont(G)\), \(g_{1},g_{2}\in G\). Also, every discrete group~\(\Gamma\) gives rise to a compact quantum group by 
	fixing~\(C=\Cred(\Gamma)\) and \(\Comult[C](\lambda_{g})\defeq \lambda_{g}\otimes\lambda_{g}\), where 
	\(\lambda\) is the regular representation of~\(\Gamma\) on~\(L^{2}(\Gamma)\) and $g \in \Gamma$ is arbitrary; we could as well choose here the full group $\Cst$\nb-algebra $\Cst(\Gamma)$, as will be discussed below.
\end{example} 

Nonunital Hopf~\(\Cst\)\nb-algebras are noncommutative analogue of locally compact semigroups satisfying the cancellation property. The question of how one should define locally compact quantum \emph{groups} was studied for many years, with the approach by multiplicative unitaries initiated by  Baaj and Skandalis~\cite{Baaj-Skandalis:Unitaires} and later developed   in~\cite{Woronowicz:Mult_unit_to_Qgrp} by Woronowicz, and a generally accepted notion based on von Neumann algebraic techniques proposed  by Kustermans and Vaes in \cite{kv} (see also a later paper \cite{MNW}). One should note here also an earlier von Neumann algebraic approach presented in \cite{MasudaNakagami}, and also the fact that both articles \cite{MNW} and \cite{kv} drew on the algebraic duality techniques of \cite{VDDuality}.

Thus a \emph{locally compact quantum group} $\G$ is a virtual object studied via its associated operator algebras, in particular the von Neumann algebra $\textup{L}^\infty (\G)$ equipped with a comultiplication $\Delta: \textup{L}^\infty (\G) \to \textup{L}^\infty (\G) \overline{\otimes} \textup{L}^\infty (\G)$ and the left and right \emph{Haar weights}. We assume that $\textup{L}^\infty (\G)$ is represented on $\Ltwo(\G)$, the GNS-Hilbert space of the right Haar weight. The key fact connecting the approach of Kustermans and Vaes with these of \cite{Baaj-Skandalis:Unitaires} and \cite{Woronowicz:Mult_unit_to_Qgrp} is the existence of a distinguished unitary $\multunit[\G]\in \U(\Ltwo(\G) \otimes \Ltwo(\G))$, whose properties we will now describe.

Let~\(\Hils\) be a Hilbert space. Recall that a  \emph{multiplicative unitary} is an element 
\(\multunit\in\U(\Hils\otimes\Hils)\) 
that satisfies the 
\emph{pentagon equation}
\begin{equation}
\label{eq:pentagon}
\multunit_{23}\multunit_{12}
= \multunit_{12}\multunit_{13}\multunit_{23}
\qquad
\text{in \(\U(\Hils\otimes\Hils\otimes\Hils).\)}
\end{equation}  
Define~\(C\defeq \{(\omega\otimes\Id_{\Hils})(\multunit)\mid
\omega\in\Bound(\Hils)_*\}^\CLS\). When~\(\multunit\) is \emph{manageable} (see~\cite{Woronowicz:Mult_unit_to_Qgrp}*{Theorem 1.5}), \(C\) is a separable nondegenerate \(\Cst\)\nb-subalgebra of~\(\Bound(\Hils)\).
Moreover, the formula \(\Comult[C](c)\defeq \multunit (c\otimes 1)\multunit[*]\) for~\(c\in C\) defines  an element of~\(\Mor(C,C\otimes C)\) 
such that~\(\Bialg{C}\) is a Hopf~\(\Cst\)\nb-algebra (see ~\cites{Soltan-Woronowicz:Multiplicative_unitaries, Woronowicz:Mult_unit_to_Qgrp}), 
which is said to be \emph{generated} by~\(\multunit\). 

In particular when we return to the Kustermans-Vaes setup, we denote the $\Cst$\nb-algebra $C$ associated to $\multunit[\G]$ (with $\Hils=\Ltwo(\G)$) as $\Contvin(\G)$. Note that $\Contvin(\G) \subset \textup{L}^{\infty}(\G)$ and the respective coproducts are compatible and will both be denoted by $\Comult[\G]$ (or $\Comult$ if the context is clear).

The \emph{dual} of a multiplicative unitary~\(\multunit\in\U(\Hils\otimes\Hils)\) is given by the formula \(\Dumultunit\defeq \Flip\multunit[*]\Flip\in\U(\Hils\otimes\Hils)\). Moreover, \(\Dumultunit\) is manageable whenever~\(\multunit\) is. It turns out that if we start from a locally compact quantum group $\G$ we can associate to it another  locally compact quantum group which we denote $\DuG$ and call the dual locally compact quantum group of $\G$, so that $\textup{W}^{\DuG} = \Dumultunit$. We naturally have  \(\Contvin(\DuG)\defeq \{(\omega\otimes\Id_{\Hils})(\Dumultunit)\mid \omega\in\Bound(\Hils)_{*}\}^\CLS\) and \(\DuComult(\hat{c})\defeq\Dumultunit(\hat{c}\otimes 1)\Dumultunit[*]\) for all~\(\hat{c}\in\Contvin(\DuG)\).  Since the dual of~\(\Dumultunit\) is equal to \(\multunit\), we obtain a canonical isomorphism $\hat{\DuG} \approx \G$.

\begin{example}
	\label{ex:classical}
	Let~\(G\) be a locally compact group and \(\mu\) be its right Haar measure. Then the operator \(\multunit[G]\in\U(\Ltwo(G,\mu)\otimes \Ltwo(G,\mu))\) defined 
	by~\((\multunit[G] f) ( g_1, g_2 ) = f( g_1 g_2, g_2 )\), $f\in \Ltwo(G,\mu), g_1,g_2 \in G$, is a (manageable) multiplicative unitary. The resulting locally compact quantum group is given simply by the algebra $\Contvin(G)$ with the comultiplication determined by the formula \((\Comult[G] f) (g_1, g_2)=f(g_1g_2)\) for all~\(f\in\Contvin(G)\), \(g_1, g_2\in G\). Further we have $\Contvin(\hat{G}) = \Cred(G)$, and the dual comultiplication is determined by the formula \(\DuComult[G](\lambda_g)=\lambda_g\otimes \lambda_g\) for all~\(g \in G\). 
\end{example}
Let then $\G$ be a locally compact quantum group in the sense described above. We then write \(1_{\G}\) for the identity element of~\(\Mult(\Contvin(\G))\). By virtue of~\cite{Woronowicz:CQG}*{Theorem 1.3} and~\cite{kustermans}*{Theorem 3.16 \& Proposition 3.18}, every compact quantum group can be also viewed as a locally compact quantum group; for a compact quantum group $\G$ we naturally write $\C(\G)$ for the (unital) $\Cst$\nb-algebra $\Contvin(\G)$. \emph{Discrete quantum groups} are duals of CQGs. 

\begin{definition}
	\label{def:corepresentation}
	Let~\(\Bialg{C}\) be a Hopf~\(\Cst\)\nb-algebra and~\(D\) be a~\(\Cst\)\nb-algebra. 
	\begin{enumerate} 
		\item An element~\(\corep{U}\in\U(D\otimes C)\) is said to be a 
		\emph{right corepresentation} of~\(C\) in~\(D\) if and only if~\((\Id_{D}\otimes\Comult[C])(\corep{U})=\corep{U}_{12}\corep{U}_{13}\) in~\(\U(D\otimes C\otimes C)\). 
		\item Similarly, a \emph{left corepresentation} of~\(C\) in~\(D\) is an element~\(\corep{U}\in\U(C\otimes D)\) satisfying 
		\((\Comult[C]\otimes\Id_{D})(\corep{U})=\corep{U}_{23}\corep{U}_{13}\) in~\(\U(C\otimes C\otimes D)\). 
		\item In particular, if~\(D=\Comp(\Hils[L])\) for some 
		Hilbert space~\(\Hils[L]\), then~\(\corep{U}\) is said to be (right or left) corepresentation of~\(C\) on~\(\Hils[L]\).
	\end{enumerate}
	An element~\(\corep{U}\in\U(D\otimes C)\) is a right corepresentation of~\(C\) in~\(D\) if and only if~\(\Ducorep{U}\defeq\flip(\corep{U}^{*})\in\U(C\otimes D)\) is a left corepresentation of~\(C\) in~\(D\). From now on we reserve the word ``corepresentation'' for right corepresentations. Moreover if $\G$ is a locally compact quantum group then we call right corepresentations of $\Contvin(\G)$ simply (\emph{unitary, strongly continuous) representations} of $\G$.
\end{definition}

Consider a locally compact quantum group $\G$ and the  manageable multiplicative unitary ~\(\multunit[\G]\). Then \(\multunit[\G]\in\U(\Contvin(\DuG)\otimes\Contvin(\G))\subset\U(\Ltwo(\G)\otimes\Ltwo(\G))\). Thus we can also view  ~\(\multunit[\G]\)  as a unitary element of the \emph{abstract} $\Cst$\nb-algebra $\Mult(\Contvin(\DuG)\otimes\Contvin(\G))$,  called the \emph{reduced bicharacter} of~\(\G\).
Indeed, it satisfies the following bicharacter conditions (see Definition \ref{def:bicharacter}):
\begin{alignat}{2}
\label{eq:Comult_W}
(\Id_{\Contvin(\DuG)}\otimes \Comult[\G])(\multunit[\G])
&= \multunit[\G]_{12}\multunit[\G]_{13}
&\qquad \text{in \(\U(\Contvin(\DuG)\otimes \Contvin(\G)\otimes \Contvin(\G))\);}\\
\label{eq:dual_Comult_W}
(\Comult[\DuG]\otimes\Id_{\Contvin(\G)})(\multunit[\G])
&=\multunit[\G]_{23}\multunit[\G]_{13} 
&\qquad\text{in \(\U(\Contvin(\DuG)\otimes\Contvin(\DuG)\otimes \Contvin(\G))\).}
\end{alignat}  
Therefore, \(\multunit[\G]\) is a representation of $\G$ in~\(\Contvin(\DuG)\) and also a
(left) representation of $\DuG$ in~\(\Contvin(\G)\).

Finally note that in fact the Haar weights will not play any significant role in this paper and we will concentrate on the $\Cst$\nb-algebraic setup.

\subsection{Universal algebras of locally compact quantum groups}
\label{subsec:univ}

Suppose $G$ is a locally compact group. If $G$ is not amenable (for example, $ G = \mathbb{F}_2 $), the convolution algebra  of compactly supported continuous functions on $G$ can have more than one $\textup{C}^{*}$-algebraic completion. By completing the convolution algebra of $G $ in the norm topology of $ \mathcal{B} ( L^2 ( G ) ), $ we get the reduced group $\textup{C}^{*}-$algebra $ \textup{C}^{*}_\red ( G ). $ On the other hand, the quantity
$$ \left\| f \right\|_{\univ}: = {\rm sup}_\pi \{ \left\| \pi ( f ) \right\| :~ \pi ~ {\rm is} ~ {\rm a} ~ \ast- ~ {\rm representation} ~ {\rm of} ~ \C_c ( G )      \} $$
defines a pre-$\textup{C}^{*}$-algebraic norm on the convolution algebra $ \C_c ( G ). $ The completion of $ \C_c ( G ) $ with respect to $ \left\| \cdot \right\|_\univ $ defines a $\textup{C}^{*}$ algebra known as the full group $\textup{C}^{*}$ algebra of $ G $ and denoted by $\C^\ast ( G ). $ There always exists a canonical surjective $\textup{C}^{*}$-homomorphism from $ \C^\ast ( G ) $ to $ \textup{C}^{*}_\red ( G ). $ When $ G $ is not amenable,  this homomorphism is not one-to-one. For more details, we refer to \cite{pedersen}. This explains the need to associate with any locally compact quantum group, apart from the `reduced algebra of functions', discussed in the last subsection, also its `universal' counterpart, which we describe in what follows.

Let $\G$ be a locally compact quantum group. By ~\cite{Kustuniv} (see also ~\cite{Soltan-Woronowicz:Multiplicative_unitaries}*{Proposition~22 \& Theorem 25}), the 
algebra $\Contvin(\G)$ admits also a \emph{universal version}, denoted $\Contvin^{\univ}(\G)$, characterized by the fact that there exists a canonical 1-1 correspondence between representations of $\DuG$ and $\Cst$\nb-algebraic representations of $\Contvin^{\univ}(\G)$, implemented by the `semi-universal' version of the multiplicative unitary $\textup{W}^{\G}$, denoted $\wW^{\G}$. We have $\wW^{\G} \in \Mult(\Contvin(\DuG) \otimes \Contvin^{\univ}(\G))$, and $\wW^{\G}$ is in fact a representation of $\DuG$ in $\Contvin^{\univ}(\G)$. The correspondence mentioned above is of the following form:  given any (left) representation~\(\corep{U}\in\U(\Contvin(\DuG) \otimes D)\) of~$\DuG$ in a~\(\Cst\)\nb-algebra~\(D\) there is a unique~\(\varphi\in\Mor(\Contvin^{\univ}(\G),D)\) with
\begin{equation}
\label{eq:univ_dumaxcorep}
(\Id \otimes \varphi)(\wW^{\G})=\corep{U}.
\end{equation}
The algebra $\Contvin^{\univ}(\G)$ is equipped with the comultiplication $\Delta^{\univ} \in\Mor(\Contvin^\univ(\G), \Contvin^\univ(\G)\otimes\Contvin^\univ(\G))$ (as well as a `bi-universal multiplicative unitary', see below)  making $\Contvin^{\univ}(\G)$ a Hopf~\(\Cst\)\nb-algebra (see~\cite{Soltan-Woronowicz:Multiplicative_unitaries}*{Proposition~31}). By certain abuse of notation we will write $\Id_{\G}$ for both $\Id_{\Contvin(\G)}$ and $\Id_{\Contvin^\univ(\G)}$, with the specific meaning clear from the context. For clarity, we will also sometimes write $\Delta^{\univ}_{\G}$ for $\Delta^{\univ}$.

Further note that the universality of~\(\wW^{\G}\) gives a unique \(\Lambda_{\G}\in\Mor(\Contvin^\univ(\G),\Contvin(\G))\), known as the \emph{reducing morphism}, satisfying the equations
\begin{equation}
\label{eq:dumaxcorep_to_multunit}
(\Id \otimes\Lambda_{\G})(\wW^{\G}) = \multunit[\G], 
\qquad 
(\Lambda_{\G}\otimes\Lambda_{\G})\circ \Comult^\univ
=\Comult \circ \Lambda_{\G}.
\end{equation}

Another application of the universal property yields the existence of the character $e: \Contvin^{\univ}(\G)\to \bc$, called the \emph{counit}, associated to the trivial 
representation of~\(\DuG\):
\begin{equation}
\label{eq:bounded_dual_counit} 
(\Id \otimes e)(\wW)= 1_{\Contvin(\DuG)},
\end{equation}
with the following property:
\begin{equation}
\label{eq:bdd_dual_counit_and_univ_dual_comult}
(e\otimes\Id)\circ\Comult^{\univ}
= (\Id\otimes e)\circ \Comult^{\univ}
= \Id_{\Contvin^{\univ}(\G)}.
\end{equation}

Naturally the construction described above can be applied also to the dual locally compact quantum group $\DuG$, yielding the (Hopf)-$\Cst$\nb-algebra $\Contvin^{\univ}(\DuG)$, the unitary $\Ww^{\G} \in \Mult(\Contvin^{\univ}(\DuG) \otimes \Contvin(\G))$, reducing morphism $\Lambda_{\DuG}$, etc. In fact Kustermans showed the existence of the `fully universal' multiplicative unitary $\WW^{\G} \in \Mult(\Contvin^{\univ}(\DuG) \otimes \Contvin^{\univ}(\G))$ such that 
\[ \multunit[\G] = (\Lambda_{\DuG} \otimes \Lambda_{\G})(\WW^{\G}).\]

We call $\G$ \emph{coamenable} if the reducing morphism $\Lambda_{\G}$ is an isomorphism. This is the case for $\G$ classical or discrete. On the other hand
the dual of a classical locally compact group  \(G\) is coamenable if and only if $G$ is amenable. Indeed, we have $\Contvin^{\univ}(\hat{G})= \textup{C}^{*}(G)$, with the comultiplication ~\(\Comult[\hat{G}]^{\univ}(u_g)=u_g\otimes u_g\) for all~\(g\in G\), where $u_g$ denotes the image of $g$ under the universal representation. 
By analogy with the notation introduced before, for a compact quantum group $\G$ we will write simply $\C^u(\G)$ for $\Contvin^{\univ}(\G)$. Each of the algebras $\C(\G)$ and $\C^u(\G)$ contain then a canonical dense Hopf *-subalgebra, $\textup{Pol} (\G)$, and in fact $\textup{Pol} (\G)$ may admit also other, so-called \emph{exotic completions} to Hopf $\Cst$\nb-algebras (see~\cite{MR2990124}).

\subsection{Coactions, bicharacters and quantum group morphisms}
In this short subsection we discuss quantum group actions and the notion of morphisms between locally compact quantum groups.
\begin{definition}
	\label{def:coaction}
	Let~\(\Bialg{C}\) be a Hopf~\(\Cst\)\nb-algebra. A \emph{\textup(right\textup) coaction} of~\(C\) on a \(\Cst\)\nb-algebra 
	\(A\) is an element~\(\gamma \in\Mor(A, A\otimes C)\) such that
	\begin{enumerate}
		\item \(\gamma \) is a comodule structure (in other words, it satisfies the action equation), that is,
		\begin{equation}
		\label{eq:right_action}
		(\Id_A\otimes\Comult[C])\circ \gamma
		= (\gamma\otimes\Id_{C})\circ \gamma;
		\end{equation}
		\item \(\gamma\) satisfies the \emph{Podle\'s condition}:
		\begin{equation}
		\label{eq:Podles_cond}
		\gamma(A)(1_A\otimes C)=A\otimes C.
		\end{equation}
	\end{enumerate}
	In the case where $\Bialg{C} = (\Contvin(\G), \Comult[\G])$ for a locally compact quantum group $\G$, we call $\gamma$ simply the (reduced) action of $\G$ on $A$, and  
	the pair \((A,\gamma )\) is called a~\(\G\)\nb-\(\Cst\)\nb-algebra. Sometimes one needs also to consider \emph{universal} actions of $\G$ (i.e.\ the coactions of the Hopf $\Cst$\nb-algebra $\Contvin^{\univ}(\G)$). 
\end{definition}

Let us  record  a  consequence of the Podle\'s condition for \Star{}homomorphisms.
\begin{lemma}
	\label{lemma16thjan3}
	Let~\(A, B, C\) be \(\Cst\)\nb-algebras with~\(A\subseteq\Mult(D)\) and let~\(\gamma\colon A\to\Mult(D\otimes C)\) be 
	a \Star{}homomorphism such that \(\gamma(A)(1_{D}\otimes C)=A\otimes C\). Then 
	\(\gamma\in\Mor(A,A\otimes C)\).
\end{lemma}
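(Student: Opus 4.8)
The plan is to verify directly the two conditions defining membership in \(\Mor(A,A\otimes C)\): first, that each \(\gamma(a)\), which a priori only lies in \(\Mult(D\otimes C)\), actually multiplies the smaller algebra \(A\otimes C\) into itself on both sides (so that \(\gamma\) factors through a \Star{}homomorphism \(A\to\Mult(A\otimes C)\)); and second, that this \Star{}homomorphism is nondegenerate, i.e.\ \(\gamma(A)(A\otimes C)=A\otimes C\) in the closed-linear-span convention of the preliminaries. Throughout I would use two elementary facts: that the range of a \Star{}homomorphism of $\Cst$-algebras is a closed $\Cst$-subalgebra, so \(\gamma(A)\) is itself a $\Cst$-algebra and in particular \(\gamma(a)\gamma(A)\subseteq\gamma(A)\) and \(\gamma(A)\gamma(A)=\gamma(A)\); and that \(A\otimes C\), being self-adjoint, equals its own set of adjoints.

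The first preparatory step is to record the adjoint form of the hypothesis. Taking adjoints in \(\gamma(A)(1_{D}\otimes C)=A\otimes C\) and using that \(\gamma(A)\), \(C\) and \(A\otimes C\) are all self-adjoint yields
\[
(1_{D}\otimes C)\gamma(A)=A\otimes C .
\]
Having both presentations of \(A\otimes C\) is exactly what makes the multiplier computation symmetric. For the multiplier property I would then fix \(a\in A\) and compute, using the two presentations in turn,
\[
\gamma(a)(A\otimes C)=\gamma(a)\,\gamma(A)(1_{D}\otimes C)\subseteq \gamma(A)(1_{D}\otimes C)=A\otimes C,
\]
since \(\gamma(a)\gamma(A)\subseteq\gamma(A)\), and symmetrically
\[
(A\otimes C)\gamma(a)=(1_{D}\otimes C)\gamma(A)\,\gamma(a)\subseteq(1_{D}\otimes C)\gamma(A)=A\otimes C .
\]
Thus \(\gamma(a)\) determines a double centralizer of \(A\otimes C\), hence an element of \(\Mult(A\otimes C)\); this restriction map is a \Star{}homomorphism, so composing it with \(\gamma\) gives a genuine \Star{}homomorphism \(\gamma\colon A\to\Mult(A\otimes C)\).

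For nondegeneracy I would substitute the presentation of \(A\otimes C\) once more:
\[
\gamma(A)(A\otimes C)=\gamma(A)\,\gamma(A)(1_{D}\otimes C),
\]
and since \(\gamma(A)\gamma(A)=\gamma(A)\) (because \(\gamma(A)\) is a $\Cst$-algebra, equivalently has an approximate unit), the right-hand side equals \(\gamma(A)(1_{D}\otimes C)=A\otimes C\). Hence \(\gamma(A)(A\otimes C)=A\otimes C\), which is precisely the nondegeneracy condition, and therefore \(\gamma\in\Mor(A,A\otimes C)\).

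The only place where genuine care is needed — the main, if modest, obstacle — is the passage from \(\Mult(D\otimes C)\) to \(\Mult(A\otimes C)\): one must check that \(\gamma(a)\) sends \(A\otimes C\) back into \(A\otimes C\) (not merely into \(\Mult(D\otimes C)\)) on \emph{both} sides, and this is exactly where the Podle\'s-type hypothesis \(\gamma(A)(1_{D}\otimes C)=A\otimes C\), together with its adjoint, is used essentially. Everything else reduces to the $\Cst$-algebraic identity \(\gamma(A)\gamma(A)=\gamma(A)\), i.e.\ to the existence of approximate units.
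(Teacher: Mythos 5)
Your proof is correct and takes essentially the same route as the paper's: a direct verification from the Podle\'s-type hypothesis via closed-linear-span manipulations and approximate-unit identities, followed by taking adjoints to get the other-sided statement. The only cosmetic difference is that the paper substitutes the factorization \(A\otimes C=(1_{D}\otimes C)(A\otimes 1_{C})\) and uses \(AA=A\), whereas you substitute the hypothesis \(\gamma(A)(1_{D}\otimes C)=A\otimes C\) itself and use \(\gamma(A)\gamma(A)=\gamma(A)\); both are equally valid.
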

\begin{proof}
	 The Podle\'s condition 
	implies that ~\(\gamma(A)(A\otimes C)=\gamma(A)(1_{D}\otimes C)(A\otimes 1_{C})=(A\otimes C)(A\otimes 1_{C})=A\otimes C\). Applying the adjoint to the last equality we also get $(A\otimes C) \gamma(A) = A \otimes C$. Thus 	\(\gamma\in\Mor(A,A\otimes C)\).
\end{proof}

A \emph{covariant representation} of the coaction  \((A,\gamma)\) on a Hilbert
space~\(\Hils\) is a pair~\((\corep{U},\varphi)\) consisting of a 
corepresentation \(\corep{U}\in\U(\Comp(\Hils)\otimes C)\) of~\(C\) on~\(\Hils\)  
and a representation \(\varphi\in\Mor(A, \Comp(\Hils))\) that satisfy 
the covariance condition
\begin{equation}
\label{eq:covariant_corep}
(\varphi\otimes\Id_{C})(\gamma(a)) =
\corep{U}(\varphi(a)\otimes 1_{C})\corep{U}^*,
\qquad\qquad a\in A.
\end{equation}
A covariant representation is called
\emph{faithful} if~\(\varphi\) is faithful. Faithful covariant representations always exist whenever~\(\gamma\) is injective (see~
\cite{Meyer-Roy-Woronowicz:Twisted_tensor}*{Example~4.5}).

In this article we are mainly going to work with compact quantum groups~\(\G\) and unital \(\G\)\nb-\(\Cst\)\nb-algebras \((A,\gamma)\) with a faithful state~\(\tau\) on $A$ such that~\(\gamma\) \emph{preserves} \(\tau\), i.e.\
\[
(\tau\otimes\Id_{\G})(\gamma(a))=\tau(a)1_{\G}
\qquad\text{for all $a\in A$,}
\]
Let us note that  such actions are always injective.
\begin{lemma}
	\label{lemm:faithful-state}
	Let~\((A,\gamma)\) be a unital~\(\G\)\nb-\(\Cst\)\nb-algebra with a faithful state~\(\tau\) such that 
	\(\gamma\) preserves~\(\tau\). Then~\(\gamma\) is injective. 
\end{lemma}

We will now define bicharacters associated to two given Hopf $\Cst$\nb-algebras. This notion plays a fundamental role in describing morphisms between locally compact quantum groups.

\begin{definition}
	\label{def:bicharacter}
	Let~\(\Bialg{C}\) and~\(\Bialg{D}\) be Hopf~\(\Cst\)\nb-algebras. An element 
	\(\bichar\in\U(C\otimes D)\) is said to be a \emph{bicharacter} if it satisfies the 
	following properties
	\begin{alignat}{2}
	\label{eq:bichar_char_in_first_leg}
	(\Comult[C]\otimes\Id_{D})(\bichar)
	&=\bichar_{23}\bichar_{13}
	&\qquad &\text{in }
	\U(C\otimes C\otimes D),\\
	\label{eq:bichar_char_in_second_leg}
	(\Id_{C}\otimes\Comult[D])(\bichar)
	&=\bichar_{12}\bichar_{13}
	&\qquad &\text{in }
	\U(C\otimes D\otimes D).
	\end{alignat}
\end{definition}

The notion of bicharacter for quantum groups is a generalization of that  of bicharacters for groups. Indeed, let $G$ and be $H$ be locally compact abelian groups. An element $\bichar\in\U(\Contvin(\hat{G})\otimes\Contvin(\hat{H}))$ is simply a continuous map $\bichar\colon \hat{G}\times\hat{H}\to\T$ and the conditions~\eqref{eq:bichar_char_in_first_leg} and~\eqref{eq:bichar_char_in_second_leg} say that this map is a bicharacter in the classical sense.

Let $\G$ and $\G[H]$ be locally compact quantum groups.

\begin{example}
	\label{ex:bichar}
	A Hopf~\Star{}homomorphism~\(f\in\Mor(\Contvin(\G),\Contvin(\G[H]))\) \emph{induces} a 
	bicharacter~\(\bichar_{f}\in\U(\Contvin(\DuG)\otimes\Contvin(\G[H]))\) defined by 
	\(\bichar_{f}\defeq(\Id_{\DuG}\otimes f)(\multunit[\G])\). The `semi-universal' multiplicative unitaries
	\(\Ww\in\U(\Contvin^{\univ}(\DuG)\otimes\Contvin(\G))\) and \(\wW\in\U(\Contvin(\DuG)\otimes \Contvin^{\univ}(\G))\) 
	are both bicharacters.
\end{example}  

Bicharacters in \(\U(\Contvin(\DuG)\otimes\Contvin(\G[H]))\) are interpreted as quantum
group morphisms from~\(\G\) to~\(\G[H]\). The article~\cite{Meyer-Roy-Woronowicz:Homomorphisms} contains a detailed study of such morphisms and provides several equivalent pictures.   Since the notion of bicharacters is going to be crucially used in the article, let us recall some of the related definitions and results from that paper. For simplicity we will describe below quantum group morphisms from \(\G\) to~\(\DuG[H]\).

 A \emph{right quantum group homomorphism} from~\(\G\) to~\(\DuG[H]\) is
an element \(\Delta_R\in\Mor(\Contvin(\G),\Contvin(\G)\otimes\Contvin(\DuG[H]))\) with the following 
properties:
\begin{equation}
\label{eq:right_homomorphism}
(\Comult[\G]\otimes\Id_{\DuG[H]})\circ\Delta_{R} 
=  (\Id_{\G}\otimes\Delta_{R})\circ\Comult[\G]
\quad\text{and}\quad
(\Id_{\G}\otimes\Comult[\DuGH])\circ\Delta_{R}
=  (\Delta_{R}\otimes\Id_{\DuG[H]})\circ\Delta_{R}.
\end{equation}

\begin{theorem}[\cite{Meyer-Roy-Woronowicz:Homomorphisms}]
	\label{the:equiv_homs} 
	Let $\G$ and $\G[H]$ be locally compact quantum groups.
	There are natural bijections between the following sets:
	\begin{enumerate}
		\item bicharacters \(\bichar\in\U(\Contvin(\DuG)\otimes\Contvin(\DuG[H]))\)\textup{;}
		\item bicharacters \(\Dubichar\in\U(\Contvin(\DuG[H])\otimes\Contvin(\DuG))\)\textup{;}
		\item right quantum group homomorphisms \(\Delta_R \in\Mor(\Contvin(\G),\Contvin(\G)\otimes\Contvin(\DuG[H]))\)\textup{;}
		\item Hopf~\Star{}homomorphisms \(f\in\Mor(\Contvin^{\univ}(\G),\Contvin^{\univ}(\DuG[H]))\)\textup{.}
	\end{enumerate}
	The first bijection maps a bicharacter~\(\bichar\) to its dual 
	\(\Dubichar\in\U(\Contvin(\DuG[H])\otimes\Contvin(\DuG))\) defined by
	\begin{equation}
	\label{eq:dual_bicharacter}
	\Dubichar\defeq\flip(\bichar^*).
	\end{equation}
	A bicharacter~\(\bichar\) and a right quantum group
	homomorphism~\(\Delta_R\) determine each other uniquely via
	\begin{equation}
	\label{eq:def_V_via_right_homomorphism}
	(\Id_{\DuG} \otimes \Delta_R)(\multunit[\G])
	= \multunit[\G]_{12}\bichar_{13}.
	\end{equation}
	
	A bicharacter \(\bichar\in\U(\Contvin(\DuG)\otimes\Contvin(\DuG[H]))\) and a Hopf~\Star{}homomorphism \(f\in\Mor(\Contvin^{\univ}(\G),\Contvin^{\univ}(\DuG[H]))\) determine each other uniquely by 
	\begin{equation}
	\label{eq:bicha_univmor}
	(\Id_{\G}\otimes\Lambda_{\DuG[H]}\circ f)(\wW^{\G})=\bichar.
	\end{equation}
\end{theorem}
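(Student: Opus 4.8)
The plan is to take the bicharacter $\bichar\in\U(\Contvin(\DuG)\otimes\Contvin(\DuG[H]))$ of item (1) as the hub and to establish three separate bijections $(1)\leftrightarrow(2)$, $(1)\leftrightarrow(3)$, $(1)\leftrightarrow(4)$; composing them then yields all the asserted pairwise identifications. In each case I would read the displayed formula as the \emph{definition} of the map out of (1), and then verify three things: that the output lies in the claimed set, that the construction is reversible, and that the two directions are mutually inverse. The equivalence $(1)\leftrightarrow(2)$ is the cheapest: I would check directly that if $\bichar$ obeys \eqref{eq:bichar_char_in_first_leg}--\eqref{eq:bichar_char_in_second_leg} for the Hopf structures $\DuComult[\G]$ and $\DuComult[\G[H]]$, then $\Dubichar\defeq\flip(\bichar^*)$ obeys the same conditions with the roles of the two legs swapped. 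Taking adjoints reverses the order of the two factors on each right-hand side, $\flip$ interchanges the legs, and the leg-numbering bookkeeping turns the first-leg condition into the second and conversely; since $\flip(\,\cdot\,^*)$ is an involution this is automatically a bijection.

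For $(1)\leftrightarrow(3)$ I would exploit that $\multunit[\G]$ generates $\Contvin(\G)$ through slicing of its first leg, so every $c\in\Contvin(\G)$ is a limit of elements $(\omega\otimes\Id_\G)(\multunit[\G])$. Given $\bichar$, define $\Delta_R$ on such slices by the rule forced by \eqref{eq:def_V_via_right_homomorphism}, namely $\Delta_R\bigl((\omega\otimes\Id_\G)(\multunit[\G])\bigr)=(\omega\otimes\Id_\G\otimes\Id_{\DuG[H]})(\multunit[\G]_{12}\bichar_{13})$. The points to verify are then: well-definedness and extension to a nondegenerate $^*$\nobreakdash-homomorphism, for which the first-leg condition on $\bichar$ (equivalently, that $\bichar$ is a representation of $\DuG$) is the tool controlling products; the first identity in \eqref{eq:right_homomorphism}, which I would deduce from the comultiplication identity \eqref{eq:Comult_W} for $\multunit[\G]$ in its $\Contvin(\G)$\nobreakdash-leg; and the second identity in \eqref{eq:right_homomorphism}, which comes from the second-leg bicharacter condition \eqref{eq:bichar_char_in_second_leg}. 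A short computation with the leg numbering shows both sides of each identity, after applying $\Id_{\DuG}\otimes(\,\cdot\,)$ to $\multunit[\G]$, reduce to $\multunit[\G]_{12}\multunit[\G]_{13}\bichar_{14}$ and to $\multunit[\G]_{12}\bichar_{13}\bichar_{14}$ respectively. Conversely the same formula recovers $\bichar$ from $\Delta_R$ by slicing the first leg of $\multunit[\G]$, so the two assignments are inverse.

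The equivalence $(1)\leftrightarrow(4)$ is where the universal algebras enter and is the step I expect to be the main obstacle. One direction is routine: given a Hopf $^*$\nobreakdash-homomorphism $f$, put $\bichar\defeq(\Id_\G\otimes\Lambda_{\DuG[H]}\circ f)(\wW^\G)$; the first-leg condition \eqref{eq:bichar_char_in_first_leg} then holds because $\wW^\G$ is a representation of $\DuG$, and the second-leg condition \eqref{eq:bichar_char_in_second_leg} follows from the universal comultiplication identity $(\Id\otimes\Delta^{\univ}_{\G})(\wW^\G)=(\wW^\G)_{12}(\wW^\G)_{13}$ together with the fact that $f$ intertwines $\Delta^{\univ}_{\G}$ and $\Delta^{\univ}_{\DuG[H]}$. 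The reverse direction is the delicate one. The first-leg condition says precisely that $\bichar$ is a representation of $\DuG$ in $\Contvin(\DuG[H])$, so the universal property \eqref{eq:univ_dumaxcorep} produces a unique $\varphi\in\Mor(\Contvin^{\univ}(\G),\Contvin(\DuG[H]))$ with $(\Id\otimes\varphi)(\wW^\G)=\bichar$. The hard part is to promote $\varphi$ to a genuine Hopf $^*$\nobreakdash-homomorphism $f\in\Mor(\Contvin^{\univ}(\G),\Contvin^{\univ}(\DuG[H]))$ with $\Lambda_{\DuG[H]}\circ f=\varphi$: the reduced target $\Contvin(\DuG[H])$ does not remember the universal coproduct of $\DuG[H]$, so the lift cannot be read off directly and must instead be constructed via a second application of the universal property on the $\DuG[H]$\nobreakdash-side, using the fully universal unitary $\WW^{\G[H]}$ (whose first leg lives in $\Contvin^{\univ}(\DuG[H])$). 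It is exactly the second-leg bicharacter condition on $\bichar$ that should guarantee both that this universal lift exists and that the resulting $f$ intertwines the universal comultiplications; verifying this lifting, its Hopf property, and that it inverts the first direction is the technical heart of the proof.
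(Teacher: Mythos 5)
First, a point of reference: the paper does not prove this theorem --- it is imported verbatim from \cite{Meyer-Roy-Woronowicz:Homomorphisms} --- so your attempt can only be measured against the argument in that reference, not against anything in this paper. Your architecture (hub at (1), three separate bijections, each displayed formula read as a definition) is the correct one, and your treatment of $(1)\leftrightarrow(2)$ and your leg-numbering verifications of the two identities in \eqref{eq:right_homomorphism} are sound. But the two places you dispatch in a sentence are exactly where the content of the theorem lives, and as written they are gaps. In $(1)\leftrightarrow(3)$, defining $\Delta_R$ on slices $(\omega\otimes\Id_{\G})(\multunit[\G])$ and saying the first-leg condition ``controls products'' does not yield a map at all: an element of $\Contvin(\G)$ is a limit of linear combinations of such slices in many ways, and nothing you write shows the prescription is independent of the presentation or extends to a bounded \Star{}homomorphism. (Your multiplicativity check can indeed be made to work, via the pentagon equation and \eqref{eq:dual_Comult_W}, but only \emph{after} one knows the map exists.) The standard repair is to produce an implementing unitary: representing everything on $\Ltwo(\G)\otimes\Ltwo(\G[H])$, the condition \eqref{eq:bichar_char_in_first_leg} gives $\multunit[\G]_{12}^{*}\bichar_{23}\multunit[\G]_{12}=\bichar_{13}\bichar_{23}$, hence $\bichar_{23}\multunit[\G]_{12}=\multunit[\G]_{12}\bichar_{13}\bichar_{23}$, so that $\Delta_R(x)\defeq\bichar(x\otimes 1)\bichar^{*}$ is automatically a \Star{}homomorphism satisfying \eqref{eq:def_V_via_right_homomorphism}; what then remains is the Podle\'s-type verification that it lands in $\Mult(\Contvin(\G)\otimes\Contvin(\DuG[H]))$.

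The second and larger gap is the passage $(1)\Rightarrow(4)$. You correctly extract $\varphi\in\Mor(\Contvin^{\univ}(\G),\Contvin(\DuG[H]))$ with $(\Id\otimes\varphi)(\wW^{\G})=\bichar$ from the first-leg condition and the universal property \eqref{eq:univ_dumaxcorep}, and you correctly identify the lift of $\varphi$ through $\Lambda_{\DuG[H]}$ as the crux --- but you then state that the second-leg condition ``should guarantee'' the lift exists and that $f$ is a Hopf \Star{}homomorphism. That sentence is the theorem, not an argument for it. The missing ingredient is a lifting result for bicharacters: one must first show that $\bichar$, viewed via \eqref{eq:bichar_char_in_second_leg} as a representation of $\G[H]$ in $\Contvin(\DuG)$, lifts uniquely to $\tilde{\bichar}\in\U(\Contvin(\DuG)\otimes\Contvin^{\univ}(\DuG[H]))$ which is again a bicharacter with respect to $\Comult^{\univ}_{\DuG[H]}$ in its second leg, and only then apply the universal property of $\wW^{\G}$ in the first leg to get $f$ with $(\Id\otimes f)(\wW^{\G})=\tilde{\bichar}$; the Hopf property of $f$ and the fact that this inverts your forward construction both come out of the uniqueness clause of that lifting. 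None of this is routine, and none of it is in your proposal. So the skeleton is right, but the two load-bearing steps --- existence of $\Delta_R$ as a morphism, and the universal lift in $(1)\Rightarrow(4)$ --- are asserted rather than proved.
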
 

The dual bicharacter~\(\Dubichar\in\U(\Contvin(\DuG[H])\otimes\Contvin(\DuG))\) should be thought of as the dual quantum group morphism  from $\G[H]$ to $\DuG$. It corresponds to  the dual right quantum group homomorphism 
\(\hat{\Delta}_{R}\in\Mor(\Contvin(\DuG[H]),\Contvin(\DuG[H])\otimes\Contvin(\G))\). Thus~\(\Delta_R\) 
and~\(\hat{\Delta}_{R}\) are in bijection as~\(\bichar\) and~\(\Dubichar\) 
are.
Finally, the dual bicharacter describes a unique Hopf~\Star{}homomorphism \(\hat{f}\in\Mor(\Contvin^{\univ}(\G[H]),\Contvin^{\univ}(\DuG))\). 
Thus~\(f\) and~\(\hat{f}\) determine each other uniquely by 
\[
(\Id_{\G}\otimes\Lambda_{\DuG[H]}\circ f)(\wW^{\G})=\bichar
=\flip ((\Id_{\G[H]}\otimes\Lambda_{\DuG}\circ\hat{f})(\wW^{\G[H]})^{*})
\]

Let~\(\gamma^\univ\) be a coaction of~\(\Contvin^{\univ}(\G)\) on a $\Cst$\nb-algebra~\(A\). 
It is said to be \emph{normal} if the associated reduced action of $\G$ on $A$,
\(\gamma\defeq(\Id_{A}\otimes\Lambda_{\G})\circ \gamma^\univ\),  is injective. By virtue of \cite{Roy-Timmermann:Max_twisted_tensor}*{Theorem A.2} or~\cite{Fischer:thesis}, an injective 
coaction of~\(\Contvin(\G)\) on a \(\Cst\)\nb-algebra~\(A\) 
lifts uniquely to a normal coaction of~\(\Contvin^{\univ}(\G)\) on~\(A\).

Yet another equivalent description of quantum group homomorphisms~\cite{Meyer-Roy-Woronowicz:Homomorphisms}*{Section 6} 
shows that for an  injective coaction \(\gamma\in \Mor(A, A\otimes \Contvin(\G))\) and a  right quantum group homomorphism 
\(\Delta_R\in\Mor(\Contvin(\G),\Contvin(\G)\otimes\Contvin(\DuG[H]))\) there is a unique injective coaction \(\delta\in \Mor (A,A\otimes \Contvin(\DuG[H]))\) such that  the following diagram commutes:
\begin{equation}
 \label{eq:right_quantum_group_homomorphism_as_fucntor}
  \begin{tikzpicture}[baseline=(current bounding box.west)] 
    \matrix(m)[cd,column sep=4.5em]{
     A&A\otimes \Contvin(\G)\\
     A\otimes\Contvin(\DuG[H])& A\otimes \Contvin(\G)\otimes \Contvin(\DuG[H])\\
     };
    \draw[cdar] (m-1-1) -- node {\(\gamma\)} (m-1-2);
    \draw[cdar] (m-1-1) -- node[swap] {\(\delta\)} (m-2-1);
    \draw[cdar] (m-1-2) -- node {\(\Id_A\otimes\Delta_R\)} (m-2-2);
    \draw[cdar] (m-2-1) -- node[swap] {\(\gamma\otimes\Id_{\DuG[H]}\)} (m-2-2);
 \end{tikzpicture}
 \end{equation}  
 Let~\(\gamma^\univ\) denote the associated normal coaction of~\(\Contvin^{\univ}(\G)\) on~\(A\) and let 
 \(f\in\Mor(\Contvin^{\univ}(\G),\Contvin^{\univ}(\DuG[H]))\) be the Hopf~\Star{}homomorphism corresponding  to~\(\Delta_{R}\). Then we say that~\(\delta\) is \emph{induced from~\(\gamma\)} by~\(\Delta_{R}\) or~\(f\). 
The  following lemma describes~\(\delta\) explicitly.

\begin{lemma}
	\label{lem:Feb9}
	The coaction \(\delta\) in~\textup{\eqref{eq:right_quantum_group_homomorphism_as_fucntor}} is given by 
	\(\delta\defeq (\Id_{A}\otimes\Lambda_{\DuG[H]}\circ f)\circ \gamma^\univ\).
\end{lemma}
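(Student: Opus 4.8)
The plan is to identify $\delta$ without constructing it afresh, by exploiting the injectivity of the lower horizontal arrow in~\eqref{eq:right_quantum_group_homomorphism_as_fucntor}. Put $\tilde\delta\defeq(\Id_A\otimes\Lambda_{\DuG[H]}\circ f)\circ\gamma^\univ$. Since $\delta$ is the unique injective coaction making~\eqref{eq:right_quantum_group_homomorphism_as_fucntor} commute, it obeys $(\gamma\otimes\Id_{\DuG[H]})\circ\delta=(\Id_A\otimes\Delta_R)\circ\gamma$, and because $\gamma$ is injective the morphism $\gamma\otimes\Id_{\DuG[H]}$ is injective on the minimal tensor product. Hence it is enough to prove that $\tilde\delta$ satisfies the same equation,
\begin{equation}
(\gamma\otimes\Id_{\DuG[H]})\circ\tilde\delta=(\Id_A\otimes\Delta_R)\circ\gamma,\tag{$\star$}
\end{equation}
for then cancelling the injective map $\gamma\otimes\Id_{\DuG[H]}$ yields $\delta=\tilde\delta$, as claimed.

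First I would reduce~$(\star)$ to a statement about quantum group morphisms only. Recall that $\gamma=(\Id_A\otimes\Lambda_{\G})\circ\gamma^\univ$ and that the normal coaction $\gamma^\univ$ satisfies the action equation $(\gamma^\univ\otimes\Id)\circ\gamma^\univ=(\Id_A\otimes\Comult[\G]^{\univ})\circ\gamma^\univ$. Expanding $\tilde\delta$ and $\gamma$ and tracking the legs, one obtains
\[
(\gamma\otimes\Id_{\DuG[H]})\circ\tilde\delta=\bigl(\Id_A\otimes\bigl((\Lambda_{\G}\otimes\Lambda_{\DuG[H]}\circ f)\circ\Comult[\G]^{\univ}\bigr)\bigr)\circ\gamma^\univ,
\]
whereas the right-hand side of~$(\star)$ equals $(\Id_A\otimes\Delta_R\circ\Lambda_{\G})\circ\gamma^\univ$. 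Therefore~$(\star)$ follows from the identity of morphisms
\begin{equation}
\Delta_R\circ\Lambda_{\G}=(\Lambda_{\G}\otimes\Lambda_{\DuG[H]}\circ f)\circ\Comult[\G]^{\univ}\tag{$\dagger$}
\end{equation}
in $\Mor(\Contvin^{\univ}(\G),\Contvin(\G)\otimes\Contvin(\DuG[H]))$.

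Establishing~$(\dagger)$ is the crux, and I would do it through the universal property~\eqref{eq:univ_dumaxcorep} of the semi-universal multiplicative unitary $\wW^{\G}$: as the assignment $\varphi\mapsto(\Id\otimes\varphi)(\wW^{\G})$ is a bijection, it suffices to show that the two sides of~$(\dagger)$ send $\wW^{\G}$ to the same element. On the left, \eqref{eq:dumaxcorep_to_multunit} and~\eqref{eq:def_V_via_right_homomorphism} give $(\Id\otimes\Delta_R\circ\Lambda_{\G})(\wW^{\G})=(\Id\otimes\Delta_R)(\multunit[\G])=\multunit[\G]_{12}\bichar_{13}$. On the right, the second-leg bicharacter condition~\eqref{eq:bichar_char_in_second_leg} for $\wW^{\G}$ gives $(\Id\otimes\Comult[\G]^{\univ})(\wW^{\G})=\wW^{\G}_{12}\wW^{\G}_{13}$; applying $\Lambda_{\G}$ to the second leg and $\Lambda_{\DuG[H]}\circ f$ to the third, and invoking~\eqref{eq:dumaxcorep_to_multunit} and~\eqref{eq:bicha_univmor}, one again gets $\multunit[\G]_{12}\bichar_{13}$. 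Here $\bichar=(\Id\otimes\Lambda_{\DuG[H]}\circ f)(\wW^{\G})$ is exactly the bicharacter attached to $\Delta_R$ under Theorem~\ref{the:equiv_homs}, so the two occurrences of $\bichar$ coincide.

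Finally, to legitimately apply~\eqref{eq:univ_dumaxcorep} I must check that the common value $\multunit[\G]_{12}\bichar_{13}$ is a representation of $\DuG$ in $\Contvin(\G)\otimes\Contvin(\DuG[H])$. Applying $\Comult[\DuG]$ to the first leg, using~\eqref{eq:dual_Comult_W} for $\multunit[\G]$ and the first-leg bicharacter condition~\eqref{eq:bichar_char_in_first_leg} for $\bichar$, the required corepresentation identity reduces to the commutation of $\multunit[\G]_{13}$ and $\bichar_{24}$, which holds because these factors occupy disjoint legs. This proves~$(\dagger)$, hence~$(\star)$, and with it the lemma. I expect the only genuinely delicate point to be the leg-bookkeeping in~$(\dagger)$, and in particular confirming that the bicharacter produced on the $\Delta_R$-side is the very same one produced by $f$.
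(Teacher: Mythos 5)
Your proof is correct, and its overall skeleton is the same as the paper's: both arguments identify $\delta$ with $\tilde\delta=(\Id_{A}\otimes\Lambda_{\DuG[H]}\circ f)\circ\gamma^\univ$ by verifying that $\tilde\delta$ satisfies $(\gamma\otimes\Id_{\DuG[H]})\circ\tilde\delta=(\Id_A\otimes\Delta_R)\circ\gamma$, and both reduce this, via $\gamma=(\Id_A\otimes\Lambda_{\G})\circ\gamma^\univ$ and the comodule identity for $\gamma^\univ$, to the single morphism identity $\Delta_{R}\circ\Lambda_{\G}=(\Lambda_{\G}\otimes\Lambda_{\DuG[H]}\circ f)\circ\Comult[\G]^{\univ}$ (your $(\dagger)$). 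The one genuine difference is how that identity is handled: the paper simply cites it as Equation~2.23 of Roy--Timmermann, whereas you derive it from the universal property of $\wW^{\G}$ by checking that both sides send $\wW^{\G}$ to $\multunit[\G]_{12}\bichar_{13}$, using \eqref{eq:dumaxcorep_to_multunit}, \eqref{eq:def_V_via_right_homomorphism}, \eqref{eq:bicha_univmor} and the bicharacter relations. This makes your write-up self-contained where the paper's is not, at the cost of the leg-bookkeeping you carry out (which I have checked and which is accurate, including the final disjoint-legs commutation $V_{24}\multunit[\G]_{13}=\multunit[\G]_{13}V_{24}$). A further small improvement on your side: by cancelling the injective morphism $\gamma\otimes\Id_{\DuG[H]}$ directly (its strict extension to multipliers is still injective), you avoid having to verify separately that $\tilde\delta$ is itself an injective coaction before invoking the uniqueness clause, a point the paper glosses over.
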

\begin{proof}

By the uniqueness of $ \delta, $ it is enough to show that $ \delta $ satisfies the equation 
$$ (\Id_{A}\otimes\Delta_{R})\circ\gamma = (\gamma\otimes\Id_{\DuG[H]})\circ\delta. $$
Let us 	recall \cite{Roy-Timmermann:Max_twisted_tensor}*{Equation 2.23}: 
	\(\Delta_{R}\circ\Lambda_{\G}=(\Lambda_{\G}\otimes\Lambda_{\DuG[H]}\circ f)\circ\Comult[\G]^{\univ}\).
	Using this equation and properties of~\(\gamma\) and~\(\gamma^\univ\) we obtain
	\begin{align*}
	(\Id_{A}\otimes\Delta_{R})\circ\gamma
	= (\Id_{A}\otimes\Delta_{R}\circ\Lambda_{\G})\circ\gamma^\univ
	&= (\Id_{A}\otimes(\Lambda_{\G}\otimes\Lambda_{\DuG[H]}\circ f)\circ\Comult[{\G}]^{\univ})\circ\gamma^\univ \\
	&=  ((\Id_{A}\otimes\Lambda_{\G})\circ\gamma^\univ\otimes\Lambda_{\DuG[H]}\circ f)\circ\gamma^\univ\\
	&=(\gamma\otimes\Id_{\DuG[H]})\circ\delta .\qedhere
	\end{align*}
\end{proof}

\subsection{Twisted tensor product \texorpdfstring{$\Cst$}{C*}-algebras and the action of the Drinfeld double} \label{twistedtensprod}

In this subsection we recall the notion of quantum group twisted tensor product of \(\Cst\)\nb-algebras as developed in \cite{Meyer-Roy-Woronowicz:Twisted_tensor} and the action of the generalized Drinfeld  double on the twisted tensor product constructed in~\cite{Roy:Codoubles}. 

We start with the following data: let \(\G\) and~\(\G[H]\) be locally compact quantum groups, let \((A,\gamma_{A})\), 
 \((B,\gamma_{B})\), be a~\(\G\)\nb-\(\Cst\)\nb-algebra, respectively a~\(\G[H]\)\nb-\(\Cst\)\nb-algebra, such that~\(\gamma_{A}\) and~\(\gamma_{B}\) are injective maps and let 
\(\bichar\in\U(\Contvin(\DuG)\otimes\Contvin(\DuG[H]) ) \) be a bicharacter (viewed as a morphism from $\G$ to $\DuG[H]$). 

By~\cite{Meyer-Roy-Woronowicz:Twisted_tensor}*{Lemma 3.8}, there exists a~\(\bichar\)\nb-\emph{Heisenberg pair}, i.e.\ a Hilbert space \(\Hils\) and a pair of representations~\(\alpha\in\Mor(\Contvin(\G),\Comp(\Hils))\) 
and~\(\beta\in\Mor((\Contvin(\G[H]),\Comp(\Hils))\) that satisfy the following condition:
\begin{equation}
\label{eq:V-Heispair}
\multunit[\G]_{1\alpha}\multunit[\GH]_{2\beta}=\multunit[\GH]_{2\beta}\multunit[\G]_{1\alpha}\bichar_{12}
\qquad\text{in~\(\U(\Contvin(\DuG)\otimes\Contvin(\DuG[H])\otimes\Comp(\Hils))\).}
\end{equation}
Here~\(\multunit[\G]_{1\alpha}\defeq \bigl((\Id_{\DuG}\otimes\alpha)(\multunit[\G])\bigr)_{13}\) and 
\(\multunit[\GH]_{2\beta}\defeq \bigl((\Id_{\DuG[H]}\otimes\beta)(\multunit[\GH])\bigr)_{23}\). We recall \cite{Meyer-Roy-Woronowicz:Twisted_tensor}*{Example 3.2} to motivate the notion of $\bichar$\nb-Heisenberg pairs. Indeed, let $G=H=\R$ and consider a standard bicharacter $\bichar\in\U(\Contvin(\R)\otimes\Contvin(\R))$, i.e.\ the map  $(s,t)\to \exp(ist)$. Then the maps $\alpha$ and $\beta$ satisfying the condition~\eqref{eq:V-Heispair} can be equivalently described as a pair of continuous one-parameter unitary groups $(U_{1}(s),U_{2}(t))_{s,t\in\R}$ satisfying the canonical commutation relation in the Weyl form: $U_{1}(s)U_{2}(t)=\exp(ist)U_{2}(t)U_{1}(s)$ for all $s,t\in\R$.

Define~\(j_{A}\in\Mor(A,A\otimes B\otimes\Comp(\Hils))\) and~\(j_{B}\in\Mor(B,A\otimes B\otimes\Comp(\Hils))\) by
\begin{equation}
\label{eq:def-js}
j_{A}(a)\defeq \bigl((\Id_{A}\otimes\alpha)(\gamma_{A}(a))\bigr)_{13},
\qquad
j_{B}(b)\defeq \bigl((\Id_{B}\otimes\alpha)(\gamma_{B}(b))\bigr)_{23}
\end{equation}
for all~ \(a\in A\),  \(b\in  B\).

Then we have the following theorem.
\begin{theorem}[\cite{Meyer-Roy-Woronowicz:Twisted_tensor}*{Theorem 4.6}]
 The space	\(A\boxtimes_{\bichar}B\defeq j_{A}(A)j_{B}(B)\) is a~\(\Cst\)\nb-algebra, which does not depend (up to an isomorphism) on the choice of the~\(\bichar\)\nb-Heisenberg pair \((\alpha,\beta)\).
\end{theorem}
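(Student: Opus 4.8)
The plan is to reduce the whole statement to a single \emph{twisted commutation relation} between $j_A(A)$ and $j_B(B)$, and to treat the independence from the Heisenberg pair separately. First I would note that, as images of \Star{}homomorphisms, the sets $X\defeq j_A(A)$ and $Y\defeq j_B(B)$ are $\Cst$-subalgebras of $\Mult(A\otimes B\otimes\Comp(\Hils))$, so that $A\boxtimes_{\bichar}B=XY$ is automatically a closed subspace by the closed-linear-span convention. To promote it to a $\Cst$-algebra it then suffices to establish
\[
YX=XY .
\]
Granting this, closure under the involution is immediate from $(XY)^{*}=Y^{*}X^{*}=YX=XY$, and closure under multiplication follows from $(XY)(XY)=X(YX)Y=X(XY)Y=(XX)(YY)=XY$, using that $X$ and $Y$ are $\Cst$-subalgebras. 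Since moreover $(XY)^{*}=YX$, it is in fact enough to prove the single inclusion $YX\subseteq XY$, the reverse one following by taking adjoints. Thus everything hinges on this commutation, which I expect to be the main obstacle.

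For the commutation itself I would exploit the defining relation~\eqref{eq:V-Heispair} of the $\bichar$-Heisenberg pair, which is precisely the device that lets $\alpha(\Contvin(\G))$ and $\beta(\Contvin(\GH))$ be interchanged at the cost of the twist $\bichar$. In the triple tensor product $A\otimes B\otimes\Comp(\Hils)$ the element $j_A(a)$ occupies legs $1,3$ and $j_B(b)$ legs $2,3$, so the two factors commute freely in the $A$- and $B$-legs and meet only in the third leg, where $\alpha$ and $\beta$ live. The strategy is to apply slice maps $\omega\otimes\Id$ on the $\Contvin(\DuG)$- and $\Contvin(\DuGH)$-legs of~\eqref{eq:V-Heispair} to rewrite products $\alpha(x)\beta(y)$ as combinations of $\beta(y')\alpha(x')$ modulo the comultiplications $\Comult[\G]$ and $\Comult[\GH]$, and then to reabsorb the resulting correction into the coactions by means of the action equation~\eqref{eq:right_action} for $\gamma_A$ and $\gamma_B$. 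The Podle\'s conditions~\eqref{eq:Podles_cond} ensure that $\gamma_A$ and $\gamma_B$ fill out $A\otimes\Contvin(\G)$ and $B\otimes\Contvin(\GH)$, which is what forces the transported elements to land back inside $j_A(A)j_B(B)$ and keeps the closed linear spans from shrinking. Transporting the single twist $\bichar$ consistently through both coactions at once is the technical heart of the argument.

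Finally, for independence of the choice of $\bichar$-Heisenberg pair, I would take two pairs $(\alpha,\beta)$ on $\Hils$ and $(\alpha',\beta')$ on $\Hils'$, with associated embeddings $j_A,j_B$ and $j_A',j_B'$, and produce a \Star{}isomorphism determined by $j_A(a)\mapsto j_A'(a)$ and $j_B(b)\mapsto j_B'(b)$. The natural route is to show that $A\boxtimes_{\bichar}B$ satisfies a universal property: it is the universal $\Cst$-algebra receiving commuting-up-to-$\bichar$ copies of $A$ and $B$ whose embeddings obey exactly the covariance and commutation relations derived above. Since that presentation refers only to $\gamma_A$, $\gamma_B$ and $\bichar$, and never to $\Hils$, any two realisations are canonically isomorphic; alternatively one compares both pairs through a common (for instance regular) Heisenberg pair built from $\multunit[\G]$ and $\multunit[\GH]$ and checks that the induced map intertwines the generators. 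The delicate point here is \emph{well-definedness together with injectivity} of the map on generators — that no realisation imposes extra relations — which again rests on the injectivity of $\gamma_A,\gamma_B$ and on the Heisenberg relation.
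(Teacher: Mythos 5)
First, a remark on the comparison itself: the paper does not prove this statement --- it is imported verbatim from \cite{Meyer-Roy-Woronowicz:Twisted_tensor}*{Theorem 4.6} --- so there is no internal proof to measure you against; I can only judge your outline on its own terms and against the cited source. Your first half has the right skeleton. Reducing the $\Cst$\nb-algebra property to the single inclusion $j_B(B)j_A(A)\subseteq j_A(A)j_B(B)$ is correct, and the mechanism you name is the right one: the Podle\'s conditions let one replace $j_A(A)\bigl(1\otimes 1\otimes\alpha(\Contvin(\G))\bigr)$ by $A\otimes 1\otimes\alpha(\Contvin(\G))$ (in the appropriate legs) and similarly for $B$, after which everything reduces to the twisted commutation $\alpha(\Contvin(\G))\beta(\Contvin(\GH))=\beta(\Contvin(\GH))\alpha(\Contvin(\G))$ of closed linear spans in $\Mult(\Comp(\Hils))$, obtained by slicing \eqref{eq:V-Heispair} with $\omega_{1}\otimes\omega_{2}\otimes\Id$ (the factor $\bichar_{12}$ only reshuffles which functionals are being sliced, not the resulting span). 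You flag this computation as the technical heart without carrying it out, which is acceptable for an outline.

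The genuine gap is in your treatment of independence of the Heisenberg pair. The object $A\boxtimes_{\bichar}B$ is a \emph{reduced} construction and does not satisfy the universal property you propose. Already for the trivial bicharacter (and trivial $\G$, $\GH$) the paper notes that $A\boxtimes_{1}B$ is the \emph{minimal} tensor product $A\otimes B$, whereas the universal $\Cst$\nb-algebra generated by commuting copies of $A$ and $B$ is $A\otimes_{\max}B$; these differ in general, so no presentation by generators and relations of the kind you describe can characterize $A\boxtimes_{\bichar}B$. (The genuinely universal version is a different construction, the maximal twisted tensor product of \cite{Roy-Timmermann:Max_twisted_tensor}, onto which the reduced one is only a quotient.) The workable route is the one you relegate to an ``alternatively'': in \cite{Meyer-Roy-Woronowicz:Twisted_tensor} two $\bichar$\nb-Heisenberg pairs are compared by tensoring with a $\bichar$\nb-\emph{anti}-Heisenberg pair (cf.\ \eqref{eq:V-antiHeispair} in this paper), which makes the relevant families of operators commute on the larger Hilbert space and allows either realisation to be recovered by slicing; this produces the intertwining isomorphism directly, with no appeal to universality. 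As written, your main argument for independence would fail, and the fallback is too vague to certify.
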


Let~$\bichar=1\in\U(\Contvin(\hat{G})\otimes\Contvin(\hat{H}))$ be the trivial bicharacter. The associated twisted tensor product of $A$ and $B$ is isomorphic to the minimal tensor product $A\otimes B$. Further,
if $ \Gamma  $ is a discrete group acting on a unital $\Cst$\nb-algebra $ A $ via an action $ \beta$, then  the reduced crossed product $ A \rtimes_{\beta, \red } \Gamma $ is another particular example of the above construction. We formalize this observation in the theorem below. 

\begin{proposition}[\cite{Meyer-Roy-Woronowicz:Twisted_tensor}*{Theorem 6.3}] 
	\label{18thjan1}
	Let $\Gamma$ be a discrete group, and assume that $ \gamma_A \colon= \beta\in \Mor(A, A \otimes \Contvin( \Gamma )) $ is a coaction of $\Contvin( \Gamma ) $ on a $\Cst$\nb-algebra $ A, $ that  $ ( B, \gamma_B ) = (\Cred ( \Gamma ), \hat{\Delta}_{\Gamma} ) $\textup{(}i.e.\ $\gamma_B$ is a canonical action of $\hat{\Gamma}$ on itself\textup{)} and that \(\bichar=\multunit[\Gamma]\in\U(\Cred ( \Gamma ) \otimes\Contvin( \Gamma ) ) \) is the reduced bicharacter for~\(\Gamma\). Then there exists an isomorphism $ \Psi\colon A \boxtimes_\bichar B \rightarrow  A \rtimes_{\beta,\red} \Gamma $ such that  for all $ a \in A, b \in B$, 
	\begin{equation}
	\label{eq:jan26}
	\Psi ( j_A ( a ) ) = \beta ( a ), 
	\qquad
	\Psi ( j_B ( b ) ) = 1 \otimes b.
	\end{equation}
	Here we have used the canonical faithful representations of~\(\Contvin(\Gamma)\) and~\(\Cred(\Gamma)\) in~\(\Bound(\ell^{2}(\Gamma))\) 
	in the second legs.
\end{proposition}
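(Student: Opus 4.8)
The plan is to realise both algebras concretely through the regular representation and then to collapse the auxiliary leg of the twisted tensor product by an imprimitivity argument. First I would unwind the data. As $\Gamma$ is discrete, $\Contvin(\Gamma)=c_{0}(\Gamma)$ is the function algebra of the group, so the coaction $\gamma_{A}=\beta\in\Mor(A,A\otimes\Contvin(\Gamma))$ is simply an action of $\Gamma$ on $A$ by automorphisms $\{\beta_{g}\}_{g\in\Gamma}$: the action equation \eqref{eq:right_action} becomes $\beta_{gh}=\beta_{g}\circ\beta_{h}$ and \eqref{eq:Podles_cond} is automatic. Writing $\beta(a)=\sum_{g}\beta_{g}(a)\otimes\chi_{g}$, where $\chi_{g}\in c_{0}(\Gamma)$ is the indicator of $g$, both $\beta$ and $\gamma_{B}=\hat{\Delta}_{\Gamma}$ are injective, so the twisted tensor product is defined. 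I want to produce a $*$-isomorphism realising the assignment $j_{A}(a)\mapsto\beta(a)$, $j_{B}(b)\mapsto 1\otimes b$ on generators.

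For the concrete model I would choose the $\bichar$-Heisenberg pair on $\Hils=\ell^{2}(\Gamma)$ given by $\alpha\in\Mor(\Contvin(\Gamma),\Comp(\Hils))$, the representation by multiplication operators $M_{f}$, and $\eta\in\Mor(\Cred(\Gamma),\Comp(\Hils))$ (the second member of the pair, renamed to avoid clashing with the action $\beta$), the left regular representation $\lambda$. With $\bichar=\multunit[\Gamma]$ the Heisenberg relation \eqref{eq:V-Heispair} is precisely the canonical commutation relation between multiplication and translation, $\lambda_{g}M_{f}\lambda_{g}^{*}=M_{f\circ L_{g^{-1}}}$ (the $\Gamma$-analogue of the one-parameter Weyl relation recorded after \eqref{eq:V-Heispair}); its existence in general is \cite{Meyer-Roy-Woronowicz:Twisted_tensor}*{Lemma 3.8}. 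Substituting into \eqref{eq:def-js}, and writing $e_{h}=M_{\chi_{h}}$ for the rank-one projection onto $\delta_{h}$, I get $j_{A}(a)=\sum_{h}\beta_{h}(a)\otimes 1\otimes e_{h}$ (legs $1$ and $3$) and $j_{B}(\lambda_{g})=1\otimes\hat{\Delta}_{\Gamma}(\lambda_{g})=1\otimes\lambda_{g}\otimes\lambda_{g}$ (legs $2$ and $3$).

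The crux is now a factor-collapse. Let $\mathcal{C}\subseteq\Bound(\ell^{2}(\Gamma)\otimes\ell^{2}(\Gamma))$ be the $\Cst$-algebra generated by the elements $1\otimes e_{h}$ (coming from $j_{A}$) and $\hat{\Delta}_{\Gamma}(\lambda_{g})=\lambda_{g}\otimes\lambda_{g}$ (coming from $j_{B}$); by construction $j_{A}(A)j_{B}(B)\subseteq\Mult(A\otimes\mathcal{C})$. I would observe that the pair $(1\otimes e_{\bullet},\,\lambda_{\bullet}\otimes\lambda_{\bullet})$ is a covariant representation of the translation action of $\Gamma$ on $c_{0}(\Gamma)$, since $(\lambda_{g}\otimes\lambda_{g})(1\otimes e_{h})(\lambda_{g}\otimes\lambda_{g})^{*}=1\otimes e_{gh}$; hence, as $c_{0}(\Gamma)\rtimes\Gamma\cong\Comp(\ell^{2}(\Gamma))$, it integrates to a $*$-homomorphism from $\Cst(\{e_{h}\},\{\lambda_{g}\})=\Comp(\ell^{2}(\Gamma))$ onto $\mathcal{C}$, which is faithful because $\Comp(\ell^{2}(\Gamma))$ is simple. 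This yields a $*$-isomorphism $\Theta\colon\mathcal{C}\to\Comp(\ell^{2}(\Gamma))$ with $1\otimes e_{h}\mapsto e_{h}$ and $\lambda_{g}\otimes\lambda_{g}\mapsto\lambda_{g}$. I would then set $\Psi\defeq(\Id_{A}\otimes\Theta)|_{A\boxtimes_{\bichar}B}$. It sends $j_{A}(a)=\sum_{h}\beta_{h}(a)\otimes(1\otimes e_{h})\mapsto\sum_{h}\beta_{h}(a)\otimes e_{h}=\beta(a)$ and $j_{B}(\lambda_{g})=1\otimes(\lambda_{g}\otimes\lambda_{g})\mapsto 1\otimes\lambda_{g}$, which are exactly the formulas \eqref{eq:jan26}; being the restriction of the isomorphism $\Id_{A}\otimes\Theta$ it is injective, and its range is the $\Cst$-algebra generated by $\beta(A)$ and $\{1\otimes\lambda_{g}\}$, namely the regular (hence reduced) realisation of $A\rtimes_{\beta,\red}\Gamma$ on $\Hils[K]\otimes\ell^{2}(\Gamma)$ for $A\subseteq\Bound(\Hils[K])$.

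The step I expect to be the main obstacle is this factor-collapse: one must check carefully that $j_{A}(A)j_{B}(B)$ genuinely lands in $\Mult(A\otimes\mathcal{C})$, so that the two $\ell^{2}(\Gamma)$-legs are truly tied together rather than independent, and that the resulting completion is the reduced and not the full crossed product. The first point rests on the special form $\gamma_{B}=\hat{\Delta}_{\Gamma}$ together with $\bichar=\multunit[\Gamma]$, which forces the $j_{B}$-contribution to enter only through the corepresentation $\lambda_{g}\otimes\lambda_{g}$; the second is automatic from having built everything on the regular representation $\eta=\lambda$. All the remaining verifications — the Heisenberg relation, the explicit forms of $j_{A}$ and $j_{B}$, and multiplicativity of $\Psi$ — are routine once the model is fixed.
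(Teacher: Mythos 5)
Your argument is correct, and it is essentially the proof of the cited result (\cite{Meyer-Roy-Woronowicz:Twisted_tensor}*{Theorem 6.3}), which the paper itself does not reprove: one takes the $\multunit[\Gamma]$\nb-Heisenberg pair given by the multiplication and translation representations on $\ell^2(\Gamma)$ (the Heisenberg relation there being exactly the pentagon equation), computes $j_A$ and $j_B$ explicitly, and collapses the auxiliary leg. The only point to tidy is that $\mathcal{C}$ should be taken as the \emph{closed linear span} of the products $(1\otimes e_h)(\lambda_g\otimes\lambda_g)$ rather than the generated $\Cst$\nb-algebra — that span is precisely the image of $c_0(\Gamma)\rtimes\Gamma\cong\Comp(\ell^2(\Gamma))$ under the integrated covariant pair, it is simple, and $j_A(A)j_B(B)$ sits in $\Mult(A\otimes\mathcal{C})$ for this $\mathcal{C}$, so $\Id_A\otimes\Theta$ extends strictly and restricts to the required isomorphism onto $\overline{\beta(A)(1\otimes\Cred(\Gamma))}=A\rtimes_{\beta,\red}\Gamma$ as defined in the paper.
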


The \emph{generalized Drinfeld double}~\(\GDrin{\bichar}\) of~\(\G\) and~\(\G[H]\) with respect to the bicharacter~\(\bichar\) is a locally compact quantum group described  (see~\cite{Roy:Codoubles}*{Theorem 5.1(ii)}) as follows:
\begin{alignat}{2}
\label{eq:GDrin-alg}
\Contvin(\GDrin{\bichar}) &=\rho(\Contvin(\G))\theta(\Contvin(\G[H])),\\
\label{eq:GDrin-comult}
\Comult[\GDrin{\bichar}](\rho(x)\theta(y))
&=(\rho\otimes\rho)(\Comult[\G](x)) (\theta\otimes\theta)(\Comult[\GH](y))
\end{alignat}
for all~\(x\in\Contvin(\G)\) and~\(y\in\Contvin(\G[H])\). Here~\(\rho\) and \(\theta\) form a pair of faithful representations of~\(\Contvin(\G)\) and~\(\Contvin(\G[H])\) on a Hilbert space~\(\Hils_{\mathcal{D}}\) that 
satisfies \(\bichar\)\nb-\emph{Drinfeld commutation relation}:
\begin{equation}
\label{eq:Drinf-comm}
\bichar_{12}\multunit[\G]_{1\rho}\multunit[\GH]_{2\theta} 
=\multunit[\GH]_{2\theta} \multunit[\G]_{1\rho}\bichar_{12} 
\qquad\text{in~\(\U(\Contvin(\DuG)\otimes\Contvin(\DuG[H])\otimes\Comp(\Hils_{\mathcal{D}}))\),}
\end{equation}
where~\(\multunit[\G]_{1\rho}\defeq((\Id_{\DuG}\otimes\rho)(\multunit[\G]))_{13}\) and~\(\multunit[\GH]_{2\theta}\defeq((\Id_{\GH}\otimes\theta)(\multunit[\GH]))_{23}\).

In fact, \(\rho\in\Mor\bigl(\Contvin(\G),\Contvin(\GDrin{\bichar})\bigr)\) and \(\theta\in\Mor\bigl(\Contvin(\G[H]),\Contvin(\GDrin{\bichar})\bigr)\) are Hopf \Star{}homomorphisms.

Now, \cite{Roy:Codoubles}*{Lemma 6.3} shows that there is a canonical coaction~\(\gamma_{A}\bowtie_{\bichar}\gamma_{B}\in\Mor(A\boxtimes_{\bichar} B, A\boxtimes_{\bichar}B\otimes\Contvin(\GDrin{\bichar}))\) of~\(\Contvin(\GDrin{\bichar})\) on 
\(A\boxtimes_{\bichar}B\) defined by 
\begin{equation}
\label{eq:GDrin_act}
\gamma_{A}\bowtie_{\bichar}\gamma_{B}(j_{A}(a))= (j_{A}\otimes\rho)(\gamma_{A}(a)),
\qquad 
\gamma_{A}\bowtie_{\bichar}\gamma_{B}(j_{B}(b)) = (j_{B}\otimes\theta)(\gamma_{B}(b))
\end{equation}
for all~\(a\in A\), \(b\in B\).

\begin{lemma}
  \label{lemm:injec-Drinact}
  Suppose~\(\gamma_{A}\) and~\(\gamma_{B}\) are injective maps. Then the map~\(\gamma_{A}\bowtie_{\bichar}\gamma_{B}\) 
  defined by~\textup{\eqref{eq:GDrin_act}} is also injective.
 \end{lemma}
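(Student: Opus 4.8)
The plan is to prove injectivity by producing a \emph{faithful} covariant representation of the coaction $(A\boxtimes_{\bichar}B,\;\gamma_{A}\bowtie_{\bichar}\gamma_{B})$ and then invoking the elementary implication: if $(\corep{U},\Phi)$ is a covariant representation of a coaction $\delta$ with $\Phi$ faithful, then $\delta$ is injective. Indeed, writing $\gamma\defeq\gamma_{A}\bowtie_{\bichar}\gamma_{B}$, the covariance identity \eqref{eq:covariant_corep} gives $(\Phi\otimes\Id)(\gamma(z))=\corep{U}(\Phi(z)\otimes1)\corep{U}^{*}$, so $\gamma(z)=0$ forces $\Phi(z)=0$ and hence $z=0$. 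Thus everything reduces to building such a pair out of the data of $\gamma_{A}$ and $\gamma_{B}$.

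First I would fix a faithful covariant representation $(\corep{U}_{A},\varphi_{A})$ of $(A,\gamma_{A})$ on a Hilbert space $\Hils_{A}$ and a faithful covariant representation $(\corep{U}_{B},\varphi_{B})$ of $(B,\gamma_{B})$ on $\Hils_{B}$; these exist precisely because $\gamma_{A}$ and $\gamma_{B}$ are injective (see \cite{Meyer-Roy-Woronowicz:Twisted_tensor}*{Example~4.5}). I realise $A\boxtimes_{\bichar}B$ inside $\Mult(A\otimes B\otimes\Comp(\Hils))$ through $j_{A},j_{B}$ for a chosen $\bichar$\nb-Heisenberg pair $(\alpha,\beta)$ on $\Hils$, and set $\Phi\defeq(\varphi_{A}\otimes\varphi_{B}\otimes\Id_{\Hils})\big|_{A\boxtimes_{\bichar}B}$. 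Since $\varphi_{A},\varphi_{B}$ are faithful, $\varphi_{A}\otimes\varphi_{B}\otimes\Id_{\Hils}$ is faithful on the minimal tensor product $A\otimes B\otimes\Comp(\Hils)$ and hence on its $\Cst$\nb-subalgebra $A\boxtimes_{\bichar}B$, so $\Phi$ is a faithful representation.

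The corepresentation is assembled from the Hopf \Star{}homomorphisms $\rho$ and $\theta$: pushing $\corep{U}_{A}$ forward along $\rho$ and $\corep{U}_{B}$ along $\theta$ yields $(\Id\otimes\rho)(\corep{U}_{A})$ and $(\Id\otimes\theta)(\corep{U}_{B})$, each a corepresentation of $\GDrin{\bichar}$ (because $(\rho\otimes\rho)\circ\Comult[\G]=\Comult[\GDrin{\bichar}]\circ\rho$ and likewise for $\theta$). One also needs the part of $\corep{U}$ acting on the Heisenberg leg $\Hils$ that implements, on $\alpha(\Contvin(\G))$ and $\beta(\Contvin(\G[H]))$, the maps $(\alpha\otimes\rho)\circ\Comult[\G]$ and $(\beta\otimes\theta)\circ\Comult[\GH]$ dictated by \eqref{eq:GDrin_act}; these are governed by the operators $\multunit[\G]_{1\rho}$ and $\multunit[\GH]_{2\theta}$ of \eqref{eq:Drinf-comm}. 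I would combine all of these into a single $\corep{U}\in\U(\Comp(\Hils_{A}\otimes\Hils_{B}\otimes\Hils)\otimes\Contvin(\GDrin{\bichar}))$ and then verify two things: (i) that $\corep{U}$ is a corepresentation of the \emph{whole} double, and (ii) that the covariance identity holds on the two generating families $j_{A}(A)$ and $j_{B}(B)$, after which it extends to $A\boxtimes_{\bichar}B=j_{A}(A)j_{B}(B)$ by multiplicativity.

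For (ii) the computation on $j_{A}(a)$ unfolds $\gamma(j_{A}(a))=(j_{A}\otimes\rho)(\gamma_{A}(a))$ using the coaction identity \eqref{eq:right_action} for $\gamma_{A}$ together with the covariance of $(\corep{U}_{A},\varphi_{A})$, and symmetrically on $j_{B}(b)$. The main obstacle is precisely (i) coupled to the simultaneous compatibility in (ii): the two pieces coming from $\rho$ and $\theta$ must fit together into one corepresentation of $\GDrin{\bichar}$ while jointly implementing the coaction on the \emph{single} Heisenberg leg. A naive external tensor product of the two pieces does not suffice, because $\Comult[\G]$ distributes the relevant factor across the Heisenberg leg and the double leg; reconciling this is exactly where the $\bichar$\nb-Drinfeld commutation relation \eqref{eq:Drinf-comm} (encoding how $\rho,\theta$ interact) and the Heisenberg relation \eqref{eq:V-Heispair} (encoding how $\alpha,\beta$ interact) must be used in tandem. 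Once (i) and (ii) are established, $(\corep{U},\Phi)$ is a faithful covariant representation and the injectivity of $\gamma_{A}\bowtie_{\bichar}\gamma_{B}$ follows from the first paragraph.
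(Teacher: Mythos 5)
Your overall strategy --- exhibit a faithful representation $\Phi$ of $A\boxtimes_{\bichar}B$ and a unitary $\corep{U}$ implementing $\gamma_{A}\bowtie_{\bichar}\gamma_{B}$ by conjugation, then read off injectivity from faithfulness of $\Phi$ --- is exactly the strategy of the paper's proof, and your reduction in the first paragraph is sound. The problem is that the heart of the argument, namely the actual construction of $\corep{U}$ and the verification of your points (i) and (ii), is left undone; you even flag it yourself as ``the main obstacle.'' Saying that the Drinfeld relation \eqref{eq:Drinf-comm} and the Heisenberg relation \eqref{eq:V-Heispair} ``must be used in tandem'' identifies the right ingredients but is not a proof, and your choice of faithful representation makes the missing step genuinely harder: with $\Phi=(\varphi_{A}\otimes\varphi_{B}\otimes\Id_{\Hils})|_{A\boxtimes_{\bichar}B}$ one has $\Phi(j_{A}(a))=\corep{U}^{A}_{1\alpha}(\varphi_{A}(a)\otimes1\otimes1)(\corep{U}^{A}_{1\alpha})^{*}$, while $(\Phi\otimes\Id)(\gamma(j_{A}(a)))=\corep{U}^{A}_{1\alpha}\corep{U}^{A}_{1\rho}(\varphi_{A}(a)\otimes1\otimes1)(\corep{U}^{A}_{1\rho})^{*}(\corep{U}^{A}_{1\alpha})^{*}$, so the implementing unitary must simultaneously conjugate through the $\alpha$-twisted and $\beta$-twisted generators on the \emph{shared} Heisenberg leg; it is not at all obvious that a single unitary does both jobs, and no candidate is written down.

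The paper sidesteps this by discarding the Heisenberg leg: it uses the faithful representation $\Pi$ of $A\boxtimes_{\bichar}B$ on $\Hils_{A}\otimes\Hils_{B}$ from \cite{Meyer-Roy-Woronowicz:Twisted_tensor}*{Theorems 4.1--4.2}, with $\Pi(j_{A}(a))=\varphi_{A}(a)\otimes1$ and $\Pi(j_{B}(b))=Z(1\otimes\varphi_{B}(b))Z^{*}$ for the canonical intertwiner $Z$ determined by \eqref{eq:def-Z}. The implementing unitary is then simply $\corep{U}^{A}_{1\rho}\corep{U}^{B}_{2\theta}$, and the only nontrivial fact needed is the commutation $\corep{U}^{A}_{1\rho}\corep{U}^{B}_{2\theta}Z_{12}=Z_{12}\corep{U}^{B}_{2\theta}\corep{U}^{A}_{1\rho}$. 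This is obtained by a genuinely different device from anything in your sketch: one checks that $\tilde{\alpha}=(\alpha\otimes\rho)\circ\Comult[\G]$ and $\tilde{\beta}=(\beta\otimes\theta)\circ\Comult[\GH]$ form another $\bichar$\nb-Heisenberg pair and invokes the \emph{uniqueness} of $Z$ in \eqref{eq:def-Z} to extract the needed relation. Without this (or an equivalent) concrete mechanism, your proposal does not yet constitute a proof.
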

 \begin{proof}
  Without loss of generality we may assume that~\((\corep{U}^{A},\varphi_{A})\) and~\((\corep{U}^{B},\varphi_{B})\) 
  are faithful covariant representations of \((A,\gamma_{A})\) and \((B,\gamma_{B})\) on the Hilbert spaces~\(\Hils_{A}\) 
  and \(\Hils_{B}\), respectively. 
  
  There is a faithful representation~\(\Pi\colon A\boxtimes_{\bichar}B\to\Bound(\Hils_{A}\otimes\Hils_{B})\) such that 
  \(\Pi(j_{A}(a))=\varphi_{A}(a)\otimes 1_{\Hils_{B}}\) and~\(\Pi(j_{B}(b))=Z(1_{\Hils_{A}}\otimes\varphi_{B}(b))Z^{*}\), 
  where~\(Z \in \U(\Hils_A \otimes \Hils_B)\) is the unique unitary that satisfies 
  \begin{equation}
   \label{eq:def-Z}
    \corep{U}^{A}_{1\alpha}\corep{U}^{B}_{2\beta}Z_{12} 
    =\corep{U}^{B}_{2\beta}\corep{U}^{A}_{1\alpha} 
    \qquad\text{in~\(\U(\Hils_{A}\otimes\Hils_{B}\otimes\Hils)\)}
  \end{equation}
  for any~\(\bichar\)\nb-Heisenberg pair~\((\alpha,\beta)\) on~\(\Hils\) (see~\cite{Meyer-Roy-Woronowicz:Twisted_tensor}*{Theorem 4.1 and 4.2}).
  Define a pair of representations~\(\tilde{\alpha}\defeq (\alpha\otimes\rho)\circ \Comult[\G]\) and~\(\tilde{\beta}(y)\defeq 
  (\beta\otimes\theta)\circ\Comult[\GH]\) of~\(\Contvin(\G)\) and~\(\Contvin(\G[H])\) on~\(\Hils\otimes\Hils_{\mathcal{D}}\).
  By Definition~\ref{def:corepresentation}.(1) for~\(\corep{U}^{A}\) and~\(\corep{U}^{B}\) and~\eqref{eq:def-Z} 
  we have
  \[
    \corep{U}^{A}_{1\tilde{\alpha}}\corep{U}^{B}_{2\tilde{\beta}}Z_{12}
    =\corep{U}^{A}_{1\alpha}\corep{U}^{A}_{1\rho}\corep{U}^{B}_{2\beta}\corep{U}^{B}_{2\theta}Z_{12}
    =\corep{U}^{A}_{1\alpha}\corep{U}^{B}_{2\beta}\corep{U}^{A}_{1\rho}\corep{U}^{B}_{2\theta}Z_{12}   
    =\corep{U}^{B}_{2\beta}\corep{U}^{A}_{1\alpha}Z^{*}_{12}\corep{U}^{A}_{1\rho}\corep{U}^{B}_{2\theta}Z_{12}
  \]
  in~\(\U(\Hils_{A}\otimes\Hils_{B}\otimes\Hils\otimes\Hils_{\mathcal{D}})\). 
  Here~\(\alpha\) and~\(\beta\) are acting on the third leg and~\(\rho\) and~\(\theta\) are acting on the fourth leg. Similarly, 
  we have
  \begin{equation}
   \label{eq:Z-Drinf}
      \corep{U}^{B}_{2\tilde{\beta}}\corep{U}^{A}_{1\tilde{\alpha}}
   =\corep{U}^{B}_{2\beta}\corep{U}^{B}_{2\theta}\corep{U}^{A}_{1\alpha}\corep{U}^{A}_{1\rho}
  =\corep{U}^{B}_{2\beta}\corep{U}^{A}_{1\alpha}\corep{U}^{B}_{2\theta}\corep{U}^{A}_{1\rho}
  \end{equation}
  Now~\((\tilde{\alpha},\tilde{\beta})\) is a \(\bichar\)\nb-Heisenberg pair on~\(\Hils\otimes\Hils_{\mathcal{D}}\) (see the proof 
  of~\cite{Roy:Codoubles}*{Lemma 6.3}). Therefore, we have the same commutation relation~\eqref{eq:def-Z} if we replace~\(\alpha\) 
  by~\(\tilde{\alpha}\) and~\(\beta\) by~\(\tilde{\beta}\). This gives the equality 
  \begin{equation}
   \corep{U}^{A}_{1\rho}\corep{U}^{B}_{2\theta}Z_{12}=Z_{12}\corep{U}^{B}_{2\theta}\corep{U}^{A}_{1\rho} 
   \qquad\text{in ~\(\U(\Hils_{A}\otimes\Hils_{B}\otimes\Hils_{\mathcal{D}})\).}
  \end{equation}
  Using~\eqref{eq:Z-Drinf} and~\eqref{eq:GDrin_act} we compute (for $a \in A, b \in B$)
  \begin{align*}
   & \corep{U}^{A}_{1\rho}\corep{U}^{B}_{2\theta}(\Pi(j_{A}(a)j_{B}(b))\otimes 1_{\GDrin{\bichar}})(\corep{U}^{B}_{2\theta})^{*} 
   (\corep{U}^{A}_{1\rho})^{*}\\
   &= \corep{U}^{A}_{1\rho}\corep{U}^{B}_{2\theta}
   \bigl((\varphi_{A}(a)\otimes 1_{\Hils_{B}}\otimes 1_{\GDrin{\bichar}})Z_{12}(1_{\Hils_{A}}\otimes\varphi_{B}(b)\otimes 1_{\GDrin{\bichar}}) \bigr)Z_{12}^{*}(\corep{U}^{B}_{2\theta})^{*} (\corep{U}^{A}_{1\rho})^{*}\\
   &= \corep{U}^{A}_{1\rho}(\varphi_{A}(a)\otimes 1_{\Hils_{B}}\otimes 1_{\GDrin{\bichar}})\corep{U}^{B}_{2\theta}Z_{12}
   (1_{\Hils_{A}}\otimes\varphi_{B}(b)\otimes 1_{\GDrin{\bichar}})(\corep{U}^{A}_{1\rho})^{*}(\corep{U}^{B}_{2\theta})^{*} Z_{12}^{*}\\
   &= \corep{U}^{A}_{1\rho}(\varphi_{A}(a)\otimes 1_{\Hils_{B}}\otimes 1_{\GDrin{\bichar}})\corep{U}^{B}_{2\theta}Z_{12}
   (\corep{U}^{A}_{1\rho})^{*}(1_{\Hils_{A}}\otimes\varphi_{B}(b)\otimes 1_{\GDrin{\bichar}})(\corep{U}^{B}_{2\theta})^{*} Z_{12}^{*}\\
   &=\corep{U}^{A}_{1\rho}(\varphi_{A}(a)\otimes 1_{\Hils_{B}}\otimes 1_{\GDrin{\bichar}})(\corep{U}^{A}_{1\rho})^{*}Z_{12}
   \corep{U}^{B}_{2\theta}(1_{\Hils_{A}}\otimes\varphi_{B}(b)\otimes 1_{\GDrin{\bichar}})(\corep{U}^{B}_{2\theta})^{*} Z_{12}^{*}\\
   &=\bigl((\varphi_{1}\otimes\rho)(\gamma_{A}(a))\bigr)_{13}Z_{12}\bigl((\varphi_{2}\otimes\theta)(\gamma_{B}(b))\bigr)_{23}Z_{12}^{*}\\
   &=(\Pi\otimes\Id_{\GDrin{\bichar}})\bigl((j_{A}\otimes\rho)(\gamma_{A}(a))(j_{B}\otimes\theta)(\gamma_{B}(b))\bigr)
     =(\Pi\otimes\Id_{\GDrin{\bichar}})\bigl(\gamma_{A}\bowtie\gamma_{B}(j_{A}(a)j_{B}(b))\bigr)
   \end{align*}
  Since \(\Pi\) is injective we conclude that~\(\gamma_{A}\bowtie\gamma_{B}\) is also injective.
 \end{proof}

Finally note the special (trivial) case of the above construction. The generalized Drinfeld double of  locally compact groups $G$ and $H$ associated to the trivial bicharacter $\bichar =1\in\U(\Contvin(\hat{G})\otimes\Contvin(\hat{H})) $ is just the cartesian product of the initial groups: $\Contvin(\GDrin{\bichar})=\Contvin(G\times H)$. Moreover, $\Contvin(G\times H)$ canonically  coacts (component-wise) on $A\otimes B$ which is a particular case of~\eqref{eq:GDrin_act}. Furthermore for any locally compact quantum group $\G$ the generalized Drinfeld double of $\G$ and $\DuG$ with respect to the reduced bicharacter $\multunit[G]\in\U(\Contvin(\DuG)\otimes\Contvin(\G))$ coincides with the usual Drinfeld double of $\G$. 

\section{Universal $\Cst$\nb-algebras of Drinfeld doubles and their universal property}
 \label{sec:Univ-Drinf}
 Let~\(\G\) and~\(\G[H]\) be locally compact quantum groups and let~\(\bichar\in\U(\Contvin(\DuG)\otimes\Contvin(\DuG[H]))\) be   
 a bicharacter. Recall that the dual of the generalized Drinfeld double~\(\GDrin{\bichar}\) is the quantum 
 codouble 
  \(\GCodb{\bichar}\) defined by
  \begin{align*}
    \Contvin(\GCodb{\bichar}) &\defeq \Contvin(\DuG[H])\otimes\Contvin(\DuG), \\
    \Comult[\GCodb{\bichar}](\hat{y}\otimes\hat{x}) &\defeq 
    \bichar_{23} \flip_{23}\bigl(\Comult[\DuGH](\hat{y})\otimes\Comult[\DuG](\hat{x})\bigr)\bichar^{*}_{23} 
    \quad\text{  for all~\(\hat{x}\in\Contvin(\DuG)\), \(\hat{y}\in\Contvin(\DuG[H])\).}
  \end{align*}

  The goal of this section is to prove the existence of a coaction of~\(\Contvin^\univ(\GDrin{\bichar})\) on $ A \boxtimes_{\bichar} B $ (Lemma \ref{lemm:Coact-univ-Drinf}) satisfying a universal property formalized in Theorem \ref{the:UnivProp-Drinf}. Let us start by proving the existence of a  
  \(\bichar\)\nb-Drinfeld pair at the universal level.
  \begin{proposition}
  There exists a unique pair  of morphisms~\(\rho^\univ\in\Mor(\Contvin^\univ(\G),\Contvin^\univ(\GDrin{\bichar}))\) 
  and \(\theta^\univ\in\Mor(\Contvin^\univ(\G[H]),\Contvin^\univ(\GDrin{\bichar}))\) satisfying the following commutation relation:
  \begin{equation}
   \label{eq:Univ-Drinfpair}
   \bichar_{12}\wW^{\G}_{1\rho^\univ}\wW^{\G[H]}_{2\theta^\univ} 
   =\wW^{\G[H]}_{2\theta^\univ} \wW^{\G}_{1\rho^\univ}\bichar_{12}
   \qquad\text{in~\(\U(\Contvin(\DuG)\otimes\Contvin(\DuG[H])\otimes\Contvin^\univ(\GDrin{\bichar}))\).}
  \end{equation} 
  \end{proposition}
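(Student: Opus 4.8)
The plan is to obtain $\rho^\univ$ and $\theta^\univ$ as the canonical universal lifts of the reduced Hopf~\Star{}homomorphisms $\rho,\theta$, and then to verify the commutation relation~\eqref{eq:Univ-Drinfpair} \emph{intrinsically} at the universal level, since---as explained below---it cannot simply be transported from its reduced counterpart~\eqref{eq:Drinf-comm} through the reducing morphism. For the construction, recall that $\rho$ and $\theta$ are Hopf~\Star{}homomorphisms, so by Example~\ref{ex:bichar} they induce bicharacters $(\Id_{\DuG}\otimes\rho)(\multunit[\G])\in\U(\Contvin(\DuG)\otimes\Contvin(\GDrin{\bichar}))$ and $(\Id_{\DuG[H]}\otimes\theta)(\multunit[\G[H]])\in\U(\Contvin(\DuG[H])\otimes\Contvin(\GDrin{\bichar}))$. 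Since $\widehat{\GDrin{\bichar}}=\GCodb{\bichar}$, we have $\Contvin(\GDrin{\bichar})=\Contvin(\widehat{\GCodb{\bichar}})$, so Theorem~\ref{the:equiv_homs}, applied with its ``$\G[H]$'' taken to be $\GCodb{\bichar}$ (hence its ``$\DuG[H]$'' equal to $\GDrin{\bichar}$), turns these bicharacters into unique Hopf~\Star{}homomorphisms $\rho^\univ\in\Mor(\Contvin^\univ(\G),\Contvin^\univ(\GDrin{\bichar}))$ and $\theta^\univ\in\Mor(\Contvin^\univ(\G[H]),\Contvin^\univ(\GDrin{\bichar}))$. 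Feeding~\eqref{eq:dumaxcorep_to_multunit} into the characterisation~\eqref{eq:bicha_univmor} and invoking the uniqueness clause of~\eqref{eq:univ_dumaxcorep} yields the reduction identities $\Lambda_{\GDrin{\bichar}}\circ\rho^\univ=\rho\circ\Lambda_{\G}$ and $\Lambda_{\GDrin{\bichar}}\circ\theta^\univ=\theta\circ\Lambda_{\G[H]}$; this is also what gives uniqueness of the pair, as the bijection of Theorem~\ref{the:equiv_homs} determines $\rho^\univ,\theta^\univ$ from the two bicharacters.

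Write $P\defeq\wW^{\G}_{1\rho^\univ}$ and $Q\defeq\wW^{\G[H]}_{2\theta^\univ}$, so that~\eqref{eq:Univ-Drinfpair} reads $\bichar_{12}PQ=QP\bichar_{12}$ in $\U(\Contvin(\DuG)\otimes\Contvin(\DuG[H])\otimes\Contvin^\univ(\GDrin{\bichar}))$. Applying $\Id\otimes\Id\otimes\Lambda_{\GDrin{\bichar}}$ and using the reduction identities together with $(\Id\otimes\Lambda_{\G})(\wW^{\G})=\multunit[\G]$ sends $P\mapsto\multunit[\G]_{1\rho}$ and $Q\mapsto\multunit[\G[H]]_{2\theta}$, so the image of this equation is precisely~\eqref{eq:Drinf-comm}. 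This consistency is, however, \emph{not} sufficient: $\Lambda_{\GDrin{\bichar}}$ is injective only when $\GDrin{\bichar}$ is coamenable, and the second legs of $P$ and $Q$ already generate $\Contvin^\univ(\GDrin{\bichar})$, so there is no subalgebra on which $\Lambda_{\GDrin{\bichar}}$ is faithful to exploit. The relation must therefore be established directly at the universal level; I expect this to be the main obstacle.

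I would resolve it through the codouble structure. The dual $\GCodb{\bichar}$ carries the $\bichar$\nb-twisted comultiplication $\Comult[\GCodb{\bichar}](\hat{y}\otimes\hat{x})=\bichar_{23}\flip_{23}(\Comult[\DuGH](\hat{y})\otimes\Comult[\DuG](\hat{x}))\bichar^{*}_{23}$, and, under the identification $\Contvin(\widehat{\GDrin{\bichar}})=\Contvin(\DuG[H])\otimes\Contvin(\DuG)$, the semi-universal multiplicative unitary $\wW^{\GDrin{\bichar}}\in\Mult(\Contvin(\DuG[H])\otimes\Contvin(\DuG)\otimes\Contvin^\univ(\GDrin{\bichar}))$ factorises so that its $\Contvin(\DuG[H])$- and $\Contvin(\DuG)$-legs are exactly $Q$ and $P$ passed through $\theta^\univ$ and $\rho^\univ$ (the universal analogue of the factorisation of $\multunit[\GDrin{\bichar}]$ underlying~\cite{Roy:Codoubles}). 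Since $\wW^{\GDrin{\bichar}}$ is a bicharacter (Example~\ref{ex:bichar}), it satisfies the first-leg character condition~\eqref{eq:bichar_char_in_first_leg} with respect to $\Comult[\GCodb{\bichar}]$; expanding that condition and using that $P,Q$ are corepresentations of $\DuG,\DuG[H]$, the conjugation by $\bichar_{23}$ built into $\Comult[\GCodb{\bichar}]$ produces exactly the two factors $\bichar_{12}$, and $\bichar_{12}PQ=QP\bichar_{12}$ drops out. Equivalently, one may test~\eqref{eq:Univ-Drinfpair} against a faithful family of representations $\pi$ of $\Contvin^\univ(\GDrin{\bichar})$: each $\pi$ corresponds under~\eqref{eq:univ_dumaxcorep} (for $\GDrin{\bichar}$) to a corepresentation of $\GCodb{\bichar}$ whose $\DuG$- and $\DuG[H]$-legs are the images of $P$ and $Q$ under $\pi$ in the last leg, and the corepresentation condition for the twisted $\Comult[\GCodb{\bichar}]$ is then the image of $\bichar_{12}PQ=QP\bichar_{12}$ under $\Id\otimes\Id\otimes\pi$; faithfulness of the family upgrades this to the multiplier identity. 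The delicate point, which I expect to be the crux of the whole argument, is the precise identification of the legs of $\wW^{\GDrin{\bichar}}$ with $P$ and $Q$ and the bookkeeping of the $\bichar_{23}$\nb-conjugation---this is exactly where the twisted structure of the codouble, rather than any reduction argument, does the work.
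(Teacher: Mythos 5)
Your overall route is the paper's: both arguments go through the codouble \(\GCodb{\bichar}\) and a factorisation of the semi\nb-universal unitary \(\wW^{\GDrin{\bichar}}\), and your observation that \eqref{eq:Univ-Drinfpair} cannot simply be pulled back from \eqref{eq:Drinf-comm} through \(\Lambda_{\GDrin{\bichar}}\) is correct and is exactly why one must work at the universal level. The problem is that the step you flag as ``the crux'' is precisely the step you do not carry out, and it is essentially the entire content of the paper's proof: the paper invokes \cite{Roy:Codoubles}*{Proposition 7.9}, which supplies representations \(\corep{U}^{1}\) of \(\DuG\) and \(\corep{U}^{2}\) of \(\DuG[H]\) in \(\Contvin^\univ(\GDrin{\bichar})\) with \(\wW^{\GDrin{\bichar}}=\corep{U}^{1}_{23}\corep{U}^{2}_{13}\) \emph{and} the commutation relation \(\bichar_{12}\corep{U}^{1}_{13}\corep{U}^{2}_{23}=\corep{U}^{2}_{23}\corep{U}^{1}_{13}\bichar_{12}\) already in place; \(\rho^\univ\) and \(\theta^\univ\) are then simply read off from \(\corep{U}^{1},\corep{U}^{2}\) by universality of \(\wW^{\G}\) and \(\wW^{\G[H]}\), and the commutation relation becomes \eqref{eq:Univ-Drinfpair} by definition. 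Your heuristic that the \(\bichar_{23}\)\nb-conjugation in \(\Comult[\GCodb{\bichar}]\) should produce the two factors \(\bichar_{12}\) is indeed the idea behind that cited result, but as written it is a plan rather than a proof: the existence of the two\nb-leg factorisation of \(\wW^{\GDrin{\bichar}}\) is the nontrivial input and cannot just be ``read off''.

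There is also a structural cost to your reorganisation. You define \(\rho^\univ,\theta^\univ\) up front as the universal lifts of \(\rho,\theta\), and then need the legs of \(\wW^{\GDrin{\bichar}}\) to coincide with \((\Id_{\DuG}\otimes\rho^\univ)(\wW^{\G})\) and \((\Id_{\DuG[H]}\otimes\theta^\univ)(\wW^{\G[H]})\) for \emph{these particular} morphisms. That identification is not automatic: in the paper it is the content of the separate Lemma~\ref{lem:hopf-mor}, where one applies \(\Lambda_{\GDrin{\bichar}}\) to the factorisation, compares with \(\multunit[\GDrin{\bichar}]=\multunit[\GH]_{2\theta}\multunit[\G]_{1\rho}\), and must then rule out a unitary discrepancy \(u\in\U(\Contvin(\GDrin{\bichar}))\) by a representation\nb-theoretic argument before concluding that the universal legs reduce to the correct bicharacters. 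So defining the morphisms as lifts first does not save work; it merely relocates the Lemma~\ref{lem:hopf-mor} argument into the proof of the proposition, where it is likewise missing. To complete your proof you would need either to prove the factorisation-plus-commutation statement for left representations of \(\GCodb{\bichar}\) from scratch, or to cite it, and then run the \(u=1\) identification.
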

  \begin{proof}
 Recall the universal representation \(\wW^{\GDrin{\bichar}}\in\U(\Contvin(\DuG[H])\otimes\Contvin(\DuG)\otimes\Contvin^\univ(\GDrin{\bichar}))\) of~\(\GCodb{\bichar}\) in~\(\Contvin^\univ(\GDrin{\bichar})\). 
  By~\cite{Roy:Codoubles}*{Proposition 7.9}, there exist representations~\(\corep{U}^{1}\in\U(\Contvin(\DuG)\otimes\Contvin^\univ(\GDrin{\bichar}))\) and~\(\corep{U}^{2}\in\U(\Contvin(\DuG[H])\otimes\Contvin^\univ(\GDrin{\bichar}))\), respectively of~\(\DuG\) and 
  \(\DuG[H]\) in~\(\Contvin^\univ(\GDrin{\bichar})\) such that 
  \begin{equation}
   \label{eq:univ-rep-codoub}
    \wW^{\GDrin{\bichar}}=\corep{U}^{1}_{23}\corep{U}^{2}_{13} 
    \quad\text{and}\quad 
    \bichar_{12}\corep{U}^{1}_{13}\corep{U}^{2}_{23} 
    =\corep{U}^{2}_{23}\corep{U}^{1}_{13}\bichar_{12}.
  \end{equation}
  Universality of~\(\wW^{\G}\) and~\(\wW^{\G[H]}\) gives unique $\C^*$-algebra morphisms~\(\rho^\univ\in\Mor(\Contvin^\univ(\G),\Contvin^\univ(\GDrin{\bichar}))\) and \(\theta^\univ\in\Mor(\Contvin^\univ(\G[H]),\Contvin^\univ(\GDrin{\bichar}))\) 
  such that~\((\Id_{\DuG}\otimes\rho^\univ)(\wW^{\G})=\corep{U}^{1}\) and~\((\Id_{\DuG[H]}\otimes\theta^\univ)(\wW^{\G[H]})
  =\corep{U}^{2}\).
  \end{proof}
 Therefore, we have 
  \begin{align*}
  & \Contvin^\univ(\GDrin{\bichar}) =\{(\omega_{1}\otimes\omega_{2}\otimes\Id_{\GDrin{\bichar}})(\wW^{\GDrin{\bichar}})\mid \omega_{1}\in\Contvin(\DuG[H])', \omega_{2}\in\Contvin(\DuG)'\}^\CLS \\
  &=\Bigl\{\theta^\univ \bigl( (\omega_{2}\otimes\Id_{\G[H]})(\wW^{\G[H]})\bigr)  \rho^\univ \bigl( (\omega_{1}\otimes\Id_{\G})(\wW^{\G})\bigr)\mid \omega_{1}\in\Contvin(\DuG[H])', \omega_{2}\in\Contvin(\DuG)'\Bigr\}^\CLS  \\
  &=\rho^\univ(\Contvin^\univ(\G))\theta^\univ(\Contvin^\univ(\G[H])).
  \end{align*}
  Since \(\wW^{\G}, \wW^{\G[H]}, {\wW^{\GDrin{\bichar}}}\) are bicharacters (see Example~\ref{ex:bichar}), 
  we also obtain 
  \begin{align*}
     & (\Id_{\DuG[H]}\otimes\Id_{\DuG}\otimes  \Comult[\GDrin{\bichar}]^\univ)(\wW^{\GDrin{\bichar}})\\
     &=\wW^{\GDrin{\bichar}}_{123}\wW^{\GDrin{\bichar}}_{124}\\
     &= (\wW^{\G}_{1\rho^\univ})_{23}(\wW^{\G[H]}_{1\theta^\univ})_{13}(\wW^{\G}_{1\rho^\univ})_{24}(\wW^{\G[H]}_{1\theta^\univ})_{14}\\
     &=(\wW^{\G}_{1\rho^\univ})_{23}(\wW^{\G}_{1\rho^\univ})_{24}(\wW^{\G[H]}_{1\theta^\univ})_{13}(\wW^{\G[H]}_{1\theta^\univ})_{14}\\
     &=((\Id_{\DuG}\otimes(\rho^\univ\otimes\rho^\univ)\circ \Comult[\G]^\univ)(\wW^{\G}))_{234}((\Id_{\DuG[H]}\otimes(\theta^\univ\otimes\theta^\univ)\circ\Comult[\GH]^\univ)(\wW^{\G[H]}))_{134}
  \end{align*}
  Taking the slice on the first and second leg by~\(\omega_{1}\otimes\omega_{2}\) for all~\(\omega_{1}\in\Contvin(\DuG[H])'\), 
  \(\omega_{2}\in\Contvin(\DuG)'\) and using the first equality in~\eqref{eq:univ-rep-codoub} we obtain
  \[
  \Comult[\GDrin{\bichar}]^\univ(\rho^\univ(x)\theta^\univ(y)) 
  =(\rho^\univ\otimes\rho^\univ)(\Comult[\G]^\univ(x))(\theta^\univ\otimes\theta^\univ)(\Comult[\GH]^\univ(y))
  \]
 for all~\(x\in\Contvin^\univ(\G)\), \(y\in\Contvin^\univ(\G[H])\).
 \begin{definition}
  \label{def:univ-Drinfpair}
  The pair of morphisms~\((\rho^\univ,\theta^\univ)\) is called the~\emph{universal \(\bichar\)\nb-Drinfeld pair}.
 \end{definition} 
 \begin{lemma}
  \label{lem:hopf-mor}
  The maps~\(\rho^\univ\) and~\(\theta^\univ\) are Hopf~\Star{}homomorphisms. 
  Furthermore~\(\Lambda_{\GDrin{\bichar}}\circ\rho^\univ=\rho\circ\Lambda_{\G}\) and \(\Lambda_{\GDrin{\bichar}}\circ\theta^\univ=\theta\circ\Lambda_{\G[H]}\).
 \end{lemma}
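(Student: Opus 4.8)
The plan is to read off the Hopf property directly from the coproduct formula established just above Definition~\ref{def:univ-Drinfpair}, and to obtain the compatibility with the reducing morphisms from the uniqueness built into the universal property~\eqref{eq:univ_dumaxcorep} of $\wW^{\G}$ and $\wW^{\G[H]}$.

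For the first assertion I would simply specialize the displayed identity for $\Comult[\GDrin{\bichar}]^\univ$ preceding Definition~\ref{def:univ-Drinfpair}. Since $\rho^\univ$ and $\theta^\univ$ are nondegenerate morphisms they are unital on multipliers, so putting $y=1$ makes the right-hand factor trivial and leaves $\Comult[\GDrin{\bichar}]^\univ(\rho^\univ(x))=(\rho^\univ\otimes\rho^\univ)(\Comult[\G]^\univ(x))$ for all $x\in\Contvin^\univ(\G)$; this is exactly the Hopf~\Star{}homomorphism condition for $\rho^\univ$. Setting $x=1$ gives the analogous statement for $\theta^\univ$.

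For the second assertion I would note that $\Lambda_{\GDrin{\bichar}}\circ\rho^\univ$ and $\rho\circ\Lambda_{\G}$ both lie in $\Mor(\Contvin^\univ(\G),\Contvin(\GDrin{\bichar}))$, so by the bijection~\eqref{eq:univ_dumaxcorep} it suffices to check that slicing $\wW^{\G}$ in its second leg by the two morphisms yields the same representation of $\DuG$. Using $(\Id\otimes\rho^\univ)(\wW^{\G})=\corep{U}^{1}$ and $(\Id\otimes\Lambda_{\G})(\wW^{\G})=\multunit[\G]$ from~\eqref{eq:dumaxcorep_to_multunit}, this reduces to the single equality $(\Id\otimes\Lambda_{\GDrin{\bichar}})(\corep{U}^{1})=(\Id\otimes\rho)(\multunit[\G])$, and symmetrically $(\Id\otimes\Lambda_{\GDrin{\bichar}})(\corep{U}^{2})=(\Id\otimes\theta)(\multunit[\GH])$ for $\theta^\univ$.

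To establish this equality I would reduce the factorization~\eqref{eq:univ-rep-codoub}: applying $\Lambda_{\GDrin{\bichar}}$ to the last leg of $\wW^{\GDrin{\bichar}}=\corep{U}^{1}_{23}\corep{U}^{2}_{13}$ (via the defining relation of $\Lambda_{\GDrin{\bichar}}$ analogous to~\eqref{eq:dumaxcorep_to_multunit}) gives $\multunit[\GDrin{\bichar}]=P_{23}Q_{13}$ with $P=(\Id\otimes\Lambda_{\GDrin{\bichar}})(\corep{U}^{1})$ and $Q=(\Id\otimes\Lambda_{\GDrin{\bichar}})(\corep{U}^{2})$, whereas the reduced construction of $\GDrin{\bichar}$ in~\cite{Roy:Codoubles} presents the same unitary as $\multunit[\GDrin{\bichar}]=P'_{23}Q'_{13}$ with $P'=(\Id\otimes\rho)(\multunit[\G])$ and $Q'=(\Id\otimes\theta)(\multunit[\GH])$. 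Rewriting $P_{23}Q_{13}=P'_{23}Q'_{13}$ as $(P'^{*}P)_{23}=(Q'Q^{*})_{13}$ forces both sides into the copy of $\Mult(\Contvin(\GDrin{\bichar}))$ in the third leg, so $P=P'(1\otimes c)$ for a unitary $c$; feeding this into the corepresentation identity $(\Comult[\DuG]\otimes\Id)(P)=P_{23}P_{13}$, which $P'$ also satisfies, collapses to $c=1$, whence $P=P'$ and likewise $Q=Q'$. I expect the main obstacle to be exactly this matching step: one must verify that the reduced multiplicative unitary of $\GDrin{\bichar}$ factors through the canonical representations $\rho,\theta$ precisely as the semi-universal one factors through $\corep{U}^{1},\corep{U}^{2}$, and then separate the legs cleanly; once the two factorizations are aligned, the uniqueness in~\eqref{eq:univ_dumaxcorep} closes the argument.
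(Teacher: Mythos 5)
Your proof is correct, and for the substantive half of the lemma it follows the paper's own route: both arguments reduce the identity $\Lambda_{\GDrin{\bichar}}\circ\rho^\univ=\rho\circ\Lambda_{\G}$ to matching the two factorizations of $\multunit[\GDrin{\bichar}]$ (one obtained by applying $\Lambda_{\GDrin{\bichar}}$ to $\wW^{\GDrin{\bichar}}=\corep{U}^{1}_{23}\corep{U}^{2}_{13}$, the other being the formula $\multunit[\GDrin{\bichar}]=\multunit[\GH]_{2\theta}\multunit[\G]_{1\rho}$ from \cite{Roy:Codoubles}*{Theorem 5.1(iii)}), separating the legs to extract a unitary discrepancy $u$ in $\U(\Contvin(\GDrin{\bichar}))$, and killing $u$ by plugging into the left-representation identity. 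The only genuine divergence is in how you obtain the Hopf property: you read it off directly from the coproduct formula for $\Comult[\GDrin{\bichar}]^\univ(\rho^\univ(x)\theta^\univ(y))$ established before Definition~\ref{def:univ-Drinfpair}, by strictly extending to $y=1$ (resp.\ $x=1$); the paper instead gets it for free from the same leg-separation computation, observing that $(\Id_{\DuG}\otimes\rho)(\multunit[\G])$ is the bicharacter induced by the Hopf~\Star{}homomorphism $\rho$ and invoking the correspondence \eqref{eq:bicha_univmor} of Theorem~\ref{the:equiv_homs}, which simultaneously identifies $\rho^\univ$ as the unique universal Hopf lift of $\rho$. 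Your version makes the two assertions of the lemma independent (and the approximate-unit argument you implicitly use is standard and sound), at the cost of not exhibiting $\rho^\univ$ as the canonical lift; the paper's version is more economical but leans on the machinery of quantum group morphisms. Either way the proof closes.
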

 \begin{proof}
   The reduced bicharacter of~\(\GDrin{\bichar}\) is~\(\multunit[\GDrin{\bichar}]=\multunit[\GH]_{2\theta}\multunit[\G]_{1\rho}\in\U(\Contvin(\DuG[H])\otimes\Contvin(\DuG)\otimes\Contvin(\GDrin{\bichar}))\) (see~\cite{Roy:Codoubles}*{Theorem 5.1(iii)}). 
   Using the first equality in~\eqref{eq:univ-rep-codoub} we have
   \begin{align*}
    \multunit[\GDrin{\bichar}]
    =(\Id_{\DuG}\otimes\Lambda_{\GDrin{\bichar}})(\wW^{\GDrin{\bichar}})
    &= ((\Id_{\DuG}\otimes\theta)(\multunit[\GH]))_{23}(\Id_{\DuG}\otimes\rho)(\multunit[\G]))_{13}\\
    &=(\Id_{\DuG[H]}\otimes\Id_{\DuG}\otimes\Lambda_{\GDrin{\bichar}})
    (\wW^{\GDrin{\bichar}})\\
    &=((\Id_{\DuG[H]}\otimes\Lambda_{\GDrin{\bichar}}\circ\theta^\univ)(\wW^{\GH}))_{23}
         ((\Id_{\DuG}\otimes\Lambda_{\GDrin{\bichar}}\circ\rho^\univ  )(\wW^{\G}))_{13}
   \end{align*}
   in~\(\U(\Contvin(\DuG[H])\otimes\Contvin(\DuG)\otimes\Contvin(\GDrin{\bichar}))\). The last 
   equation implies
   \begin{align*}
   & ((\Id_{\DuG[H]}\otimes\Lambda_{\GDrin{\bichar}}\circ\theta^\univ)(\wW^{\GH}))^{*}_{23}
   ((\Id_{\DuG}\otimes\theta)(\multunit[\GH]))_{23}\\
   &= ((\Id_{\DuG}\otimes\Lambda_{\GDrin{\bichar}}\circ\rho^\univ  )(\wW^{\G}))_{13}
      (\Id_{\DuG}\otimes\rho)(\multunit[\G]))^{*}_{13}
   \end{align*}
  Clearly, the first leg in the term of the left hand side and the second leg in the term of the right hand side of the above equation 
  are trivial. Hence, both the terms are equal to~\(1_{\DuG[H]}\otimes 1_{\DuG}\otimes u\) for some element~\(u\in\U(\Contvin(\GDrin{\bichar}))\). In particular, we have 
  \begin{equation}
   \label{eq:bichar-equal}
     (\Id_{\DuG}\otimes\Lambda_{\GDrin{\bichar}}\circ\rho^\univ  )(\wW^{\G})=(1_{\DuG}\otimes u)
      (\Id_{\DuG}\otimes\rho)(\multunit[\G])
      \qquad\text{in~\(\U(\Contvin(\DuG)\otimes\Contvin(\GDrin{\bichar}))\).}
  \end{equation}
  Since~\(\wW^{\G}\) is the (universal) left representation of~\(\DuG\) in~\(\Contvin^\univ(\G)\), the unitary 
  \(((\Id_{\DuG}\otimes\Lambda_{\GDrin{\bichar}}\circ\rho^\univ  )(\wW^{\G})\) is also a representation of~\(\DuG\)
   in~\(\Contvin(\GDrin{\bichar})\). Also, \((\Id_{\DuG}\otimes\rho)(\multunit[\G])\) is a left representation 
  of~\(\DuG\) in~\(\Contvin(\GDrin{\bichar})\). These facts force \(u=1_{\GDrin{\bichar}}\). Now~\((\Id_{\DuG}\otimes\rho)(\multunit[\G])\) is the bicharacter induced by the Hopf~\Star{}homomorphism~\(\rho\). Then~\eqref{eq:bichar-equal} and~\eqref{eq:bicha_univmor} 
  show that~\(\rho^\univ\) is a Hopf~\Star{}homomorphism which is the unique universal 
  lift of~\(\rho\). Slicing the first leg of the both sides of~\eqref{eq:bichar-equal} with~\(\omega\in\Contvin(\DuG)'\) we get 
  \(\Lambda_{\GDrin{\bichar}}\circ\rho^\univ=\rho\circ\Lambda_{\G}\). The rest of the proof follows in a similar way.
 \end{proof}
 
 \subsection{Coaction on the twisted tensor products}
  \label{subsec:coaction-on-twisted}
   
  As a prelude to the results proved in the next section, we  show that  there is a unique coaction~\(\gamma^\univ_{A}\bowtie_{\bichar}\gamma^\univ_{B}\in\Mor
  (A\boxtimes_{\bichar}B,A\boxtimes_{\bichar}B\otimes\Contvin^\univ(\GDrin{\bichar}))\) satisfying some natural conditions.
  
 \begin{lemma}
  \label{lemm:Coact-univ-Drinf}
  Let~\((A,\gamma_{A})\) and \((B,\gamma_{B})\) be \(\G\)\nb- and \(\G[H]\)\nb-\(\Cst\)\nb-algebras such that 
  \(\gamma_{A}\) and~\(\gamma_{B}\) are injective. Let~\(\gamma^\univ_{A}\) and~\(\gamma^\univ_{B}\) be the respective 
  universal normal coactions. Then there is a unique coaction~\(\gamma^\univ_{A}\bowtie_{\bichar}\gamma^\univ_{B}\in\Mor
  (A\boxtimes_{\bichar}B,A\boxtimes_{\bichar}B\otimes\Contvin^\univ(\GDrin{\bichar}))\) such that 
  \begin{equation}
   \label{eq:uniq-ext}
   \gamma^\univ_{A}\bowtie_{\bichar}\gamma^\univ_{B}(j_{A}(a))=(j_{A}\otimes\rho^\univ)(\gamma_{A}^\univ(a)), 
   \qquad
   \gamma^\univ_{B}\bowtie_{\bichar}\gamma^\univ_{B}(j_{B}(b))=(j_{B}\otimes\theta^\univ)(\gamma_{B}^\univ(b))
  \end{equation}
  for all~\(a\in A\), \(b\in B\).
 \end{lemma}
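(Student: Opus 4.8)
The strategy is to build the coaction directly on the two copies $j_A(A)$ and $j_B(B)$ and then assemble them into a single morphism by the universal property of the twisted tensor product; uniqueness is then automatic. Indeed, since $A\boxtimes_{\bichar}B=j_{A}(A)j_{B}(B)$ is generated as a \(\Cst\)\nb-algebra by $j_A(A)$ and $j_B(B)$, any morphism out of it is determined by its values on these images, so at most one coaction can satisfy~\eqref{eq:uniq-ext}.

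For existence, consider the morphisms
\[
\Phi_A\defeq(j_A\otimes\rho^\univ)\circ\gamma^\univ_A,
\qquad
\Phi_B\defeq(j_B\otimes\theta^\univ)\circ\gamma^\univ_B,
\]
both landing in $\Mult\bigl((A\boxtimes_{\bichar}B)\otimes\Contvin^\univ(\GDrin{\bichar})\bigr)$ and each a composition of morphisms, hence a morphism. The key point is to check that the pair $(\Phi_A,\Phi_B)$ obeys the $\bichar$\nb-braiding relation characterising morphisms out of $A\boxtimes_{\bichar}B$: fixing faithful covariant representations and the unitary $Z$ of~\eqref{eq:def-Z}, one has to verify that $\Phi_A(a)$ and $\Phi_B(b)$ intertwine through $Z$ exactly as $j_A(a)$ and $j_B(b)$ do inside $A\boxtimes_{\bichar}B$. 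Granting this, the universal property of the twisted tensor product (\cite{Meyer-Roy-Woronowicz:Twisted_tensor}*{Theorems~4.1 and~4.2}) produces a unique morphism $\gamma^\univ_A\bowtie_{\bichar}\gamma^\univ_B$ with $(\gamma^\univ_A\bowtie_{\bichar}\gamma^\univ_B)\circ j_A=\Phi_A$ and $(\gamma^\univ_A\bowtie_{\bichar}\gamma^\univ_B)\circ j_B=\Phi_B$, that is, satisfying~\eqref{eq:uniq-ext}.

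It then remains to verify the coaction axioms. The action equation~\eqref{eq:right_action} I would check on the generators: applying $(\Id\otimes\Comult[\GDrin{\bichar}]^\univ)$ and $((\gamma^\univ_A\bowtie_{\bichar}\gamma^\univ_B)\otimes\Id)$ after $\gamma^\univ_A\bowtie_{\bichar}\gamma^\univ_B$ to $j_A(a)$, both sides collapse to the same expression once one uses the action equation for $\gamma^\univ_A$ together with the Hopf identity $\Comult[\GDrin{\bichar}]^\univ\circ\rho^\univ=(\rho^\univ\otimes\rho^\univ)\circ\Comult[\G]^\univ$ from Lemma~\ref{lem:hopf-mor}; the computation on $j_B(b)$ is identical with $\theta^\univ$ and $\gamma^\univ_B$. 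The Podle\'s condition~\eqref{eq:Podles_cond} follows from the corresponding conditions for $\gamma^\univ_A$ and $\gamma^\univ_B$ together with the density $\Contvin^\univ(\GDrin{\bichar})=\rho^\univ(\Contvin^\univ(\G))\theta^\univ(\Contvin^\univ(\G[H]))$ established above, exactly as in the reduced case of~\cite{Roy:Codoubles}*{Lemma~6.3}.

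The main obstacle is the compatibility check in the existence step: one must show that the twisted images $\Phi_A(A)$ and $\Phi_B(B)$ satisfy the same $\bichar$\nb-braiding as $j_A(A)$ and $j_B(B)$. This is where the universal Drinfeld commutation relation~\eqref{eq:Univ-Drinfpair} is used, and the argument runs parallel, at the universal level, to the chain of equalities in the proof of Lemma~\ref{lemm:injec-Drinact}, with $\corep{U}^A,\corep{U}^B$ replaced by the semi-universal unitaries $\wW^{\G},\wW^{\G[H]}$ and $\rho,\theta$ by $\rho^\univ,\theta^\univ$. Alternatively, since $\gamma_{A}\bowtie_{\bichar}\gamma_{B}$ is injective by Lemma~\ref{lemm:injec-Drinact}, one may instead obtain the coaction as the unique normal lift supplied by~\cite{Roy-Timmermann:Max_twisted_tensor}*{Theorem~A.2} and then identify it with~\eqref{eq:uniq-ext}: Lemma~\ref{lem:hopf-mor} gives $\Lambda_{\GDrin{\bichar}}\circ\rho^\univ=\rho\circ\Lambda_{\G}$ and $\Lambda_{\GDrin{\bichar}}\circ\theta^\univ=\theta\circ\Lambda_{\G[H]}$, whence $(\Id\otimes\Lambda_{\GDrin{\bichar}})$ applied to the right-hand sides of~\eqref{eq:uniq-ext} returns $\gamma_{A}\bowtie_{\bichar}\gamma_{B}$ on $j_A(a)$ and $j_B(b)$, as in~\eqref{eq:GDrin_act}.
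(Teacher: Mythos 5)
Your proposal correctly identifies the two available strategies, and your ``alternative'' route is in fact the one the paper takes: use Lemma~\ref{lemm:injec-Drinact} to see that $\gamma_{A}\bowtie_{\bichar}\gamma_{B}$ is injective, take its unique normal lift to a coaction of $\Contvin^\univ(\GDrin{\bichar})$, and then identify that lift with the formulas~\eqref{eq:uniq-ext}. However, your justification of the identification step has a genuine gap. You argue that applying $\Id\otimes\Lambda_{\GDrin{\bichar}}$ to the right-hand sides of~\eqref{eq:uniq-ext} returns $\gamma_{A}\bowtie_{\bichar}\gamma_{B}$, and that the normal lift reduces to the same thing. But $\Lambda_{\GDrin{\bichar}}$ is not injective unless $\GDrin{\bichar}$ is coamenable, so agreement after reduction does not force the two candidate values in $\Mult(A\boxtimes_{\bichar}B\otimes\Contvin^\univ(\GDrin{\bichar}))$ to coincide. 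The paper closes exactly this gap with the long computation in its proof: it applies $\gamma_{A}\bowtie_{\bichar}\gamma_{B}\otimes\Id_{\GDrin{\bichar}^\univ}$ to both $\gamma^\univ_{A}\bowtie_{\bichar}\gamma^\univ_{B}(j_{A}(a))$ and $(j_{A}\otimes\rho^\univ)(\gamma^\univ_{A}(a))$, shows the results agree using the coaction identity, the compatibility~\eqref{eq:Univ_lift_comult} of $\Comult[\GDrin{\bichar}]^\univ$ with its reduced--universal version, and Lemma~\ref{lem:hopf-mor}, and only then invokes injectivity of $\gamma_{A}\bowtie_{\bichar}\gamma_{B}$ to conclude. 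Some argument of this kind is indispensable.

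Your primary route (assembling $\Phi_A$ and $\Phi_B$ via a universal property of $A\boxtimes_{\bichar}B$) also leaves its hardest step --- the $\bichar$\nb-braiding compatibility of $(\Phi_A,\Phi_B)$ --- explicitly unproved (``granting this''). Moreover, Theorems~4.1 and~4.2 of \cite{Meyer-Roy-Woronowicz:Twisted_tensor} furnish a faithful Hilbert-space representation built from covariant representations and a Heisenberg pair for the \emph{reduced} algebras; the device of \cite{Roy:Codoubles}*{Lemma 6.3}, namely forming the new Heisenberg pair $(\tilde{\alpha},\tilde{\beta})=((\alpha\otimes\rho)\circ\Comult[\G],(\beta\otimes\theta)\circ\Comult[\GH])$, does not transfer verbatim to $\rho^\univ$ and $\theta^\univ$, which are defined on $\Contvin^\univ(\G)$ and $\Contvin^\univ(\G[H])$ rather than on $\Contvin(\G)$ and $\Contvin(\G[H])$. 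So, as written, neither branch of the proposal actually establishes existence of the coaction with the stated properties; the missing content in both cases is precisely the computation the paper performs.
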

  \begin{proof}
 Since~\(\gamma_{A}\) and~\(\gamma_{B}\) are injective,  the canonical coaction 
 \(\gamma_{A}\bowtie_{\bichar}\gamma_{B}\) defined by~\eqref{eq:GDrin_act} is injective by Lemma~\ref{lemm:injec-Drinact}. Hence, there is a unique 
 normal coaction of~\(\Contvin^\univ(\GDrin{\bichar})\), denoted 
 by~\(\gamma^\univ_{A}\bowtie_{\bichar}\gamma^\univ_{B}\), such that 
 $$(\Id_{A\boxtimes_{\bichar}B}\otimes\Lambda_{\GDrin{\bichar}}) \circ \gamma^\univ_{A}\bowtie_{\bichar}\gamma^\univ_{B}
 =\gamma_{A}\bowtie_{\bichar}\gamma_{B}.$$
 Therefore, by~\eqref{eq:GDrin_act}, for $a \in A$ we have 
 \begin{align*}
  (\Id_{A\boxtimes_{\bichar}B}\otimes\Lambda_{\GDrin{\bichar}})(\gamma^\univ_{A}\bowtie_{\bichar}\gamma^\univ_{B}(j_{A}(a)))
 &=\gamma_{A}\bowtie_{\bichar}\gamma_{B}(j_{A}(a))
 =(j_{A}\otimes\rho)(\gamma_{A}(a)) 
 \end{align*}
  where the last equality uses  Lemma~\ref{lem:hopf-mor} and the definition of induced coactions.
  
  Let~\(\Delta_{\GDrin{\bichar}}^{\red,\univ}\in\Mor(\Contvin(\GDrin{\bichar}),\Contvin(\GDrin{\bichar})\otimes\Contvin^\univ(\GDrin{\bichar}))\) denote the universal lift of~\(\Comult[\GDrin{\bichar}]\) while 
  viewed as coaction of~\(\Contvin({\GDrin{\bichar}})\) on~\(\Contvin(\GDrin{\bichar})\). By virtue of 
  \cite{Roy-Timmermann:Max_twisted_tensor}*{Proposition 4.8}, \(\Delta_{\GDrin{\bichar}}^{\red,\univ}\) 
  satisfies the following condition
  \begin{equation}
   \label{eq:Univ_lift_comult}
   (\Lambda_{\GDrin{\bichar}}\otimes\Id_{\GDrin{\bichar}})\circ\Comult[\GDrin{\bichar}]^{\univ}
   =\Delta_{\GDrin{\bichar}}^{\red,\univ}\circ\Lambda_{\GDrin{\bichar}}.
  \end{equation}
  By~\eqref{eq:GDrin-comult} and repeated application of \eqref{eq:Univ_lift_comult}, \eqref{eq:GDrin_act} and 
  Lemma~\ref{lem:hopf-mor} gives
  \begin{align*}   
  & (\gamma_{A}\bowtie_{\bichar}\gamma_{B}\otimes\Id_{\GDrin{\bichar}^\univ})
     (\gamma^\univ_{A}\bowtie_{\bichar}\gamma^\univ_{B}(j_{A}(a)))\\
  &=(\Id_{A\boxtimes B}\otimes\Lambda_{\GDrin{\bichar}}\otimes\Id_{\GDrin{\bichar}})(
      (\gamma^\univ_{A}\bowtie_{\bichar}\gamma^\univ_{B}\otimes\Id_{\GDrin{\bichar}})
     (\gamma^\univ_{A}\bowtie_{\bichar}\gamma^\univ_{B}(j_{A}(a))))\\
  &= \bigl(\Id_{A\boxtimes B}\otimes(\Lambda_{\GDrin{\bichar}^\univ}\otimes\Id_{\GDrin{\bichar}})\circ\Comult[\GDrin{\bichar}]^\univ\bigr)
      (\gamma^\univ_{A}\bowtie_{\bichar}\gamma^\univ_{B}(j_{A}(a)))\\
  &= (\Id_{A\boxtimes B}\otimes\Delta_{\GDrin{\bichar}}^{\red,\univ})
      (\gamma_{A}\bowtie_{\bichar}\gamma_{B}(j_{A}(a)))\\
  &= (\Id_{A\boxtimes B}\otimes\Delta_{\GDrin{\bichar}}^{\red,\univ})
     ((j_{A}\otimes\rho)(\gamma_{A}(a))\\
  &=(\Id_{A\boxtimes B}\otimes\Delta_{\GDrin{\bichar}}^{\red,\univ}\circ\Lambda_{\GDrin{\bichar}})
      ((j_{A}\otimes\rho^\univ)\gamma^\univ_{A}(a))\\
  &=\bigl(\Id_{A\boxtimes B}\otimes(\Lambda_{\GDrin{\bichar}}\otimes\Id_{\GDrin{\bichar}})\circ
       \Comult[\GDrin{\bichar}]^\univ\circ\rho^\univ\bigr)
       ((j_{A}\otimes\Id_{\Contvin(\G^\univ)})\gamma^\univ_{A}(a))\\
  &= \bigl(j_{A}\otimes\Lambda_{\GDrin{\bichar}}\circ\rho^\univ\otimes\rho^\univ\bigr)
       ((\Id_{A}\otimes\Comult[\G^\univ])(\gamma^\univ_{A}(a)))\\
  &= \bigl((j_{A}\otimes\rho\circ\Lambda_{\G})\circ\gamma^\univ_{A}\otimes\rho^\univ\bigr)(\gamma^\univ_{A}(a))\\
  &=((j_{A}\otimes\rho)\gamma_{A}\otimes\rho^\univ)(\gamma^\univ_{A}(a))
    =(\gamma_{A}\bowtie_{\bichar}\gamma_{B}\otimes\Id_{\GDrin{\bichar}^\univ})
       \bigl((j_{A}\otimes\rho^\univ)(\gamma^\univ_{A}(a)\bigr)).
  \end{align*}
  Now, injectivity of~\(\gamma_{A}\bowtie_{\bichar}\gamma_{B}\) gives 
  \(\gamma^\univ_{A}\bowtie_{\bichar}\gamma^\univ_{B}(j_{A}(a))=(j_{A}\otimes\rho^\univ)\gamma^\univ_{A}(a)\) 
  for all~\(a\in A\). The second part of~\eqref{eq:uniq-ext} can be shown similarly.
 \end{proof}
 
\subsection{Universal property of~\(\Contvin^\univ(\GDrin{\bichar})\)}
 \label{subsec:universal property}
 Let $(\gamma^\univ_{A},A)$ be a coaction of \(\Contvin^\univ(\G)\) on a \(\Cst\)\nb-algebra $A$; in other words $\gamma^\univ_A$ is an action of $\G$ on the universal level. 

 In the spirit of~\cite{Goswami-Roy:Faithful_act_LCQG}*{Definition 4.2}, we say 
that the action  \(\gamma^\univ_{A}\) is \emph{faithful} if the \Star{}algebra generated by the set 
 \(\{(\omega_{1}\otimes\Id_{\G^\univ})(\gamma^\univ_{A}(a))\mid \omega_{1}\in A', a \in A\}\) is 
 strictly dense in~\(\Mult(\Contvin^\univ(\G))\).  If~\(\G\) is compact then we get the usual 
 definition of faithfulness of the action~\(\gamma_{A}^\univ\): \Star{}algebra generated by the 
 set~\(\{(\omega_{1}\otimes\Id_{\G})(\gamma^\univ_{A}(a))\mid \omega_{1}\in A', a \in A\}\) 
 is norm dense in~\(\Cont^\univ(\G)\).

\begin{theorem}
 \label{the:UnivProp-Drinf}
 Let $\G$, $\G[H]$ be locally compact quantum groups and let $A, B$ be $\Cst$\nb-algebras.
 Let~\((A,\gamma^\univ_{A})\), 
 \((B,\gamma^\univ_{B})\), be an action of \(\G\) on $A$ on the universal level, respectively an action of \(\G[H]\) on $B$ on the universal level. Assume that~\(\gamma_{A}^\univ\) and~\(\gamma_{B}^\univ\) are faithful and normal. 

 Suppose that \(\G[I]\) is a locally compact quantum group and
 \begin{enumerate}
  \item there is a coaction~\(\gamma\in\Mor(A\boxtimes_{\bichar}B, A\boxtimes_{\bichar}B\otimes\Contvin^\univ(\G[I]))\) 
  of~\(\Contvin^\univ(\G[I])\) on~\(A\boxtimes_{\bichar}B\);
  \item there are Hopf~\Star{}homomorphisms~\(\rho_{1}\in\Mor(\Contvin^\univ(\G),\Contvin^\univ(\G[I]))\) and 
  \(\theta_{1}\in\Mor(\Contvin^\univ(\G[H]),\Contvin^\univ(\G[I]))\) such that 
  \[
   \gamma \circ j_{A}=(j_{A}\otimes\rho_{1})\circ \gamma_{A}^\univ, 
   \qquad 
   \gamma \circ j_{B}=(j_{B}\otimes\theta_{1})\circ\gamma_{B}^\univ.
  \]
 \end{enumerate} 
Then there is a unique Hopf~\Star{}homomorphism~\(\Psi\in\Mor(\Contvin^\univ(\GDrin{\bichar}),\Contvin^\univ(\G[I]))\) such that~\(\Psi\circ\rho^\univ=\rho_{1}\) and \(\Psi\circ\theta^\univ=\theta_{1}\). In particular, this implies that \((\Id_{A\boxtimes_{\bichar}B}\otimes\Psi)\circ\gamma^\univ_{A}\bowtie\gamma^\univ_{B}\circ j_{A}=\gamma\circ j_{A}\) and \((\Id_{A\boxtimes_{\bichar}B}\otimes\Psi)\circ\gamma^\univ_{A}\bowtie\gamma^\univ_{B}\circ j_{B}=\gamma\circ j_{B}\).
\end{theorem}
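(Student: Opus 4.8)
The plan is to obtain $\Psi$ from the universal property of the quantum codouble $\GCodb{\bichar}$, in the same spirit as the construction of $(\rho^\univ,\theta^\univ)$. Recall that $\Contvin^\univ(\GDrin{\bichar})=\rho^\univ(\Contvin^\univ(\G))\theta^\univ(\Contvin^\univ(\G[H]))$ and that its universal representation decomposes as $\wW^{\GDrin{\bichar}}=\corep{U}^1_{23}\corep{U}^2_{13}$, with $\corep{U}^1=(\Id_{\DuG}\otimes\rho^\univ)(\wW^\G)$ and $\corep{U}^2=(\Id_{\DuG[H]}\otimes\theta^\univ)(\wW^{\G[H]})$ satisfying the $\bichar$\nb-Drinfeld relation~\eqref{eq:univ-rep-codoub}. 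Accordingly I would set
\[
\corep{X}\defeq(\Id_{\DuG}\otimes\rho_1)(\wW^\G),\qquad
\corep{Y}\defeq(\Id_{\DuG[H]}\otimes\theta_1)(\wW^{\G[H]}),
\]
which are representations of $\DuG$ and $\DuG[H]$ in $\Contvin^\univ(\G[I])$ because $\rho_1,\theta_1$ are morphisms and $\wW^\G,\wW^{\G[H]}$ are bicharacters. The entire theorem then reduces to establishing the single commutation relation
\[
\bichar_{12}\,\corep{X}_{13}\,\corep{Y}_{23}=\corep{Y}_{23}\,\corep{X}_{13}\,\bichar_{12}
\qquad\text{in}\quad \U(\Contvin(\DuG)\otimes\Contvin(\DuG[H])\otimes\Contvin^\univ(\G[I])).
\]

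Granting this relation, assembling $(\corep{X},\corep{Y})$ into a representation of $\GCodb{\bichar}$ exactly as in~\eqref{eq:univ-rep-codoub} and invoking the universal property of $\wW^{\GDrin{\bichar}}$ (that is,~\eqref{eq:univ_dumaxcorep} for $\GDrin{\bichar}$, via \cite{Roy:Codoubles}*{Proposition 7.9}) produces a unique $\Psi\in\Mor(\Contvin^\univ(\GDrin{\bichar}),\Contvin^\univ(\G[I]))$ with $(\Id\otimes\Psi)(\corep{U}^1)=\corep{X}$ and $(\Id\otimes\Psi)(\corep{U}^2)=\corep{Y}$. Reading the first identity as $(\Id\otimes\Psi\circ\rho^\univ)(\wW^\G)=(\Id\otimes\rho_1)(\wW^\G)$ and applying the uniqueness clause of~\eqref{eq:univ_dumaxcorep} for $\G$ forces $\Psi\circ\rho^\univ=\rho_1$, and similarly $\Psi\circ\theta^\univ=\theta_1$; since these pin $\Psi$ down on the generating set $\rho^\univ(\Contvin^\univ(\G))\theta^\univ(\Contvin^\univ(\G[H]))$, uniqueness of $\Psi$ follows at once. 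That $\Psi$ is a Hopf~\Star{}homomorphism is routine: feeding the comultiplication formula $\Comult[\GDrin{\bichar}]^\univ(\rho^\univ(x)\theta^\univ(y))=(\rho^\univ\otimes\rho^\univ)(\Comult[\G]^\univ(x))(\theta^\univ\otimes\theta^\univ)(\Comult[\GH]^\univ(y))$ established above into $(\Psi\otimes\Psi)$ and using that $\rho_1,\theta_1$ are Hopf~\Star{}homomorphisms yields $(\Psi\otimes\Psi)\circ\Comult[\GDrin{\bichar}]^\univ=\Delta^\univ_{\G[I]}\circ\Psi$ on generators. The closing ``in particular'' assertion is then immediate from~\eqref{eq:uniq-ext} and hypothesis~(2): $(\Id\otimes\Psi)\circ(\gamma^\univ_A\bowtie_\bichar\gamma^\univ_B)\circ j_A=(j_A\otimes\Psi\circ\rho^\univ)\circ\gamma^\univ_A=(j_A\otimes\rho_1)\circ\gamma^\univ_A=\gamma\circ j_A$, and symmetrically on $j_B$.

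The crux, and the only step requiring the coaction $\gamma$ together with the faithfulness of $\gamma^\univ_A,\gamma^\univ_B$, is the derivation of the displayed Drinfeld relation for $(\corep{X},\corep{Y})$. Conceptually, the braiding between $j_A(A)$ and $j_B(B)$ inside $A\boxtimes_\bichar B$ is governed, through the defining Heisenberg\nb-pair relation~\eqref{eq:V-Heispair}, precisely by $\bichar$; applying the \Star{}homomorphism $\gamma$ and substituting $\gamma\circ j_A=(j_A\otimes\rho_1)\circ\gamma^\univ_A$ and $\gamma\circ j_B=(j_B\otimes\theta_1)\circ\gamma^\univ_B$ transports this braiding to a $\bichar$\nb-twisted commutation between $\rho_1(\Contvin^\univ(\G))$ and $\theta_1(\Contvin^\univ(\G[H]))$. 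To make this precise I would evaluate $\corep{X}$ and $\corep{Y}$ inside a faithful covariant representation of $(A\boxtimes_\bichar B,\gamma)$ assembled from faithful covariant representations of $(A,\gamma^\univ_A)$ and $(B,\gamma^\univ_B)$, reusing the intertwining unitary $Z$ of~\eqref{eq:def-Z} and running the computation from the proof of Lemma~\ref{lemm:injec-Drinact} in the reverse direction, so that~\eqref{eq:V-Heispair} turns into the required relation once the $A\boxtimes_\bichar B$\nb-leg is removed. Faithfulness is exactly what licenses that removal: the \Star{}algebras generated by the slices $(\omega\otimes\Id)(\gamma^\univ_A(a))$ and $(\omega\otimes\Id)(\gamma^\univ_B(b))$ are dense in $\Contvin^\univ(\G)$ and $\Contvin^\univ(\G[H])$, and since $j_A,j_B$ are injective, slicing $\gamma$ against functionals on $A\boxtimes_\bichar B$ recovers $\rho_1$ and $\theta_1$ on these dense subalgebras, so an identity valid after application of $\gamma$ descends to the sought relation for $(\corep{X},\corep{Y})$. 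The delicate bookkeeping lies in tracking which legs carry the reduced Heisenberg representations $(\alpha,\beta)$ used to build $j_A,j_B$ and which carry the universal maps $\rho_1,\theta_1$, and in checking that the passage between the reduced actions $\gamma_A,\gamma_B$ defining $j_A,j_B$ and the universal actions $\gamma^\univ_A,\gamma^\univ_B$ of the hypotheses (through their normality) obstructs neither the covariant representation nor the final slicing.
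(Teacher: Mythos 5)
Your overall architecture is exactly the paper's: reduce everything to the single commutation relation $\bichar_{12}\wW^{\G}_{1\rho_{1}}\wW^{\G[H]}_{2\theta_{1}}=\wW^{\G[H]}_{2\theta_{1}}\wW^{\G}_{1\rho_{1}}\bichar_{12}$, feed the resulting left representation of $\GCodb{\bichar}$ into \cite{Roy:Codoubles}*{Proposition 7.9} to obtain $\Psi$, and then deduce $\Psi\circ\rho^\univ=\rho_1$, $\Psi\circ\theta^\univ=\theta_1$, uniqueness, the Hopf property, and the ``in particular'' clause. That part of your write-up is fine (with one caveat below). The problem is that the step you correctly identify as the crux is only gestured at, and the mechanism you sketch for it does not obviously work. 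A ``faithful covariant representation of $(A\boxtimes_{\bichar}B,\gamma)$'' presupposes a unitary corepresentation of $\Contvin^\univ(\G[I])$ implementing $\gamma$ on a Hilbert space, and no such implementation is among the hypotheses; likewise, ``running the computation of Lemma~\ref{lemm:injec-Drinact} in reverse'' is not available, because that computation starts from the already-known Drinfeld relation for $(\rho,\theta)$, whereas here the analogous relation for $(\rho_1,\theta_1)$ is precisely what must be produced. What actually makes the paper's argument go is a device you never invoke: a $\bichar$\nb-\emph{anti}\nb-Heisenberg pair $(\bar\alpha,\bar\beta)$ together with the commutation identity $[(j_{A}\otimes\bar{\alpha})(\gamma_{A}(a)),(j_{B}\otimes\bar{\beta})(\gamma_{B}(b))]=0$ from \cite{Roy-Timmermann:Max_twisted_tensor}*{Proposition 5.10}. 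Applying the \Star{}homomorphism $\gamma$ to that identity, substituting hypothesis (2), and using faithfulness of $\gamma^\univ_A,\gamma^\univ_B$ yields commuting operators $\alpha'(x),\beta'(y)$; the Heisenberg relation \eqref{eq:V-Heispair} and the anti-Heisenberg relation then cancel against each other to isolate $\bichar_{12}\wW^{\G}_{1\rho_{1}}\wW^{\G[H]}_{2\theta_{1}}\bichar_{12}^{*}$, giving the desired relation. Without the anti-Heisenberg pair (or an equivalent substitute) there is no clean algebraic identity between $j_A(A)$ and $j_B(B)$ to which one can apply $\gamma$, so your ``delicate bookkeeping'' conceals the actual missing idea.

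A secondary, smaller gap: Proposition 7.9 of \cite{Roy:Codoubles} gives you a morphism $\Psi$ satisfying $(\Id\otimes\Id\otimes\Psi)(\wW^{\G}_{1\rho^\univ}\wW^{\G[H]}_{2\theta^\univ})=\wW^{\G}_{1\rho_{1}}\wW^{\G[H]}_{2\theta_{1}}$ as an identity of \emph{products}, not factorwise as you assert. To pass from the product identity to $(\Id\otimes\Psi\circ\rho^\univ)(\wW^{\G})=(\Id\otimes\rho_{1})(\wW^{\G})$ one must first separate legs (the two sides of the rearranged equation have trivial first, respectively second, legs, hence both equal $1\otimes 1\otimes u$ for some $u\in\U(\Contvin^\univ(\G[I]))$) and then argue $u=1$ by comparing two left representations of $\DuG$, exactly as in the proof of Lemma~\ref{lem:hopf-mor}. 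This step should be included rather than read off.
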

\begin{proof}
 Denote the induced coactions of~\(\Contvin(\G)\) and~\(\Contvin(\G[H])\) on~\(A\) and~\(B\) by 
 \(\gamma_{A}\) and~\(\gamma_{B}\), respectively. Since \(\gamma^\univ_{A}\) and \(\gamma^\univ_{B}\) 
 are normal, \(\gamma_{A}\) and~\(\gamma_{B}\) are injective.
 
 Let~\((\alpha,\beta)\) be a~\(\bichar\)\nb-Heisenberg pair on a Hilbert space~\(\Hils\). Then using the definition of 
 \(j_{A}\) we compute ($a \in A$)
 \begin{align*}
  \gamma(j_{A}(a)) 
    =(j_{A}\otimes\rho_{1})(\gamma_{A}^\univ(a))
  &=\bigl(((\Id_{A}\otimes\alpha)\gamma_{A}\otimes\rho_{1})(\gamma_{A}^\univ(a))\bigr)_{134}\\
  &=\bigl(((\Id_{A}\otimes\alpha\circ\Lambda_{\G})\gamma^\univ_{A}\otimes\rho_{1})(\gamma_{A}^\univ(a))\bigr)_{134}\\
  &=\bigl(((\Id_{A}\otimes\alpha\circ\Lambda_{\G}\otimes\rho_{1})(\Id_{A}\otimes\Comult[\G]^{\univ})(\gamma_{A}^\univ(a))\bigr)_{134}
 \end{align*} 
 Similarly, we obtain 
 $$\gamma(j_{B}(b)) =\bigl(((\Id_{B}\otimes\beta\circ\Lambda_{\G[H]}\otimes\theta_{1})((\Id_{B}\otimes\Comult[\GH]^{\univ})(\gamma_{B}^\univ(b)))\bigr)_{234}.$$
 Let~\((\bar{\alpha},\bar{\beta})\) be a~\(\bichar\)\nb-anti\nb-Heisenberg pair on a Hilbert space~\(\Hils_{1}\) (see~\cite{Meyer-Roy-Woronowicz:Twisted_tensor}*{Lemma 3.6 \& Lemma 3.8} for its existence):
 \begin{equation}
 \label{eq:V-antiHeispair}
 \multunit[\GH]_{2\bar{\beta}}\multunit[\G]_{1\bar{\alpha}}=\bichar_{12}\multunit[\G]_{1\bar{\alpha}}\multunit[\GH]_{2\bar{\beta}}
 \qquad\text{in~\(\U(\Contvin(\DuG)\otimes\Contvin(\DuG[H])\otimes\Comp(\Hils_{1}))\).}
\end{equation}
By~\cite{Roy-Timmermann:Max_twisted_tensor}*{Proposition 5.10}, \(j_{A}\) and~\(j_{B}\) satisfy the following commutation 
relation:
\[
  [(j_{A}\otimes\bar{\alpha})(\gamma_{A}(a)),(j_{B}\otimes\bar{\beta})(\gamma_{B}(b))]=0 
  \qquad\text{for all~\(a\in A\), \(b\in B\).}
\]
This implies that 
\[
  [(\gamma\circ j_{A}\otimes\bar{\alpha})(\gamma_{A}(a)),(\gamma\circ j_{B}\otimes\bar{\beta})(\gamma_{B}(b))]=0 
  \qquad\text{for all~\(a\in A\), \(b\in B\).}
\]
This is in turn equivalent to
\begin{align*}
& \Bigl[\bigl((\Id_{A}\otimes\alpha\circ\Lambda_{\G}\otimes\rho_{1}\otimes\Lambda_{\G}\circ\bar{\alpha})(\Id_{A}\otimes
(\Id_{\G}\otimes\Comult[\G^\univ])\circ\Comult[\G]^{\univ})(\gamma_{A}^\univ(a))\bigr)_{1345}, \\
 &
 \bigl((\Id_{B}\otimes\beta\circ\Lambda_{\GH}\otimes\theta_{1}\otimes\Lambda_{\GH}\circ\bar{\beta})(\Id_{B}\otimes(\Id_{\GH}\otimes\Comult[\GH^\univ])\circ\Comult[\GH]^{\univ})(\gamma_{B}^\univ(b))\bigr)_{2345}\Bigr]=0.
\end{align*}
Therefore, the following operators 
\begin{alignat*}{2}
 \alpha'(x) &\defeq (\alpha\circ\Lambda_{\G}\otimes\rho_{1}\otimes\Lambda_{\G}\circ\bar{\alpha})((\Id_{\G}\otimes\Comult[\G]^{\univ})(\Comult[\G]^{\univ}(x))),\\
 \beta'(y) &\defeq (\beta\circ\Lambda_{\GH}\otimes\theta_{1}\otimes\Lambda_{\GH}\circ\bar{\beta})((\Id_{\GH}\otimes\Comult[\GH]^{\univ})(\Comult[\GH]^{\univ}(y)))
\end{alignat*}
commute for all~\(x=(\omega_{1}\otimes\Id_{\G})(\gamma^\univ_{A}(a))\in\Mult(\Contvin^{\univ}(\G))\),~\(y=(\omega_{2}\otimes\Id_{\G[H]})(\gamma^\univ_{B}(b))\in\Mult(\Contvin^{\univ}(\G[H]))\), where~\(\omega_{1}\in A'\), \(\omega_{2}\in B'\), \(a\in A\), \(b\in B\). Now faithfulness of 
\(\gamma^\univ_{A}\) and~\(\gamma^\univ_{B}\) shows that the above operators  commute for all~\(x\in\Mult(\Contvin^\univ(\G))\) and 
\(y\in\Mult(\Contvin^\univ(\G[H]))\); hence in particular for all \(x\in\Contvin^\univ(\G)\) and \(y\in\Contvin^\univ(\G[H])\).

Equivalently, 
\begin{equation}
 \label{eq:Drinpair-aux}
\wW^{\G}_{1\alpha'}\wW^{\GH}_{2\beta'}=\wW^{\GH}_{2\beta'}\wW^{\G}_{1\alpha'}
\quad\text{in
\(\U(\Contvin(\DuG)\otimes\Contvin(\DuG[H])\otimes\Comp(\Hils)\otimes\Contvin^\univ(\G[I])\otimes\Comp(\Hils_{1}))\).}
\end{equation}
Using the properties of~\(\wW^{\G}\), \(\wW^{\G[H]}\), \(\Lambda_{\G}\), \(\Lambda_{\GH}\) and the equations~\eqref{eq:V-Heispair}, \eqref{eq:V-antiHeispair} we obtain:
\begin{align*}
    \wW^{\G}_{1\alpha'}\wW^{\GH}_{2\beta'}
 &= \multunit[\G]_{1\alpha}\wW^{\G}_{1\rho_{1}}\multunit[\G]_{1\bar{\alpha}}\multunit[\GH]_{2\beta}\wW^{\G[H]}_{2\theta_{1}}\multunit[\GH]_{2\bar{\beta}}\\
 &=\multunit[\G]_{1\alpha}\multunit[\GH]_{2\beta}\wW^{\G}_{1\rho_{1}}\wW^{\G[H]}_{2\theta_{1}}\multunit[\G]_{1\bar{\alpha}}\multunit[\GH]_{2\bar{\beta}}\\
 &=\multunit[\GH]_{2\beta}\multunit[\G]_{1\alpha}\bichar_{12}\wW^{\G}_{1\rho_{1}}\wW^{\G[H]}_{2\theta_{1}}\bichar_{12}^{*}\multunit[\GH]_{2\bar{\beta}}\multunit[\G]_{1\bar{\alpha}}.
\end{align*}
In the computation above the morphisms~\(\alpha\), \(\beta\) are acting on the third leg, \(\rho_{1}\), \(\theta_{1}\) are acting on the fourth leg and 
\(\bar{\alpha}\), \(\bar{\beta}\) are acting on the fifth leg, respectively.

Similarly, we get
\begin{align*}
    \wW_{2\beta'}\wW^{\G}_{1\alpha'}
 &=\multunit[\GH]_{2\beta}\multunit[\G]_{1\alpha}\wW^{\G[H]}_{2\rho_{1}}\wW^{\G}_{1\rho_{1}}\multunit[\GH]_{2\bar{\beta}}\multunit[\G]_{1\bar{\alpha}}.
 \end{align*}
 Then~\eqref{eq:Drinpair-aux} gives 
 \[
 \bichar_{12}\wW^{\G}_{1\rho_{1}}\wW^{\G[H]}_{2\theta_{1}}=\wW^{\G[H]}_{2\rho_{1}}\wW^{\G}_{1\rho_{1}} \bichar_{12}
\qquad\text{in \(\U(\Contvin(\DuG)\otimes\Contvin(\DuG[H])\otimes\Contvin^\univ(\G[I]))\).} 
\]
By~\cite{Roy:Codoubles}*{Proposition 7.9},~\(\wW^{\G}_{1\rho_{1}}\wW^{\G[H]}_{2\theta_{1}}\) is a left representation of~\(\GCodb{\bichar}\) in 
\(\Contvin^\univ(\G[I])\). Hence, there is a unique~\(\eta\in\Mor(\Contvin^\univ(\GDrin{\bichar}),\Contvin^\univ(\G[I]))\) such that 
\((\Id_{\DuG}\otimes\Id_{\DuG[H]}\otimes\eta)(\wW^{\G}_{1\rho^\univ}\wW^{\G[H]}_{2\theta^\univ})
 =\wW^{\G}_{1\rho_{1}}\wW^{\G[H]}_{2\theta_{1}}\), which implies 
\(((\Id_{\DuG}\otimes\eta)(\wW^{\G}_{1\rho^\univ}))((\Id_{\DuG[H]}\otimes\eta)(\wW^{\G[H]}_{2\theta^\univ}))
=\wW^{\G}_{1\rho_{1}}\wW^{\G[H]}_{2\theta_{1}}\). Equivalently, 
\[
(\wW^{\G}_{1\rho_{1}})^{*}((\Id_{\DuG}\otimes\eta)(\wW^{\G}_{1\rho^\univ}))=\wW^{\G[H]}_{2\theta_{1}}((\Id_{\DuG[H]}\otimes\eta)(\wW^{\G[H]}_{2\theta^\univ}))^{*}
\]
in~\(\U(\Contvin(\DuG)\otimes\Contvin(\DuG[H])\otimes\Contvin^\univ(\G[I]))\). 
Hence there is a unique~\(u\in\U(\Contvin^\univ(\G[I]))\) such that~\(((\Id_{\DuG}\otimes\eta\circ\rho^\univ)(\wW^{\G})=(\Id_{\DuG}\otimes\rho_{1})(\wW^{\G})(1\otimes u)\) and 
\((\Id_{\G[H]}\otimes\theta_{1})(\wW^{\G[H]})=(1\otimes u)((\Id_{\DuG[H]}\otimes\eta\circ\theta^\univ)(\wW^{\G[H]})\). Now 
\((\Id_{\G[H]}\otimes\theta_{1})(\wW^{\G[H]})\in\U(\Contvin(\DuG)\otimes\Contvin^\univ(\GDrin{\bichar}))\) is a left representation 
of~\(\DuG\) in~\(\Contvin^\univ(\GDrin{\bichar})\). Repeating the argument used in the proof of Lemma~\ref{lem:hopf-mor}
 we can conclude that \(u=1\) and this completes the proof.
\end{proof}

\section{Orthogonal filtrations of $C^*$-algebras and their quantum symmetries} 
 \label{sec:orthfilt}

The theory of quantum symmetry group of orthogonal filtrations first appeared in \cite{MR3066746}. Later, M.\ Thibault de Chanvalon extended the theory to Hilbert modules in the paper \cite{MR3158722}. 
In this section we recall the notions of orthogonal filtration of a unital $\Cst$\nb-algebra and their quantum symmetry groups as developed in these two papers. Then, under suitable conditions, we prove the existence of a canonical filtration of  twisted tensor products.

\begin{definition}[\cite{MR3158722}, Definition 2.4]
Let ~\(A\) be a unital $\Cst$\nb-algebra and let $ \tau_{A}$ be a faithful state on $ A$. An orthogonal filtration for the pair $ (A, \tau_{A}) $ is a sequence of finite dimensional subspaces $ \{ A_i \}_{i \geq 0} $ such that $ A_0 = \bc1_A, ~  {\rm Span} \cup_{i \geq 0} A_i $ is dense in $ A $ and $ \tau_{A} ( a^* b ) = 0 $ if $ a \in A_i, ~ b \in A_j $ and $ i \neq j. $  We will usually write $\Afilt$ for the triple $(A, \tau_{A}, \{ A_i \}_{i \geq 0})$ as above.
\end{definition}

\begin{remark} \label{14thfeb1}
Thibault de Chanvalon's definition replaced subspaces $A_i$ by suitable Hilbert modules. However, if we view a unital $ C^* $-algebra $ A $ as a right Hilbert module over itself and we take $ W = \bc 1_A $ and $ J $ to be the map $ a \mapsto a^*, $ then  the above definition indeed coincides with the one given in Definition 2.4 of \cite{MR3158722}. We should mention that  in the original formulation of \cite{MR3066746} it was additionally assumed that $ {\rm Span} \cup_{i \geq 0} A_i $ is a  $\ast$-algebra; at the same time the indices $i$ were allowed to come from an arbitrary set. We will occasionally use the latter framework without further comment.
\end{remark}

The following example will be crucially used throughout the rest of the article.

\begin{example} 
 \label{groupalgexample}
Let $\Gamma$ be a finitely generated discrete group endowed with a proper length function $l.$ Then the collection $ B^l_n := {\rm span} \{ \lambda_g \mid l ( g ) = n \} $, $n \geq 0$, forms a filtration for the pair $ (\Cred(\Gamma), \tau_{\Gamma} ) $ where $\tau_{\Gamma}$ is the canonical trace on $\Cred(\Gamma)$.
\end{example}

\subsection{The quantum symmetry group of an orthogonal filtration}
\begin{theorem}[\cite{MR3066746}, \cite{MR3158722}]
\label{banicaskalskitheorem}
Let $ \{ A_i \}_{i \geq 0} $ be an orthogonal filtration for a pair $ ( A, \tau_{A} ) $ as above.    Let $ {\bf \clc} (\Afilt) $ be the category with objects as pairs $ (G, \alpha ) $ where $G$ is a compact quantum group, $ \alpha $ is an action of $\G$ on $ A $ such that $ \alpha ( A_i ) \subseteq A_i \otimes_{{\rm alg}}  \C (G) $ for each $i\geq 0$, and the morphisms being CQG morphisms intertwining the respective actions. Then there exists a universal initial object in the category $ {\bf \clc} (\Afilt) $ called the quantum symmetry group 
of the filtration $\Afilt$ and denoted by $ \QISO (\Afilt) $. Moreover the action of $\QISO (\Afilt)$ on $ A $ is faithful 
\textup{(}see Subsection~\textup{\ref{subsec:universal property}} for the definition of faithfulness\textup{)}.  
\end{theorem}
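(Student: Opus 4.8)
The plan is to follow the by-now-standard construction of a universal Woronowicz algebra from generators and relations, in the style of Wang, adapted to the filtration data, and then to recover the universal and faithfulness statements from that construction. First I would pass to the GNS picture: let $\Hils=\Ltwo(A,\tau_A)$ with cyclic map $\Lambda\colon A\to\Hils$. Since $\tau_A$ is faithful, $A$ embeds in $\Bound(\Hils)$, the images $\overline{\Lambda(A_i)}$ are mutually orthogonal finite-dimensional subspaces, and their algebraic sum is dense in $\Hils$. Fix for each $i$ an orthonormal basis $(e^i_1,\dots,e^i_{n_i})$. The first observation is that any object $(G,\alpha)\in\clc(\Afilt)$ automatically preserves $\tau_A$: because $A_0=\bc1_A$ and the filtration is orthogonal, $\tau_A(a)=\langle\Lambda(1_A),\Lambda(a)\rangle$ is exactly the $A_0$-component of $a$, so the inclusions $\alpha(A_i)\subseteq A_i\otimes_{\mathrm{alg}}\C(G)$ together with $\alpha(1_A)=1_A\otimes1$ force $(\tau_A\otimes\Id)\circ\alpha=\tau_A(\cdot)\,1$. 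Consequently $\alpha$ restricts on each $\overline{\Lambda(A_i)}$ to a finite-dimensional unitary corepresentation $v^{(i)}=(v^{(i)}_{kl})$ of $G$, determined by $\alpha(e^i_l)=\sum_k e^i_k\otimes v^{(i)}_{kl}$.

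Next I would define the candidate algebra. Let $A_u$ be the universal unital $\Cst$-algebra generated by symbols $\{v^{(i)}_{kl}\}$ subject to the relations that (i) each matrix $v^{(i)}$ is unitary and its entrywise conjugate $\overline{v^{(i)}}$ is invertible, and (ii) the assignment $\alpha_u(e^i_l)=\sum_k e^i_k\otimes v^{(i)}_{kl}$ on the dense subspace $\mathcal{A}=\mathrm{Span}\bigcup_i A_i$ respects every $*$-algebraic relation holding among the $e^i_l$ inside $A$. Because each $v^{(i)}_{kl}$ is an entry of a unitary it has norm at most $1$, and this bound guarantees that the universal $\Cst$-algebra exists. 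The relations in (ii) are precisely what is needed for $\alpha_u$ to extend to a $*$-homomorphism $A\to A\otimes A_u$; note that in the present generality $\mathcal{A}$ need not be a subalgebra, so these relations must encode how products $e^i_k e^j_l\in A$ decompose against the dense span, rather than how the filtration multiplies into itself.

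I would then equip $A_u$ with the comultiplication $\Comult[A_u](v^{(i)}_{kl})=\sum_m v^{(i)}_{km}\otimes v^{(i)}_{ml}$. The central verification is that $\Comult[A_u]$ is well defined, i.e.\ that it respects (i) and (ii): relation (i) is preserved by the usual computation that the ``tensor product'' of a corepresentation with itself is again unitary, while (ii) is preserved because $\Comult[A_u]$ acts blockwise on each $v^{(i)}$ and the defining relations are homogeneous in the generators. Coassociativity is immediate on generators; the cancellation conditions follow from exhibiting the dense Hopf $*$-subalgebra spanned by the $v^{(i)}_{kl}$, whose antipode is $\kappa(v^{(i)}_{kl})=(v^{(i)}_{lk})^*$ (using unitarity of $v^{(i)}$ and invertibility of $\overline{v^{(i)}}$). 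This makes $(A_u,\Comult[A_u])$ a compact quantum group, which we call $\QISO(\Afilt)$, and $\alpha_u$ is then a filtration-preserving coaction: it is a $*$-homomorphism by construction, its coassociativity matches the definition of $\Comult[A_u]$, and the Podle\'s density condition follows from invertibility of the $v^{(i)}$ together with density of $\mathcal{A}$.

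Finally, for universality and faithfulness: given any $(G,\alpha)$, the entries of its corepresentations satisfy all the defining relations of $A_u$ (this is exactly where one uses that $\alpha$ is a genuine $\tau_A$-preserving coaction), so the universal property yields a unique unital $*$-homomorphism $\pi\colon A_u\to\C(G)$ sending generators to the corresponding entries; checking on generators shows that $\pi$ is a CQG morphism and that $(\Id_A\otimes\pi)\circ\alpha_u=\alpha$, so $\pi$ intertwines the actions, uniqueness being clear since $\pi$ is prescribed on generators. This exhibits $\QISO(\Afilt)$ as the universal initial object. Faithfulness of $\alpha_u$ is built in, since each generator is recovered as a slice $(\omega^i_k\otimes\Id)\,\alpha_u(e^i_l)$ with $\omega^i_k\in A'$, so the $*$-algebra generated by such slices is all of $A_u$, which is precisely the condition from Subsection~\ref{subsec:universal property}. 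I expect the main obstacle to be the careful formulation of the multiplicativity relations (ii) and the verification that they are compatible with $\Comult[A_u]$, in the setting where $\mathrm{Span}\bigcup_i A_i$ is \emph{not} assumed to be a $*$-subalgebra; once those relations are pinned down, the remaining Woronowicz-algebra axioms follow the classical template.
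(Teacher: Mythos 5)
First, a point of order: the paper does not prove Theorem \ref{banicaskalskitheorem} at all --- it is imported from \cite{MR3066746} and \cite{MR3158722} --- so there is no internal proof to compare against, and I can only measure your argument against those sources. Your template (GNS picture, automatic $\tau_A$\nb-preservation, unitary corepresentations $v^{(i)}$ on each $\overline{\Lambda(A_i)}$, a universal $\Cst$\nb-algebra on the matrix entries, universality and faithfulness read off on generators) is the right one and essentially reproduces the Banica--Skalski construction \emph{in the case where} $\mathrm{Span}\bigcup_i A_i$ is a \Star subalgebra of $A$. The genuine gap is exactly the point you flag and then defer: in the generality in which the theorem is stated here (de Chanvalon's definition; cf.\ Remark \ref{14thfeb1}), the span need not be a \Star algebra, and then your relations (ii) are not $\Cst$\nb-algebraic relations at all. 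A product $e^i_k e^j_l$ decomposes against the filtration only as a norm-convergent series, not a finite linear combination, so ``$\alpha_u$ respects every \Star algebraic relation among the $e^i_l$'' cannot be written as a family of polynomial identities in the generators; consequently the universal $\Cst$\nb-algebra $A_u$ is not defined, and the argument that $\Comult[A_u]$ is well defined ``because the defining relations are homogeneous'' has nothing to act on. A secondary defect of the same nature: ``$\overline{v^{(i)}}$ is invertible'' is not an admissible universal relation either (no norm bound on the inverse survives all representations); one must work with Wang's $A_u(Q_i)$ for suitable positive matrices $Q_i$ instead.

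It is worth seeing how the cited proofs close this gap. When the span is a \Star algebra (\cite{MR3066746}), products and adjoints of basis vectors are \emph{finite} combinations of basis vectors, so your relations (ii) really are finitely many polynomial relations and your argument goes through essentially verbatim. In the general case (\cite{MR3158722}) one abandons explicit relations: every object $(G,\alpha)$ of $\clc(\Afilt)$ yields a quotient of a fixed ``big'' compact quantum group (a free product of universal unitary quantum groups $A_u(Q_i)$ acting on $\bigoplus_i \overline{\Lambda(A_i)}$ preserving each summand) by a Woronowicz $\Cst$\nb-ideal, and one shows that the family of those ideals $I$ for which the induced map $A\to A\otimes \bigl((\ast_i A_u(Q_i))/I\bigr)$ extends to a \Star homomorphism satisfying the Podle\'s condition is stable under arbitrary intersections --- being a \Star homomorphism is detected through the jointly injective family of quotient maps --- so that $\QISO(\Afilt)$ is the quotient by the smallest admissible ideal. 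Your observations that every object automatically preserves $\tau_A$, that faithfulness follows by slicing $\alpha_u$ with the functionals $\tau_A\bigl((e^i_k)^*\,\cdot\,\bigr)$, and that universality is checked on generators are all correct and carry over to that framework; what your proposal does not actually establish is the existence of $A_u$ itself, which is the heart of the theorem.
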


\begin{remark} 
As mentioned in Remark \ref{14thfeb1}, the definition of  an orthogonal filtration in  \cite{MR3066746} included an additional condition, namely that  $ {\rm Span} \{ A_i : i \geq 0 \}$ is  a $\ast$-algebra. However, M.\ Thibault de Chanvalon showed (\cite{MR3158722})  that the existence of the quantum symmetry group of an orthogonal filtration can be proved without assuming this extra condition.
\end{remark}

Note that we assume throughout that the actions in our category are defined on the reduced level; in fact the construction of the quantum symmetry group in \cite{MR3066746} gives naturally an action on the universal level (which then induces the reduced action). A certain care needs then to be taken when one interprets the intertwining relation with respect to the CQG morphisms (acting on the universal level), but this can be always dealt with, for example by exploiting the purely algebraic picture of the actions (see Lemma \ref{25thjan1}).

\begin{remark} \label{prop20thmay}
Given a pair $(\G,\alpha)\in \clc(\Afilt)$ we automatically deduce that the coaction $ \alpha\in \Mor(A, A \otimes \C(\G))$ is injective.  This is because $ \alpha $  preserves the faithful state $ \tau_{A}$ as observed in \cite{MR3066746}, hence it is injective by Lemma~\ref{lemm:faithful-state}.
\end{remark}

\begin{remark} 
 \label{20thmayremark}
  For a finitely generated countable group $ \Gamma $ and a fixed word-length function $ l, $ consider the orthogonal filtration $\Bfilt:= (\Cred(\Gamma), \tau_\Gamma, \{ B^l_n \}_{n \geq 0}  ) $ of Example \ref{groupalgexample}. Then it can be easily seen that $(\hat{\Gamma}, \hat{\Delta})$ is an object of the category $ \clc (\Bfilt)$; in particular we have a morphism from $\widehat{\Gamma}$ to $\QISO(\Bfilt)$, represented by a Hopf~$^*$\nb-homomorphism $\pi_\Gamma\in \Mor( \C^\univ(\QISO(\Bfilt)), \C^*(\Gamma))$.  Moreover, in \cite{adamjyotishgroup}, it was proved that for $ \Gamma = \mathbb{Z}_n $ ($ n \in \N, n \neq 4 $) $\C (\QISO (\Cfilt)) \cong \Cst(\Gamma) \oplus \Cst(\Gamma)$.
\end{remark}

As an immediate application of Theorem \ref{banicaskalskitheorem}, one can make the following observations.

\begin{lemma} \label{25thjan1}
Let $\Afilt$ be as above and let $(G, \alpha  )$ be an object in the category $ \mathcal{C} (\Afilt).$ Then we have the following:
\begin{enumerate}
\item if $ \{ a_{ij} \mid j = 1,2, \ldots, {\rm dim} ~ ( A_i ) \} $ is a basis of $ A_i,  $ then there exist elements $ q^i_{kj} \in \textup{Pol}(\G)$ \textup{(}$i\geq 0, k,j=1,\ldots, \textup{dim} (A_i)\textup{)}$ such that
\[
\alpha ( a_{ij} ) = \sum_k a_{ik} \otimes q^i_{kj}
\qquad\text{for all~$ j,k = 1,2, \ldots, {\rm dim} ~ ( A_i ) $.}
\]
\item The action $ \alpha $ is faithful if and only if the $\Cst$\nb-algebra generated by $ \{  q^i_{kj}\mid i \geq 0, j,k = 1,2, \ldots, {\rm dim} ~ ( A_i )  \} $ is equal to $\Cont(G).$ 
\item If $ \alpha $ is a faithful action, then the canonical morphism \textup{(}in $ \mathcal{C} ( \Afilt)$\textup{)} from $ \Cont^\univ ( \QISO (\Afilt )   ) $ to $ \Cont (G)$ is surjective.
\end{enumerate}
\end{lemma}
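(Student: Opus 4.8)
My plan is to treat the three parts in sequence, as each feeds into the next. For part (1), I would exploit the defining property of an object in $\clc(\Afilt)$, namely that $\alpha(A_i) \subseteq A_i \otimes_{\textup{alg}} \C(\G)$ for every $i$. Since $\{a_{ij}\}_j$ is a basis of the finite-dimensional space $A_i$, the element $\alpha(a_{ij})$ lies in $A_i \otimes_{\textup{alg}} \C(\G)$, and so can be written uniquely in the form $\sum_k a_{ik} \otimes q^i_{kj}$ for some $q^i_{kj} \in \C(\G)$. The only nontrivial point is to check that these coefficients actually lie in the dense Hopf $*$-subalgebra $\textup{Pol}(\G)$ rather than just in $\C(\G)$. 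This follows from the general theory of actions of compact quantum groups on $\Cst$-algebras: applying the coaction identity \eqref{eq:right_action} to $a_{ij}$ and comparing coefficients against the (linearly independent) basis vectors $a_{ik}$ shows that the matrix $(q^i_{kj})_{k,j}$ is a corepresentation of $\C(\G)$, i.e. $\Comult[\G](q^i_{kj}) = \sum_l q^i_{kl}\otimes q^i_{lj}$; the matrix coefficients of any finite-dimensional corepresentation of a CQG lie in $\textup{Pol}(\G)$ by Woronowicz's theory, which gives the claim.

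For part (2), I would unwind the definition of faithfulness given in Subsection \ref{subsec:universal property}: the action is faithful precisely when the $*$-algebra generated by $\{(\omega \otimes \Id_\G)(\alpha(a)) \mid \omega \in A', a\in A\}$ is norm-dense in $\C(\G)$. Using part (1) and the density of $\textup{Span}\bigcup_i A_i$ in $A$, I would argue that the set of such slices, as $a$ ranges over the basis elements $a_{ij}$ and $\omega$ over functionals, spans exactly the linear span of the $q^i_{kj}$: indeed $(\omega\otimes\Id_\G)(\alpha(a_{ij})) = \sum_k \omega(a_{ik})\, q^i_{kj}$, and by choosing $\omega$ dual to the basis one recovers each individual $q^i_{kj}$. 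Hence the $*$-algebra generated by the slices coincides with the $*$-algebra generated by $\{q^i_{kj}\}$, and so its norm-closure is all of $\C(\G)$ if and only if the $\Cst$-algebra generated by the $q^i_{kj}$ is all of $\C(\G)$.

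For part (3), I would invoke the universality of $\QISO(\Afilt)$ from Theorem \ref{banicaskalskitheorem}: since $(\G,\alpha)$ is an object of $\clc(\Afilt)$, there is a canonical CQG morphism, realized as a Hopf $*$-homomorphism $\pi \in \Mor(\C^\univ(\QISO(\Afilt)), \C(\G))$, intertwining the universal action of $\QISO(\Afilt)$ with $\alpha$. The intertwining relation forces $\pi$ to send the canonical generators (the matrix coefficients of $\QISO(\Afilt)$) onto the corresponding $q^i_{kj}$. When $\alpha$ is faithful, part (2) tells us that these $q^i_{kj}$ generate all of $\C(\G)$ as a $\Cst$-algebra, and since a Hopf $*$-homomorphism has closed range, the image of $\pi$ is a $\Cst$-subalgebra of $\C(\G)$ containing all generators, hence equals $\C(\G)$; that is, $\pi$ is surjective.

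The main obstacle I anticipate is the bookkeeping in part (1) concerning the subtlety flagged in the remark after Theorem \ref{banicaskalskitheorem}: the quantum symmetry group's action is constructed at the universal level, while our objects carry reduced actions, so one must be careful about whether the coefficients $q^i_{kj}$ naturally live in $\textup{Pol}(\G)$ (a purely algebraic object common to both levels) and whether the corepresentation argument is being run on the correct algebra. Once one passes to the purely algebraic picture via $\textup{Pol}(\G)$, as the remark suggests, these difficulties dissolve, and the remaining arguments in parts (2) and (3) are essentially formal consequences of the definitions and of Woronowicz's structure theory for CQGs.
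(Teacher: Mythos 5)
The paper offers no written proof of this lemma: it is stated as an ``immediate application'' of Theorem \ref{banicaskalskitheorem}, so there is nothing to compare line by line. Your proposal fills in exactly the intended standard argument, and parts (2) and (3) are correct as written: the slice computation $(\omega\otimes\Id)(\alpha(a_{ij}))=\sum_k\omega(a_{ik})q^i_{kj}$ together with dual functionals identifies the generated $\Cst$\nb-algebras, and in (3) the closedness of the range of a \Star{}homomorphism between $\Cst$\nb-algebras plus the intertwining relation $(\Id\otimes\pi)\circ\gamma^{\univ}_A=\alpha$ does give surjectivity.

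The one place to tighten is part (1). The coassociativity identity $\Comult(q^i_{kj})=\sum_l q^i_{kl}\otimes q^i_{lj}$ alone does not make $(q^i_{kj})$ a corepresentation in the sense required for Woronowicz's theory (the zero matrix also satisfies it); you need nondegeneracy or invertibility of the matrix before you may conclude that its entries lie in $\Pol(\G)$. The clean way to get this, consistent with how \cite{MR3066746} proceeds, is to use that $\alpha$ preserves the faithful state $\tau_A$ (Remark \ref{prop20thmay}): the GNS inner product of $\tau_A$ turns each invariant finite-dimensional subspace $A_i$ into a unitary corepresentation, whose matrix coefficients then lie in $\Pol(\G)$ by the Peter--Weyl theory for compact quantum groups. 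With that one sentence added, the argument is complete, and your closing remark about working in the purely algebraic picture is precisely the point the paper flags after Theorem \ref{banicaskalskitheorem}.
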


\subsection{Orthogonal filtration of a twisted tensor product} \label{orthfiltsubsect}

Throughout this subsection we will work with the following notation: $ \Afilt:=( A, \tau_A, \{ A_i \}_{i \geq 0} ) $ and $\Bfilt:= ( B, \tau_B, \{ B_j \}_{j \geq 0} ) $ will denote  orthogonal filtrations of unital $\Cst$\nb-algebras $ A $ and $ B,$ $ \gamma_A $ and $ \gamma_B $ will denote the canonical actions of $\QISO(\Afilt)$ on $A$ and $\QISO(\Bfilt)$ on $B$, respectively, while \(\bichar\in\U(\Contvin(\widehat{\QISO(\Afilt )}) 
\otimes\Contvin(\widehat{\QISO(\Bfilt)})\) will be a fixed bicharacter.

Let~\((\alpha,\beta)\) be a~\(\bichar\)\nb-Heisenberg pair on~\(\Hils\). We will work with a realization of~\(A\boxtimes_{\bichar}B\) inside \(A\otimes B\otimes\Bound(\Hils)\) defined via 
embeddings~\(j_{A}\) and~\(j_{B}\) described by~\eqref{eq:def-js}.

Since \(\gamma_{A}\) preserves~\(\tau_{A}\) and~\(\gamma_{B}\) preserves~\(\tau_{B}\), we can apply  \cite{Meyer-Roy-Woronowicz:Twisted_tensor}*{Lemma 5.5} for completely positive maps to
define a functional~\(\tau_{A}\boxtimes_{\bichar}\tau_{B}\colon A\boxtimes_{\bichar}B\to\bc\)  
by 
\begin{equation}
 \label{eq:30thjaneq}  
\tau_{A}\boxtimes_{\bichar}\tau_{B}(j_{A}(a)j_{B}(b))=\tau_{A}(a)\tau_{B}(b) 
\qquad\text{for all \(~a\in A\), \(b\in B\).} 
\end{equation}
\begin{proposition} 
 \label{filtrationonbox}
The functional \(\tau_{A}\boxtimes_{\bichar}\tau_{B}\) is a faithful state on~\(A\boxtimes_{\bichar}B\) and the triple~\(\ABfilt:=(A\boxtimes_{\bichar}B,\tau_{A}\boxtimes_{\bichar}\tau_{B},  \{ j_{A}(A_{i})j_{B}(B_{j}) \}_{i,j\geq 0} )\) is an orthogonal filtration of~\(A \boxtimes_\bichar B\).
\end{proposition}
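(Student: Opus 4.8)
The plan is to realise \(\tau_{A}\boxtimes_{\bichar}\tau_{B}\) as a vector state in the concrete faithful representation supplied by Lemma~\ref{lemm:injec-Drinact}, and then to read off every filtration axiom from that picture. Since \(\gamma_{A}\) preserves the faithful state \(\tau_{A}\), its GNS triple \((\mathcal H_{A},\pi_{A},\xi_{A})\) carries a unitary corepresentation \(\corep{U}^{A}\) of \(\G\) implementing \(\gamma_{A}\), and invariance of \(\tau_{A}\) forces \(\corep{U}^{A}\) to fix \(\xi_{A}\); faithfulness of \(\tau_{A}\) makes \((\corep{U}^{A},\pi_{A})\) a faithful covariant representation, and similarly for \(B\). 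Feeding these into Lemma~\ref{lemm:injec-Drinact} I obtain a faithful representation \(\Pi\colon A\boxtimes_{\bichar}B\to\Bound(\mathcal H_{A}\otimes\mathcal H_{B})\) with \(\Pi(j_{A}(a))=\pi_{A}(a)\otimes 1\) and \(\Pi(j_{B}(b))=Z(1\otimes\pi_{B}(b))Z^{*}\). The first step is to show that the unitary \(Z\) of~\eqref{eq:def-Z} fixes both \(\xi_{A}\otimes\mathcal H_{B}\) and \(\mathcal H_{A}\otimes\xi_{B}\) pointwise: this is a direct computation applying the defining relation of \(Z\) to vectors \(\xi_{A}\otimes\eta\otimes v\) and \(\zeta\otimes\xi_{B}\otimes v\), using that \(\corep{U}^{A}_{1\alpha}\) fixes every \(\xi_{A}\otimes\eta\otimes v\) and \(\corep{U}^{B}_{2\beta}\) fixes every \(\zeta\otimes\xi_{B}\otimes v\).

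Writing \(\xi\defeq\xi_{A}\otimes\xi_{B}\), these two facts give at once \(\Pi(j_{A}(a)j_{B}(b))\xi=\Lambda_{A}(a)\otimes\Lambda_{B}(b)\), where \(\Lambda_{A},\Lambda_{B}\) are the GNS maps. Hence \(\langle\xi,\Pi(\,\cdot\,)\xi\rangle\) agrees with the functional of~\eqref{eq:30thjaneq} on the dense \Star{}subalgebra spanned by the \(j_{A}(a)j_{B}(b)\), so \(\tau_{A}\boxtimes_{\bichar}\tau_{B}=\langle\xi,\Pi(\,\cdot\,)\xi\rangle\); in particular it is a state and \((\mathcal H_{A}\otimes\mathcal H_{B},\Pi,\xi)\) is its GNS triple, with \(\xi\) cyclic because \(\Lambda_{A}(A)\) and \(\Lambda_{B}(B)\) are dense. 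The remaining filtration axioms are then immediate: \(j_{A}(A_{0})j_{B}(B_{0})=\bc 1\) since \(j_{A}(1_{A})=j_{B}(1_{B})=1\); density of \(\mathrm{Span}\bigcup_{i,j}j_{A}(A_{i})j_{B}(B_{j})\) follows from density of \(\mathrm{Span}\bigcup_{i}A_{i}\) and \(\mathrm{Span}\bigcup_{j}B_{j}\), continuity of \(j_{A},j_{B}\) and of multiplication, and \(A\boxtimes_{\bichar}B=[j_{A}(A)j_{B}(B)]\); and for \(a\in A_{i},b\in B_{j},a'\in A_{i'},b'\in B_{j'}\) one computes
\begin{equation*}
\tau_{A}\boxtimes_{\bichar}\tau_{B}\bigl((j_{A}(a)j_{B}(b))^{*}j_{A}(a')j_{B}(b')\bigr)
=\langle\Lambda_{A}(a)\otimes\Lambda_{B}(b),\,\Lambda_{A}(a')\otimes\Lambda_{B}(b')\rangle
=\tau_{A}(a^{*}a')\,\tau_{B}(b^{*}b'),
\end{equation*}
which vanishes whenever \((i,j)\neq(i',j')\) by orthogonality of the filtrations \(\Afilt\) and \(\Bfilt\).

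The one genuinely delicate point is faithfulness. Using that \(Z\) fixes \(\mathcal H_{A}\otimes\xi_{B}\), the slice \((\Id_{\mathcal H_{A}}\otimes\omega_{\xi_{B}})\circ\Pi\), with \(\omega_{\xi_{B}}=\langle\xi_{B},\,\cdot\,\xi_{B}\rangle\), lands in \(\pi_{A}(A)\) and defines a unital completely positive map \(E\colon A\boxtimes_{\bichar}B\to A\) with \(E(j_{A}(a)j_{B}(b))=\tau_{B}(b)\,a\), so that \(\tau_{A}\boxtimes_{\bichar}\tau_{B}=\tau_{A}\circ E\). As \(\tau_{A}\) is faithful it suffices to prove that \(E\) is faithful, and \(E(c^{*}c)=0\) is readily seen to be equivalent to \(\Pi(c)(\mathcal H_{A}\otimes\xi_{B})=0\); since \(\Pi\) is faithful, the proof is completed by showing that \(\mathcal H_{A}\otimes\xi_{B}\) is cyclic for the commutant \(\Pi(A\boxtimes_{\bichar}B)'\). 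The commuting operators will be produced from an anti\nb-Heisenberg pair \((\bar\alpha,\bar\beta)\) exactly as in the proof of Theorem~\ref{the:UnivProp-Drinf}, the relation \([(j_{A}\otimes\bar\alpha)(\gamma_{A}(a)),(j_{B}\otimes\bar\beta)(\gamma_{B}(b))]=0\) yielding, after slicing the auxiliary legs, a family of unitaries in \(\Pi(A\boxtimes_{\bichar}B)'\) whose orbit of \(\mathcal H_{A}\otimes\xi_{B}\) is dense. I expect this commutant construction to be the main obstacle; it is the abstract counterpart of the classical fact that the canonical conditional expectation of a reduced crossed product onto \(A\) is faithful, with \(\xi_{B}\) in the role of \(\delta_{e}\in\ell^{2}(\Gamma)\) and the commuting unitaries in the role of the right regular representation of \(\Gamma\) (cf.\ Proposition~\ref{18thjan1}).
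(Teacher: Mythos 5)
Most of your argument is correct and follows a genuinely different route from the paper's. The paper never passes to GNS representations: it invokes \cite{Meyer-Roy-Woronowicz:Twisted_tensor}*{Lemma 5.5} to identify $\tau_{A}\boxtimes_{\bichar}\tau_{B}$ with the restriction to $A\boxtimes_{\bichar}B\subset\Mult(A\otimes B\otimes\Comp(\Hils))$ of the operator-valued slice map $\tau'=\tau_{A}\otimes\tau_{B}\otimes\Id_{\Hils}$, gets faithfulness as an immediate inheritance from faithfulness of $\tau'$, and proves orthogonality by a direct computation in the Heisenberg-pair picture of $j_{A},j_{B}$ using invariance of $\tau_{A},\tau_{B}$. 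Your observation that the unitary $Z$ of \eqref{eq:def-Z} fixes $\xi_{A}\otimes\Hils_{B}$ and $\Hils_{A}\otimes\xi_{B}$ pointwise is correct (and a nice structural fact), and it does give a clean derivation of the state property, of the orthogonality relations, and of the density and $j_{A}(A_{0})j_{B}(B_{0})=\bc 1$ axioms.

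There is, however, a genuine gap in the faithfulness part, exactly where you flagged it. Writing $\tau_{A}\boxtimes_{\bichar}\tau_{B}=\tau_{A}\circ E$ with $E=(\Id_{\Hils_{A}}\otimes\omega_{\xi_{B}})\circ\Pi$ correctly reduces faithfulness to the assertion that $\Hils_{A}\otimes\xi_{B}$ is separating for $\Pi(A\boxtimes_{\bichar}B)$, and you propose to obtain this by exhibiting enough of the commutant $\Pi(A\boxtimes_{\bichar}B)'$ to act cyclically on $\Hils_{A}\otimes\xi_{B}$. But the relation you quote, $[(j_{A}\otimes\bar\alpha)(\gamma_{A}(a)),(j_{B}\otimes\bar\beta)(\gamma_{B}(b))]=0$, is a commutation between two families of operators each carrying an extra auxiliary leg on $\Hils_{1}$; neither family lives on $\Hils_{A}\otimes\Hils_{B}$, and neither commutes with $\Pi(A\boxtimes_{\bichar}B)$ itself. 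Extracting from it a concrete $*$-algebra inside $\Pi(A\boxtimes_{\bichar}B)'$ with a dense orbit of $\Hils_{A}\otimes\xi_{B}$ is not achieved by ``slicing the auxiliary legs'' --- slicing a unitary by a state yields only a contraction and does not preserve commutation --- so the step that actually carries the faithfulness claim is missing; it is the quantum analogue of constructing the right regular representation commuting with a reduced crossed product, and in the twisted setting this is real work. The simplest repair is to adopt the paper's mechanism: by \cite{Meyer-Roy-Woronowicz:Twisted_tensor}*{Lemma 5.5} your functional coincides with the restriction of the faithful positive map $\tau_{A}\otimes\tau_{B}\otimes\Id_{\Hils}$ to the $\Cst$\nb-subalgebra $A\boxtimes_{\bichar}B$, and a restriction of a faithful positive map is faithful; this coexists with the rest of your GNS picture unchanged.
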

\begin{proof}
 Define~\(\tau'\colon A\otimes B\otimes\Bound(\Hils)\to\Bound(\Hils)\) by 
\(\tau'\defeq\tau_{A}\otimes\tau_{B}\otimes\Id_{\Hils}\). Then $\tau'$ is a state because~\(\tau_{A}\) and~\(\tau_{B}\) are states. 
By~\cite{Meyer-Roy-Woronowicz:Twisted_tensor}*{Lemma 5.5}, \(\tau_{A}\boxtimes_{\bichar}\tau_{B}\defeq\tau'|_{A\boxtimes B}\). Since \(\tau'\) is  faithful, so is its restriction~\(\tau_{A}\boxtimes_{\bichar}\tau_{B}\). Therefore, \(\tau_{A}\boxtimes_{\bichar}\tau_{B}\) is a faithful state on~\(A\boxtimes_{\bichar}B\).

Let $ S\defeq {\rm Span} \{ j_{A} ( A_i ) j_{B} ( B_j ) : i,j \geq 0 \}$. Since the density of $ S $ in $ A \boxtimes_\bichar B $ is clear by the definition of $ A \boxtimes_\bichar B, $ we only need to prove that~\(\{j_{A}(A_{i})j_{B}(B_{j}\}_{i,j\geq 0}\) is orthogonal with respect to~\(\tau_{A}\boxtimes_{\bichar}\tau_{B}\). Indeed, for all~\(a_{i}\in A_{i}\), \(b_{j}\in B_{j}\), \(a_{k}\in A_{k}\) and~\(b_{l}\in B_{l}\), we have:
\begin{align*}
 c :&= \tau_{A}\boxtimes_{\bichar}\tau_{B}\bigl((j_{A}(a_{i})j_{B}(b_{j}))^{*} j_{A}(a_{k})j_{B}(b_{l})\bigr)\\
 &=\tau_{A}\boxtimes_{\bichar}\tau_{B}\bigl(j_{B}(b_{j}^{*})j_{A}(a_{i}^{*}a_{k})j_{B}(b_{l})\bigr)\\
 &=(\tau_{A}\otimes\tau_{B}\otimes\Id_{\Hils})
 \Bigl(\bigl((\Id_{B}\otimes\beta)(\gamma_{B}(b_{j}^{*}))\bigr)_{23} \bigl((\Id_{A}\otimes\alpha)(\gamma_{A}(a_{i}^{*}a_{k}))\bigr)_{13}\\
 & \;\;\;\;\; \bigl((\Id_{B}\otimes\beta)(\gamma_{B}(b_{l}))\bigr)_{23} \Bigr)\\
 &=(\tau_{B}\otimes\Id_{\Hils}) 
 \Bigl(\bigl (\Id_{B}\otimes\beta)(\gamma_{B}(b_{j}^{*}))\bigr) \bigl((\tau_{A}\otimes\alpha)(\gamma_{A}(a_{i}^{*}a_{k}))\bigr)
 \bigl((\Id_{B}\otimes\beta)(\gamma_{B}(b_{l}))\bigr) \Bigr)\\
  &=\tau_{A}(a_{i}^{*}a_{k})(\tau_{B}\otimes\beta)(\gamma_{B}(b_{j}^{*}b_{l}))
     =\tau_{A}(a_{i}^{*}a_{k})\tau_{B}(b_{j}^{*}b_{l})1_{\mathcal{B}(\Hils)}.
  \end{align*}
 Therefore, if $ ( i, j ) \neq ( k, l ), $ then~\(c=0\) since~\(\tau_{A}(a_{i}^{*}a_{k})=0\) if~\(i\neq k\) and~\(\tau_{B}(b_{j}^{*}b_{l})=0\)  if~\(j\neq l\). This proves that \(\{j_{A}(A_{i})j_{B}(B_{j})\}_{i,j\geq 0}\) yields an orthogonal filtration with respect to 
\(\tau_{A}\boxtimes\tau_{B}\). 
\end{proof}

 Let then~\((\G,\gamma_{A}')\) and \((\G[H],\gamma_{B}')\) be objects in $ {\bf \clc} ( \Afilt) $ and~$ {\bf \clc} (\Bfilt) $, respectively. Suppose~\(\bichar_{1}\in\U(\Contvin(\DuG)\times\Contvin(\DuG[H]))\) is a bicharacter. Then universality of~\(\QISO(\Afilt)\) and~\(\QISO(\Bfilt)\) gives the existence Hopf \Star{}homomorphisms~\(f_{1}\colon\Cont^\univ(\QISO(\Afilt))\to\Cont(\G)\) 
 and~\(f_{2}\colon\Cont^\univ(\QISO(\Bfilt))\to\Cont(\G[H])\) such that~\((\Id_{A}\otimes f_{1})\circ \gamma^\univ_{A}=\gamma_{A}'\) and 
 \((\Id_{B}\otimes f_{2})\circ \gamma^\univ_{B}=\gamma_{B}'\). These admit universal lifts and by Theorem~\ref{the:equiv_homs}
 induce dual Hopf~\Star{}homomorphisms 
 \[
    \hat{f}_{1}\colon\Contvin(\DuG)\to\Contvin(\widehat{\QISO(\Afilt)})\
    \qquad\text{and}\qquad
    \hat{f}_{2}\colon\Contvin(\DuG[H])\to\Contvin(\widehat{\QISO(\Bfilt)}).
\] 
The latter maps allow us to define a bicharacter $\bichar \in \U(\Contvin(\widehat{\QISO(\Afilt)})\otimes\Contvin(\widehat{\QISO(\Bfilt)})$ by the formula~\(\bichar= (\hat{f}_{1}\otimes\hat{f}_{2})(\bichar_{1})\).
\begin{corollary}
 \label{cor:feb6th}
 In the situation above, there is a faithful state~\(\tau_{A}\boxtimes_{\bichar_{1}}\tau_{B}\) 
 on~\(A\boxtimes_{\bichar_{1}}B\) such that the triplet~\((A\boxtimes_{\bichar_{1}}B,\tau_{A}\boxtimes_{\bichar_{1}}\tau_{B}, 
 \{j'_{A}(A_{i})j'_{B}(B_{j})\}_{i,j\geq 0})\) is an orthogonal filtration of~\(A\boxtimes_{\bichar_{1}}B\), where~\(j'_{A}\) and 
 \(j'_{B}\) are embeddings of~\(A\) and \(B\) into~\(A\boxtimes_{\bichar_{1}}B\).
\end{corollary}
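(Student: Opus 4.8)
The plan is to deduce the statement directly from Proposition~\ref{filtrationonbox}, after noticing that the proof of that proposition used nothing about the actions beyond the facts that they preserve the respective reference states and that the filtrations are orthogonal. The first step is therefore to verify that $\gamma_{A}'$ and $\gamma_{B}'$ preserve $\tau_{A}$ and $\tau_{B}$. Since $(\G,\gamma_{A}')\in\clc(\Afilt)$ we have $\gamma_{A}'(A_{i})\subseteq A_{i}\otimes_{\textup{alg}}\Cont(\G)$ for every $i$, and in particular $\gamma_{A}'(1_{A})=1_{A}\otimes 1_{\G}$; as $\tau_{A}$ annihilates every $A_{i}$ with $i\geq 1$ (orthogonality against $A_{0}=\bc 1_{A}$), slicing the first leg of $\gamma_{A}'$ by $\tau_{A}$ yields $(\tau_{A}\otimes\Id_{\G})\circ\gamma_{A}'=\tau_{A}(\cdot)1_{\G}$, and symmetrically for $\gamma_{B}'$. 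This is exactly the state preservation recorded in Remark~\ref{prop20thmay} (following~\cite{MR3066746}); by Lemma~\ref{lemm:faithful-state} it also makes $\gamma_{A}'$ and $\gamma_{B}'$ injective, so that $A\boxtimes_{\bichar_{1}}B$ is well defined, realised via a $\bichar_{1}$\nb-Heisenberg pair $(\alpha',\beta')$ on a Hilbert space $\Hils'$ together with the embeddings $j_{A}',j_{B}'$ of~\eqref{eq:def-js}.

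With state preservation available, I would introduce the candidate functional exactly as in Proposition~\ref{filtrationonbox}: set $\tau'\defeq\tau_{A}\otimes\tau_{B}\otimes\Id_{\Hils'}$ on $A\otimes B\otimes\Bound(\Hils')$ and invoke \cite{Meyer-Roy-Woronowicz:Twisted_tensor}*{Lemma 5.5} to see that its restriction to $A\boxtimes_{\bichar_{1}}B$ is a scalar-valued state $\tau_{A}\boxtimes_{\bichar_{1}}\tau_{B}$ with $\tau_{A}\boxtimes_{\bichar_{1}}\tau_{B}(j_{A}'(a)j_{B}'(b))=\tau_{A}(a)\tau_{B}(b)$. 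Faithfulness is inherited from that of $\tau'$, which in turn follows from the faithfulness of $\tau_{A}$ and $\tau_{B}$.

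It then remains to check orthogonality of the finite-dimensional subspaces $j_{A}'(A_{i})j_{B}'(B_{j})$, and here I would simply transcribe the computation of Proposition~\ref{filtrationonbox} with $\gamma_{A}',\gamma_{B}',\bichar_{1},\alpha',\beta'$ in place of $\gamma_{A},\gamma_{B},\bichar,\alpha,\beta$. Its only structural inputs are that $j_{A}'(a)$ occupies the first and third legs (through $\alpha'$) while $j_{B}'(b)$ occupies the second and third (through $\beta'$), so that the $\Hils'$\nb-legs can be moved past one another; that $(\tau_{A}\otimes\alpha')(\gamma_{A}'(a_{i}^{*}a_{k}))=\tau_{A}(a_{i}^{*}a_{k})1$ and $(\tau_{B}\otimes\beta')(\gamma_{B}'(b_{j}^{*}b_{l}))=\tau_{B}(b_{j}^{*}b_{l})1$ by state preservation; and finally that $\tau_{A}(a_{i}^{*}a_{k})=0$ for $i\neq k$ and $\tau_{B}(b_{j}^{*}b_{l})=0$ for $j\neq l$ by orthogonality of $\Afilt$ and $\Bfilt$. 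Hence the relevant expression vanishes unless $(i,j)=(k,l)$, which is orthogonality of $\{j_{A}'(A_{i})j_{B}'(B_{j})\}_{i,j\geq 0}$; density of their span is immediate from the definition of $A\boxtimes_{\bichar_{1}}B$.

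I do not expect a genuine obstacle here: the whole argument rests on the elementary observation that membership in $\clc(\Afilt)$ and $\clc(\Bfilt)$ forces state preservation, after which Proposition~\ref{filtrationonbox} applies essentially verbatim. The auxiliary bicharacter $\bichar=(\hat{f}_{1}\otimes\hat{f}_{2})(\bichar_{1})$ on the duals of the quantum symmetry groups is not used in the present proof; it is recorded here only so that, in the following section, the filtration just built can be matched with the canonical filtration $\Afilt\boxtimes_{\bichar}\Bfilt$, to which the main theorem on generalized Drinfeld doubles will be applied.
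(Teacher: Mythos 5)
Your proof is correct, but it follows a genuinely different route from the paper's. The paper does not re-run the orthogonality computation: it first observes (via Lemma~\ref{lem:Feb9}) that $\gamma_A'$ and $\gamma_B'$ are induced from the universal $\QISO$-actions by the Hopf \Star{}homomorphisms $f_1$, $f_2$, and then invokes \cite{Meyer-Roy-Woronowicz:Twisted_tensor}*{Theorem 5.2} to obtain an isomorphism $\Theta\colon A\boxtimes_{\bichar}B\to A\boxtimes_{\bichar_1}B$ with $\Theta\circ j_A=j_A'$ and $\Theta\circ j_B=j_B'$; the state and the filtration are then simply transported through $\Theta$ from the case already settled in Proposition~\ref{filtrationonbox}. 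Your argument instead notes that the proof of Proposition~\ref{filtrationonbox} uses only state preservation (which you correctly derive from membership in $\clc(\Afilt)$ and $\clc(\Bfilt)$) and orthogonality of $\Afilt$, $\Bfilt$, and so applies verbatim to $(\G,\gamma_A')$, $(\G[H],\gamma_B')$ and $\bichar_1$. This is more self-contained and in fact shows the proposition holds for arbitrary filtration-preserving actions of arbitrary compact quantum groups, without needing Lemma~\ref{lem:Feb9} or the functoriality theorem. What the paper's route buys, and yours does not, is the isomorphism $\Theta$ itself, which identifies $\{j_A'(A_i)j_B'(B_j)\}$ with $\{j_A(A_i)j_B(B_j)\}$ and $\tau_A\boxtimes_{\bichar_1}\tau_B$ with $\tau_A\boxtimes_{\bichar}\tau_B$; this identification is precisely what the proof of Theorem~\ref{cor:equiv-cross-iso} later extracts from ``Corollary~\ref{cor:feb6th} and its proof.'' You flag this correctly at the end, but be aware that with your proof of the corollary the matching with $\Afilt\boxtimes_{\bichar}\Bfilt$ would still have to be established separately before the main theorem can be applied.
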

\begin{proof}
Clearly, \(\gamma'_{A}\) and~\(\gamma'_{B}\) are injective coactions because they preserve~\(\tau_{A}\) and~\(\tau_{B}\) respectively. Now Lemma~\ref{lem:Feb9} shows that \(\gamma'_{A}\) and~\(\gamma'_{B}\) are induced by the Hopf~\Star{}homomorphism 
\(f_{1}\) and~\(f_{2}\), respectively. Therefore,  by~\cite{Meyer-Roy-Woronowicz:Twisted_tensor}*{Theorem 5.2}, \(A\boxtimes_{\bichar}B\) and 
\(A\boxtimes_{\bichar_{1}}B\) are \emph{equivalent}: there is an isomorphism~\(\Theta\in\Mor(A\boxtimes_{\bichar}B, A\boxtimes_{\bichar_{1}}B)\) such that 
\begin{equation}
 \label{eq:cross-iso}
  \Theta \circ j_{A}= j'_{A}, 
  \qquad
  \Theta \circ j_{B}=j'_{B}.
\end{equation}
Then~\(\tau_{A}\boxtimes_{\bichar_{1}}\tau_{B}\defeq (\tau_{A}\boxtimes_{\bichar}\tau_{B})\circ\Theta^{-1}\) defines a faithful state 
on~\(A\boxtimes_{\bichar_{1}}B\) and the double-indexed family~\(\{j'_{A}(A_{i})j'_{B}(B_{j})\}_{i,j\geq 0}\) defines an orthogonal filtration of~\(A\boxtimes_{\bichar_{1}}B\) 
with respect to~\(\tau_{A}\boxtimes_{\bichar_{1}}\tau_{B}\).
\end{proof}

\section{Quantum symmetries of twisted tensor product} 
\label{sec:sec5}

Let~$ \Afilt \defeq( A, \tau_A, \{ A_i \}_{i \geq 0} ) $ and $\Bfilt \defeq ( B, \tau_B, \{ B_j \}_{j \geq 0} ) $ be orthogonal filtrations of unital $\Cst$\nb-algebras $ A $ and $ B$. Let $ \gamma_A $ and $ \gamma_B $  denote the actions of  $ \QISO(\Afilt )  $ and $\QISO(\Bfilt )  $ on $ A $ and $  B $, respectively. Let~\((\G,\gamma_{A}')\) and \((\G[H],\gamma_{B}')\) be objects in $ {\bf \clc} ( \Afilt) $ and~$ {\bf \clc} (\Bfilt) $ respectively, and  suppose we have a bicharacter~\(\bichar_{1}\in\U(\Contvin(\DuG)\otimes\Contvin(\DuG[H]))\).  Let $\bichar \in \U(\Contvin(\widehat{\QISO(\Afilt)})\otimes\Contvin(\widehat{\QISO(\Bfilt)})$ be the associated bicharacter as in Corollary \textup{\ref{cor:feb6th}}.  By Proposition \ref{filtrationonbox}, $ \{ j_A ( A_i ) j_B ( B_j )    \}_{i,j \geq 0} $ is an orthogonal filtration of $ A \boxtimes_\bichar B $ with respect to the state $ \tau_A \boxtimes_\bichar \tau_B .$ The resulting  triple will be denoted by  $\ABfilt$.
Finally let~\(\GDrin{\bichar}\) denote the Drinfeld double of~\( \QISO(\Afilt) \) and~\( \QISO(\Bfilt)  \) with respect to the bicharacter~\(\bichar\). The aim of this section is to prove Theorem \ref{cor:equiv-cross-iso} which states that  \(\QISO(\ABCfilt)\) is isomorphic to~\(\GDrin{\bichar}\). It turns out that this conclusion can be easily derived from the following theorem:
\begin{theorem} 
  \label{maintheorem} Let $\Afilt$, $\Bfilt$ be orthogonal filtrations, and fix a reduced bicharacter $\bichar\in \U ( \Contvin(\widehat{\QISO( \Afilt )}) \otimes \Contvin(\widehat{\QISO (\Bfilt )}))$.
  The quantum symmetry group \(\QISO( \ABfilt)\), whose existence is guaranteed by \textup{Proposition~\ref{filtrationonbox}}, is isomorphic to $\GDrin{\bichar}$, the Drinfeld double of~\( \QISO(\Afilt) \) and~\( \QISO(\Bfilt)  \) with respect to the bicharacter~\(\bichar\). 
\end{theorem}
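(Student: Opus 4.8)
The plan is to prove that the Drinfeld double $\GDrin{\bichar}$, equipped with the canonical coaction $\gamma_{A}\bowtie_{\bichar}\gamma_{B}$ of~\eqref{eq:GDrin_act}, is an \emph{initial} object of the category $\clc(\ABfilt)$. Since $\QISO(\ABfilt)$ is by definition the initial object of this category (Theorem~\ref{banicaskalskitheorem}) and initial objects are isomorphic via a unique isomorphism, this yields the desired identification $\QISO(\ABfilt)\cong\GDrin{\bichar}$.

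First I would verify that $(\GDrin{\bichar},\gamma_{A}\bowtie_{\bichar}\gamma_{B})$ lies in $\clc(\ABfilt)$. As $\QISO(\Afilt)$ and $\QISO(\Bfilt)$ are compact, the algebra $\Contvin(\GDrin{\bichar})=\rho(\C(\QISO(\Afilt)))\theta(\C(\QISO(\Bfilt)))$ is unital, so $\GDrin{\bichar}$ is a compact quantum group; and $\gamma_{A}\bowtie_{\bichar}\gamma_{B}$ is a coaction by the construction preceding Lemma~\ref{lemm:injec-Drinact}. To see that the filtration is preserved, fix bases of $A_{i}$ and $B_{j}$ and use Lemma~\ref{25thjan1} to write $\gamma_{A}(a)=\sum_{k}a_{k}\otimes q_{k}$, $\gamma_{B}(b)=\sum_{l}b_{l}\otimes p_{l}$ with $a_{k}\in A_{i}$, $b_{l}\in B_{j}$ and $q_{k},p_{l}$ polynomial. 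Then~\eqref{eq:GDrin_act} and multiplicativity give $\gamma_{A}\bowtie_{\bichar}\gamma_{B}(j_{A}(a)j_{B}(b))=\sum_{k,l}j_{A}(a_{k})j_{B}(b_{l})\otimes\rho(q_{k})\theta(p_{l})$, which lies in $j_{A}(A_{i})j_{B}(B_{j})\otimes_{\mathrm{alg}}\C(\GDrin{\bichar})$ because $\rho,\theta$ are Hopf~\Star{}homomorphisms. A short computation, using that $\gamma_{A}$ preserves $\tau_{A}$, $\gamma_{B}$ preserves $\tau_{B}$ and $\rho(1)=\theta(1)=1$, shows that $\gamma_{A}\bowtie_{\bichar}\gamma_{B}$ preserves $\tau_{A}\boxtimes_{\bichar}\tau_{B}$, completing this step.

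The substance of the proof is initiality. Given an arbitrary object $(\G[I],\gamma)$ of $\clc(\ABfilt)$, I would first note that, since $A_{0}=\bc 1_{A}$, $B_{0}=\bc 1_{B}$ and $j_{A}(1_{A})=j_{B}(1_{B})=1$, the subspaces $j_{A}(A_{i})=j_{A}(A_{i})j_{B}(B_{0})$ and $j_{B}(B_{j})=j_{A}(A_{0})j_{B}(B_{j})$ are themselves members of the filtration $\ABfilt$; hence $\gamma$ preserves $j_{A}(A_{i})$ and $j_{B}(B_{j})$. Transporting along the injective embeddings $j_{A},j_{B}$, this produces coactions $\gamma_{A}''$ on $A$ and $\gamma_{B}''$ on $B$ with $\gamma\circ j_{A}=(j_{A}\otimes\Id)\circ\gamma_{A}''$ and $\gamma\circ j_{B}=(j_{B}\otimes\Id)\circ\gamma_{B}''$; the Podle\'s condition for these restrictions is automatic from the comodule-algebra identity $a\otimes h=\sum\gamma_{A}''(a_{(0)})(1\otimes S(a_{(1)})h)$ on the dense algebraic cores, passing to closures. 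As $\gamma_{A}''(A_{i})\subseteq A_{i}\otimes_{\mathrm{alg}}\C(\G[I])$ and $\gamma_{B}''(B_{j})\subseteq B_{j}\otimes_{\mathrm{alg}}\C(\G[I])$, the pairs $(\G[I],\gamma_{A}'')$ and $(\G[I],\gamma_{B}'')$ are objects of $\clc(\Afilt)$ and $\clc(\Bfilt)$, and universality (Theorem~\ref{banicaskalskitheorem}, Lemma~\ref{25thjan1}) provides Hopf~\Star{}homomorphisms $\rho_{1}\in\Mor(\C^{u}(\QISO(\Afilt)),\C^{u}(\G[I]))$ and $\theta_{1}\in\Mor(\C^{u}(\QISO(\Bfilt)),\C^{u}(\G[I]))$. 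Since the universal lift $\gamma^\univ$ of $\gamma$ still preserves $j_{A}(A_{i}),j_{B}(B_{j})$ with the \emph{same} polynomial matrix coefficients, chasing the definitions yields the relations $\gamma^\univ\circ j_{A}=(j_{A}\otimes\rho_{1})\circ\gamma_{A}^\univ$ and $\gamma^\univ\circ j_{B}=(j_{B}\otimes\theta_{1})\circ\gamma_{B}^\univ$. The actions $\gamma_{A}^\univ,\gamma_{B}^\univ$ are faithful (Theorem~\ref{banicaskalskitheorem}) and normal (Remark~\ref{prop20thmay}, Lemma~\ref{lemm:faithful-state}), so Theorem~\ref{the:UnivProp-Drinf} applies and furnishes a unique Hopf~\Star{}homomorphism $\Psi\in\Mor(\C^{u}(\GDrin{\bichar}),\C^{u}(\G[I]))$ with $\Psi\circ\rho^\univ=\rho_{1}$, $\Psi\circ\theta^\univ=\theta_{1}$, intertwining $\gamma_{A}^\univ\bowtie_{\bichar}\gamma_{B}^\univ$ with $\gamma^\univ$ on $j_{A}(A)$ and $j_{B}(B)$, hence on all of $A\boxtimes_{\bichar}B$; thus $\Psi$ is a morphism $\GDrin{\bichar}\to\G[I]$ in $\clc(\ABfilt)$.

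For initiality it remains to see that $\Psi$ is unique: if $\Phi$ is any such morphism, intertwining forces $(j_{A}\otimes\Phi\circ\rho^\univ)\circ\gamma_{A}^\univ=(j_{A}\otimes\rho_{1})\circ\gamma_{A}^\univ$, and slicing the first leg and using faithfulness of $\gamma_{A}^\univ$ gives $\Phi\circ\rho^\univ=\rho_{1}$, similarly $\Phi\circ\theta^\univ=\theta_{1}$, whence $\Phi=\Psi$ by the uniqueness clause of Theorem~\ref{the:UnivProp-Drinf}. This establishes that $\GDrin{\bichar}$ is initial in $\clc(\ABfilt)$ and therefore isomorphic to $\QISO(\ABfilt)$. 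I expect the main obstacle to be precisely the restriction--and--lift bookkeeping of the third paragraph: checking that $\gamma$ genuinely restricts to filtration-preserving coactions $\gamma_{A}'',\gamma_{B}''$ (the algebraic Podle\'s argument), and that these lift to the universal level compatibly so as to produce the exact intertwining relations required by Theorem~\ref{the:UnivProp-Drinf}. Once this is arranged, Theorem~\ref{the:UnivProp-Drinf} carries out the genuinely analytic work --- manufacturing $\Psi$ from the $\bichar$\nb-Drinfeld commutation relation --- essentially for free.
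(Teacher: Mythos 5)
Your proposal is correct, and it rests on the same two pillars as the paper's own proof: the verification that \((\GDrin{\bichar},\gamma_{A}\bowtie_{\bichar}\gamma_{B})\) is an object of \(\clc(\ABfilt)\) with faithful coaction (the paper's Lemma~\ref{17thjan1}), and the universal property of \(\Contvin^\univ(\GDrin{\bichar})\) from Theorem~\ref{the:UnivProp-Drinf}, fed by restricting a filtration-preserving coaction to \(j_{A}(A)\) and \(j_{B}(B)\) and invoking the universality of \(\QISO(\Afilt)\) and \(\QISO(\Bfilt)\). The differences lie in the assembly. The paper runs the restriction argument only for the single object \(\QISO(\ABfilt)\), producing the two canonical morphisms \(q\) (from initiality of \(\QISO(\ABfilt)\) applied to Lemma~\ref{17thjan1}) and \(\Psi\) (from Theorem~\ref{the:UnivProp-Drinf}), and then checks \(\Psi\circ q=\Id\) and \(q\circ\Psi=\Id\) by slicing and faithfulness of the two coactions; you instead prove outright that \(\GDrin{\bichar}\) is initial in \(\clc(\ABfilt)\), which costs you the extra uniqueness step for \(\Psi\) (your argument via faithfulness of \(\gamma_{A}^\univ,\gamma_{B}^\univ\) is sound, since \(\rho^\univ\) and \(\theta^\univ\) generate \(\Contvin^\univ(\GDrin{\bichar})\)) but then finishes by abstract nonsense. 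You also replace the paper's Lemma~\ref{lemma16thjan1} --- the slice-map route to the Podle\'s condition for the restricted coactions --- by the algebraic antipode identity on the cores \(\bigoplus_i A_i\) and \(\bigoplus_j B_j\); this is legitimate because the matrix coefficients over the finite-dimensional filtration subspaces \(j_A(A_i)j_B(B_0)\) and \(j_A(A_0)j_B(B_j)\) automatically lie in \(\Pol(\G[I])\) (Lemma~\ref{25thjan1}), and it is arguably more elementary in the compact setting. The only bookkeeping you leave implicit is that feeding Theorem~\ref{the:UnivProp-Drinf} requires the universal coaction \(\gamma_{A}^\univ\bowtie_{\bichar}\gamma_{B}^\univ\) satisfying~\eqref{eq:uniq-ext}, i.e.\ Lemma~\ref{lemm:Coact-univ-Drinf}; with that made explicit the argument is complete.
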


For the rest of the section, the symbol \(\bichar\) will denote a fixed reduced bicharacter belonging to $ \U ( \Contvin(\widehat{\QISO( \Afilt )}) \otimes \Contvin(\widehat{\QISO (\Bfilt )})).$ 
As a preparation for proving Theorem \ref{maintheorem}, we will first prove some auxiliary results.
Remark~\ref{prop20thmay} shows that the coactions~\(\gamma_{A}\) and~\(\gamma_{B}\) are injective. By the same argument the actions~\(\gamma^\univ_{A}\) and~\(\gamma^\univ_{B}\) defined on the universal level are normal. Thus, by Lemma~\ref{lemm:Coact-univ-Drinf}, there is a coaction~\(\gamma_{A}^\univ\bowtie_{\bichar}\gamma_{B}^\univ\) of~\(\Contvin^\univ(\GDrin{\bichar})\) on~\(A\boxtimes_{\bichar}B\) satisfying \eqref{eq:uniq-ext}. This allows us to show the following fact.

\begin{lemma}
  \label{17thjan1} The pair 
 \((\GDrin{\bichar},\gamma_{A}^\univ\bowtie_{\bichar}\gamma_{B}^\univ)\) is an object in the category 
 \(\clc (\ABfilt)\). Moreover, \(\gamma_{A}^\univ\bowtie_{\bichar}\gamma_{B}^\univ\) is a faithful coaction.
\end{lemma}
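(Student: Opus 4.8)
The plan is to verify the two claims in turn. For membership in $\clc(\ABfilt)$, I would first note that $\GDrin{\bichar}$ is genuinely a compact quantum group: its defining algebra $\C(\GDrin{\bichar})=\rho(\C(\QISO(\Afilt)))\theta(\C(\QISO(\Bfilt)))$ is unital, since it contains $\rho(1)\theta(1)=1$, and it carries the Hopf structure~\eqref{eq:GDrin-comult}. It then remains to check that $\gamma_A^\univ\bowtie_\bichar\gamma_B^\univ$ preserves the filtration subspaces $j_A(A_i)j_B(B_j)$. Fixing bases $\{a_{im}\}_m$ of $A_i$ and $\{b_{jn}\}_n$ of $B_j$, Lemma~\ref{25thjan1}(1) applied to the canonical actions $\gamma_A$ and $\gamma_B$ supplies matrix coefficients $q^i_{mm'}\in\Pol(\QISO(\Afilt))$ and $r^j_{nn'}\in\Pol(\QISO(\Bfilt))$ with $\gamma_A^\univ(a_{im'})=\sum_m a_{im}\otimes q^i_{mm'}$ and $\gamma_B^\univ(b_{jn'})=\sum_n b_{jn}\otimes r^j_{nn'}$, the scalars being the same at the universal level because the coefficients lie in the common polynomial algebras.

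Using the defining formulas~\eqref{eq:uniq-ext} and the fact that $\gamma_A^\univ\bowtie_\bichar\gamma_B^\univ$ is a homomorphism, I would then compute
\[
\gamma_A^\univ\bowtie_\bichar\gamma_B^\univ\bigl(j_A(a_{im'})j_B(b_{jn'})\bigr)
=\sum_{m,n} j_A(a_{im})j_B(b_{jn})\otimes \rho^\univ(q^i_{mm'})\theta^\univ(r^j_{nn'}).
\]
Since $\rho^\univ$ and $\theta^\univ$ are Hopf~\Star{}homomorphisms by Lemma~\ref{lem:hopf-mor}, they carry the polynomial algebras into $\Pol(\GDrin{\bichar})$, so the right-hand side lies in $j_A(A_i)j_B(B_j)\otimes_{\textup{alg}}\C(\GDrin{\bichar})$; passing to the reduced level via the reducing morphism $\Lambda_{\GDrin{\bichar}}$ leaves the first two legs untouched and hence does not affect the filtration-preservation. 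This establishes that $(\GDrin{\bichar},\gamma_A^\univ\bowtie_\bichar\gamma_B^\univ)$ is an object of $\clc(\ABfilt)$.

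For faithfulness I would invoke the factorization $\Contvin^\univ(\GDrin{\bichar})=\rho^\univ(\Contvin^\univ(\QISO(\Afilt)))\theta^\univ(\Contvin^\univ(\QISO(\Bfilt)))$ obtained just before Definition~\ref{def:univ-Drinfpair}. Because the canonical actions are faithful by Theorem~\ref{banicaskalskitheorem}, the coefficients $\{q^i_{mm'}\}$ generate $\Contvin^\univ(\QISO(\Afilt))$ densely as a \Star{}algebra, and likewise $\{r^j_{nn'}\}$ generate $\Contvin^\univ(\QISO(\Bfilt))$. Slicing the identity $\gamma_A^\univ\bowtie_\bichar\gamma_B^\univ(j_A(a_{im'}))=\sum_m j_A(a_{im})\otimes\rho^\univ(q^i_{mm'})$ by functionals dual to the linearly independent family $\{j_A(a_{im})\}_m$ (available by Hahn--Banach, since $j_A$ is a faithful embedding) recovers each $\rho^\univ(q^i_{mm'})$, and the $B$-side formula recovers each $\theta^\univ(r^j_{nn'})$. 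Hence the \Star{}algebra generated by the slices of $\gamma_A^\univ\bowtie_\bichar\gamma_B^\univ$ contains densely both $\rho^\univ(\Contvin^\univ(\QISO(\Afilt)))$ and $\theta^\univ(\Contvin^\univ(\QISO(\Bfilt)))$, hence their product $\Contvin^\univ(\GDrin{\bichar})$, which is exactly the faithfulness condition from Subsection~\ref{subsec:universal property}.

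I expect the main obstacle to be the careful bookkeeping between the universal and reduced levels: ensuring that the matrix coefficients genuinely live in the common polynomial algebra, that slicing recovers precisely the $\rho^\univ$- and $\theta^\univ$-images, and that the factorization $\Contvin^\univ(\GDrin{\bichar})=\rho^\univ(\cdots)\theta^\univ(\cdots)$ is applied correctly so that the faithfulness argument closes up. Once this scaffolding is in place, the underlying algebraic identities are routine.
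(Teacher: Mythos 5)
Your proposal is correct and follows essentially the same route as the paper: both verify filtration preservation via the matrix-coefficient formula from Lemma~\ref{25thjan1}(1) combined with~\eqref{eq:uniq-ext} and multiplicativity, and both deduce faithfulness from the faithfulness of $\gamma_A^\univ$, $\gamma_B^\univ$ together with the factorization $\Contvin^\univ(\GDrin{\bichar})=\rho^\univ(\Contvin^\univ(\QISO(\Afilt)))\,\theta^\univ(\Contvin^\univ(\QISO(\Bfilt)))$. The only cosmetic difference is that you recover the generators $\rho^\univ(q^i_{mm'})$ and $\theta^\univ(r^j_{nn'})$ separately by slicing, whereas the paper works directly with the products $\rho^\univ(q^i_{lk})\theta^\univ(r^j_{nm})$; these are equivalent since $A_0=\bc 1_A$ and $B_0=\bc 1_B$.
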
  
\begin{proof}
    Let $i\geq 0$, and let $ \{ v_{i,x}\mid x = 1,2, \ldots, {\rm dim} ( A_i ) \} $ be a basis of $ A_i. $ By Lemma \ref{25thjan1}, we have elements $ \{ q^i_{kl}\mid k,l = 1,2, \ldots, {\rm dim} ( A_i ) \} \in \Pol(  \QISO (\Afilt) )\subset \C^\univ(\QISO(\Afilt)  )$ such that for each $k=1,\ldots, {\rm dim} ( A_i ) $ 
		$$ \gamma^\univ_A ( v_{i,k} ) = \sum_{l=1}^{\dim ( A_i )} v_{i,l} \otimes q^i_{lk}. $$
		Moreover, by virtue of \eqref{eq:uniq-ext}, 
		$$ \gamma_{A}^\univ\bowtie_{\bichar}\gamma_{B}^\univ ( j_A ( v_{i,k} ) ) = \sum_{l=1}^{\dim ( A_i )} j_A ( v_{i,l} ) \otimes \rho^\univ (  q^i_{lk}  ). $$
Similarly, if $j \geq 0$ and $ \{ w_{j,m} \mid m = 1,2, \ldots, {\rm dim} ( B_j ) \} $ is a basis of $ B_j, $ then we have elements $ \{ r^j_{mn}\mid m,n = 1,2, \ldots, {\rm dim} ( B_j ) \} \in \Cont^\univ( \QISO (\Bfilt ) ) $ such that for $m = 1,2, \ldots, {\rm dim} ( B_j )$
	$$	\gamma_{A}^\univ\bowtie_{\bichar}\gamma_{B}^\univ ( j_B ( w_{j,m} ) ) = \sum_{n=1}^{\dim ( B_j )} j_B ( w_{j,n} ) \otimes \theta^\univ ( r^j_{nm} ). $$
	Therefore, we can conclude that
	\begin{equation} \label{25thjan2}
	 \gamma_{A}^\univ\bowtie_{\bichar}\gamma_{B}^\univ ( j_A ( v_{i,k} ) j_B ( w_{j,m} ) ) = \sum_{l,n} j_A ( v_{i,l} ) j_B ( w_{j,n} ) \otimes \rho^\univ ( q^i_{lk} ) \theta^\univ ( r^j_{nm} ).
	\end{equation}
	Thus the map $ \gamma_{A}^\univ\bowtie_{\bichar}\gamma_{B}^\univ $ preserves the subspace $ j_A ( A_i ) j_B ( B_j ) $ for each $i,j \geq 0$.  This proves the first assertion.
	
Now we prove that the action $\gamma_{A}^\univ\bowtie_{\bichar}\gamma_{B}^\univ $ is faithful. By  Lemma \ref{25thjan1} (2) and by the equality \eqref{25thjan2} it is enough to show that the $\Cst$\nb-algebra generated by the set $ \{ \rho^\univ (  q^i_{l,k} ) \theta^\univ (  r^j_{n,m}  )\mid i,j \geq 0, l,k=1,\ldots,{\rm dim} ( A_i ), n, m = 1, \ldots, {\rm dim} ( B_j )   \} $ is equal to $\Cont^\univ(\GDrin{\bichar})$.
	
	Since $ \gamma^\univ_A $ and $ \gamma^\univ_B $ are faithful coactions, we have:
	 $$ C^* \{ q^i_{lk}\mid i \geq 0, l,k = 1, \ldots, {\rm dim} ( A_i ) \} = \Cont^\univ( \QISO (\Afilt)  ),$$
	 $$ C^* \{ r^j_{nm}\mid j\geq 0, n,m=1, \ldots, {\rm dim} ( B_j ) \} = \Cont^\univ( \QISO (\Bfilt )  ). $$
As $\Cont^\univ(\GDrin{\bichar})= \rho^\univ (  \Cont^\univ( \QISO ( \Afilt )  )  ) \theta^\univ (  \Cont^\univ( \QISO ( \Bfilt ) )  ),$ the proof is completed. 
\end{proof}

Next we prove the following auxiliary result. 
\begin{lemma}
 \label{lemma16thjan1}
 Let~\(\gamma^\univ\) denote the coaction of~\(\Cont^\univ(\QISO(\ABfilt))\) on~\(A\boxtimes_{\bichar}B\). Then 
 \begin{alignat}{2}
  \label{univ-coact-slice_A}
  (\tau_{A}\boxtimes_{\bichar}\Id_{B}\otimes\Id_{\QISO(\ABfilt)})(\gamma^\univ(j_{A}(a)j_{B}(b)))
  &= \tau_{A}(a)\gamma^\univ(j_{B}(b)),\\
  \label{univ-coact-slice_B}  
   (\Id_{A}\boxtimes_{\bichar}\tau_{B}\otimes\Id_{\QISO(\ABfilt)})(\gamma^\univ(j_{A}(a)j_{B}(b)))
  &=\tau_{B}(b)\gamma^\univ(j_{A}(a)) 
 \end{alignat}
 for all~\(a\in A\) and~\(b\in B\).
\end{lemma}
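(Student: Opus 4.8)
The plan is to recast both identities as the single statement that $\gamma^\univ$ intertwines a conditional-expectation-type slice map, and then to read this off from the fact that $\gamma^\univ$ respects the grading of the filtration of $\ABfilt$.

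First I would pin down the slice maps. Since $\gamma_{A}$ preserves $\tau_{A}$, the twisted tensor product construction of \cite{Meyer-Roy-Woronowicz:Twisted_tensor}*{Lemma 5.5} (applied to the $\gamma_A$-invariant functional $\tau_{A}\colon A\to\bc$ and to $\Id_{B}\colon B\to B$) produces a bounded map $E_{A}\defeq\tau_{A}\boxtimes_{\bichar}\Id_{B}$ with image $j_{B}(B)\subseteq A\boxtimes_{\bichar}B$, determined by $E_{A}(j_{A}(a)j_{B}(b))=\tau_{A}(a)\,j_{B}(b)$; concretely, realizing $A\boxtimes_{\bichar}B$ inside $A\otimes B\otimes\Bound(\Hils)$ through a $\bichar$-Heisenberg pair $(\alpha,\beta)$ and applying $\tau_{A}\otimes\Id_{B}\otimes\Id_{\Bound(\Hils)}$, the invariance $(\tau_{A}\otimes\Id)\gamma_{A}(a)=\tau_{A}(a)1$ factors out the scalar $\tau_{A}(a)$ and leaves the realization of $j_{B}(b)$. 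The symmetric construction gives $E_{B}\defeq\Id_{A}\boxtimes_{\bichar}\tau_{B}$ with $E_{B}(j_{A}(a)j_{B}(b))=\tau_{B}(b)\,j_{A}(a)$. Since $\tau_{A}(a)\gamma^\univ(j_{B}(b))=\gamma^\univ\bigl(E_{A}(j_{A}(a)j_{B}(b))\bigr)$, the two equalities \eqref{univ-coact-slice_A} and \eqref{univ-coact-slice_B} are exactly the intertwining relations $(E_{A}\otimes\Id)\circ\gamma^\univ=\gamma^\univ\circ E_{A}$ and $(E_{B}\otimes\Id)\circ\gamma^\univ=\gamma^\univ\circ E_{B}$.

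Next I would identify $E_{A}$ with a filtration-adapted projection. Writing $V_{ij}\defeq j_{A}(A_{i})j_{B}(B_{j})$ for the filtration subspaces of $\ABfilt$, the hypotheses $A_{0}=\bc 1_{A}$ and $\tau_{A}$-orthogonality of $\{A_{i}\}$ force $\tau_{A}$ to act as the identity on $A_{0}$ and to vanish on $A_{i}$ for $i\geq 1$ (take $\tau_{A}(a)=\tau_{A}(1_{A}^{\,*}a)=0$ for $a\in A_{i}$, $i\neq 0$). Thus $E_{A}$ is the projection of $A\boxtimes_{\bichar}B=\overline{\mathrm{Span}}\{V_{ij}\}$ onto $j_{B}(B)=\overline{\mathrm{Span}}\{V_{0j}\}$ that is the identity on each $V_{0j}$ and kills every $V_{ij}$ with $i\geq 1$. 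I would then invoke that $\gamma^\univ$, as the universal coaction of $\QISO(\ABfilt)$, preserves each filtration subspace, i.e.\ $\gamma^\univ(V_{ij})\subseteq V_{ij}\otimes_{\mathrm{alg}}\Pol(\QISO(\ABfilt))$ on the underlying Hopf $*$-algebra — the universal analogue of Lemma~\ref{25thjan1}(1), already used in Lemma~\ref{17thjan1}. Comparing the two bounded maps $(E_{A}\otimes\Id)\circ\gamma^\univ$ and $\gamma^\univ\circ E_{A}$ on each $V_{ij}$ then finishes the argument: on $V_{0j}$ both restrict to $\gamma^\univ$ (because $E_{A}$ fixes $V_{0j}$ and $\gamma^\univ(V_{0j})\subseteq V_{0j}\otimes_{\mathrm{alg}}\Pol$), while on $V_{ij}$ with $i\geq 1$ both vanish (because $E_{A}$ annihilates $V_{ij}$). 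Agreement on the dense subspace $\mathrm{Span}\{V_{ij}\}$ extends to all of $A\boxtimes_{\bichar}B$ by continuity, yielding \eqref{univ-coact-slice_A}; \eqref{univ-coact-slice_B} follows symmetrically with $E_{B}$.

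The one genuinely delicate step I expect is the first one: verifying that the partial slice $\tau_{A}\boxtimes_{\bichar}\Id_{B}$ is well defined and takes values in $j_{B}(B)$ rather than merely in $A\otimes B\otimes\Bound(\Hils)$. This rests on the $\tau_A$-invariance of $\gamma_A$ (equivalently, on \cite{Meyer-Roy-Woronowicz:Twisted_tensor}*{Lemma 5.5}); once it is available, the lemma is a formal consequence of $\gamma^\univ$ being graded for the filtration of $\ABfilt$, with no further computation needed.
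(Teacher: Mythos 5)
Your argument is correct, and it takes a genuinely different route from the paper's. Both proofs start from the same two ingredients -- the formula $E_A(j_A(a)j_B(b))=\tau_A(a)j_B(b)$ supplied by \cite{Meyer-Roy-Woronowicz:Twisted_tensor}*{Lemma 5.5}, and the fact that $\gamma^\univ$ preserves each subspace $j_A(A_i)j_B(B_j)$ (which, as you note, holds by the very definition of $\QISO(\ABfilt)$ rather than by Lemma~\ref{17thjan1}, which establishes the analogous property for the Drinfeld-double coaction) -- but they diverge after that. The paper first passes through the canonical surjection $q\colon\Cont^\univ(\QISO(\ABfilt))\to\Cont^\univ(\GDrin{\bichar})$ coming from Lemmas~\ref{17thjan1} and~\ref{25thjan1}(3), verifies the identity after applying $\Id\otimes q$ via the explicit form of $\gamma^\univ_A\bowtie_\bichar\gamma^\univ_B$, and then, to remove $q$ by its injectivity on $\mathrm{Ran}(\gamma^\univ)$, still has to run a grading computation showing that the left-hand side lies in that range. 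You bypass $q$ entirely by observing that $E_A=\tau_A\boxtimes_\bichar\Id_B$ is precisely the grading projection onto the $i=0$ part of the filtration (identity on $V_{0j}$, zero on $V_{ij}$ for $i\geq 1$, since $\tau_A$ vanishes on $A_i$ for $i\neq 0$ by orthogonality against $A_0=\bc 1_A$), so that the intertwining relation $(E_A\otimes\Id)\circ\gamma^\univ=\gamma^\univ\circ E_A$ is immediate on each graded piece and extends by boundedness and density. Your proof is shorter and makes the mechanism transparent -- any filtration-preserving coaction commutes with the graded projections; what the paper's detour buys is chiefly expository, in that it introduces the map $q$ and the injectivity argument that are reused immediately afterwards in the proof of Theorem~\ref{maintheorem}.
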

\begin{proof}
 By~\cite{Meyer-Roy-Woronowicz:Twisted_tensor}*{Lemma 5.5}, concerning equivariant completely positive maps, we have 
\begin{equation}
 \label{eq:20janaux1}
   \tau_{A}\boxtimes_{\bichar}\Id_{B}(j_{A}(a)j_{B}(b))=\tau_{A}(a)j_{B}(b),
   \qquad
   \Id_{A}\boxtimes_{\bichar}\tau_{B}((j_{A}(a)j_{B}(b))=\tau_{B}(b)j_{A}(a)
\end{equation}
for all~\(a\in A\), \(b\in B\).

By Lemma~\ref{17thjan1} and  Lemma \ref{25thjan1} (3) it follows that there is a unique surjective Hopf \Star{}homomorphism \(q\colon \Cont^\univ(\QISO(\ABfilt))\to\Cont^\univ(\GDrin{\bichar})\) such 
that 
\begin{alignat}{2}
   \label{eq:18janaux4}
        (\Id_{A\boxtimes_{\bichar}B}\otimes q)\circ \gamma^\univ \circ  j_{A} 
  &=\gamma_{A}^\univ\bowtie\gamma_{B}^\univ \circ j_{A}
  &=(j_{A}\otimes\rho^\univ) \circ \gamma_{A}^\univ,\\
  \label{eq:18janaux5} 
  (\Id_{A\boxtimes_{\bichar}B}\otimes q)\circ \gamma^\univ \circ j_{B}
  &=\gamma_{A}^\univ\bowtie\gamma_{B}^\univ \circ j_{B}
  &=(j_{B}\otimes\theta^\univ)\circ \gamma_{B}^\univ.
\end{alignat}
Notice that the morphism \(\gamma_{A}^\univ\bowtie_{\bichar}\gamma_{B}^\univ\) is injective as it preserves the faithful 
state~\(\tau_{A}\boxtimes_{\bichar}\tau_{B}\). Using~\eqref{eq:20janaux1}, \eqref{eq:18janaux4} and \eqref{eq:18janaux5} we get
\begin{align*}
  &     (\tau_{A}\boxtimes_{\bichar}\Id_{B}\otimes q)(\gamma^\univ(j_{A}(a)j_{B}(b))) \\
 &=(\tau_{A}\boxtimes_{\bichar}\Id_{B}\otimes\Id_{\QISO(\ABfilt)})
 \bigl((j_{A}\otimes\rho^\univ)(\gamma_{A}^\univ(a))(j_{B}\otimes\theta^\univ)(\gamma_{B}^\univ(b))\bigr)\\
 &=(\tau_{A}\otimes\rho^\univ)(\gamma_{A}^\univ(a))(j_{B}\otimes\theta^\univ)(\gamma_{B}^\univ(b))\\
 &=\tau_{A}(a)(j_{B}\otimes\theta^\univ)(\gamma_{B}^\univ(b))
   =(\Id_{A\boxtimes_{\bichar}B}\otimes q)\bigl(\tau_{A}(a)\gamma^\univ( j_{B}(b))\bigr). 
\end{align*}
Note that we have shown above that 
\[(\Id_{A\boxtimes_{\bichar}B}\otimes q) \bigl( (\tau_{A}\boxtimes_{\bichar}\Id_{B}\otimes \Id)(\gamma^\univ(j_{A}(a)j_{B}(b)))\bigr) =  (\Id_{A\boxtimes_{\bichar}B}\otimes q)\bigl(\tau_{A}(a)\gamma^\univ( j_{B}(b))\bigr)\]

The equation
$ ( {\rm id} \otimes q  ) \circ \gamma^\univ = \gamma_{A}^\univ\bowtie\gamma_{B}^\univ $ implies that $  {\rm id} \otimes q  $ is injective on $ {\rm Ran} ( \gamma^\univ ). $ Thus it follows that
\eqref{univ-coact-slice_A} holds, if only we can show that $(\tau_{A}\boxtimes_{\bichar}\Id_{B}\otimes \Id)(\gamma^\univ(j_{A}(a)j_{B}(b)))$ belongs to the image of $\gamma^\univ$. For that (by density) we may assume that $a \in A_i$, $b \in B_j$ for some $i,j \geq 0$. Since by definition the coaction   $\gamma^\univ$ preserves the subspaces $ j_A ( A_i ) j_B ( B_j ) $ for all $i,j\geq 0,$ it has to preserve the subspaces $ j_A ( A_i ) $ and $ j_B ( B_j ) $ and hence  $\gamma^\univ(j_A(a)) =\sum_{k=1}^n j_A(a_k) \otimes x_k$, $\gamma^\univ(j_B(b)) =\sum_{k=1}^n j_B(b_k) \otimes y_k$ for some $n \in \N$, $a_1,\ldots, a_n \in A_i$, $b_1, \ldots,b_n \in B_j$ and $x_1, \ldots, x_n, y_1, \ldots, y_n \in  \Cont^\univ(\QISO(\ABfilt))$. Thus 
\begin{align*} (\tau_{A}\boxtimes_{\bichar}&\Id_{B}\otimes \Id)(\gamma^\univ(j_{A}(a)j_{B}(b))) = \sum_{j,k=1}^n (\tau_{A}\boxtimes_{\bichar}\Id_{B}\otimes \Id)
(j_A(a_j)j_B(b_k) \otimes x_jy_k) \\&= \sum_{j,k=1}^n j_A(\tau_{A}(a_j))j_B(b_k) \otimes x_jy_k = \bigl(\sum_{j=1}^n j_A(\tau_{A}(a_j))j_B(1) \otimes x_j \bigr)\gamma^\univ(j_{B}(b)) 
\\&= (\tau_{A}\boxtimes_{\bichar}\tau_{B}\otimes \Id) (\gamma^\univ(j_{A}(a)j_{B}(1))) \gamma^\univ(j_{B}(b))  
\\&= \bigl(\tau_{A}\boxtimes_{\bichar}\tau_{B} (j_{A}(a)j_{B}(1))\bigr) \gamma^\univ(j_{B}(b))=\tau_{A}(a) \gamma^\univ(j_{B}(b)),
\end{align*}
where in the second equality we used \cite{Meyer-Roy-Woronowicz:Twisted_tensor}*{Lemma 5.5}. This shows the desired containment and completes the proof of \eqref{univ-coact-slice_A}.

Similarly we can show that~\eqref{univ-coact-slice_B} holds.
\end{proof}

We are now ready to prove Theorem~\textup{\ref{maintheorem}}.

\begin{proof}[Proof of Theorem~\textup{\ref{maintheorem}}]
From the Podle\'s condition for~\(\gamma^\univ\) we get
 \[
 \gamma^\univ(A\boxtimes B)(1_{A\boxtimes_{\bichar}B}\otimes\Cont^\univ(\QISO(\ABfilt)))=A\boxtimes B\otimes\Cont^\univ(\QISO(\ABfilt)).
\] 
Applying~\(\tau_{A}\boxtimes_{\bichar}\Id_{B}\otimes\Id_{\QISO(\ABfilt)}\) to the 
both sides of the above equality and using~\eqref{univ-coact-slice_A} and \eqref{eq:20janaux1} gives 
\[
 \gamma^\univ(j_{B}(B))(1_{j_{B}(B)}\otimes\Cont^\univ(\QISO(\ABfilt)))= j_{B}(B)\otimes\Cont^\univ(\QISO(\ABfilt)).
\]
Thus,~\(\gamma^\univ(j_{B}(B))\subseteq j_{B}(B)\otimes\Cont^\univ(\QISO(\ABfilt))\). Therefore, 
\(\tilde{\gamma}_{B}\defeq \bigl(j_{B}^{-1}\otimes\Id_{\QISO(\ABfilt)}\bigr)\circ\gamma^\univ\circ j_{B}\) defines 
 a coaction of~\(\Cont^\univ(\QISO(\ABfilt))\) on 
\(B\).
Moreover, \((\QISO(\ABfilt),\tilde{\gamma}_{B})\) is an object in $ \clc (\Bfilt)$ and so by Theorem~\ref{banicaskalskitheorem}, there is a Hopf~\Star{}homomorphism~\(\theta_{1}\colon \Cont^\univ(\QISO(\Bfilt))\to\Cont^\univ(\QISO(\ABfilt))\) such that 
\((\Id_{B}\otimes\theta_{1})\circ \gamma^\univ_{B}=\tilde{\gamma}_{B}\). This yields the following equality: 
\[
   (\Id_{B}\otimes\theta_{1})\circ \gamma^\univ_{B}=(j_{B}^{-1}\otimes\Id_{\QISO(\ABfilt)})\circ \gamma^\univ \circ j_{B}.
\]
Hence,
\begin{equation}
 \label{eq:18janaux1}
(j_{B}\otimes\theta_{1})\circ \gamma^\univ_{B}=\gamma^\univ \circ j_{B}.\end{equation}
Similarly, we can show that there is a coaction~\(\tilde{\gamma}_{A}\) of~\(\Cont^\univ(\QISO(\ABfilt))\) on~\(A\) and a Hopf~\Star{}homomorphism \(\rho_{1}\colon\Cont^\univ(\QISO(\Afilt))\to\Cont^\univ(\QISO(\ABfilt))\) such that 
\begin{equation}
  \label{eq:18janaux2}
 (j_{A}\otimes\rho_{1}) \circ \gamma^\univ_{A}=\gamma^\univ \circ j_{A}.
\end{equation}
By the universal property of~\(\Cont^\univ(\GDrin{\bichar})\) proved in Theorem~\ref{the:UnivProp-Drinf}, there is a unique 
Hopf~\Star{}homomorphism \(\Psi\colon \Cont^\univ(\GDrin{\bichar})\to \Cont^\univ(\QISO(\ABfilt))\) 
such that~\(\Psi\circ\rho^\univ=\rho_{1}\), \(\Psi\circ\theta^\univ=\theta_{1}\) and
\begin{alignat}{2}
   \label{eq:18janaux3}
(\Id_{A\boxtimes_{\bichar}B}\otimes\Psi)\circ\gamma^\univ_{A}\bowtie\gamma^\univ_{B}\circ j_{A} 
&=\gamma^\univ\circ j_{A}\\
   \label{eq:18janaux3}
(\Id_{A\boxtimes_{\bichar}B}\otimes\Psi)\circ \gamma^\univ_{A}\bowtie\gamma^\univ_{B}\circ j_{B}
&=\gamma^\univ\circ j_{B}
\end{alignat}

Using~\eqref{eq:18janaux2}, \eqref{eq:18janaux3} and~\eqref{eq:18janaux4} we have
\begin{align*}
(\Id_{A\boxtimes_{\bichar}B}\otimes \Psi\circ q)\circ\gamma^\univ \circ j_{A} 
= (j_{A}\otimes\Psi\circ\rho^\univ)\circ\gamma_{A}^\univ
= (j_{A}\otimes\rho_{1})\circ \gamma_{A}^\univ
=\gamma^\univ\circ j_{A}.
\end{align*}
Similarly, we can show that
\((\Id_{A\boxtimes_{\bichar}B}\otimes \Psi\circ q)\circ\gamma^\univ \circ j_{B} 
=\gamma^\univ\circ j_{B}\). Therefore, for all~\(\omega 
\in (A\boxtimes_{\bichar}B)'\) we have 
\[
(\Psi\circ q)((\omega\otimes\Id_{\QISO(\ABfilt)})(\gamma^\univ(x)))=(\omega\otimes\Id_{\QISO(\ABfilt)})(\gamma^
\univ(x))
\]
for all~\(x\in A\boxtimes_{\bichar}B\). Faithfulness of~\(\gamma^\univ\) gives 
\(\Psi\circ q(c)=c\) for all~\(c\in\Cont^\univ(\QISO(\ABfilt))\).

A similar computation gives 
\[
 (\Id_{A\boxtimes_{\bichar}B}\otimes q\circ \Psi)(\gamma^\univ_{A}\bowtie\gamma^\univ_{B}(j_{A}(a)j_{B}(b)))
 =\gamma^\univ_{A}\bowtie\gamma^\univ_{B}(j_{A}(a)j_{B}(b))
\]
for all~\(a\in A\), \(b\in B\). Finally, by talking slices with~\(\omega\in (A\boxtimes_{\bichar}B)' \) on the first leg of the the both sides in the above equation  and using faithfulness of~\(\gamma^\univ_{A}\bowtie\gamma^\univ_{B}\) we obtain
\((q\circ\Psi)(d)=d\) for all~\(d\in\Cont^\univ(\GDrin{\bichar})\).
\end{proof}

 Thus we are in a position to prove the  main result of this article:

\begin{theorem}
 \label{cor:equiv-cross-iso}
Let~\((\G,\gamma_{A}')\) and \((\G[H],\gamma_{B}')\) be objects in $ {\bf \clc} ( \Afilt) $ and~$ {\bf \clc} (\Bfilt) $ respectively, and  suppose we have a bicharacter~\(\bichar_{1}\in\U(\Contvin(\DuG)\otimes\Contvin(\DuG[H]))\).  Let $\bichar \in \U(\Contvin(\widehat{\QISO(\Afilt)})\otimes\Contvin(\widehat{\QISO(\Bfilt)})$ be the associated bicharacter as in \textup{Corollary~\ref{cor:feb6th}}. Then the quantum symmetry group \(\QISO(\ABCfilt)\) is isomorphic to~\(\GDrin{\bichar}\).
\end{theorem}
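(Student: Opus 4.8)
The plan is to deduce Theorem~\ref{cor:equiv-cross-iso} directly from Theorem~\ref{maintheorem} by transporting structure along the equivalence supplied by Corollary~\ref{cor:feb6th}. First I would recall from the proof of that corollary the isomorphism~\(\Theta\in\Mor(A\boxtimes_{\bichar}B, A\boxtimes_{\bichar_{1}}B)\) satisfying~\(\Theta\circ j_{A}=j'_{A}\) and~\(\Theta\circ j_{B}=j'_{B}\), together with the definitional identity~\(\tau_{A}\boxtimes_{\bichar_{1}}\tau_{B}=(\tau_{A}\boxtimes_{\bichar}\tau_{B})\circ\Theta^{-1}\). From these I would observe that~\(\Theta\) intertwines the two reference states and, since~\(\Theta(j_{A}(A_{i})j_{B}(B_{j}))=j'_{A}(A_{i})j'_{B}(B_{j})\) for all~\(i,j\geq 0\), carries the orthogonal filtration underlying~\(\ABfilt\) bijectively onto the one underlying~\(\ABCfilt\).

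Next I would argue that any isomorphism of unital \(\Cst\)\nb-algebras that preserves the reference states and the filtration subspaces induces an isomorphism of the associated categories of filtration-preserving actions, and hence of their universal initial objects. Concretely, transport of structure along~\(\Theta\) should send an object~\((\G[K],\alpha)\) of~\(\clc(\ABfilt)\) to~\((\G[K],(\Theta\otimes\Id_{\G[K]})\circ\alpha\circ\Theta^{-1})\); because~\(\alpha\) preserves each subspace~\(j_{A}(A_{i})j_{B}(B_{j})\) and~\(\Theta\) maps it onto~\(j'_{A}(A_{i})j'_{B}(B_{j})\), the conjugated action preserves the subspaces of~\(\ABCfilt\) and so defines an object of~\(\clc(\ABCfilt)\). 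This assignment is functorial with respect to the CQG morphisms intertwining the actions, and transport along~\(\Theta^{-1}\) provides its inverse; thus~\(\Theta\) implements an isomorphism of categories~\(\clc(\ABfilt)\cong\clc(\ABCfilt)\). Since an isomorphism of categories sends a universal initial object to a universal initial object, Theorem~\ref{banicaskalskitheorem} then yields~\(\QISO(\ABfilt)\cong\QISO(\ABCfilt)\).

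Finally I would invoke Theorem~\ref{maintheorem}, which identifies~\(\QISO(\ABfilt)\) with the Drinfeld double~\(\GDrin{\bichar}\), to conclude that~\(\QISO(\ABCfilt)\) is isomorphic to~\(\GDrin{\bichar}\). I expect the main point requiring care to be the functoriality step of the second paragraph: the actions in the category~\(\clc\) are defined on the reduced level while the quantum symmetry group is constructed on the universal level, so the intertwining by CQG morphisms must be interpreted as flagged in the discussion following Theorem~\ref{banicaskalskitheorem}. To keep this distinction under control I would phrase the transport of structure through the purely algebraic description of the actions given by Lemma~\ref{25thjan1}, working with the matrix coefficients of the coactions on bases of~\(A_{i}\) and~\(B_{j}\) and checking that~\(\Theta\) identifies the corresponding generating systems; this renders the categorical isomorphism rigorous without ever leaving the algebraic picture.
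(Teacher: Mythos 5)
Your proposal is correct and is essentially the argument the paper intends: its proof of Theorem~\ref{cor:equiv-cross-iso} simply cites Theorem~\ref{maintheorem} together with Corollary~\ref{cor:feb6th} \emph{and its proof}, the latter being exactly where the isomorphism~\(\Theta\) is constructed and shown to carry the state and the filtration subspaces of~\(\ABfilt\) onto those of~\(\ABCfilt\). Your additional care about transporting the categories~\(\clc\) via~\(\Theta\) and handling the reduced-versus-universal level through Lemma~\ref{25thjan1} just makes explicit what the paper leaves as ``immediate.''
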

\begin{proof}
The result is an immediate consequence of Theorem~\textup{\ref{maintheorem}}, Corollary \textup{\ref{cor:feb6th}} and its proof.
 \end{proof}
In particular, we can 
choose~\(\bichar=1\in\U(\Contvin(\widehat{\QISO(\Afilt)})\otimes\Contvin(\widehat{\QISO(\Bfilt)})\). Then~\(A\boxtimes_{\bichar}B\cong A\otimes B\). Also, by virtue of \cite{Roy:Codoubles}*{Example 5.10} the reduced Drinfeld double of~\(\QISO(\Afilt)\) and 
\(\QISO(\Bfilt)\) with respect to~\(\bichar\) is~\(\QISO(\Afilt)\otimes\QISO(\Bfilt)\). 
Thus, denoting the filtration of $A \otimes B$ coming from Proposition \ref{filtrationonbox} by $\Afilt \otimes \Bfilt$ and using the standard Cartesian product construction for compact quantum groups (so that $\C^\univ(\G\times \GH)=\C^\univ(\G) \otimes_{{\rm max}} \C^\univ(\GH)$) we obtain the following corollary.

\begin{corollary}
 \label{cor:triv-bichar}
 The quantum symmetry group \(\QISO(\Afilt\otimes\Bfilt)\) is 
 isomorphic to \(\QISO(\Afilt)\times\QISO(\Bfilt)\).
\end{corollary}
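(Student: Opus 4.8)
The plan is to obtain this as the $\bichar=1$ specialization of Theorem~\ref{maintheorem}. First I would fix the trivial bicharacter $\bichar=1\in\U(\Contvin(\widehat{\QISO(\Afilt)})\otimes\Contvin(\widehat{\QISO(\Bfilt)}))$ together with the canonical actions $\gamma_{A}$, $\gamma_{B}$ of $\QISO(\Afilt)$ and $\QISO(\Bfilt)$. As recorded in the discussion following \cite{Meyer-Roy-Woronowicz:Twisted_tensor}*{Theorem 4.6}, the twisted tensor product attached to a trivial bicharacter is nothing but the minimal tensor product, so $A\boxtimes_{1}B\cong A\otimes B$; under this identification the embeddings $j_{A}$, $j_{B}$ become the canonical inclusions $a\mapsto a\otimes 1_{B}$ and $b\mapsto 1_{A}\otimes b$. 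I would then observe that the filtration data collapse accordingly: the product state $\tau_{A}\boxtimes_{1}\tau_{B}$ defined in~\eqref{eq:30thjaneq} becomes $\tau_{A}\otimes\tau_{B}$, and the subspaces $j_{A}(A_{i})j_{B}(B_{j})$ become $A_{i}\otimes B_{j}$. Hence the orthogonal filtration furnished by Proposition~\ref{filtrationonbox} is precisely the filtration we denote $\Afilt\otimes\Bfilt$.

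With this in hand, Theorem~\ref{maintheorem} immediately gives an isomorphism $\QISO(\Afilt\otimes\Bfilt)\cong\GDrin{1}$, where $\GDrin{1}$ is the Drinfeld double of $\QISO(\Afilt)$ and $\QISO(\Bfilt)$ taken with respect to the trivial bicharacter. The last task is to identify $\GDrin{1}$ with the Cartesian product. For this I would invoke \cite{Roy:Codoubles}*{Example 5.10}, which asserts that the reduced generalized Drinfeld double associated with the trivial bicharacter is the compact quantum group $\QISO(\Afilt)\otimes\QISO(\Bfilt)$: the defining algebra~\eqref{eq:GDrin-alg} and coproduct~\eqref{eq:GDrin-comult} reduce, because the Drinfeld commutation relation~\eqref{eq:Drinf-comm} forces the ranges of $\rho$ and $\theta$ to commute when $\bichar=1$, to the component-wise structure of the Cartesian product $\QISO(\Afilt)\times\QISO(\Bfilt)$. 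On the universal level this is the statement $\C^\univ(\G\times\G[H])=\C^\univ(\G)\otimes_{{\rm max}}\C^\univ(\G[H])$ recalled above. Composing the two isomorphisms yields $\QISO(\Afilt\otimes\Bfilt)\cong\QISO(\Afilt)\times\QISO(\Bfilt)$.

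Since both non-trivial inputs---the reduction of $\boxtimes_{1}$ to $\otimes$ and the triviality of the Drinfeld double for $\bichar=1$---are already established in \cite{Meyer-Roy-Woronowicz:Twisted_tensor} and \cite{Roy:Codoubles}, I do not expect a genuine obstacle; the proof is essentially a matter of assembling the specialization correctly, exactly as indicated in the paragraph preceding the statement. The only point deserving a moment's care is the compatibility of the isomorphism $A\boxtimes_{1}B\cong A\otimes B$ with the filtration and the reference state, so that $\QISO$ computed from the two descriptions of the filtration genuinely coincide; this follows directly from the explicit formulas for $\tau_{A}\boxtimes_{1}\tau_{B}$ and $j_{A}(A_{i})j_{B}(B_{j})$, which one reads off as $\tau_{A}\otimes\tau_{B}$ and $A_{i}\otimes B_{j}$ respectively under the identification.
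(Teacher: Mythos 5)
Your proposal is correct and follows essentially the same route as the paper: specialize Theorem~\ref{maintheorem} to the trivial bicharacter, identify $A\boxtimes_{1}B$ with $A\otimes B$ (together with its state and filtration), and invoke \cite{Roy:Codoubles}*{Example 5.10} to recognize the Drinfeld double as the Cartesian product. The only difference is cosmetic: you spell out the compatibility of the filtration data under the identification $A\boxtimes_{1}B\cong A\otimes B$, which the paper handles implicitly by simply naming the filtration from Proposition~\ref{filtrationonbox} as $\Afilt\otimes\Bfilt$.
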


In the next example, we apply Theorem \ref{cor:equiv-cross-iso} to describe the quantum symmetry group of a class of Rieffel deformations of unital $ \textup{C}^{*} $-algebras by actions of compact groups (\cite{rieffel}, \cite{kasprzak}). These are examples which are not necessarily of the crossed product type; the next section is devoted to the examples arising as reduced crossed products.

\begin{example}
Let $ A $ and $ 
B $ be unital $ \textup{C}^{*} $-algebras equipped with orthogonal filtrations. 
Assume that $ G $ and $ H $ are compact abelian groups acting respectively on $A$ and on $B$ in the filtration preserving way (so that they are objects of respective categories). Moreover, let $ \chi: \hat{G} \times \hat{H} \rightarrow \mathbb{T} $ be a bicharacter. The coactions $ \alpha_A: A \rightarrow A \otimes \Cont ( G ) $ and $ \alpha_B: B \rightarrow B \otimes \Cont ( H ) $ define a canonical coaction $ \gamma $ of $ \Cont ( K ):= \Cont ( G \times H ) $ on $ E:= A \otimes B. $ Furthermore $ \chi $ defines a bicharacter $ \psi $ on $ \hat{K} $ via the formula
	$$ \psi: \hat{K} \times \hat{K} \rightarrow \mathbb{T}, ~ \psi (  ( g_1, h_1   ), ~ ( g_2, h_2 )   ) = \chi ( g_2, h_1  )^{-1}, \;\;\; g_1,g_2, h_1, h_2 \in \hat{K}. $$
	Since $ \psi $ is a bicharacter, it defines a $2$-cocycle  on the group $ \hat{K}. $ The Rieffel deformation of the data $ ( E, \gamma, \psi  ) $ yields a new unital $ \textup{C}^{*}   $-algebra $ E_\psi. $
	
	By Theorem 6.2 of \cite{Meyer-Roy-Woronowicz:Twisted_tensor}, $ E_\psi $ is isomorphic to $ A \boxtimes_\psi B. $  Therefore, we can apply Theorem \ref{cor:equiv-cross-iso} to compute the quantum symmetry group of $ E_\psi. $ Concrete examples can be obtained in the following way, using the notions appearing in the next section: take $ A $ and $ B $ to be any two $ \textup{C}^{*} $ algebras appearing in Examples \ref{3rdjuly}, \ref{18thdec2}, \ref{18thdec3}, \ref{18thdec4} and $ G = H = \mathbb{T}^n. $ Then the homomorphism $ f_{{\bf \lambda}} $ of Proposition \ref{18thdec} defines the required bicharacter.
\end{example}

\section{The case of the reduced crossed products} 
 \label{sec:sec6}

In this section we will apply the general results obtained before for quantum symmetry groups of twisted tensor products to the case of crossed products by discrete group actions.

\subsection{Quantum symmetries of  reduced crossed products}

Throughout this subsection, we will adopt the following notations and conventions.  The triple $ \Afilt\defeq ( A, \tau_A, \{ A_i \}_{i \geq 0}  ) $ will denote  an orthogonal filtration of  a unital $\Cst$\nb-algebra $ A. $ Further $ \Gamma $ will denote a discrete countable group with a neutral element $e$ and a proper length function $ l: \Gamma \to \mathbb{N}_0 $, so that the $\Cst$\nb-algebra $ B\defeq \Cred(\Gamma) $ has the orthogonal filtration $\Bfilt \defeq ( \Cred(\Gamma), \tau_\Gamma, \{ B^l_n \}_{n \geq 0}    ) $ as in Example \textup{\ref{groupalgexample}}.   Symbols $ \gamma^\univ_A $ and $ \gamma_A $ will denote  respectively the universal and reduced version of the action of $\QISO (\Afilt) $ on $A$, and  \(\multunit[\Gamma]\) will denote the reduced bicharacter associated to $\Gamma$ (see  Theorem~\ref{18thjan1}). Given any action $\beta$ of $\Gamma$ on $A$ (classically viewed as  a homomorphism from $\Gamma$ to $\textup{Aut}(A)$, but here interpreted as a coaction of $\Contvin(\Gamma)$, that is a morphism $\beta\in \Mor(A, A \otimes \Contvin(\Gamma))$ satisfying the equation~\eqref{eq:right_action}), we will denote the resulting reduced crossed product, contained in $\Mult(A\otimes \Comp(\ell^2(\Gamma)))$, as $A \rtimes_{\beta, \red} \Gamma$. As customary, we will write then $a \lambda_g$ for   $ \beta ( a ) ( 1 \otimes \lambda_g ) $, where $a\in A, g \in \Gamma$. Moreover
we will write $ \tau:=  \tau_A \circ \tau^\prime\in S( A \rtimes_{\beta, \red} \Gamma) $, where $ \tau^\prime $ is the canonical conditional expectation from $ A \rtimes_{\beta, \red} \Gamma $ onto $ A $ defined by the continuous linear extension of the prescription $ \tau^{\prime} ( \sum_g a_g \lambda_g  ) = a_e$. 
Finally, given the data as above define for each $i,j \geq 0$
\begin{equation} A_{ij} \defeq {\rm span} \{ a_i \lambda_{g_j} \mid a_i \in A_i, l ( g_j ) = j   \}. \label{Aijdeft}\end{equation}

Note first the following easy lemma, extending Theorem~\ref{18thjan1}.

\begin{lemma} \label{31stjan2}
Let $\Afilt$, $\Bfilt$ be as above and fix an action $\beta$ of $\Gamma$ on $A$.
	The isomorphism~$ \Psi\colon A \boxtimes_\bichar B \rightarrow  A \rtimes_{\beta,\red} \Gamma $, discussed in \textup{Theorem~\ref{18thjan1}} has the following properties: 
	\[
	\tau = (\tau_{A}\boxtimes_{\multunit[\Gamma]}\tau_{\Gamma})\circ \Psi,\] and  for each $i, j \geq 0$ we have
	\[\Psi(j_{A}(A_{i})j_{B}(B_{j}^{l})) = A_{ij}.\] 
\end{lemma}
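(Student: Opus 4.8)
The plan is to deduce both identities directly from the explicit description of $\Psi$ on generators recorded in~\eqref{eq:jan26}, together with multiplicativity of $\Psi$ and finite-dimensionality of the filtration subspaces. Recall from Theorem~\ref{18thjan1} that for $a\in A$ and $g\in\Gamma$ one has $\Psi(j_A(a))=\beta(a)$ and $\Psi(j_B(\lambda_g))=1\otimes\lambda_g$, so by multiplicativity $\Psi(j_A(a)j_B(\lambda_g))=\beta(a)(1\otimes\lambda_g)=a\lambda_g$, using the notational convention for $a\lambda_g$ fixed above.

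For the trace identity, I would first note that $A\boxtimes_{\multunit[\Gamma]}B$ is the closed linear span of the elements $j_A(a)j_B(\lambda_g)$ with $a\in A$, $g\in\Gamma$, so by linearity and continuity it suffices to verify $\tau\bigl(\Psi(j_A(a)j_B(\lambda_g))\bigr)=(\tau_A\boxtimes_{\multunit[\Gamma]}\tau_\Gamma)(j_A(a)j_B(\lambda_g))$ on such elements. By~\eqref{eq:30thjaneq} the right-hand side equals $\tau_A(a)\tau_\Gamma(\lambda_g)$, and since $\tau_\Gamma$ is the canonical trace we have $\tau_\Gamma(\lambda_g)=\delta_{g,e}$. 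For the left-hand side, $\Psi(j_A(a)j_B(\lambda_g))=a\lambda_g$, and the conditional expectation $\tau'$ annihilates $a\lambda_g$ unless $g=e$, in which case it returns $a$; thus $\tau(a\lambda_g)=\tau_A(\tau'(a\lambda_g))=\delta_{g,e}\tau_A(a)$. The two sides coincide, and continuity finishes this part.

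For the filtration identity, the key observation is that both $A_i$ and $B^l_j$ are finite-dimensional, the latter because $l$ is proper, so $\{g : l(g)=j\}$ is finite; hence $j_A(A_i)j_B(B^l_j)$ is a finite-dimensional subspace and the closed linear span implicit in the box-notation coincides with the ordinary linear span of the products $j_A(a_i)j_B(\lambda_g)$ with $a_i\in A_i$ and $l(g)=j$. Applying $\Psi$ and multiplicativity as above sends each such product to $a_i\lambda_g$, and as $a_i$ ranges over $A_i$ and $g$ over $\{g:l(g)=j\}$ these span exactly $A_{ij}$ by the definition~\eqref{Aijdeft}. Since $\Psi$ is a linear isomorphism, I conclude $\Psi(j_A(A_i)j_B(B^l_j))=A_{ij}$.

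I expect no serious obstacle here: the argument is essentially bookkeeping once the action of $\Psi$ on generators is in hand. The only points requiring minor care are verifying that the composite $\tau_A\circ\tau'$ matches the twisted product state through the identity $\tau_\Gamma(\lambda_g)=\delta_{g,e}$, and invoking finite-dimensionality to replace the closed linear spans by honest spans so that the image under $\Psi$ can be computed generator-by-generator.
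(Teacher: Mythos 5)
Your proof is correct and is exactly the ``easy computation'' the paper leaves to the reader: evaluate $\Psi$ on the generators $j_A(a)j_B(\lambda_g)$ via~\eqref{eq:jan26}, match the states using $\tau_\Gamma(\lambda_g)=\delta_{g,e}$ and the conditional expectation, and use finite-dimensionality of $A_i$ and $B^l_j$ to identify the spans. No gaps.
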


\begin{proof}  
Easy computation.\qedhere
\end{proof}

The above lemma shows that we have a natural candidate for an orthogonal filtration of the reduced crossed product. This raises two natural questions: first, when does the family $\{A_{ij}:i,j\geq 0\}$ indeed form an orthogonal filtration, and second, when can we determine the respective quantum symmetry group. 
The next theorem involves a condition which identifies the family $\{A_{ij}:i,j\geq 0\}$ as an orthogonal filtration arising via the construction described in Subsection \ref{orthfiltsubsect}, and further allows us to apply Theorem~\ref{maintheorem} to compute the quantum symmetry group in question. Later, in Proposition \ref{prop15thmarch}, we will see another, more general situation, under which $\{A_{ij}:i,j\geq 0\}$ still forms an orthogonal filtration. Then of course the second question will have to be addressed separately.

\begin{theorem} 
\label{maintheoremcrossed}
Let $\Afilt$, $\Bfilt$ be as above. Suppose that the map  $ \pi\in\Mor(\Cont^\univ(\QISO ( \Afilt )),\Contvin( \Gamma ))$ is a Hopf~\Star{}homomorphism, describing  quantum group morphism from $\Gamma$ to $\QISO(\Afilt)$, and define the action $ \beta\in \Mor(A,A \otimes \Contvin( \Gamma ) ) $ as 
\begin{equation}
 \label{compatibility} 
   \beta\defeq ( {\Id} \otimes \pi )\circ \gamma^\univ_A.
\end{equation}
Then the triplet $( A \rtimes_{\beta,\red} \Gamma, \tau, ( A_{ij} )_{i,j \geq 0})$ is an orthogonal filtration,  denoted by $\crfilt$.

Recall the  Hopf~\Star{}homomorphism $ \pi_\Gamma\in\Mor(\Cont(\QISO ( \Bfilt )),\C^*( \Gamma ))$ mentioned in \textup{Remark \ref{20thmayremark}} and define a bicharacter $\bichar \in\U(\Contvin(\widehat{\QISO(\Afilt)})\otimes\Contvin(\widehat{\QISO(\Bfilt)}))$ as \[\bichar\defeq(\widehat{\pi}\otimes\widehat{\pi_\Gamma})(\WW^{\Gamma}),\]
where $\widehat{\pi}, \widehat{\pi_\Gamma}$ denote the respective dual morphisms.

Then $ \QISO ( \crfilt) \cong \GDrin{\bichar},$ where $\GDrin{\bichar}$ is the Drinfeld double of $\QISO(\Afilt)$ and $\QISO(\Bfilt)$ determined by $\bichar$.

\end{theorem}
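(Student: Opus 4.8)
The plan is to realise the reduced crossed product, together with its state \(\tau\) and the family \((A_{ij})_{i,j\geq0}\), as the twisted tensor product \(A\boxtimes_{\bichar}B\) equipped with the canonical filtration \(\Afilt\boxtimes_{\bichar}\Bfilt\) of Proposition~\ref{filtrationonbox}---the one built from the \emph{compact} quantum group actions \(\gamma_{A}\) of \(\QISO(\Afilt)\) and \(\gamma_{B}\) of \(\QISO(\Bfilt)\)---and then to read off the conclusion from Theorem~\ref{maintheorem}. Two isomorphisms bridge the two descriptions: the identification \(\Psi\colon A\boxtimes_{\multunit[\Gamma]}B\to A\rtimes_{\beta,\red}\Gamma\) of Theorem~\ref{18thjan1}, and a change-of-bicharacter isomorphism \(\Theta\colon A\boxtimes_{\bichar}B\to A\boxtimes_{\multunit[\Gamma]}B\) supplied by \cite{Meyer-Roy-Woronowicz:Twisted_tensor}*{Theorem 5.2}.

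First I would verify that the two actions defining the crossed-product twisted tensor product are induced from the canonical ones. The hypothesis~\eqref{compatibility}, namely \(\beta=(\Id\otimes\pi)\circ\gamma^\univ_{A}\), is exactly the assertion (Lemma~\ref{lem:Feb9}, with \(\Lambda_{\Gamma}=\Id\) since \(\Gamma\) is coamenable) that the reduced action \(\beta\) of \(\Gamma\) on \(A\) is induced from \(\gamma_{A}\) by the Hopf~\Star{}homomorphism \(\pi\). Symmetrically, Remark~\ref{20thmayremark} presents \((\hat{\Gamma},\hat{\Delta}_{\Gamma})\) as an object of \(\clc(\Bfilt)\), so the canonical action of \(\hat{\Gamma}\) on \(B=\Cred(\Gamma)\) is induced from \(\gamma_{B}\) by \(\pi_{\Gamma}\). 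Since Theorem~\ref{18thjan1} presents \(A\rtimes_{\beta,\red}\Gamma\) as \(A\boxtimes_{\multunit[\Gamma]}B\) with \(\multunit[\Gamma]\in\U(\Cred(\Gamma)\otimes\Contvin(\Gamma))\), I would feed this pair of induced actions into \cite{Meyer-Roy-Woronowicz:Twisted_tensor}*{Theorem 5.2}: it produces the isomorphism \(\Theta\) above, intertwining the embeddings \(j_{A},j_{B}\), provided the bicharacter on the larger quantum groups is the lift \((\widehat{\pi}\otimes\widehat{\pi_{\Gamma}})(\multunit[\Gamma])\). As \(\multunit[\Gamma]\) is the reduction of \(\WW^{\Gamma}\) and the dual morphisms are taken universally, this lift is precisely \(\bichar=(\widehat{\pi}\otimes\widehat{\pi_{\Gamma}})(\WW^{\Gamma})\), as in the statement.

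Composing, \(\Psi\circ\Theta\colon A\boxtimes_{\bichar}B\to A\rtimes_{\beta,\red}\Gamma\) is a \(\Cst\)\nb-algebra isomorphism which, by Lemma~\ref{31stjan2}, carries each subspace \(j_{A}(A_{i})j_{B}(B^{l}_{j})\) onto \(A_{ij}\) and pulls \(\tau\) back to \(\tau_{A}\boxtimes_{\bichar}\tau_{B}\). Thus \((A\rtimes_{\beta,\red}\Gamma,\tau,(A_{ij})_{i,j\geq0})\) is isomorphic, as a filtered \(\Cst\)\nb-algebra with faithful state, to \(\Afilt\boxtimes_{\bichar}\Bfilt\); Proposition~\ref{filtrationonbox} then guarantees that the former is an orthogonal filtration, which is the first claim. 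Because \(\QISO\) is an invariant of the isomorphism class of a filtered \(\Cst\)\nb-algebra with faithful state, Theorem~\ref{maintheorem} finally yields \(\QISO(\crfilt)\cong\QISO(\Afilt\boxtimes_{\bichar}\Bfilt)\cong\GDrin{\bichar}\).

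I expect the main difficulty to lie in the second paragraph: checking that the dual morphisms \(\widehat{\pi},\widehat{\pi_{\Gamma}}\) take values in \(\Contvin(\widehat{\QISO(\Afilt)})\) and \(\Contvin(\widehat{\QISO(\Bfilt)})\) with the correct variance, and that the hypothesis of \cite{Meyer-Roy-Woronowicz:Twisted_tensor}*{Theorem 5.2} is met with the bicharacter coming out to be exactly \((\widehat{\pi}\otimes\widehat{\pi_{\Gamma}})(\WW^{\Gamma})\). A related point worth flagging is that the quantum group acting on the \(A\)\nb-leg is \(\Gamma\), which is discrete rather than compact, so Corollary~\ref{cor:feb6th} cannot be cited verbatim; one instead appeals directly to \cite{Meyer-Roy-Woronowicz:Twisted_tensor}*{Theorem 5.2}, valid for locally compact quantum groups, and only invokes compactness at the last step through Theorem~\ref{maintheorem}, where the canonical actions \(\gamma_{A},\gamma_{B}\) of the compact \(\QISO\)-groups are in force.
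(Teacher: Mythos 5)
Your proposal is correct and follows essentially the same route as the paper: Lemma~\ref{lem:Feb9} to recognize $\beta$ and $\hat{\Delta}_\Gamma$ as induced coactions, \cite{Meyer-Roy-Woronowicz:Twisted_tensor}*{Theorem 5.2} to produce the change-of-bicharacter isomorphism $\Theta$, composition with $\Psi$ from Lemma~\ref{31stjan2} to transport the state and the subspaces $j'_A(A_i)j'_B(B_j)$ onto $\tau$ and $A_{ij}$, and finally Proposition~\ref{filtrationonbox} and Theorem~\ref{maintheorem}. The caveat you flag about not citing Corollary~\ref{cor:feb6th} verbatim (since $\Gamma$ is discrete, not compact) is exactly how the paper handles it, namely by rerunning the argument of that corollary rather than invoking it directly.
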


\begin{proof}
The proof proceeds via identifying the sets $A_{ij}$ with those constructed via Proposition \ref{filtrationonbox}.

By virtue of Lemma~\ref{lem:Feb9}, the coactions \(\beta\) and \(\DuComult[\Gamma]\) are induced by~\(\pi\) and~\(\pi_{\Gamma}\), respectively. 
Note that~\(\beta\) and~\(\DuComult[\Gamma]\) are injective. Thus we can use \cite{Meyer-Roy-Woronowicz:Twisted_tensor}*{Theorem 5.2} to deduce there is an isomorphism~\(\Theta\colon A\boxtimes_{\bichar}\Cred(\Gamma)\to A\boxtimes_{\multunit[\Gamma]}\Cred(\Gamma)\) such that 
\[
\Theta (j'_{A})=j_{A}, \qquad
\Theta (j'_{B})=j_{B},
\]
where~\(j'_{A}\) and~\(j'_{B}\) are embeddings of~\(A\) and~\(\Cred(\Gamma)\) into~\(A\boxtimes_{\bichar}\Cred(\Gamma)\). An argument similar to that used in the proof of Corollary~\ref{cor:feb6th} shows that the isomorphism~\(\Theta\) maps~\(\tau_{A}\boxtimes_{\bichar}\tau_{\Gamma}\) and~\(\{j'_{A}(A_{i})j'_{B}(B_{j})\}_{i,j\geq 0}\) 
to~\(\tau_{A}\boxtimes_{\multunit[\Gamma]}\tau_{\Gamma}\) and~\(\{j_{A}(A_{i})j_{B}(B_{j})\}_{i,j\geq 0}\), respectively. Then, using 
Lemma~\ref{31stjan2}, we obtain that \(\Psi\circ\Theta\) maps~\(\tau_{A}\boxtimes_{\bichar}\tau_{\Gamma}\) and~\(\{j'_{A}(A_{i})j'_{B}(B_{j})\}_{i,j\geq 0}\)  to \(\tau\) and~\(\{A_{ij}\}_{i,j\geq 0}\), respectively. The map \(\Psi\circ\Theta\) is an isomorphism and 
the triplet \((A\boxtimes_{\bichar}B,\tau_A\boxtimes_{\bichar}\tau_\Gamma, \{j'_{A}(A_{i})j'_{B}(B_{j})\}_{i,j\geq 0})\) is an orthogonal filtration 
by Proposition~\ref{filtrationonbox}. Hence, the triplet $ (  A \rtimes_{\beta,\red} \Gamma, \tau, ( A_{ij} )_{i,j \geq 0} )$ is also an 
orthogonal filtration of $ A \rtimes_{\beta,\red} \Gamma $.

Then Theorem~\ref{maintheorem} allows us to conclude the proof.
\end{proof}
We quickly note that the theorem applies of course to the case of the trivial action.

\begin{corollary}
  Let  $\Afilt$, $\Bfilt$  be the filtrations introduced above. Let $\beta$ denote the trivial action of $\Gamma$ on $A.$ Then $\QISO ( \Afilt \otimes \Bfilt ) $  is isomorphic to $\QISO ( \Afilt )  \times  \QISO ( \Bfilt  )  .$
\end{corollary}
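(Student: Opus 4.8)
The plan is to recognise the trivial action as a special case of the setup of Theorem~\ref{maintheoremcrossed} and then to check that the resulting twisting bicharacter degenerates, so that the Drinfeld double collapses to a direct product.

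First I would produce the Hopf~\Star{}homomorphism required by~\eqref{compatibility}. Write $\epsilon\colon\Cont^\univ(\QISO(\Afilt))\to\bc$ for the counit (the character denoted $e$ in~\eqref{eq:bounded_dual_counit}) and let $\iota\colon\bc\to\Mult(\Contvin(\Gamma))$ be the unital $*$\nb-homomorphism. Set $\pi\defeq\iota\circ\epsilon$. A direct verification using~\eqref{eq:bdd_dual_counit_and_univ_dual_comult} shows that $\pi\in\Mor(\Cont^\univ(\QISO(\Afilt)),\Contvin(\Gamma))$ is a Hopf~\Star{}homomorphism; it is the one describing the trivial quantum group morphism from $\Gamma$ to $\QISO(\Afilt)$. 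Since $\gamma^\univ_A$ is an injective coaction, the comodule counit identity $(\Id_A\otimes\epsilon)\circ\gamma^\univ_A=\Id_A$ holds, whence $\beta\defeq(\Id\otimes\pi)\circ\gamma^\univ_A$ sends $a\mapsto a\otimes 1_\Gamma$, i.e.\ $\beta$ is exactly the trivial action and $A\rtimes_{\beta,\red}\Gamma\cong A\otimes\Cred(\Gamma)$. Moreover, reading off~\eqref{Aijdeft} with $\beta$ trivial gives $A_{ij}=A_i\otimes B^l_j$, so that the filtration $\crfilt$ of Theorem~\ref{maintheoremcrossed} is precisely the tensor product filtration $\Afilt\otimes\Bfilt$ of Proposition~\ref{filtrationonbox}.

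Next I would invoke Theorem~\ref{maintheoremcrossed} to obtain $\QISO(\crfilt)\cong\GDrin{\bichar}$ with $\bichar=(\widehat{\pi}\otimes\widehat{\pi_\Gamma})(\WW^{\Gamma})$, and then compute that $\bichar=1$. The key point is that $\pi$ factors through the counit: by~\eqref{eq:bounded_dual_counit} the bicharacter $(\Id\otimes\pi)(\wW^{\QISO(\Afilt)})$ associated with $\pi$ equals $1$, so by the bijections of Theorem~\ref{the:equiv_homs} the dual Hopf~\Star{}homomorphism $\widehat{\pi}$ again factors through the counit (augmentation) of $\Cst(\Gamma)=\Contvin^\univ(\widehat{\Gamma})$. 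Applying this to the first leg of $\WW^{\Gamma}$ and using the counit property of $\WW^{\Gamma}$ in that leg collapses $\bichar=(\widehat{\pi}\otimes\widehat{\pi_\Gamma})(\WW^{\Gamma})$ to $1\otimes 1=1$.

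Finally, with $\bichar=1$, the reduced Drinfeld double degenerates to the Cartesian product: by~\cite{Roy:Codoubles}*{Example 5.10} (as already recorded in Corollary~\ref{cor:triv-bichar}) one has $\GDrin{1}=\QISO(\Afilt)\times\QISO(\Bfilt)$. Combining this with the identification $\QISO(\crfilt)=\QISO(\Afilt\otimes\Bfilt)$ from the previous step yields $\QISO(\Afilt\otimes\Bfilt)\cong\QISO(\Afilt)\times\QISO(\Bfilt)$, as claimed. The only genuinely delicate point is the bicharacter computation $\bichar=1$: one has to line up the legs of $\WW^{\Gamma}$ with the counit relations correctly and confirm that the dual of a trivial morphism is again trivial; everything else is the routine observation that a trivial twist produces no interaction between the two quantum symmetry groups. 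Alternatively, one may bypass Theorem~\ref{maintheoremcrossed} altogether and simply apply Corollary~\ref{cor:triv-bichar} to $A\otimes\Cred(\Gamma)$, since the trivial crossed product is exactly this tensor product equipped with the filtration $\Afilt\otimes\Bfilt$.
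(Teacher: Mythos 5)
Your proposal is correct, and in fact it contains the paper's own proof as the throwaway alternative in your last sentence: the authors simply observe that for the trivial action $A \rtimes_{\beta,\red} \Gamma \cong A \otimes \Cred(\Gamma)$, identify the filtration $\crfilt$ with $\Afilt\otimes\Bfilt$, and invoke Corollary~\ref{cor:triv-bichar} (which is itself Theorem~\ref{maintheorem} for $\bichar=1$ together with \cite{Roy:Codoubles}*{Example 5.10}). Your main route instead realizes the trivial action as the instance of Theorem~\ref{maintheoremcrossed} attached to the trivial morphism $\pi=\iota\circ\epsilon$, and then checks that the induced bicharacter $(\widehat{\pi}\otimes\widehat{\pi_\Gamma})(\WW^{\Gamma})$ collapses to $1$. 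All the steps there are sound: normality of $\gamma^\univ_A$ (which follows from state preservation, Remark~\ref{prop20thmay}) justifies $(\Id_A\otimes\epsilon)\circ\gamma^\univ_A=\Id_A$; the bicharacter associated to $\pi$ is $1$ by \eqref{eq:bounded_dual_counit}, so by the bijections of Theorem~\ref{the:equiv_homs} the dual morphism $\widehat{\pi}$ is again trivial and kills the first leg of $\WW^{\Gamma}$; and $\GDrin{1}$ is the Cartesian product. What the longer route buys is a consistency check that the crossed-product machinery of Theorem~\ref{maintheoremcrossed} specializes correctly to the untwisted case; what it costs is the (avoidable) leg-bookkeeping in the bicharacter computation. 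The paper's one-line argument is the cleaner choice, and you already identified it.
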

\begin{proof}
It suffices to recall that for a trivial action $ \beta, $ $ A \rtimes_{\beta,\red} \Gamma $ is isomorphic with $ A \otimes \Cred(\Gamma )$ and apply 
Corollary~\ref{cor:triv-bichar}.
\end{proof}

\begin{remark}
Note that the action defined by the formula \eqref{compatibility} preserves the state $\tau_A$. As we will see below, this preservation alone guarantees that the family $\{A_{ij}:i,j \geq 0\}$ forms an orthogonal filtration. The example presented in the last part of this section will show however that  if we only assume that $\beta$ preserves $\tau_A$, then in general the quantum symmetry group need not be of the form discussed above. More specifically we will prove that the quantum symmetry group $\QISO(\crfilt)$ need not be a Drinfeld double of $\QISO(\Afilt)$ and $\QISO(\Bfilt)$  with respect to any bicharacter.
\end{remark}

\subsection{Examples}

In this subsection we present several examples illustrating the scope of 
Theorem \ref{maintheoremcrossed} and also discuss the cases in which it does not apply.
 We begin with the following observation, presenting a general situation where one can apply Theorem \ref{maintheoremcrossed}.

\begin{proposition} \label{18thdec}
Suppose that $ \Afilt \defeq( A, \tau_A, ( A_i )_{i \geq 0} ) $ is an orthogonal filtration and denote as usual by $ \gamma^\univ_A $ the coaction of $ \Cont^\univ( \QISO (\Afilt ))$ on $ A$. Suppose that   $n \in \N$ and we have a quantum group morphism from $\mathbb{T}^n$ to $\QISO(\Afilt)$, described by a  Hopf~\Star{}homomorphism  $ \pi\in \Mor(\Cont^\univ ( \QISO ( \Afilt )), \Cont( \mathbb{T}^n))  $. Further  let $\fatlam\defeq(\lambda_1, \lambda_2, \ldots, \lambda_n) \in \mathbb{T}^n $ and define a homomorphism $ f_{\fatlam}\colon \mathbb{Z}^n \rightarrow \mathbb{T}^n $ by the formula $ f_{\fatlam} ( m_1, m_2, \ldots, m_n ) = \lambda^{m_1}_1 \cdots \lambda^{m_n}_n $ and let $ f_{\fatlam}^*\colon \Cont( \mathbb{T}^n ) \rightarrow \Contvin( \mathbb{Z}^n )  $ denote the associated  Hopf~\Star{}homomorphism. 

Then the formula  $\beta \defeq ( {\rm id} \otimes f_{\fatlam}^*\circ \pi  )\circ \gamma^\univ_A  $
defines an action $ \beta $ of $\Gamma:= \mathbb{Z}^n $ on $ A $ and the $\Cst$\nb-algebra $ A \rtimes_{\beta, \red} \mathbb{Z}^n $ satisfies the conditions of \textup{Theorem \ref{maintheoremcrossed}}.  
\end{proposition}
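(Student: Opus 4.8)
The plan is to recognise the pair $(\Z^n,\beta)$ constructed here as a concrete instance of the situation of Theorem~\ref{maintheoremcrossed}, with the discrete group $\Gamma$ taken to be $\Z^n$. Once the Hopf~\Star{}homomorphism demanded there is exhibited and $\beta$ is seen to be an honest action, the assertion is exactly the content of that theorem and nothing further is needed. Concretely, I will take the Hopf~\Star{}homomorphism ``$\pi$'' of Theorem~\ref{maintheoremcrossed} to be the composite $\tilde{\pi}\defeq f_{\fatlam}^{*}\circ\pi$, where $\pi$ now denotes the given morphism $\Cont^{\univ}(\QISO(\Afilt))\to\Cont(\T^n)$.

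First I would check that $f_{\fatlam}^{*}$ is a Hopf~\Star{}homomorphism. The defining formula shows at once that $f_{\fatlam}$ is a continuous homomorphism $\Z^n\to\T^n$, i.e.\ $f_{\fatlam}(m+m')=f_{\fatlam}(m)f_{\fatlam}(m')$. Its pullback $(f_{\fatlam}^{*}h)(m)=h(f_{\fatlam}(m))$ therefore maps $\Cont(\T^n)$ unitally into $\Contb(\Z^n)=\Mult(\Contvin(\Z^n))$, so it is nondegenerate and defines an element of $\Mor(\Cont(\T^n),\Contvin(\Z^n))$; multiplicativity of $f_{\fatlam}$ is precisely what makes it intertwine the two coproducts (compare Example~\ref{ex:classical}). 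Since $\Cont(\T^n)$ is simultaneously the reduced and the universal algebra of the compact, hence coamenable, group $\T^n$, the composition with the given Hopf~\Star{}homomorphism $\pi\in\Mor(\Cont^{\univ}(\QISO(\Afilt)),\Cont(\T^n))$ is unambiguous, and $\tilde{\pi}=f_{\fatlam}^{*}\circ\pi\in\Mor(\Cont^{\univ}(\QISO(\Afilt)),\Contvin(\Z^n))$ is again a Hopf~\Star{}homomorphism, describing a quantum group morphism from $\Z^n$ to $\QISO(\Afilt)$.

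Next I would verify that $\beta=(\Id\otimes\tilde{\pi})\circ\gamma^{\univ}_{A}$ is a genuine action of $\Z^n$ on $A$, that is, a coaction of $\Contvin(\Z^n)$. The comodule identity $(\Id_{A}\otimes\Comult[\Z^n])\circ\beta=(\beta\otimes\Id)\circ\beta$ follows formally by feeding the Hopf relation $\Comult[\Z^n]\circ\tilde{\pi}=(\tilde{\pi}\otimes\tilde{\pi})\circ\Comult^{\univ}$ into the coassociativity of $\gamma^{\univ}_{A}$. For the Podle\'s condition I would avoid a direct computation and instead appeal to the induced coaction machinery recalled around~\eqref{eq:right_quantum_group_homomorphism_as_fucntor} and in Lemma~\ref{lem:Feb9}: the reduced action $\gamma_{A}$ of $\QISO(\Afilt)$ on $A$ is injective because it preserves the faithful state $\tau_{A}$, and the quantum group morphism encoded by $\tilde{\pi}$ induces from $\gamma_{A}$ a unique injective coaction of $\Contvin(\Z^n)$, which by Lemma~\ref{lem:Feb9} equals $(\Id_{A}\otimes\Lambda_{\Z^n}\circ\tilde{\pi})\circ\gamma^{\univ}_{A}$. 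As $\Z^n$ is discrete and hence coamenable, $\Lambda_{\Z^n}=\Id$, so this induced coaction is exactly $\beta$; in particular $\beta$ is an injective action and $A\rtimes_{\beta,\red}\Z^n$ is well defined. Equivalently, evaluating $\beta$ at a point $m\in\Z^n$ yields the \Star{}automorphism $\beta_{m}=(\Id\otimes\textup{ev}_{f_{\fatlam}(m)}\circ\pi)\circ\gamma^{\univ}_{A}$, and these assemble into an honest $\Z^n$-action.

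Finally, with $\tilde{\pi}$ and $\beta$ so produced, the pair required by Theorem~\ref{maintheoremcrossed} is $(\tilde{\pi},\beta)$, and relation~\eqref{compatibility} holds by construction; hence all hypotheses of that theorem are met with $\Gamma=\Z^n$, which is precisely what was to be shown. I expect the only genuinely delicate point to be the second step, namely confirming that $\beta$ satisfies the Podle\'s condition rather than being a mere comodule map; handling this through the induced coaction framework (instead of by hand) is what keeps the argument short. Everything else --- the functoriality of the classical group versus function algebra correspondence producing $f_{\fatlam}^{*}$, and the closure of Hopf~\Star{}homomorphisms under composition --- is routine.
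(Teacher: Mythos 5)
Your argument is correct and is exactly the routine verification that the paper leaves implicit (Proposition \ref{18thdec} is stated without proof): you compose the given Hopf \Star{}homomorphisms to obtain $\tilde{\pi}=f_{\fatlam}^{*}\circ\pi\in\Mor(\Cont^{\univ}(\QISO(\Afilt)),\Contvin(\Z^n))$, identify $\beta$ as the coaction induced from the injective action $\gamma_A$ via Lemma~\ref{lem:Feb9} (with $\Lambda_{\Z^n}=\Id$ by coamenability of the discrete group $\Z^n$), and then invoke Theorem~\ref{maintheoremcrossed}, whose hypothesis \eqref{compatibility} holds by construction. Your handling of the Podle\'s condition through the induced-coaction framework is the right way to avoid a hands-on computation, and no gaps remain.
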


Proposition \ref{18thdec} can be directly applied to the following class of examples.

\begin{example} \label{3rdjuly}
Let  $ M $ be a compact Riemannian manifold. Assume  that $ \mathbb{T}^n $ is a subgroup of the maximal torus of the isometry group $ {\rm ISO} ( M ) $ for some $n \geq 1. $ Consider the orthogonal filtration $ ( E, \tau, \{ V_i \}_i, J, W ) $ on $ \C ( M ) $ coming from the Hodge-Dirac operator $ d + d^* $ (see  Example 2.5.(1) of   \cite{MR3158722}). It is well known that (Remark 2.13 of \cite {MR3158722} )  $ {\rm ISO} ( M ) $ is an object of the category  $ \mathcal{C} ( E, \tau, \{ V_i \}_i, J, W ) $ and thus we have a quantum group morphism from $ \mathbb{T}^n $ to $ {\rm QISO}  ( E, \tau, \{ V_i \}_i, J, W ). $ Now we can apply Proposition \ref{18thdec}.

In particular, if we take $ M = \mathbb{T}^n, $ the resulting crossed product is the $2n$-dimensional noncommutative torus. 
\end{example}

\begin{example} \label{18thdec2}
For $ q \in ( 0, 1 ) $ and $ G $ a compact semisimple Lie group, let $A:= \Cont( G_q ) $ denote the reduced version of the $ q $-deformation of $ G. $ It is well known that $G_q$ is coamenable.   Consider the (reduced, ergodic) action of $G_q$ on itself, i.e.\ the coproduct $ \Delta: A \rightarrow A \otimes A. $ Then the faithful Haar state $ \tau_A $ of $ A $ is the unique invariant state for the action $ \Delta. $ For an irreducible representation $\pi$ of $G_q$ denote by $A_\pi$   the linear span of its matrix coefficients.  By~\cite{MR3066746}*{Theorem 3.6}, we have an orthogonal filtration  $\Afilt\defeq ( A, \tau_A, \{ A_\pi \}_{\pi\in \textup{Irr}(G_q)} ) $ such that $ ( \Cont ( G_q ), \Delta ) $ is an object of $ \mathcal{C} (\Afilt) .$ Therefore, we have a Hopf~$^*$\nb-homomorphism  $ f: \Cont^\univ ( \QISO (\Afilt )  ) \rightarrow \Cont ( G_q ). $    Let us recall that the maximal toral subgroup  $ \mathbb{T}^n $ of $ G $ is still a quantum subgroup of $G_q$, so that we have a Hopf~$^*$\nb-homomorphism $ g: \Cont ( G_q ) \rightarrow \Cont ( \mathbb{T}^n ). $  Thus we obtain a Hopf~$^*$\nb-homomorphism from $ \Cont^\univ  ( \QISO (\Afilt ) ) $ to $ \Cont ( \mathbb{T}^n )  $ and we end up in the framework of Proposition  \ref{18thdec}.
\end{example}

\begin{example} \label{18thdec3}
The situation described above can be generalized to quantum homogeneous spaces of $G_q$ (we continue using the same notations as above). Let $\GH$ be a quantum subgroup of $G_q$, given by the surjective Hopf~$^*$\nb-homomorphism $ \chi: \Cont ( G_q ) \rightarrow \Cont (H)  $.  Let $ \C(G_q/H)  := \{ a \in \Cont ( G_q ) \mid  ( \chi \otimes {\rm id} ) (\Delta ( a )) = 1 \otimes a\}. $ We then have a reduced ergodic coaction $ \Delta\mid_{\C(G_q/H)}: \C(G_q/H) \rightarrow \C(G_q/H) \otimes \Cont ( G_q ),$ and an orthogonal filtration of $ \C(G_q/H)  $ resulting from an application of Theorem 3.6 of \cite{MR3066746}. The analogous argument to that in Example \ref{18thdec2} shows that the conditions of Proposition \ref{18thdec} hold also here. 
\end{example}

\begin{example} \label{18thdec4}
 This example deals with the orthogonal filtrations of  Cuntz algebras constructed in Proposition 4.5 in \cite{arnabpreprint}.   Let $N \in \N$, and let $ \mathcal{O}_N $ be the associated Cuntz algebra. We will denote the canonical generators of $ \mathcal{O}_N $ 
 by the symbols $ S_1, \ldots, S_N. $ For a multi-index $ \fatmu = ( \mu_1, \mu_2, \cdots, \mu_k ) $, where $k \in \N$ and $ \mu_i \in \{ 1, 2, \ldots, N \}, $ we write $ S_\fatmu $ for $  S_{\mu_1} \cdots S_{\mu_k} $ and $ \left| \fatmu \right| $ for $ \sum_{i=1}^k \mu_i. $ We have the gauge action $ \gamma $ of $\T$ on $ \mathcal{O}_N $ defined by the equation $ \gamma_z ( S_i ) = z S_i $ where $ z \in \mathbb{T}, i = 1, \ldots, N. $ Then the fixed point algebra $ \mathcal{O}^\gamma_N $ has a natural filtration $ \{ W_k \}_{k\geq 0}$. The authors of \cite{arnabpreprint} construct  further an orthogonal filtration $ \{ V_{k,m} \}_{k,m\geq 0}   $ of $ \mathcal{O}_N $ with respect to the canonical state $ \omega. $ It can be easily seen that the filtration is given by the following formulas $(k\geq 0$):
 $$ V_{k,0} = W_k, ~ V_{k,m} = {\rm Span} ~ \{S_\mu x: x \in W_k, |\mu|=m\}\;\;\; {\rm for} ~ m ~ > ~ 0, $$
$$  V_{k,m} = {\rm Span} ~ \{x S_\mu^* : x \in W_k, |\mu|=-m\} \;\;\; {\rm for}~ m ~ < ~ 0.$$

Suppose we have an action of $\mathbb{T}^n$ on  $ \mathcal{O}_N $ which acts  on each $ S_i $ merely by scalar multiplication. We claim that Proposition \ref{18thdec} applies to such actions. Indeed, it is easy to see that  $ \mathbb{T}^n $ acts by quantum symmetries on $ \mathcal{O}_N.$ Therefore, we have a quantum group morphism from $\T^n$ to $\QISO (\widetilde{\mathcal{O}_N}) $ and we can apply Proposition \ref{18thdec} to obtain an action of $\Z^n$ on $\mathcal{O}_N$ satisfying the conditions in Theorem \ref{maintheoremcrossed}.

The first example of such a group action is of course the  gauge action $ \gamma $ of $\T$ defined above. 
More  generally, following Katsura's prescription in Definition 2.1 of  \cite{MR2016248}, we have actions of $ \mathbb{T}^n $ on $ \mathcal{O}_N $ defined by
  $$ \beta^\chi_z ( S_i ) =  \chi  ( z ) S_i, \;\;\; i=1,\ldots,N,  z  \in  \mathbb{T}^n,  $$
$ \chi $ being a fixed character of $\mathbb{T}^n$ (so an element of $ \mathbb{Z}^n $).

Then we can apply Proposition \ref{18thdec} as mentioned above.
\end{example}

\subsection{The example of the Bunce-Deddens algebra}

This subsection deals with the quantum symmetries of the Bunce-Deddens algebra. Let us recall that the Bunce-Deddens algebra is isomorphic to the crossed product $ A \rtimes_{\beta} \mathbb{Z} $, where $ A $ is the commutative AF algebra of continuous functions on the middle-third Cantor set and $ \beta $ is the odometer action on $ A. $ More precisely, $A$ is an AF algebra arising as the limit of the unital embeddings
\[\mathbb{C}^2 \longrightarrow \mathbb{C}^2 \otimes \mathbb{C}^2 \longrightarrow \mathbb{C}^2 \otimes \mathbb{C}^2 \otimes\mathbb{C}^2 \longrightarrow \,\cdots.\]

Let us recall a multi-index notation for a basis of $ \mathcal{A}_n $ (the $n$-th element of the above sequence), as introduced in \cite{MR2728589}.
For each $n \in \mathbb{N},$  $\Jnd_n$ will denote the set
$\{i_1i_2\cdots i_n: i_j\in\{1,2\} \textrm{ for }j=1,\ldots,n\}$. Multi-indices in $\Jnd:= \bigcup_{n \in \mathbb{N}} \Jnd_n$ will be denoted by capital letters $I,J,\ldots$ and we let the canonical basis of the algebra $\mathcal{A}_n$ built of minimal projections be indexed by elements of $\Jnd_n$. Hence, the basis vectors of $ \mathcal{A}_n$ will be denoted by $ e_I, $ where $ I  $ belongs to $ \Jnd_n. $ 

 Then the natural embeddings $ i_n:\mathcal{A}_n \to\mathcal{A}_{n+1}$ can be  described by the formula
\begin{equation} \label{cantorset0} i_n ( e_I ) =  e_{I1} + e_{I2},\;\;\; I \in \Jnd_n, \end{equation}
where we use the standard concatenation of multi-indices.

The quantum isometry group (in the sense of \cite{goswami2}) of the $\Cst$\nb-algebra $ A $ was studied in \cite{MR2728589}. For more details, we refer to Chapter 5 of \cite{goswamibook2}.  Indeed, we can fix a spectral triple on $A$  coming from the family constructed by Christensen and Ivan (\cite{Chrivan}).  It turns out (Theorem 3.1 of \cite{MR2728589}) that the relevant quantum isometry group of $A$ (with respect to that triple), which we will denote $\mathbb{S}_{\infty}$, is the projective limit of the quantum isometry groups $\mathbb{S}_n$ of the finite dimensional commutative $\Cst$\nb-algebras  $\mathcal{A}_n$, equipped with suitable spectral triples (or, equivalently, viewed as algebras of functions on respective finite Bratteli diagrams, see \cite{MR2728589}).
More precisely,  the algebras $ \C^\univ (  \mathbb{S}_n  ) $ are inductively defined by the formulas
  $$ \C^{\univ} ( \mathbb{S}_1 ) = \C ( \mathbb{Z}_2 ), \;\;\;  \C^\univ ( \mathbb{S}_{n + 1} ) =  \C^\univ ( \mathbb{S}_n ) \ast  \C^\univ ( \mathbb{S}_n ) \oplus  \C^\univ ( \mathbb{S}_n ) \ast  \C^\univ ( \mathbb{S}_n ), \;\;\; n \in \N.  $$
Let us introduce some more notation, following \cite{MR2728589}. The algebra
	 $ \C^\univ ( \mathbb{S}_{n + 1} ) $ is generated by elements $ \{  a_{Kl,Ij}: K,I \in \Jnd_n, l,j = 1,2 \}, $ and the coaction $ \gamma^\univ_{n + 1}  $ of $ \C^\univ ( \mathbb{S}_{n + 1} ) $ on $ \mathcal{A}_{n + 1}  $ is given by the following formula:
	 \begin{equation} \label{cantorset1}
	    \gamma^\univ_{n + 1} ( e_{Ij} ) = \sum_{K \in \Jnd_n, l = 1,2} e_{Kl} \otimes a_{Kl,Ij}, \;\;\; I ~ \in ~ \Jnd_n, ~ j ~ \in ~ \{ 1,2 \}.
	  \end{equation}
The sequence $ (  \C^\univ ( \mathbb{S}_n ) )_{n \in \N} $ defines an inductive system with connecting Hopf~$^*$\nb-homomorphisms $ \phi_{n,m} : \C^\univ ( \mathbb{S}_n ) \rightarrow \C^\univ ( \mathbb{S}_{m} )  $ (if $ n \leq m $) satisfying the equation
   $$ \phi_{nk} = \phi_{mk} \circ  \phi_{nm},\;\;\;  n < m < k. $$
Further one can check that for all $n \in \N$ we have 
	 \begin{equation} \label{cantorset2} ( i_n \otimes \phi_{n,n + 1}   ) \circ \gamma^\univ_n = \gamma^\univ_{n + 1} \circ i_n. \end{equation}
	Combining \eqref{cantorset0} and \eqref{cantorset2}, we see that for  all $n \in \N$, $I,J \in \Jnd_n,$
	  \begin{equation} \label{cantorset2a} \phi_{n, n + 1} ( a_{J,I} ) = a_{J1,I1} + a_{J1,I2} = a_{J2,I1} + a_{J2,I2}.   \end{equation}
	Thus, we have a compact quantum group $ \mathbb{S}_\infty $ arising as a projective  limit of this system, a compact quantum group morphism  from $ \mathbb{S}_\infty $ to $  \mathbb{S}_n$ represented by a Hopf $^*$\nb-homomorphism $ \phi_{n,\infty}\in \Mor(\C^{\univ}(\mathbb{S}_n), \C^{\univ}(\mathbb{S}_\infty))  $ and a canonical coaction $ \gamma^\univ \in \Mor(A,A \otimes \C^\univ ( \mathbb{S}_\infty  )). $

Let  $\omega_{\xi}$ denote the canonical trace on $A$. As explained in Subsection 3.3 of \cite{MR3066746}, $ A $ can be equipped with an orthogonal filtration $ ( A, \omega_\xi, \{ \mathcal{A}_n \setminus \mathcal{A}_{n - 1}\}_{n\in \N} ). $ 
	As a result, by Theorem 3.2 of \cite{MR3066746},  the quantum symmetry group of $ ( A, \omega_\xi, \{ \mathcal{A}_n \setminus \mathcal{A}_{n - 1}\}_n ) $  is isomorphic to $  \mathbb{S}_\infty.$ 
	
Now let us recall the odometer action $ \beta $ on $ A. $  In fact the action $ \beta $ arises as an inductive limit of actions $ \beta_n $ of $ \Gamma_n:= \mathbb{Z}_{2^{n}} $  on $ \mathcal{A}_n. $ Let $ {\bf I}_n $ denote the element $ ( 1 1 1 \cdots 1 ) \in \Jnd_n. $ 
	
We first define inductively homomorphisms  $\sigma_n: \Gamma_n \to {\rm Aut} ( \Jnd_n )$, $n \in \N$. We do it  as follows, for simplicity writing $\sigma_n^i$ for $\sigma_n(i)$: first
the map	$ \sigma_1: \Jnd_1 \rightarrow \Jnd_1 $ is defined as
	 $$ \sigma_1 ( 1 ) = 2, \sigma_1 ( 2 ) = 1.  $$
	In the inductive step, for each $ j = 1,2 $ and $ 0 \leq i \leq 2^n-1, $ the map $ \sigma^i_{ n + 1} : \Jnd_{n + 1} \rightarrow \Jnd_{n + 1} $ is defined by
	\begin{equation} \label{cantorsetminus1}  \sigma^i_{n + 1} (  \sigma^k_n ( {\bf I}_n ) j ) = \sigma^{k + i}_n ( {\bf I}_n ) j \;\;\; {\rm if} \;\;\; 0 \leq k + i \leq 2^n - 1, \; j=1,2,\end{equation} 
\begin{equation} \label{cantorsetminus1b}\sigma^i_{n + 1} ( \sigma^k_n ( {\bf I}_n ) j ) = \sigma^{k + i}_n ( {\bf I}_n ) ( j + 1 ) \;\;\; {\rm if} \;\;\; k + i \geq 2^n, \;j=1,2,  \end{equation}
where $ j + 1 $ is defined mod $ 2 $	and we have used the concatenation of indices.
	It is clear that  $ \Jnd_n = \{ \sigma^i_n  ( {\bf I}_n ): i = 0, 1,2, \ldots, 2^{n} - 1 \}. $

Let $ \{ \delta_j: j = 0,1,2 \ldots, 2^{n - 1}  \}  $ denote the standard basis of the finite-dimensional  (commutative) algebra $ \C ( \Gamma_n ). $
	Then for each $n \in \N$ the coaction $ \beta_n\in \Mor( \mathcal{A}_n,\mathcal{A}_n \otimes \C ( \Gamma_n )) $ is defined as
\begin{equation} \label{cantorset3}
		   \beta_n ( e_{ \sigma^i_n ( {\bf I}_n  )} ) = \sum_{j=0}^{2^n-1}  e_{ \sigma^j_n ( {\bf I}_n  )} \otimes \delta_{j -i }.
		   \end{equation}
We then easily check that we have an inductive system $ ( \C ( \Gamma_n ), \beta_n )_{n \in \N}  $ of coactions, with connecting morphisms $ \psi_{n,m}\in \Mor ( \C ( \Gamma_n ),\C ( \Gamma_m))$ (for $ n < m $) such that for all $n,m, k \in \N$, $n < m < k  $,
$ \psi_{n,k} = \psi_{m,k} \circ \psi_{n,m}$, and a Hopf $^{*}$\nb-homomorphism $ \psi_{n,\infty} \in \Mor (\C ( \Gamma_n), \C_0 ( \mathbb{Z} ) ). $
In particular for $n \in \N$  the map $  \psi_{n, n + 1} $ is given by
\begin{equation} \label{cantorset28thmarch} \psi_{n, n +1} ( \delta^{\Gamma_n}_j ) = \delta^{\Gamma_{n + 1}}_j +  \delta^{\Gamma_{n + 1}}_{ 2^n + j}, \;\;\;j =0,\ldots, 2^n-1. \end{equation}
Thus finally we can define the odometer action using the universal property of the inductive limit, so that for all $n \in \N$
$$ \beta \circ \phi_{n, \infty}:= ( {\rm id} \otimes \psi_{n,\infty}  ) \circ \beta_n.$$
			
In what follows, we will replace the symbols $ \sigma^i_n $ and ${\bf I}_n$ by $ \sigma^i  $ and $\bf I$, respectively, unless there is any risk of confusion.

Thus we have an orthogonal filtration $ \Afilt:=( A, \omega_\xi, \{ \mathcal{A}_n \setminus \mathcal{A}_{n - 1}\}_{n\in \N} ) $ of a unital $\Cst$\nb-algebra $ A $ and an action $ \beta $ of $ \mathbb{Z}  $ on $ A.  $
In order to  apply Theorem \ref{maintheoremcrossed}, we need a Hopf $\ast$-homomorphism $ \pi \in {\rm Mor} ( \C^\univ (  \QISO (\Afilt)  ),  C_0 ( \mathbb{Z} )) $ such that  $ \beta = ( {\rm id} \otimes \pi    ) \circ \gamma^\univ_A. $
			
In order to define $ \pi, $ we begin by defining a quantum group homomorphism from $  \C^\univ ( \mathbb{S}_n ) $ to $ \C ( \Gamma_n ). $
			
\begin{lemma} \label{cantorsetlemma1}
	Let $n \in \N$.
 Define $ \pi_n\in \Mor(\C^\univ ( \mathbb{S}_n ),\C ( \Gamma_n ) )$  by the formula 
$$\pi_n ( a_{\sigma^i ({\bf I} ), \sigma^j ({\bf I}) }  ) = \delta_{i - j},\;\;\;  i,j=1,\ldots,2^n. $$
Then $ \pi_n $ is a Hopf $^{*}$\nb-homomorphism and
\begin{equation} \label{cantorset28thmarch2} ( {\rm id} \otimes \psi_{n, \infty} \circ \pi_n )\circ  \gamma^\univ_n = \beta \circ \phi_{n, \infty}. \end{equation}
\end{lemma}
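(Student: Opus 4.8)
The plan is to produce $\pi_n$ from the universal property of $\mathbb{S}_n$ rather than by checking the defining relations of $\C^\univ(\mathbb{S}_n)$ by hand. Recall that $\mathbb{S}_n$ is the quantum symmetry group of the finite-dimensional algebra $\mathcal{A}_n$ equipped with the orthogonal filtration $\{\mathcal{A}_k\ominus\mathcal{A}_{k-1}\}_{0\le k\le n}$ (the finite analogue of the filtration of $A$), and that the associated universal coaction is $\gamma^\univ_n(e_{\sigma^j({\bf I})})=\sum_i e_{\sigma^i({\bf I})}\otimes a_{\sigma^i({\bf I}),\sigma^j({\bf I})}$, a special case of \eqref{cantorset1} written using $\Jnd_n=\{\sigma^i({\bf I}):0\le i\le 2^n-1\}$. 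Viewing the finite abelian group $\Gamma_n=\Z_{2^n}$ as a compact quantum group through $\C(\Gamma_n)$, the odometer $\beta_n\in\Mor(\mathcal{A}_n,\mathcal{A}_n\otimes\C(\Gamma_n))$ is a coaction of $\C(\Gamma_n)$.

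First I would verify that $(\Gamma_n,\beta_n)$ is an object of the category $\clc$ defining $\mathbb{S}_n$. Since $\beta_n$ is the action by $^*$-automorphisms induced by the translations $i\mapsto i+m$ of the cyclically enumerated set $\Jnd_n$, two observations suffice. On one hand $\beta_n$ preserves the canonical trace $\omega_\xi$, which gives equal mass to the minimal projections and is therefore invariant under any permutation of them. On the other hand each translation preserves the subalgebra tower $\mathcal{A}_0\subseteq\mathcal{A}_1\subseteq\cdots\subseteq\mathcal{A}_n$: this is the defining feature of the odometer, for by \eqref{cantorsetminus1}--\eqref{cantorsetminus1b} the level-$k$ ancestor of the position $i$ depends only on $i\bmod 2^k$, so translation by $m$ descends to translation by $m\bmod 2^k$ on the level-$k$ projections and hence carries $\mathcal{A}_k$ into itself. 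As $\beta_n$ is trace preserving and each group element preserves every $\mathcal{A}_k$, the action preserves each orthogonal complement $\mathcal{A}_k\ominus\mathcal{A}_{k-1}$, i.e.\ every filtration subspace; thus $(\Gamma_n,\beta_n)$ lies in $\clc$.

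Granting this, Theorem \ref{banicaskalskitheorem} together with Lemma \ref{25thjan1} supplies a unique Hopf~$^*$-homomorphism $\pi_n\in\Mor(\C^\univ(\mathbb{S}_n),\C(\Gamma_n))$ intertwining the coactions, $(\Id\otimes\pi_n)\circ\gamma^\univ_n=\beta_n$. To read off $\pi_n$ on generators I would evaluate this identity on $e_{\sigma^j({\bf I})}$: the left-hand side is $\sum_i e_{\sigma^i({\bf I})}\otimes\pi_n(a_{\sigma^i({\bf I}),\sigma^j({\bf I})})$, while the right-hand side is $\sum_i e_{\sigma^i({\bf I})}\otimes\delta_{i-j}$ by \eqref{cantorset3}; linear independence of the $e_{\sigma^i({\bf I})}$ then forces $\pi_n(a_{\sigma^i({\bf I}),\sigma^j({\bf I})})=\delta_{i-j}$, which is exactly the asserted formula (and, since $\Jnd_n=\{\sigma^i({\bf I})\}$ exhausts the index set, this accounts for every generator). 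In particular the prescription of the lemma genuinely defines a Hopf~$^*$-homomorphism. As a consistency check one may also confirm the Hopf property directly from $\Comult(a_{\sigma^i({\bf I}),\sigma^j({\bf I})})=\sum_k a_{\sigma^i({\bf I}),\sigma^k({\bf I})}\otimes a_{\sigma^k({\bf I}),\sigma^j({\bf I})}$, since then $(\pi_n\otimes\pi_n)\Comult(a_{\sigma^i({\bf I}),\sigma^j({\bf I})})=\sum_k\delta_{i-k}\otimes\delta_{k-j}=\sum_{a+b=i-j}\delta_a\otimes\delta_b=\Comult(\delta_{i-j})$.

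Finally, to obtain \eqref{cantorset28thmarch2} I would push the intertwining identity forward along $\psi_{n,\infty}$: applying $\Id\otimes\psi_{n,\infty}$ to $(\Id\otimes\pi_n)\circ\gamma^\univ_n=\beta_n$ yields $(\Id\otimes\psi_{n,\infty}\circ\pi_n)\circ\gamma^\univ_n=(\Id\otimes\psi_{n,\infty})\circ\beta_n$, whose right-hand side equals $\beta\circ\phi_{n,\infty}$ by the very definition of the odometer action $\beta$ as the inductive limit of the $\beta_n$. The only genuinely non-formal step is the first verification: the assertion that the adding-machine enumeration $\sigma^i({\bf I})$ is compatible with the tower $\mathcal{A}_0\subseteq\cdots\subseteq\mathcal{A}_n$, in the sense that translation by $m$ respects reduction modulo $2^k$. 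This is the arithmetic heart of the odometer and is where I expect to spend most care, extracting it cleanly from the recursion \eqref{cantorsetminus1}--\eqref{cantorsetminus1b}; everything else is then either universality or a one-line functoriality argument.
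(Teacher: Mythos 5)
Your proposal is correct and follows essentially the same route as the paper: both arguments obtain $\pi_n$ from the universal property of $\mathbb{S}_n$ after verifying that the odometer action is compatible with the tower $\mathcal{A}_0\subseteq\cdots\subseteq\mathcal{A}_n$ (equivalently, preserves the orthogonal filtration), read off the formula on generators from the intertwining identity $(\Id\otimes\pi_n)\circ\gamma^\univ_n=\beta_n$, and then apply $\Id\otimes\psi_{n,\infty}$ to deduce \eqref{cantorset28thmarch2}. The only cosmetic difference is that the paper phrases the universal property at level $n+1$ in the quantum-permutation-group picture (magic unitary plus preservation of $i_n(\mathcal{A}_n)$) and checks tower-preservation by the explicit expansion of $\beta_{n+1}(i_n(e_{\sigma^i(\mathbf{I})}))$, whereas you check it structurally via the observation that translation by $m$ reduces mod $2^k$ on level $k$ — the same arithmetic fact about \eqref{cantorsetminus1}--\eqref{cantorsetminus1b}.
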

					
\begin{proof} 
To prove that $ \pi_n $ is a Hopf $^*$-homomorphism, we need to recall the universal property of the quantum group  $ \mathbb{S}_n. $ Let us begin by recalling that both $ \mathbb{S}_{n + 1} $ and $ \Gamma_{n + 1} $ are quantum subgroups of the quantum permutation group $S_{2^{n + 1}}^+. $ Indeed, this follows from \cite{wang}  since both $ \mathbb{S}_{n + 1} $ and $ \Gamma_{n + 1}  $ are compact quantum groups acting on $ A_{n + 1} \approx \mathbb{C}^{2^{n + 1}}. $ In particular, the elements $ \{ a_{IJ}: I, J \in \Jnd_{n + 1}   \}  $ satisfy the magic unitary conditions of the canonical generators of $\C( S_{2^{n + 1}}^+). $ The only extra conditions on $ a_{IJ}  $ are dictated by the following equalities (see equation (2.1) of \cite{MR2728589}):
\begin{equation}
 \label{eq:30Aug18}
 \gamma^\univ_{n + 1} (  i_n (  A_n  )  ) \subseteq i_n ( A_n ) \otimes \C^\univ ( \mathbb{S}_{n + 1}  ).
 \end{equation}
Therefore, if $ \delta $ is an action of a compact quantum group $H $ on $ A_{n + 1}  $ satisfying the condition
$$ \delta (  i_n (  A_n  )  ) \subseteq i_n ( A_n ) \otimes \C^\univ (  H ), $$
then we have a quantum group morphism from $ H $ to $ \mathbb{S}_n $ given by a map $\Phi\in \Mor(\C^{\univ}(\mathbb{S}_n), \C^{\univ}(H))$ such that $ ( {\rm id} \otimes  \Phi  )\circ  \gamma^\univ_{n + 1} =\delta. $ We claim that the action $ \beta_{n + 1} $ of $ \Gamma_{n + 1} $ on $ A_{n + 1} $ satisfies the displayed condition~\eqref{eq:30Aug18}. Indeed, by \eqref{cantorset0}, \eqref{cantorsetminus1} and \eqref{cantorsetminus1b}, for all $ 0 \leq i \leq 2^n - 1 $ we have
				\begin{eqnarray*}
				 \beta_{n + 1} (  i_n ( e_{\sigma^i ( {\bf I}_n  ) }  )  )&=& \beta_{n + 1} (  e_{\sigma^i ({\bf I}  ) 1 }  )  + \beta_{n + 1} ( e_{\sigma^i ( {\bf I}  ) 2 } )\\
		&=& \sum^{2^n - 1}_{j = 0} [ e_{\sigma^j ({\bf I} ) 1 } \otimes \delta_{j - i}  +  e_{\sigma^j ( {\bf I} ) 2 } \otimes \delta_{j - i + 2^n}   +  e_{\sigma^j ({\bf I}  ) 2 } \otimes \delta_{j - i}  + e_{\sigma^j ({\bf I} ) 1 } \otimes \delta_{j - i + 2^n}  ]\\	
			&=& \sum^{2^n - 1}_{j = 0} ( e_{\sigma^j ( {\bf I} ) 1 } +   e_{\sigma^j ( {\bf I}  ) 2 }  ) \otimes ( \delta_{j - i}  +  \delta_{j - i + 2^n} )\\
			&=& \sum^{2^n - 1}_{j = 0} i_n ( e_{\sigma^j ({\bf I} ) } ) \otimes  ( \delta_{j - i}  +  \delta_{j - i + 2^n} ),
				\end{eqnarray*}
				which proves our claim.

					Finally, since $ ( {\rm id} \otimes \pi_n )\circ  \gamma^\univ_n = \beta_n $ holds, we have
					$$ ( {\rm id} \otimes \psi_{n,\infty} \circ \pi_n )\circ  \gamma^\univ_n = ( {\rm id} \otimes \psi_{n,\infty} ) \circ \beta_n = \beta \circ \phi_{n, \infty}. \qedhere $$
	\end{proof}
We naturally need also some compatibility conditions for the morphisms introduced in the last lemma.
	\begin{lemma}  \label{cantorsetlemma2} For each $n \in \N$ the following equality holds:
	 $$ \psi_{n,\infty} \circ \pi_n = \psi_{n + 1, \infty} \circ \pi_{n + 1} \circ \phi_{n, n + 1}. $$
	\end{lemma}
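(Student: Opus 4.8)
The plan is to reduce the statement to the corresponding \emph{finite-level} identity
\[
   \pi_{n+1}\circ\phi_{n,n+1}=\psi_{n,n+1}\circ\pi_{n}
   \qquad\text{in }\Mor(\C^\univ(\mathbb{S}_{n}),\C(\Gamma_{n+1})),
\]
and then to compose on the left with $\psi_{n+1,\infty}$. Invoking the tower relation $\psi_{n,\infty}=\psi_{n+1,\infty}\circ\psi_{n,n+1}$ for the connecting maps of the inductive system $(\C(\Gamma_{m}))_{m}$ immediately yields $\psi_{n,\infty}\circ\pi_{n}=\psi_{n+1,\infty}\circ\pi_{n+1}\circ\phi_{n,n+1}$, as required. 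So the whole content is the displayed finite-level identity.

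To prove that identity I would avoid the explicit combinatorics of the index set $\Jnd_{n}$ and argue instead through the intertwining relations available at the level of the coactions. Recall from the proof of Lemma~\ref{cantorsetlemma1} that $(\Id\otimes\pi_{m})\circ\gamma^\univ_{m}=\beta_{m}$ for each $m$, that the inductive system of coactions $(\C(\Gamma_{m}),\beta_{m})$ satisfies $(i_{n}\otimes\psi_{n,n+1})\circ\beta_{n}=\beta_{n+1}\circ i_{n}$, and that by~\eqref{cantorset2} one has $(i_{n}\otimes\phi_{n,n+1})\circ\gamma^\univ_{n}=\gamma^\univ_{n+1}\circ i_{n}$. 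Chaining these, I compute
\begin{align*}
(i_{n}\otimes\pi_{n+1}\circ\phi_{n,n+1})\circ\gamma^\univ_{n}
&=(\Id\otimes\pi_{n+1})\circ(i_{n}\otimes\phi_{n,n+1})\circ\gamma^\univ_{n}\\
&=(\Id\otimes\pi_{n+1})\circ\gamma^\univ_{n+1}\circ i_{n}=\beta_{n+1}\circ i_{n}\\
&=(i_{n}\otimes\psi_{n,n+1})\circ\beta_{n}=(i_{n}\otimes\psi_{n,n+1}\circ\pi_{n})\circ\gamma^\univ_{n}.
\end{align*}
Thus $(i_{n}\otimes F)\circ\gamma^\univ_{n}=(i_{n}\otimes G)\circ\gamma^\univ_{n}$ with $F=\pi_{n+1}\circ\phi_{n,n+1}$ and $G=\psi_{n,n+1}\circ\pi_{n}$, and it remains only to cancel $\gamma^\univ_{n}$ and $i_{n}$.

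For the cancellation I would slice the first leg by an arbitrary functional $\omega\in\mathcal{A}_{n+1}'$. Since $i_{n}$ is an injective embedding of finite-dimensional $\Cst$\nb-algebras, every functional on $\mathcal{A}_{n}$ arises as $\omega\circ i_{n}$, so the identity above gives $F(z)=G(z)$ for every $z$ in the linear span of $\{(\eta\otimes\Id)(\gamma^\univ_{n}(a)):\eta\in\mathcal{A}_{n}',\,a\in\mathcal{A}_{n}\}$. As $F$ and $G$ are \Star{}homomorphisms, and as $\mathbb{S}_{n}$ acts faithfully on $\mathcal{A}_{n}$ (Theorem~\ref{banicaskalskitheorem}), this span generates a dense \Star{}subalgebra of $\C^\univ(\mathbb{S}_{n})$; hence $F=G$, which is the finite-level identity.

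The main obstacle is exactly this last step: one must phrase faithfulness of $\gamma^\univ_{n}$ correctly at the universal level and check that slicing against functionals factoring through $i_{n}$ still exhausts enough of $\mathcal{A}_{n}'$ to separate points after passing to the generated \Star{}algebra. As a safeguard, the finite-level identity can also be verified by hand on the generators $a_{\sigma^{i}(\mathbf{I}),\sigma^{j}(\mathbf{I})}$: applying~\eqref{cantorset2a} to expand $\phi_{n,n+1}(a_{\sigma^{i}(\mathbf{I}),\sigma^{j}(\mathbf{I})})$, identifying the concatenated indices $\sigma^{i}(\mathbf{I})1$ and $\sigma^{j}(\mathbf{I})2$ with $\sigma^{i}(\mathbf{I})$ and $\sigma^{2^{n}+j}(\mathbf{I})$ at level $n+1$ via~\eqref{cantorsetminus1} and~\eqref{cantorsetminus1b}, and then recognizing $\pi_{n+1}$ of the outcome as $\delta^{\Gamma_{n+1}}_{i-j}+\delta^{\Gamma_{n+1}}_{i-j+2^{n}}=\psi_{n,n+1}(\delta^{\Gamma_{n}}_{i-j})$ by~\eqref{cantorset28thmarch}. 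This combinatorial route reproduces the same identity without appealing to faithfulness, at the cost of the index bookkeeping.
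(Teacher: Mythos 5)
Your argument is correct, but your primary route is genuinely different from the paper's. The paper proves the lemma by exactly the generator computation you relegate to a ``safeguard'': it expands $\phi_{n,n+1}(a_{\sigma^j(\mathbf{I}),\sigma^k(\mathbf{I})})$ via \eqref{cantorset2a}, applies $\pi_{n+1}$ using the level-$(n+1)$ identification of the concatenated indices to get $\delta_{j-k}+\delta_{2^n+j-k}$, recognizes this as $\psi_{n,n+1}(\delta_{j-k})$ via \eqref{cantorset28thmarch}, and then uses the tower relation $\psi_{n+1,\infty}\circ\psi_{n,n+1}=\psi_{n,\infty}$ together with Lemma \ref{cantorsetlemma1} --- so in effect it also establishes your stronger finite-level identity $\pi_{n+1}\circ\phi_{n,n+1}=\psi_{n,n+1}\circ\pi_n$ before composing with $\psi_{n+1,\infty}$. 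Your main route derives that identity abstractly from the three intertwining relations $(\Id\otimes\pi_m)\circ\gamma^\univ_m=\beta_m$, \eqref{cantorset2}, and $\beta_{n+1}\circ i_n=(i_n\otimes\psi_{n,n+1})\circ\beta_n$, followed by cancellation of $i_n$ and $\gamma^\univ_n$. This is sound, and the step you flag as the ``main obstacle'' is in fact harmless here: all algebras involved are finite dimensional, every functional on $\mathcal{A}_n$ factors through the embedding $i_n$, and $\C^\univ(\mathbb{S}_{n+1})$ is by construction generated by the matrix coefficients $a_{Kl,Ij}$ appearing in \eqref{cantorset1}, so faithfulness in the sense of Lemma \ref{25thjan1}(2) is immediate (your citation of Theorem \ref{banicaskalskitheorem} is slightly misplaced, since $\mathbb{S}_n$ is defined as the quantum isometry group of a spectral triple rather than of an orthogonal filtration, but nothing hinges on this). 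What your route buys is freedom from the multi-index bookkeeping and a conceptual reading of the lemma as compatibility of the $\pi_n$ with the two inductive systems; what the paper's route buys is brevity and the explicit formula, which in any case has to be checked once to know that $\pi_n$ intertwines $\gamma^\univ_n$ with $\beta_n$.
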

	\begin{proof}
	 We fix $n \in \N$, $j,k \in \{0, \ldots, 2^n-1\}$ and compute
\begin{alignat*}{2}
 &  (\psi_{n + 1, \infty} \circ \pi_{n + 1} \circ \phi_{n, n + 1}) ( a_{\sigma^j (  {\bf I} ), \sigma^k ( {\bf I}) }    )\\
&= (\psi_{n + 1, \infty} \circ \pi_{n + 1}) ( a_{\sigma^j (  {\bf I} ) 1, \sigma^k ( {\bf I} ) 1 } +  a_{\sigma^j ( {\bf I} ) 1, \sigma^k ( {\bf I}) 2 })
& ({\rm by} ~ \eqref{cantorset2a})\\
&= \psi_{n + 1, \infty} ( \delta_{( j - k )}  + \delta_{( 2^{n} + j - k )})
& ({\rm by} ~ \eqref{cantorsetminus1},  \eqref{cantorsetminus1b}~ {\rm and} ~ {\rm Lemma} ~ \ref{cantorsetlemma1} ~  )\\
&= (\psi_{n + 1, \infty} \circ \beta_{n, n +1}) ( \delta_{j - k} )
&  ( {\rm by} ~ \eqref{cantorset28thmarch}    )\\
&= \psi_{n,\infty} ( \delta_{j - k} ) & \\
&= (\psi_{n,\infty} \circ \pi_n) ( a_{\sigma^j (  {\bf I}  ), \sigma^k ( {\bf I} ) }   )
& ({\rm by} ~ {\rm Lemma} ~ \ref{cantorsetlemma1}).\qedhere
\end{alignat*}
\end{proof}
	
We are ready for the final statement, which implies that the approach of Theorem \ref{maintheoremcrossed} can be used to produce an orthogonal filtration of the Bunce-Deddens algebra.
	\begin{proposition}
	There exists a Hopf~$^*$\nb-homomorphism $ \eta \in \Mor (\C^\univ ( \mathbb{S}_\infty ),\C_0 (  \mathbb{Z} )  ) $ such that
		$$ ( {\rm id} \otimes \eta   )\circ \gamma^\univ = \beta. $$
	\end{proposition}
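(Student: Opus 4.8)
The plan is to build $\eta$ as the limit of the compatible family $(\psi_{n,\infty}\circ\pi_n)_{n}$ and then to verify the intertwining relation on the dense union $\bigcup_{n}\mathcal A_n$.

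First I would use that $\mathbb S_\infty$ is the projective limit of the $\mathbb S_n$, which means that $\C^\univ(\mathbb S_\infty)$ is the inductive limit of the system $(\C^\univ(\mathbb S_n),\phi_{n,n+1})$; the canonical maps $\phi_{n,\infty}\in\Mor(\C^\univ(\mathbb S_n),\C^\univ(\mathbb S_\infty))$ then satisfy $\phi_{m,\infty}\circ\phi_{n,m}=\phi_{n,\infty}$ and have dense union of ranges. Setting $g_n\defeq\psi_{n,\infty}\circ\pi_n\in\Mor(\C^\univ(\mathbb S_n),\C_0(\mathbb Z))$, which is a Hopf~$^*$\nb-homomorphism as a composition of the Hopf~$^*$\nb-homomorphisms $\pi_n$ of Lemma~\ref{cantorsetlemma1} and $\psi_{n,\infty}$, I observe that Lemma~\ref{cantorsetlemma2} says exactly that $g_n=g_{n+1}\circ\phi_{n,n+1}$. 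Hence the family $(g_n)_n$ descends, by the universal property of the inductive limit, to a unique morphism $\eta\in\Mor(\C^\univ(\mathbb S_\infty),\C_0(\mathbb Z))$ with $\eta\circ\phi_{n,\infty}=\psi_{n,\infty}\circ\pi_n$ for every $n\in\N$. That $\eta$ is again a Hopf~$^*$\nb-homomorphism I would deduce from the facts that the $\phi_{n,\infty}$ are Hopf and each $g_n=\eta\circ\phi_{n,\infty}$ is Hopf: the relation $(\eta\otimes\eta)\circ\Delta^\univ_{\mathbb S_\infty}=\Delta_{\mathbb Z}\circ\eta$ then holds after precomposition with every $\phi_{n,\infty}$, hence on the dense subalgebra $\bigcup_n\phi_{n,\infty}(\C^\univ(\mathbb S_n))$, hence everywhere by continuity.

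It remains to check that $(\Id_A\otimes\eta)\circ\gamma^\univ=\beta$. Writing $\iota_n\colon\mathcal A_n\hookrightarrow A$ for the canonical inclusions into the inductive limit (the maps abbreviated $\phi_{n,\infty}$ in the odometer formulas), the coaction $\gamma^\univ$ is the inductive limit of the $\gamma^\univ_n$, so that \eqref{cantorset2} passes to the limit as $\gamma^\univ\circ\iota_n=(\iota_n\otimes\phi_{n,\infty})\circ\gamma^\univ_n$. Composing with $\Id_A\otimes\eta$ and using $\eta\circ\phi_{n,\infty}=\psi_{n,\infty}\circ\pi_n$ gives
$$(\Id_A\otimes\eta)\circ\gamma^\univ\circ\iota_n=(\iota_n\otimes\psi_{n,\infty}\circ\pi_n)\circ\gamma^\univ_n=\beta\circ\iota_n,$$
where the last equality is precisely \eqref{cantorset28thmarch2} of Lemma~\ref{cantorsetlemma1}. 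Since $\bigcup_n\iota_n(\mathcal A_n)$ is dense in $A$ and both $(\Id_A\otimes\eta)\circ\gamma^\univ$ and $\beta$ are bounded $^*$\nb-homomorphisms, they agree on all of $A$, which is the assertion.

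All the analytic content has already been isolated in Lemmas~\ref{cantorsetlemma1} and~\ref{cantorsetlemma2}, so I expect no further estimates to be needed; the final proof is purely a gluing argument through the two inductive-limit universal properties. The only point demanding care is bookkeeping the overloaded symbol $\phi_{n,\infty}$, which denotes simultaneously the quantum-group connecting morphism $\C^\univ(\mathbb S_n)\to\C^\univ(\mathbb S_\infty)$ and the algebra inclusion $\mathcal A_n\to A$, and correctly matching the inductive limit of the $\C^\univ(\mathbb S_n)$ with that of the $\mathcal A_n$ via the compatibility \eqref{cantorset2}.
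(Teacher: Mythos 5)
Your proposal is correct and follows essentially the same route as the paper: the paper also sets $\eta_n:=\psi_{n,\infty}\circ\pi_n$, invokes Lemma~\ref{cantorsetlemma2} together with the universal property of the inductive limit to obtain $\eta$, and derives $(\Id\otimes\eta)\circ\gamma^\univ=\beta$ from \eqref{cantorset28thmarch2}. You have merely spelled out the density and bookkeeping details that the paper leaves implicit.
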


\begin{proof}
	  For each $n \in \N$ put $ \eta_n:= \psi_{n,\infty} \circ \pi_n: \Mor(\C^\univ ( \mathbb{S}_n ), \C_0 ( \mathbb{Z} ) ) .$ 
Then the	existence of the $\Cst$\nb-homomorphism $ \eta $ follows from standard properties of inductive limits once we apply Lemma \ref{cantorsetlemma2}. Since each $ \eta_n $ is a Hopf~$^*$\nb-homomorphism, it can be easily checked that so is $ \eta $. Finally, the equation $ ( {\rm id} \otimes \eta   ) \circ \gamma^\univ  = \beta $ follows from \eqref{cantorset28thmarch2}.
\end{proof}

\subsection{A counterexample} \label{counterexample}

Finally, as announced earlier, we show that the conditions of Theorem \ref{maintheoremcrossed} can be weakened if we are only interested in the existence of an orthogonal filtration, but are actually necessary to obtain the form of the quantum symmetry group  appearing in that theorem.  Let us start by pointing out that a reduced crossed product can have an orthogonal filtration  under more general conditions than  those assumed in Theorem \ref{maintheoremcrossed}.

\begin{proposition} 
 \label{prop15thmarch}
 Let $\Afilt \defeq ( A, \tau_A, ( A_i )_{i \geq 0} ) $ be an orthogonal filtration. Suppose $ \Gamma $ is a finitely generated discrete group having an action $ \beta $ on $A$  such that  $ \tau_A ( \beta_g ( a ) ) = \tau_A ( a )  $ for all $ a  $ in $ A $, $g \in \Gamma$.  Then the triplet $\crfilt \defeq ( A \rtimes_{\beta, \red} \Gamma, \tau,  ( A_{ij} )_{i,j \geq 0}   ) $, with $A_{ij}$ defined via \eqref{Aijdeft} defines an orthogonal filtration of the $\Cst$\nb-algebra $A \rtimes_{\beta, \red} \Gamma.$
\end{proposition}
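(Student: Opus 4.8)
The plan is to verify directly the three defining conditions of an orthogonal filtration for the triple $\crfilt$, together with the fact that $\tau$ is a faithful state. First I would record the elementary bookkeeping. Each subspace $A_{ij}$ is finite dimensional: properness of $l$ forces $\{g\in\Gamma : l(g)=j\}$ to be finite, and $A_i$ is finite dimensional by hypothesis, so $A_{ij}$ is the linear span of finitely many elements $a_i\lambda_{g}$. Since a proper length function satisfies $l(g)=0$ if and only if $g=e$, and $A_0=\bc 1_A$, we obtain $A_{00}=\bc(1_A\lambda_e)=\bc 1$, the unit of the crossed product. Density of ${\rm Span}\bigcup_{i,j\geq 0}A_{ij}$ is immediate from the density of ${\rm Span}\bigcup_i A_i$ in $A$: the elements $a\lambda_g$ with $a$ ranging over this dense subspace and $g$ over $\Gamma$ span the algebraic crossed product, which is dense in $A\rtimes_{\beta,\red}\Gamma$.

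Next I would check that $\tau=\tau_A\circ\tau'$ is a faithful state. It is a state because $\tau'$ is a unital completely positive conditional expectation and $\tau_A$ is a state; it is faithful because the canonical conditional expectation $\tau'$ of a reduced crossed product onto the coefficient algebra is faithful and $\tau_A$ is faithful on $A$.

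The heart of the argument is the orthogonality relation, and this is where the invariance hypothesis $\tau_A\circ\beta_g=\tau_A$ enters. Fix $a_i\in A_i$, $a_k\in A_k$ and $g,h\in\Gamma$ with $l(g)=j$ and $l(h)=m$. Using the crossed product relations to move $\lambda_g^{*}=\lambda_{g^{-1}}$ across $a_i^{*}a_k$, I would compute
\[
(a_i\lambda_g)^{*}(a_k\lambda_h)=\lambda_g^{*}\,a_i^{*}a_k\,\lambda_h=\beta_{g^{-1}}(a_i^{*}a_k)\,\lambda_{g^{-1}h}.
\]
Applying $\tau'$ annihilates every term with $g^{-1}h\neq e$, so $\tau\bigl((a_i\lambda_g)^{*}(a_k\lambda_h)\bigr)$ equals $\tau_A\bigl(\beta_{g^{-1}}(a_i^{*}a_k)\bigr)=\tau_A(a_i^{*}a_k)$ when $g=h$ and vanishes otherwise, the last equality being precisely the invariance of $\tau_A$. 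It then remains to treat the two cases when $(i,j)\neq(k,m)$: if $j\neq m$ then $l(g)\neq l(h)$ forces $g\neq h$ and the expression is zero; if $j=m$ but $i\neq k$ then either $g\neq h$ (again zero) or $g=h$, in which case $\tau_A(a_i^{*}a_k)=0$ by orthogonality of the original filtration $\Afilt$. In either case the inner product vanishes, which is the required orthogonality.

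I do not expect a serious obstacle here: once the conventions for the covariance relation are fixed, the computation is routine, and the only genuinely structural input is the invariance of $\tau_A$ under $\beta$, which replaces the stronger compatibility hypothesis of Theorem~\ref{maintheoremcrossed} and is exactly what makes the length-graded family $\{A_{ij}\}_{i,j\geq 0}$ orthogonal. The mild care needed is to confirm that moving $\lambda_g^{*}$ past an element of $A$ produces a genuine element of $A$ (via $\beta_{g^{-1}}$) and that $\tau'$ is faithful on the reduced crossed product; both are standard facts about reduced crossed products by discrete groups.
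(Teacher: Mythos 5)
Your proposal is correct and follows essentially the same route as the paper: the same reduction to elementary tensors $a_i\lambda_g$, the same computation moving $\lambda_g^*$ across via $\beta_{g^{-1}}$ and applying the conditional expectation, and the same use of the invariance $\tau_A\circ\beta_{g^{-1}}=\tau_A$ to conclude orthogonality, together with faithfulness of $\tau'$ for the state. The extra remarks on finite-dimensionality of $A_{ij}$ and the explicit case split on $(i,j)\neq(k,m)$ are fine and only make explicit what the paper leaves implicit.
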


\begin{proof}
First observe that $A_{00}=\bc.1$ and ${\rm Span} (\cup_{i,j\geq 0} A_{ij})$ is a dense *-subspace of the $\Cst$\nb-algebra  $ A \rtimes_{\beta, \red} \Gamma$. Moreover, it can be easily checked that $\tau$ is faithful on $A \rtimes_{\beta, \red} \Gamma$   using the faithfulness of $\tau_{A}$ and the 
canonical faithful conditional expectation $\tau':A \rtimes_{\beta, \red} \Gamma \to A$.

Now let $i,j,p,q\geq 0$, $(i,j)\neq(p,q)$ and consider $a \in A_{ij}$ and $b \in A_{pq}$. 
We need to prove that   $\tau(a^*b)=0$. By linearity it suffices to assume that $ a = a_i  \lambda_{\gamma_1} $ and $ b = b_p \lambda_{\gamma_{2}} $ with $ a_i \in A_i,  b_p \in A_p, ~ l ( \gamma_{1} ) = j $ and $ l ( \gamma_{2} ) = q.$ We then obtain
   \begin{align*}
    \tau ( a^* b ) 
&=  \tau (  \lambda_{\gamma^{ - 1 }_{1}} a^*_i b_p \lambda_{\gamma_{2}}) =   \tau (  \beta_{\gamma^{ - 1 }_{1}} (  a^*_{i} b_{p}  ) \lambda_{\gamma^{ - 1 }_{1} \gamma_{2} }  )\\
&= \delta_{\gamma_1, \gamma_2}  \tau_{A}  ( \beta_{\gamma^{ - 1 }_{1}} (  a^*_{i} b_{p}  )   )
= \delta_{\gamma_1, \gamma_2}   \tau_{A}  ( a^*_{i} b_{p}  )
= \delta_{jq} \delta_{ip} \delta_{\gamma_1, \gamma_2} \tau_{A} ( a^*_{i} b_{p}  ) = 0.
\end{align*}
 Thus the triplet $ ( A \rtimes_{\beta, \red} \Gamma, \tau,  ( A_{ij} )_{i,j \geq 0}   ) $ defines an orthogonal filtration of the $\Cst$\nb-algebra $A \rtimes_{\beta, \red} \Gamma.$ 
\end{proof}

\begin{remark}
 A sufficient condition for the condition $ \tau_{A} ( \beta_g ( a ) ) = \tau_{A} ( a )  $ to hold is that $ \beta_{g} ( A_i ) \subseteq A_i $  for all~\(g\in\Gamma\), $ i \geq 0. $ However, this is not a necessary condition as the next example shows.
\end{remark}

  For the rest of the section we will consider an example where $A= \Cst ( \Z_9 )$, and $\Gamma = \Z_3$. The elements of each of these cyclic groups will be denoted by  $ \overline{0}, \overline{1}, \overline{2}$, and so on. We fix the (symmetric) generating sets on $\Z_3$ and $\Z_9$, respectively $ \{ \overline{1}, \overline{2} \} $ and $ \{ \overline{1}, \overline{8} \} $, so that each of the $\Cst$\nb-algebras in question is equipped with the orthogonal filtration given by the word-length function associated with the corresponding generating set, as in Example \ref{groupalgexample}. Let $\phi$ be an automorphism of $\Z_9$ of order $3$, given by the formula $\phi(n)=4n$ for $n \in \Z_9$. It induces an action  of $\Z_3$ on $A=\C^*(\Z_9)$, described by the morphism $\beta \in \Mor(A, A \otimes \C(\Z_3))$ via the usual formula ($n \in \Z_9)$:
\begin{equation} \label{formulabeta}
\beta(\lambda_n) = \lambda_n \otimes \delta_{\overline{0}} + \lambda_{\phi(n)} \otimes \delta_{\overline{1}} + \lambda_{\phi^2(n)} \otimes \delta_{\overline{2}}.  \end{equation}
It is easy to verify that $\beta$ preserves the trace $\tau$ (so that Proposition \ref{prop15thmarch} applies), and at the same time considering say $n=1$ we see that $\beta$ does not preserve the individual subspaces in the filtration we defined on $\C^*(\Z_9)$.

\begin{proposition}
Consider the orthogonal filtration $\crfilt$ on the algebra $\Cst(\mathbb{Z}_9) \rtimes_{\beta} \Z_3$ defined by the family $\{A_{ij}:i,j \geq 0\}$ \textup{(as in Proposition \ref{prop15thmarch})}. Then $ \QISO (\crfilt) $ is not isomorphic to the generalized Drinfeld's double of $ \QISO ( \Afilt ) $ and  $ \QISO ( \Bfilt) $ with respect to any bicharacter.  In particular there is no  Hopf~\Star{}homomorphism $ \pi \in \Mor (\C(\QISO(\Afilt),\Cont( \Z_3 )) $ such that $ ( {\rm id} \otimes \pi ) \circ \gamma^\univ_A = \beta. $
\end{proposition}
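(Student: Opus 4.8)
The plan is to identify the two ingredient quantum symmetry groups, to settle the concrete ``in particular'' clause by a direct filtration argument, and then to obstruct the Drinfeld double structure using the fact that the filtration of $A$ is exactly the isotypic decomposition of the $\QISO(\Afilt)$-action. First I would invoke Remark~\ref{20thmayremark} to get $\C(\QISO(\Afilt))\cong\Cst(\Z_9)\oplus\Cst(\Z_9)$ and $\C(\QISO(\Bfilt))\cong\Cst(\Z_3)\oplus\Cst(\Z_3)$. Both algebras are commutative, of dimension $18$ and $6$, so both symmetry groups are classical finite groups; the character automorphisms $\lambda_k\mapsto\chi(k)\lambda_k$ together with the inversion $\lambda_k\mapsto\lambda_{-k}$ are filtration preserving and generate a copy of the dihedral group $D_9$, and comparing orders gives $\QISO(\Afilt)\cong D_9$ and likewise $\QISO(\Bfilt)\cong D_3=S_3$. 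The decisive observation is that for $i\ge1$ each $A_i=\mathrm{span}\{\lambda_i,\lambda_{-i}\}$ is an irreducible two-dimensional $D_9$-representation, the four of them pairwise inequivalent, so the filtration $\{A_i\}_{i\ge0}$ is precisely the decomposition of $A$ into $\QISO(\Afilt)$-isotypic components.

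Next I would dispose of the final sentence directly. If a Hopf~\Star{}homomorphism $\pi\in\Mor(\C(\QISO(\Afilt)),\Contvin(\Z_3))$ satisfied $(\Id\otimes\pi)\circ\gamma^\univ_A=\beta$, then, since $\gamma^\univ_A$ preserves the filtration, we would get $\beta(A_1)\subseteq A_1\otimes\Contvin(\Z_3)$. But $\phi(1)=4$ and $\phi^2(1)=7$, so by~\eqref{formulabeta} $\beta(\lambda_1)=\lambda_1\otimes\delta_{\overline{0}}+\lambda_4\otimes\delta_{\overline{1}}+\lambda_7\otimes\delta_{\overline{2}}$, which is not in $A_1\otimes\Contvin(\Z_3)$ because $\lambda_4\in A_4$ and $\lambda_7\in A_2$. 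Hence no such $\pi$ exists.

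For the main assertion I would argue by contradiction, assuming $\QISO(\crfilt)\cong\GDrin{\bichar}$ for some bicharacter $\bichar$. Realising $C=A\rtimes_{\beta,\red}\Z_3\cong A\boxtimes_{\multunit[\Z_3]}\Cred(\Z_3)$ via Proposition~\ref{18thjan1} and Lemma~\ref{31stjan2}, so that $\{A_{ij}\}=\{j_A(A_i)j_B(B_j)\}$, I would note that the slice-map computation of Lemma~\ref{lemma16thjan1} and of the proof of Theorem~\ref{maintheorem} uses only $B_0=\bc1$ and the filtration preservation of the universal $\QISO(\crfilt)$-action $\gamma^\univ$; it therefore applies here without assuming that $\beta$ factors through $\QISO(\Afilt)$. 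This produces Hopf~\Star{}homomorphisms $\rho_1\colon\Cont^\univ(\QISO(\Afilt))\to\Cont^\univ(\QISO(\crfilt))$ and $\theta_1\colon\Cont^\univ(\QISO(\Bfilt))\to\Cont^\univ(\QISO(\crfilt))$ with $(j_A\otimes\rho_1)\gamma^\univ_A=\gamma^\univ j_A$ and $(j_B\otimes\theta_1)\gamma^\univ_B=\gamma^\univ j_B$, whose images generate $\Cont^\univ(\QISO(\crfilt))$. I would then feed in the crossed-product covariance relation $j_B(\mu_1)j_A(\lambda_n)j_B(\mu_1)^{*}=j_A(\lambda_{4n})$ (the defining relation of $\Cst(\Z_9\rtimes_4\Z_3)$): applying the \Star{}homomorphism $\gamma^\univ$ and expanding through $\rho_1,\theta_1$ shows that conjugation by the $\theta_1$-copy moves the $\rho_1$-image of the coefficients sitting over $A_1$ into those sitting over $A_4$ and $A_2$, i.e.\ it does not preserve the $\QISO(\Afilt)$-isotypic grading. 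On the other hand, in any $\GDrin{\bichar}$ the commutation of the two canonical generating subalgebras is governed by the Drinfeld relation~\eqref{eq:Drinf-comm}, and since $\bichar$ satisfies the bicharacter identity~\eqref{eq:bichar_char_in_first_leg} in its first leg, the induced conjugation of the $D_9$-copy is an internal symmetry of the $\QISO(\Afilt)$-corepresentation on $A$ and must therefore preserve every isotypic component $A_i$. These two descriptions are incompatible, which is the desired contradiction.

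The hard part will be the step linking the abstract isomorphism $\QISO(\crfilt)\cong\GDrin{\bichar}$ to the concrete generators: one must show that any such isomorphism carries the intrinsically defined subalgebras $\rho_1(\Cont^\univ(\QISO(\Afilt)))$ and $\theta_1(\Cont^\univ(\QISO(\Bfilt)))$ onto the canonical Drinfeld subalgebras $\rho^\univ(\Cont^\univ(\QISO(\Afilt)))$ and $\theta^\univ(\Cont^\univ(\QISO(\Bfilt)))$, so that the commutation read off from the covariance relation really is forced to be of bicharacter type. The universal property of Theorem~\ref{the:UnivProp-Drinf} is the natural tool, but applying it requires reconciling the two twisted tensor product presentations of $C$ (the crossed-product one carrying $\multunit[\Z_3]$ against the hypothetical Drinfeld-double one carrying $\bichar$ over the duals of the symmetry groups) and tracking the grading through both. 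Once this matching is in place, the contradiction above shows that $\QISO(\crfilt)$ cannot be any $\GDrin{\bichar}$, and the excluded factorisation~\eqref{compatibility} reappears as the special ``in particular'' conclusion.
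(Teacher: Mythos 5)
Your argument for the \emph{``in particular''} clause is fine (and more direct than the paper's, which deduces it from Theorem \ref{maintheoremcrossed}): if $(\Id\otimes\pi)\circ\gamma^\univ_A=\beta$ then $\beta(A_1)\subseteq A_1\otimes\Contvin(\Z_3)$, contradicting $\beta(\lambda_1)=\lambda_1\otimes\delta_{\overline{0}}+\lambda_4\otimes\delta_{\overline{1}}+\lambda_7\otimes\delta_{\overline{2}}$ with $\lambda_4\in A_4$, $\lambda_7\in A_2$. But the main assertion is left with a genuine gap, and you have flagged it yourself: the statement is that $\QISO(\crfilt)$ is not \emph{abstractly} isomorphic to any $\GDrin{\bichar}$, whereas your contradiction is extracted from the distinguished generating copies $\rho_1(\C^\univ(\QISO(\Afilt)))$ and $\theta_1(\C^\univ(\QISO(\Bfilt)))$ inside $\C^\univ(\QISO(\crfilt))$ and the canonical copies $\rho^\univ,\theta^\univ$ inside the double. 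An arbitrary isomorphism of compact quantum groups has no reason to match these subalgebras, and Theorem \ref{the:UnivProp-Drinf} does not supply that matching here because its hypotheses are exactly the ones that fail (the coaction $\beta$ is not induced from $\gamma^\univ_A$, so the crossed product is not presented as a twisted tensor product over the duals of the two $\QISO$'s, and Lemmas \ref{17thjan1} and \ref{lemma16thjan1} — whose proofs use the surjection $q$ onto $\C^\univ(\GDrin{\bichar})$ coming from that presentation — cannot simply be quoted ``without assuming that $\beta$ factors through $\QISO(\Afilt)$''). The isotypic/covariance obstruction is a sensible heuristic, but as written the proof stops exactly where the work begins.

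The paper avoids all of this with a two-line dimension count, and you already have every ingredient for it. Since $\{A_{ij}\}_{i,j\geq 0}$ refines the word-length filtration $\{U_n\}_{n\geq 0}$ of $\Cst(\Z_9\rtimes_\beta\Z_3)$, Corollary 2.11 of \cite{MR3066746} makes $\QISO(\crfilt)$ a quantum subgroup of $\QISO(\Cst(\Z_9\rtimes_\beta\Z_3),\tau,\{U_n\}_{n\geq 0})$, whose algebra is $\Cst(\Z_9\rtimes_\beta\Z_3)\oplus\Cst(\Z_9\rtimes_\beta\Z_3)$ by \cite{arnabsingleauthor}, of vector space dimension $54$; so $\dim \C(\QISO(\crfilt))\leq 54$. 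On the other hand $\Contvin(\GDrin{\bichar})=\rho(\Contvin(\G))\theta(\Contvin(\GH))$ is, as a vector space, the tensor product $\C(\QISO(\Afilt))\otimes\C(\QISO(\Bfilt))$ of dimension $18\cdot 6=108$, for \emph{every} bicharacter $\bichar$. Since $108>54$, no isomorphism can exist. I would recommend replacing your isotypic-component argument by this count; your identifications $\QISO(\Afilt)\cong D_9$ and $\QISO(\Bfilt)\cong S_3$ are consistent with Remark \ref{20thmayremark} but are not needed beyond the dimensions $18$ and $6$.
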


\begin{proof}
As discussed before, the action $\beta$ comes from an action of $\Z_3$ on $\Z_9$ by automorphisms, which we denote by the same letter.  We will use the identification  $\Cst(\mathbb{Z}_9) \rtimes_{\beta} \Z_3 \cong \Cst(\mathbb{Z}_9 \rtimes_{\beta} \Z_3 )$, under which  the state $ \tau $ can be identified with the canonical tracial state on $ \Cst(\mathbb{Z}_9 \rtimes_{\beta} \Z_3 ).$ We can use the standard (symmetric) generating set  to define the word-length function $l:\mathbb{Z}_9 \rtimes_{\beta} \Z_3 \to \N_0$ and for $n \geq 0$ put  $U_n = {\rm Span} \{ \lambda_t:  t \in \mathbb{Z}_9 \rtimes \Z_3, l(t) = n \}.$ Then $ ( \Cst(\mathbb{Z}_9 \rtimes_{\beta} \Z_3 ), \tau_A, \{ U_n \}_{n\geq 0} ) $ is yet another orthogonal  filtration of $\Cst(\mathbb{Z}_9) \rtimes_{\beta} \Z_3$.  
 Moreover, $\{A_{i,j} \}_{i,j\geq 0}$ is a sub-filtration of $\{ U_n \}_{n\geq0}$ in the sense of \cite{MR3066746} so that by Corollary 2.11 of that paper, $\QISO (\crfilt)$ is a quantum subgroup of  $\QISO (\Cst(\mathbb{Z}_9 \rtimes_{\beta} \Z_3), \tau, \{ U_n \}_{n\geq0} ).$   Now by the first computation in Section 5 of \cite{arnabsingleauthor}, $\C(\QISO (\Cst(\mathbb{Z}_9 \rtimes_{\beta} \Z_3) , \tau, \{ U_n \}_{n\geq 0} ))$ is isomorphic to $ \Cst(\mathbb{Z}_9 \rtimes_{\beta} \Z_3) \oplus \Cst(\mathbb{Z}_9 \rtimes_{\beta} \Z_3 )$, so it  has the  vector space dimension equal $27 + 27 = 54.$ Therefore,  the vector space dimension of $\C(\QISO (\crfilt))$ is no greater than $54.$

On the other hand the Hopf  $\Cst$\nb-algebra of each generalized Drinfeld's double of $ \QISO ( \Afilt) ) $ and $ \QISO (\widetilde{\Cst (\Z_3)}) $  as a vector space is isomorphic to the tensor product of $\C( \QISO (\widetilde{\Cst ( \mathbb{Z}_9 )} )) $ and $\C( \QISO (\widetilde{\Cst(\Z_3)}))$. Since  by Remark \ref{20thmayremark}, $ \C(\QISO ( \widetilde{\Cst ( \mathbb{Z}_n )} )) \cong \C^*(\Z_n) \oplus \Cst (\Z_n)  $ for $ n \neq 4, $ the   vector space dimension of $\C( \QISO ( \widetilde{\Cst ( \mathbb{Z}_9 )} )) \otimes \C(\QISO (\widetilde{ \Cst (\Z_3)})  ) $   equals $ ( 9 + 9 ) ( 3 + 3 ) = 108.$ This completes the proof of the main part of the proposition.

The last statement follows now from Theorem \ref{maintheoremcrossed} (but can be also shown directly).

\end{proof}

\subsection{Further perspectives}
Finally we outline two possible extensions of the results of previous subsections, namely, the cases of twisted crossed products and (twisted) crossed products by discrete quantum groups.

Let us recall the notion of twisted crossed products (see \cite{BNS} and references therein). 
		Let $ \Gamma $ be a discrete group and $\Omega : \Gamma \times\Gamma \rightarrow S^1 $ be a $2$-cocycle on $ \Gamma, $ i.e, a map that satisfies the  equation: 
		$$ \Omega ( g, h ) \Omega ( gh, k ) =  \Omega ( g, h k ) \Omega ( h, k ), \;\;\;g,h,k \in \Gamma. $$
		
		Let us  then write $ \tilde{\Omega} ( h, k ) = \Omega ( k^{-1}, h^{-1} ) $, $h,k \in \Gamma$.
		For each $g \in \Gamma$ define   $ \lambda^\Omega_g $ and $ \rho^{\tilde{\Omega}}_g $ to be the `twisted' left and right shift operators on $ \ell^2 ( \Gamma ) $, given  by
		$$ \lambda^\Omega_g = \tilde{\Omega} ( g^{-1}, \cdot )\lambda_g, ~ \rho^{\tilde{\Omega}}_g = \tilde{\Omega} ( \cdot, g ) \rho_g, $$
		where $\lambda_g$ and $\rho_g$ are the usual shift unitaries acting on  $ \ell^2 ( \Gamma ) $.
		It follows that $ \lambda^\Omega_g \lambda^\Omega_h = \Omega ( g, h ) \lambda^\Omega_{gh}$, $  \rho^{\tilde{\Omega}}_g \rho^{\tilde{\Omega}}_h = \tilde{\Omega} ( g, h ) \rho^{\tilde{\Omega}}_{gh}$ for any $g,h \in \Gamma$. 
		
		The twisted group $\Cst$\nb-algebra $\Cred( \Gamma, \Omega ) $ is defined as the closed linear span of $ \{ \rho^{\tilde{\Omega}}_g\mid g \in \Gamma  \} $ in $ \Bound( \ell^2 ( \Gamma ) ). $  The $ \C^* $-algebra $\Cred( \Gamma, \Omega ) $ is equipped with a canonical coaction of $\Cred( \Gamma )$, i.e.\ the morphism $\delta \in \Mor(\Cred( \Gamma, \Omega ), \Cred( \Gamma, \Omega )\otimes \Cred( \Gamma ))$ given by the formula
		\begin{equation} 
		\label{2ndmarch2018}  
		\delta ( \rho^\Omega_g )\defeq\flip(\multunit[*] (1\otimes \rho^\Omega_g)  \multunit ) = \rho^\Omega_g \otimes \rho_g,  \;\;\; g \in \Gamma,
		\end{equation}
		where the operator $ \multunit \in\U( \ell^2 ( \Gamma \times \Gamma ) ) $ is defined by~$ \multunit \xi ( g, h ) = \xi ( gh, h)$ (with $g,h \in \Gamma$) and $ \sigma $ is the usual flip.
		Now if $ A $ is a unital $\Cst$\nb-algebra and $ \beta\in \Mor(A,A \otimes\Contvin( \Gamma )) $ is a coaction, then the twisted crossed  product $ A \rtimes_{\beta,\red,\Omega} \Gamma $ is defined as the closed linear span of $ {\rm Span} \{  \beta ( A ) ( 1 \otimes \C^*_\red ( \Gamma, \Omega ))   \} $ in $ \Mult ( A \otimes\Comp( \ell^2 ( \Gamma  ) ) )$. 
		
		Following the same line of argument as that in the proof of Proposition~\ref{18thjan1}, we obtain  a \(\Cst\)\nb-algebra isomorphism
		$$\Psi\colon A\boxtimes_{\multunit}\Cred(\Gamma, \omega )\to A\rtimes_{\beta,\red,\Omega}\Gamma $$
		such that for all $a \in A$ and $g \in \Gamma$ we have
		$$ \Psi(j_{A}(a))=\beta(a),~ \Psi(j_{\Cred(\Gamma, \omega )} (\rho^{\Omega}_{g}))=1\otimes\rho^{\Omega}_{g}.$$
		
		Let $ \Afilt\defeq ( A, \tau_A, \{ A_i \}_{i \geq 0}  ) $ denote  an orthogonal filtration of  a unital $\Cst$\nb-algebra $ A$. Let $ \Gamma $ be a discrete group with a proper length function $l.$ Define $ B^{l, \Omega}_n = {\rm Span} \{  \rho^{\tilde{\Omega}}_g: l ( g ) = n  \}. $ Then  \(\Bfilt\defeq ( \Cred(\Gamma, \Omega), \tau_\Gamma, \{B^{l, \Omega}_n\}_{n\in \mathbb{N}}  ) \) is an orthogonal filtration. Suppose~\(\beta\) satisfies~\eqref{compatibility}. Then by the arguments analogous to those of Section~\ref{sec:sec5} we can prove that~$ \QISO ( \crfilt) \cong \GDrin{\bichar},$ where $\GDrin{\bichar}$ is the Drinfeld double of $\QISO(\Afilt)$ and $\QISO(\Bfilt)$ determined by $\bichar$.

Let us finish the article by mentioning that all the results of Section \ref{sec:sec6}, and in particular  Theorem~\ref{maintheoremcrossed}, remain true if we consider actions of finitely generated discrete quantum groups (and length functions on such quantum groups) instead of classical discrete groups. 
Moreover, the results on twisted crossed products also go through for discrete quantum group actions.

\begin{bibdiv}
  \begin{biblist}
\bib{Aubrun-Skalski-Speicher:QSym}{book}{
  author={Aubrun, Guillaume},
  author={Skalski, Adam},
  author={Speicher, Roland},
  title={Quantum Symmetries},
  series={Lecture Notes in Mathematics},
  volume={2189},
  publisher={Springer},
  place={Berlin},
  date={2017},
  pages={ix+119},
  isbn={978-3-319-63205-6},
  doi={10.1007/978-3-319-63206-3},
}
  
\bib{Baaj-Skandalis:Unitaires}{article}{
  author={Baaj, Saad},
  author={Skandalis, Georges},
  title={Unitaires multiplicatifs et dualit\'e pour les produits crois\'es de $C^*$\nobreakdash -alg\`ebres},
  journal={Ann. Sci. \'Ecole Norm. Sup. (4)},
  volume={26},
  date={1993},
  number={4},
  pages={425--488},
  issn={0012-9593},
  review={\MR {1235438}},
  eprint={http://www.numdam.org/item?id=ASENS_1993_4_26_4_425_0},
}

\bib{Banica:Qaut}{article}{
  author={Banica, Teodor},
  title={Quantum automorphism groups of homogeneous graphs},
  journal={J. Funct. Anal.},
  volume={224},
  date={2005},
  number={2},
  pages={243--280},
  issn={0022-1236},
  review={\MR {2146039}},
  doi={10.1016/j.jfa.2004.11.002},
}

\bib{ban1}{article}{
  author={Banica, Teodor},
  title={Quantum automorphism groups of small metric spaces},
  journal={Pacific J. Math.},
  volume={219},
  date={2005},
  number={1},
  pages={27--51},
  issn={0030-8730},
  review={\MR {2174219}},
  doi={10.2140/pjm.2005.219.27},
}

\bib{banica_new_expository}{article}{
  author={Banica, Teodor},
  title={Quantum isometries, noncommutative spheres, and related integrals},
  status={eprint},
  note={\arxiv {1601.02159}},
  date={2016},
}

\bib{twoparameterbanicaskalski}{article}{
  author={Banica, Teodor},
  author={Skalski, Adam G.},
  title={Two-parameter families of quantum symmetry groups},
  journal={J. Funct. Anal.},
  volume={260},
  date={2011},
  number={11},
  pages={3252--3282},
  issn={0022-1236},
  review={\MR {2776569}},
  doi={10.1016/j.jfa.2010.11.016},
}

\bib{cyclicgroupsbanicaskalski}{article}{
  author={Banica, Teodor},
  author={Skalski, Adam G.},
  title={Quantum isometry groups of duals of free powers of cyclic groups},
  journal={Int. Math. Res. Not. IMRN},
  date={2012},
  number={9},
  pages={2094--2122},
  issn={1073-7928},
  review={\MR {2920825}},
  doi={10.1093/imrn/rnr098},
}

\bib{MR3066746}{article}{
  author={Banica, Teodor},
  author={Skalski, Adam G.},
  title={Quantum symmetry groups of $C^*$\nobreakdash -algebras equipped with orthogonal filtrations},
  journal={Proc. Lond. Math. Soc. (3)},
  volume={106},
  date={2013},
  number={5},
  pages={980--1004},
  issn={0024-6115},
  doi={10.1112/plms/pds071},
  review={\MR {3066746}},
}

\bib{goswami2}{article}{
  author={Bhowmick, Jyotishman},
  author={Goswami, Debashish},
  title={Quantum group of orientation-preserving Riemannian isometries},
  journal={J. Funct. Anal.},
  volume={257},
  date={2009},
  number={8},
  pages={2530--2572},
  issn={0022-1236},
  review={\MR {2555012}},
  doi={10.1016/j.jfa.2009.07.006},
}

\bib{MR2728589}{article}{
   author={Bhowmick, Jyotishman},
   author={Goswami, Debashish},
   author={Skalski, Adam},
   title={Quantum isometry groups of 0-dimensional manifolds},
   journal={Trans. Amer. Math. Soc.},
   volume={363},
   date={2011},
   number={2},
   pages={901--921},
   issn={0002-9947},
   review={\MR{2728589}},
   doi={10.1090/S0002-9947-2010-05141-4},
}

\bib{BNS}{article}{
   author={Bhowmick, Jyotishman},
   author={Neshveyev, Sergey},
   author={Sangha, Amandip},
   title={Deformation of operator algebras by Borel cocycles},
   journal={J. Funct. Anal.},
   volume={265},
   date={2013},
   number={6},
   pages={983--1001},
   issn={0022-1236},
   review={\MR{3067794}},
   doi={10.1016/j.jfa.2013.05.021},
}

\bib{adamjyotishgroup}{article}{
  author={Bhowmick, Jyotishman},
  author={Skalski, Adam},
  title={Quantum isometry groups of noncommutative manifolds associated to group $C^*$-algebras},
  journal={J. Geom. Phys.},
  volume={60},
  date={2010},
  number={10},
  pages={1474--1489},
  issn={0393-0440},
  review={\MR {2661151}},
  doi={10.1016/j.geomphys.2010.05.007},
}

\bib{bichon}{article}{
  author={Bichon, Julien},
  title={Quantum automorphism groups of finite graphs},
  journal={Proc. Amer. Math. Soc.},
  volume={131},
  date={2003},
  number={3},
  pages={665--673},
  issn={0002-9939},
  review={\MR {1937403}},
  doi={10.1090/S0002-9939-02-06798-9},
}

\bib{Chrivan}{article}{
   author={Christensen, Erik},
   author={Ivan, Cristina},
   title={Spectral triples for AF $C^*$-algebras and metrics on the Cantor
   set},
   journal={J. Operator Theory},
   volume={56},
   date={2006},
   number={1},
   pages={17--46},
   issn={0379-4024},
   review={\MR{2261610}},
}

\bib{connes2}{article}{
	AUTHOR = {Chamseddine, Ali H.}
	author = {Connes, Alain},
	TITLE = {Noncommutative geometry as a framework for unification of all
		fundamental interactions including gravity. {P}art {I}},
	JOURNAL = {Fortschr. Phys.},
	FJOURNAL = {Fortschritte der Physik. Progress of Physics},
	VOLUME = {58},
	YEAR = {2010},
	NUMBER = {6},
	PAGES = {553--600},
	ISSN = {0015-8208},
	MRCLASS = {81R60 (58J42 81T75 83C65)},
	MRNUMBER = {2674505},
	MRREVIEWER = {Walter D. van Suijlekom},
	DOI = {10.1002/prop.201000069},
	URL = {https://doi.org/10.1002/prop.201000069},
}

\bib{connes}{book}{
	AUTHOR = {Connes, Alain},
	TITLE = {Noncommutative geometry},
	PUBLISHER = {Academic Press, Inc., San Diego, CA},
	YEAR = {1994},
	PAGES = {xiv+661},
	ISBN = {0-12-185860-X},
	MRCLASS = {46Lxx (19K56 22D25 58B30 58G12 81T13 81V22 81V70)},
	MRNUMBER = {1303779},
	MRREVIEWER = {John Roe},
}

\bib{cuntzkrieger}{article}{
  author={Cuntz, Joachim},
  author={Krieger, Wolfgang},
  title={A class of $C^*$\nobreakdash -algebras and topological Markov chains},
  journal={Invent. Math.},
  volume={56},
  date={1980},
  number={3},
  pages={251--268},
  issn={0020-9910},
  review={\MR {561974}},
  doi={10.1007/BF01390048},
}

\bib{drinfeld}{inproceedings} {
	AUTHOR = {Drinfel'd, Vladimir Gershonovich},
	TITLE = {Quantum groups},
	BOOKTITLE = {Proceedings of the {I}nternational {C}ongress of
		{M}athematicians, {V}ol. 1, 2 ({B}erkeley, {C}alif., 1986)},
	PAGES = {798--820},
	PUBLISHER = {Amer. Math. Soc., Providence, RI},
	YEAR = {1987},
	MRCLASS = {17B50 (16A24 17B65 57T05 58F07 82A05 82A15)},
	MRNUMBER = {934283},
}

\bib{Fischer:thesis}{thesis}{
  author={Fischer, Robert},
  title={Volle verschr\"ankte Produkte f\"ur Quantengruppen und \"aquivariante KK-Theorie},
  institution={Westf. Wilhelms-Universit\"at M\"unster},
  type={phdthesis},
  date={2003},
  eprint={http://nbn-resolving.de/urn:nbn:de:hbz:6-85659526538},
}

\bib{goswami}{article}{
  author={Goswami, Debashish},
  title={Quantum group of isometries in classical and noncommutative geometry},
  journal={Comm. Math. Phys.},
  volume={285},
  date={2009},
  number={1},
  pages={141--160},
  issn={0010-3616},
  review={\MR {2453592}},
  doi={10.1007/s00220-008-0461-1},
}

\bib{goswamibook2}{book}{
  author={Goswami, Debashish},
  author={Bhowmick, Jyotishman},
  title={Quantum isometry groups},
  series={Infosys Science Foundation Series},
  note={Infosys Science Foundation Series in Mathematical Sciences},
  publisher={Springer, New Delhi},
  date={2016},
  pages={xxviii+235},
  isbn={978-81-322-3665-8},
  isbn={978-81-322-3667-2},
  review={\MR {3559897}},
  doi={10.1007/978-81-322-3667-2},
}

\bib{Goswami-Roy:Faithful_act_LCQG}{article}{
  author={Goswami, Debashish},
  author={Roy, Sutanu},
  title={Faithful actions of locally compact quantum groups on classical spaces},
  journal={Lett. Math. Phys.},
  volume={107},
  date={2017},
  number={7},
  pages={1375--1390},
  issn={0377-9017},
  review={\MR {3685176}},
  doi={10.1007/s11005-017-0951-1},
}

\bib{soumalyagraph}{article}{
  author={Joardar, Soumalya},
  author={Mandal, Arnab},
  title={Quantum Symmetry of Graph C*-algebras associated with connected Graphs},
  status={eprint},
  note={\arxiv {arXiv:1711.04253}},
  date={2017},
}

\bib{arnabpreprint}{article}{
  author={Joardar, Soumalya},
  author={Mandal, Arnab},
  title={Quantum symmetry of graph C*-algebras at critical inverse temperature},
  status={eprint},
  note={\arxiv{arXiv:1803.08012 }},
  date={2018},
}

\bib{kasprzak}{article}{
	author={Kasprzak, Pawe\l},
	title={Rieffel deformation via crossed products},
	journal={J. Funct. Anal.},
	volume={257},
	date={2009},
	number={5},
	pages={1288--1332},
	issn={0022-1236},
	review={\MR{2541270}},
	doi={10.1016/j.jfa.2009.05.013},
}

\bib{MR2016248}{article}{
   author={Katsura, Takeshi},
   title={The ideal structures of crossed products of Cuntz algebras by
   quasi-free actions of abelian groups},
   journal={Canad. J. Math.},
   volume={55},
   date={2003},
   number={6},
   pages={1302--1338},
   issn={0008-414X},
   review={\MR{2016248}},
   doi={10.4153/CJM-2003-050-6},
}

\bib{Kustuniv}{article}{
  author={Kustermans, Johan},
  title={Locally compact quantum groups in the universal setting},
  journal={Internat. J. Math.},
  volume={12},
  date={2001},
  number={3},
  pages={289--338},
  issn={0129-167X},
  review={\MR {1841517}},
  doi={10.1142/S0129167X01000757},
}

\bib{kustermans}{article}{
  author={Kustermans, Johan},
  author={Vaes, Stefaan},
  title={Locally compact quantum groups},
  journal={Ann. Sci. \'Ecole Norm. Sup. (4)},
  volume={33},
  date={2000},
  number={6},
  pages={837--934},
  issn={0012-9593},
  review={\MR {1832993}},
  doi={10.1016/S0012-9593(00)01055-7},
}

\bib{kv}{article}{
  author={Kustermans, Johan},
  author={Vaes, Stefaan},
  title={Locally compact quantum groups in the von Neumann algebraic setting},
  journal={Math. Scand.},
  volume={92},
  date={2003},
  number={1},
  pages={68--92},
  issn={0025-5521},
  review={\MR {1951446}},
  doi={10.7146/math.scand.a-14394},
}

\bib{MR2990124}{article}{
  author={Kyed, David},
  author={So\l tan, Piotr Miko\l aj},
  title={Property (T) and exotic quantum group norms},
  journal={J. Noncommut. Geom.},
  volume={6},
  date={2012},
  number={4},
  pages={773--800},
  issn={1661-6952},
  review={\MR {2990124}},
  doi={10.4171/JNCG/105},
}

\bib{arnabsingleauthor}{article}{
  author={Mandal, Arnab},
  title={Quantum isometry group of dual of finitely generated discrete groups---II},
  journal={Ann. Math. Blaise Pascal},
  volume={23},
  date={2016},
  number={2},
  pages={219--247},
  issn={1259-1734},
  review={\MR {3581026}},
}

\bib{MasudaNakagami}{article}{
	AUTHOR = {Masuda, Tetsuya}
	author= {Nakagami, Yoshiomi},
	TITLE = {A von {N}eumann algebra framework for the duality of the
		quantum groups},
	JOURNAL = {Publ. Res. Inst. Math. Sci.},
	VOLUME = {30},
	YEAR = {1994},
	NUMBER = {5},
	PAGES = {799--850},
	ISSN = {0034-5318},
	MRNUMBER = {1311393},
	DOI = {10.2977/prims/1195165585},
}

\bib{MNW}{article}{
  author={Masuda, Tetsuya},
  author={Nakagami, Yoshiomi},
  author={Woronowicz, Stanis\l aw Lech},
  title={A $C^*$\nobreakdash -algebraic framework for quantum groups},
  journal={Internat. J. Math},
  volume={14},
  date={2003},
  number={9},
  pages={903--1001},
  issn={0129-167X},
  review={\MR {2020804}},
  doi={10.1142/S0129167X03002071},
}

\bib{Meyer-Roy-Woronowicz:Homomorphisms}{article}{
  author={Meyer, Ralf},
  author={Roy, Sutanu},
  author={Woronowicz, Stanis\l aw Lech},
  title={Homomorphisms of quantum groups},
  journal={M\"unster J. Math.},
  volume={5},
  date={2012},
  pages={1--24},
  issn={1867-5778},
  review={\MR {3047623}},
  eprint={http://nbn-resolving.de/urn:nbn:de:hbz:6-88399662599},
}

\bib{Meyer-Roy-Woronowicz:Twisted_tensor}{article}{
  author={Meyer, Ralf},
  author={Roy, Sutanu},
  author={Woronowicz, Stanis\l aw Lech},
  title={Quantum group-twisted tensor products of \(\textup C^*\)\nobreakdash -algebras},
  journal={Internat. J. Math.},
  volume={25},
  date={2014},
  number={2},
  pages={1450019, 37},
  issn={0129-167X},
  review={\MR {3189775}},
  doi={10.1142/S0129167X14500190},
}

\bib{pedersen}{book}{
	author={Pedersen, Gert K.},
	title={$C^*$\nobreakdash-algebras and their automorphism groups},
	series={London Mathematical Society Monographs},
	volume={14},
	publisher={Academic Press},
	place={London},
	date={1979},
	pages={ix+416},
	isbn={0-12-549450-5},
	review={\MR{548006}},
}

\bib{weberraum}{article}{
  author={Raum, Sven},
  author={Weber, Moritz},
  title={A connection between easy quantum groups, varieties of groups and reflection groups},
  status={eprint},
  note={\arxiv {1212.4742v2}},
  date={2017},
}

\bib{rieffel}{article}{
	author={Rieffel, Marc A.},
	title={Deformation quantization for actions of~$\mathbf{R}^d$},
	journal={Mem. Amer. Math. Soc.},
	volume={106},
	date={1993},
	number={506},
	pages={x+93},
	issn={0065-9266},
	review={\MR{1184061}},
	doi={10.1090/memo/0506},
}

\bib{Roy:Codoubles}{article}{
  author={Roy, Sutanu},
  title={The Drinfeld double for $C^*$\nobreakdash -algebraic quantum groups},
  journal={J. Operator Theory},
  volume={74},
  date={2015},
  number={2},
  pages={485--515},
  issn={0379-4024},
  review={\MR {3431941}},
  doi={10.7900/jot.2014sep04.2053},
}

\bib{Roy-Timmermann:Max_twisted_tensor}{article}{
  author={Roy, Sutanu},
  author={Timmermann, Thomas},
  title={The maximal quantum group-twisted tensor product of $C^*$\nobreakdash -algebras},
  journal={J. Noncommut. Geom.},
  volume={12},
  date={2018},
  number={1},
  pages={279--330},
  doi={10.4171/JNCG/277},
}

\bib{moritzgraph}{article}{
  author={Schmidt, Simon},
  author={Weber, Moritz},
  title={Quantum symmetries of graph $C^*$\nobreakdash algebras},
  status={eprint},
  note={\arxiv {1706.08833v2}},
  date={2017},
}

\bib{Soltan-Woronowicz:Multiplicative_unitaries}{article}{
  author={So\l tan, Piotr Miko\l aj},
  author={Woronowicz, Stanis\l aw Lech},
  title={From multiplicative unitaries to quantum groups. II},
  journal={J. Funct. Anal.},
  volume={252},
  date={2007},
  number={1},
  pages={42--67},
  issn={0022-1236},
  review={\MR {2357350}},
  doi={10.1016/j.jfa.2007.07.006},
}

\bib{speicherweber}{article}{
  author={Speicher, Roland},
  author={Weber, Moritz},
  title={Quantum groups with partial commutation relations},
  status={eprint},
  note={\arxiv {1603.09192v1}},
  date={2017},
}

\bib{MR3158722}{article}{
  author={Thibault de Chanvalon, Manon},
  title={Quantum symmetry groups of Hilbert modules equipped with orthogonal filtrations},
  journal={J. Funct. Anal.},
  volume={266},
  date={2014},
  number={5},
  pages={3208--3235},
  issn={0022-1236},
  review={\MR {3158722}},
  doi={10.1016/j.jfa.2013.10.020},
}

\bib{VDDuality}{article}{
	AUTHOR = {Van Daele, Alfons},
	TITLE = {An algebraic framework for group duality},
	JOURNAL = {Adv. Math.},
	VOLUME = {140},
	YEAR = {1998},
	NUMBER = {2},
	PAGES = {323--366},
	ISSN = {0001-8708},
	MRNUMBER = {1658585},
	DOI = {10.1006/aima.1998.1775},
}

\bib{wang}{article}{
  author={Wang, Shuzhou},
  title={Quantum symmetry groups of finite spaces},
  journal={Comm. Math. Phys.},
  volume={195},
  date={1998},
  number={1},
  pages={195--211},
  issn={0010-3616},
  review={\MR {1637425}},
  doi={10.1007/s002200050385},
}

\bib{Woronowicz:Mult_unit_to_Qgrp}{article}{
  author={Woronowicz, Stanis\l aw Lech},
  title={From multiplicative unitaries to quantum groups},
  journal={Internat. J. Math.},
  volume={7},
  date={1996},
  number={1},
  pages={127--149},
  issn={0129-167X},
  review={\MR {1369908}},
  doi={10.1142/S0129167X96000086},
}

\bib{Woronowicz:CQG}{article}{
  author={Woronowicz, Stanis\l aw Lech},
  title={Compact quantum groups},
  conference={ title={Sym\'etries quantiques}, address={Les Houches}, date={1995}, },
  book={ publisher={North-Holland}, place={Amsterdam}, },
  date={1998},
  pages={845--884},
  review={\MR {1616348}},
}

\end{biblist}
\end{bibdiv}

\end{document}